\documentclass[preprint,3p,10pt]{elsarticle}

\biboptions{numbers,sort&compress}

\usepackage{amsmath,amssymb,amsthm,bm}
\usepackage{breqn}

\usepackage{pdflscape}
\usepackage{graphicx}
\usepackage{subfigure}
\usepackage{float}
\usepackage{multirow}
\usepackage{makecell}
\usepackage{booktabs}
\usepackage{tikz}
\usepackage{ytableau}
\usepackage{xcolor}
\usepackage{enumerate}
\usepackage[hidelinks]{hyperref}
\usepackage[capitalize,nameinlink]{cleveref}

\newcommand{\Z}{\mathbb Z}
\newcommand{\R}{\mathbb R}

\renewcommand{\Re}{\operatorname{Re}}
\renewcommand{\Im}{\operatorname{Im}}
\renewcommand{\i}{\mathrm{i}}

\DeclareMathOperator{\Wr}{Wr}

\newcounter{subeq}

\theoremstyle{definition}

\newtheorem{definition}{Definition}[section]
\newtheorem{remark}{Remark}[section]

\newtheorem{lemma}{Lemma}[section]
\newtheorem{proposition}[definition]{Proposition}
\newtheorem{theorem}{Theorem}[section]
\newtheorem{conjecture}{Conjecture}
\newtheorem{example}{Example}[section]
\newtheorem{corollary}[definition]{Corollary}

\journal{xxx}

\begin{document}
\small
\begin{frontmatter}

\title{Zeros of the generalized Wronskian--Hermite polynomials}

\author[szu_ias,szu_sms]{Chengfa Wu\corref{cor1}}
\ead{cfwu@szu.edu.cn}
\author[szu_ias]{Guangxiong Zhang}

\cortext[cor1]{Corresponding author.}
\affiliation[szu_ias]{organization={Institute for Advanced Study, Shenzhen University},
            city={Shenzhen},
            postcode={518060},
            country={People's Republic of China}}
\affiliation[szu_sms]{organization={School of Mathematical Sciences, Shenzhen University},
            city={Shenzhen},
            postcode={518060},
            country={People's Republic of China}}

\begin{abstract}
In this paper, we study generalized Wronskian--Hermite (WH) polynomials associated with arithmetic-progression index sets.  We verify the conjecture that all nonzero roots are simple for three subclasses of these polynomials, which, in a natural sense, cover more than half of the relevant parameter range. We also provide an interpretation of the root multiplicity at \(z=0\) in terms of Young diagrams. In addition, we show that certain members of generalized WH polynomials provide representations of rational solutions of
the Noumi--Yamada systems, a family of higher-order Painlev\'e equations.
Finally, we apply our results to the large-parameter
asymptotic analysis of rogue wave patterns for the multi-component Hirota
equation.
\end{abstract}

\begin{keyword}
generalized Wronskian--Hermite polynomials \sep Gould--Hopper polynomials 
\sep strictly sign-regular matrices \sep Young diagrams \sep Painlev\'e equations \sep rogue waves
\end{keyword}

\end{frontmatter}

\setcounter{tocdepth}{2}
\tableofcontents

\section{Introduction}

For an integer \(m\ge2\), define the polynomials \( p_n^{[m]}(z) \) by the generating function
\begin{equation}\label{eq:p-generating-intro} \sum_{n=0}^{\infty}p_n^{[m]}(z)\epsilon^n = \exp(z\epsilon+\epsilon^m).
\end{equation} 
We adopt the convention that \(p_n^{[m]}(z)\equiv0\) for \(n<0\). These polynomials
are normalized versions of the Gould--Hopper polynomials \cite{GouldHopper1962}, which generalize the classical Hermite polynomials and have been extensively studied \cite{feigin2021quasi,vignat2013proof,chang2011gould,romdhane2008zeros}. When $m=2$, they reduce,  after a rescaling, to the classical Hermite polynomials.

Wronskians of Hermite polynomials constitute a classical and extensively studied family of special
polynomials. They arise in the theory of monodromy-free Schr\"odinger operators \cite{oblomkov1999monodromy}
and have been investigated from many perspectives, including their zeros, recurrence relations, coefficient formulae, and irreducibility properties \cite{felder2026harmonic,felder2012zeros,bonneux2018recurrence,
bonneux2020coefficients,grosu2021irreducibility,gomez2021complete}. It is therefore natural to form Wronskian determinants from the generalized Hermite polynomials defined in \eqref{eq:p-generating-intro} \cite{zhang2022rogue,yang2023rogue,bonneux2020coefficients,bonneux2020asymptotic,grosu2021expansion}. This gives rise to the
corresponding generalized Wronskian--Hermite polynomials. In this paper,
we focus on the subfamily whose index sets are arithmetic progressions.

For integers \(N\ge1\), \(k\ge2\), and \(1\le l\le k-1\), set
\begin{equation}\label{eq:nodes-intro}
    n_i=l+(i-1)k,\qquad i=1,\ldots,N,
\end{equation}
and define the generalized Wronskian--Hermite polynomial
with jump \(k\) by
\begin{equation}\label{eqn:Wronskian-Hermite-poly-with-jump-k}
    W_N^{[m,k,l]}(z)
    =
    c_N^{[m,k,l]}
    \Wr \!\left[
        p_{n_1}^{[m]}(z),p_{n_2}^{[m]}(z),\ldots,p_{n_N}^{[m]}(z)
    \right],
    \qquad
    c_N^{[m,k,l]}
    =
    \frac{\prod_{i=1}^{N}n_i!}
         {\prod_{1\le i<j\le N}(n_j-n_i)}.
\end{equation}
For \(N=0\), we set $W_0^{[m,k,l]}(z)=1.$ For brevity, we refer to \(W_N^{[m,k,l]}(z)\) simply as a generalized
Wronskian--Hermite polynomial.
For \(m=2\), the generalized Wronskian--Hermite polynomial \(W_N^{[m,k,l]}(z)\) reduces to
Wronskian--Hermite polynomials with arithmetic index sets.  For suitable choices of \(m,k,l\), it also recovers, up to rescalings, several
polynomial hierarchies related to the Painlev\'e equations. In particular, for $s \geq 1$,
\(W_N^{[2s+1,2,1]}(z)\) is related to the Yablonskii--Vorob'ev hierarchy, which
can be used to express rational solutions of the second Painlev\'e equation and its hierarchy, and has been widely investigated
\cite{clarkson2003second,fukutani2000special,yang2023rogue,bertola2024exactly,balogh2016hankel,bertola2015zeros,BuckinghamMiller2014PII,BuckinghamMiller2015PIICritical}. When \(k=3\), \(W_N^{[m,3,1]}(z)\) and \(W_N^{[m,3,2]}(z)\) are, respectively, constant multiples
of the generalized Okamoto polynomial hierarchies \(Q_N^{[m]}(z)\) and
\(R_N^{[m]}(z)\). The classical Okamoto polynomials are associated
with rational solutions of the fourth Painlev\'e equation, which have been extensively studied
\cite{Okamoto1986StudiesPainleveIII,kametaka1983poles,noumi1999symmetries,clarkson2003fourth,yang2023rogue,roffelsen2025real}.
The generalized Wronskian–Hermite polynomials also appear in the asymptotic description of
rogue wave patterns for integrable equations, including the nonlinear
Schr\"odinger (NLS) equation, the derivative NLS equation, and their
multi-component generalizations
\cite{yang2021rogue,yang2023rogue,zhang2022rogue,lin2024rogue}.

Among many problems concerning generalized
Wronskian--Hermite polynomials, a particularly important one concerns the
simplicity of their nonzero roots. This problem arose naturally in our
previous studies of rogue wave patterns for integrable equations and led us
to formulate the following conjecture
\cite{yang2023rogue,zhang2022rogue}.

\begin{conjecture}\label{conjecture:nonzero simple of WH}
All nonzero roots of the generalized Wronskian--Hermite polynomial \(W_N^{[m,k,l]}(z)\) are simple for 
\begin{equation}
    N\ge1,\qquad m\ge2,\qquad k\ge2,\qquad 1\le l\le k-1.
\end{equation}
\end{conjecture}

In the 1990s, Veselov conjectured that all nonzero roots of the Wronskian--Hermite polynomials ($m=2$) associated with arbitrary index sets are simple; this conjecture was later recorded in \cite{felder2012zeros}, where the zero distributions of these polynomials were related to Young diagrams.
In contrast to the classical Hermite case,
the analogous statement for generalized Wronskian--Hermite polynomials
fails for arbitrary index sets (see Remark \ref{remark:conjecture not true roe arbitrary m}). This explains why this conjecture is formulated only for the arithmetic-progression index sets \eqref{eq:nodes-intro}.

To the best of our knowledge, only a few special cases of
Conjecture~\ref{conjecture:nonzero simple of WH} have been established. When
\(N=1\), \(W_1^{[m,k,l]}(z)\) is a nonzero constant multiple of the corresponding generalized Hermite polynomial, and Conjecture \ref{conjecture:nonzero simple of WH} follows from the results on
\(d\)-symmetric \(d\)-orthogonal polynomials \cite{romdhane2008zeros}. For
\(N=2\) and \(m=2\), it was proved that the
nonzero roots of Wronskians of two Hermite polynomials are simple
\cite{garcia2015oscillation}. Two Painlev\'{e}-related subfamilies provide further known cases. In particular, the case \(k=2\) and \(m=3\) coincides, up to an
elementary rescaling, with the Yablonskii--Vorob'ev polynomials, all of whose roots are
simple \cite{fukutani2000special}. Similarly, the case \(k=3\) and \(m=2\)
reduces to the classical Okamoto polynomials, for which root simplicity was
proved in \cite{kametaka1983poles,fukutani2000special}.

Despite these partial results,
Conjecture~\ref{conjecture:nonzero simple of WH} remains open in general.
The main result of this paper is to establish the conjecture for three classes of generalized Wronskian--Hermite polynomials.

\begin{theorem}\label{thm:main-intro}
Let \(n_i=l+(i-1)k\), \(i=1,\ldots,N\), and let
\(W_N^{[m,k,l]}(z)\) be defined by \eqref{eqn:Wronskian-Hermite-poly-with-jump-k}.
Then every nonzero root of \(W_N^{[m,k,l]}(z)\) is simple in the following cases:
\begin{enumerate}[(i)]
    \item \(N=2\) and \(m>n_1\);
    \item \(N\ge3\) and \(m>n_N/2\);
    \item \(k=4\) and \(m=2\).
\end{enumerate}
\end{theorem}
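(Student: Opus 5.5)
The plan is to treat the three cases separately, using two different mechanisms: a total-positivity argument for (i) and (ii), and a Painlevé-type argument for (iii).

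\emph{Cases (i) and (ii).} The starting point is the explicit form of the generalized Hermite polynomials,
\[
p_n^{[m]}(z)=\sum_{j=0}^{\lfloor n/m\rfloor}\frac{z^{n-mj}}{(n-mj)!\,j!}.
\]
When $m$ is large compared with the indices, each $p_{n_i}^{[m]}$ and each of its derivatives has very few terms. If $m>n_N$, then every $p_{n_i}^{[m]}$ is a monomial and the Wronskian is a monomial, so there are no nonzero roots at all. The real content therefore lies in the regime $m>n_N/2$ (or $m>n_1$ when $N=2$). In this regime every $p_{n_i}^{[m]}$ has at most two terms, $z^{n_i}/n_i!+[n_i\ge m]\,z^{n_i-m}/(n_i-m)!$.

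1. Expand the Wronskian with the Cauchy--Binet formula applied to the matrix of derivatives. This writes $W_N^{[m,k,l]}(z)$ as $z^{d}$ times a polynomial in $u=z^m$, with coefficients that are sums of products of Vandermonde-type minors. Because the indices form an arithmetic progression, these coefficients should collapse to products of binomial-type factors.
2. The goal is to show that the resulting polynomial $P(u)$ has only simple roots. I would aim to prove that all its roots are real and negative (or all positive) and simple. The route is to show that $P(u)$ is the characteristic-type polynomial of a strictly sign-regular (totally positive) matrix, equivalently that its coefficient sequence is a Pólya frequency sequence whose Toeplitz minors are strictly positive. Strict sign-regularity gives distinct real roots.
3. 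The roots in $z$ are then $z=u^{1/m}$. Distinct nonzero values of $u$ give distinct $m$-th roots, and $dz/du\ne0$ away from $0$, so every nonzero root in $z$ is simple.

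For $N=2$ and $m>n_1$, $p_{n_1}$ is a monomial. The Wronskian then reduces to a short explicit expression, $z^{n_1-1}$ times a polynomial in $z^m$ with few terms, and the same real-negative-root argument applies directly.

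\emph{Case (iii).} For $k=4$ and $m=2$ the polynomials are Wronskian--Hermite polynomials indexed by $4$-cores or quotients. I would relate them, via the Noumi--Yamada systems of the paper (the $A_3^{(1)}$ case, i.e.\ $P_V$/$P_{IV}$-type symmetric forms), to a Toda-type bilinear recursion of the form
\[
W_{N+1}W_{N-1}=c\bigl(W_NW_N''-(W_N')^2\bigr)+\dots
\]
together with a second bilinear equation. The standard Fukutani--Okamoto--Umemura argument then applies. Suppose $z_0\neq0$ is a multiple root of $W_N$. Evaluating the bilinear equations at $z_0$ forces $z_0$ to be a root of the neighbouring polynomials $W_{N\pm1}$ with controlled multiplicity. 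An induction down to $N=0$ then gives a contradiction. The first step is to check coprimality of consecutive members at nonzero points.

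\emph{Main obstacle.} I expect the hardest step to be the sign-regularity in step 2. Producing the right matrix and verifying that all its minors have a fixed sign requires a careful factorization of the Cauchy--Binet coefficients for arithmetic index sets, possibly through Lindström--Gessel--Viennot path counting.
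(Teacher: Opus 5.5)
\textbf{Cases (i) and (ii): the target in step 2 is false in general.} As stated, your route for these two cases cannot succeed.

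In case (ii), the nontrivial factor $P(u)$, $u=z^m$, has real simple roots of \emph{alternating} sign when ordered by modulus. Already for $N=3$, $k=2$, $l=1$, $m=3$ one finds $W_3^{[3,2,1]}(z)=z^6-15z^3-45$, whose two roots in $u$ have opposite signs. Such a polynomial is not the characteristic polynomial of any totally positive matrix, since those have positive eigenvalues. Nor does $P(u)$ or $P(-u)$ have a P\'olya frequency coefficient sequence. The correct notion is strict sign-regularity, with $q\times q$ minors of sign $(-1)^{\binom{q}{2}}$; this is not equivalent to any PF property.

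In case (i), your premise that each $p_{n_i}^{[m]}$ has at most two terms is false. The condition $m>n_1$ only makes $p_{n_1}^{[m]}$ a monomial, while $n_2=l+k$ may exceed $m$ by any amount. Write $l+k=qm+r$ with $0\le r<m$. Then $W_2^{[m,k,l]}(z)\propto z^{l+r-1}P(z^m)$ with
\[
P(u)=\sum_{s=0}^{q}\frac{(k-ms)\,u^{q-s}}{(qm+r-ms)!\,s!},
\]
which has unbounded degree $q$. All its coefficients except the constant one are positive, and the constant one equals $(r-l)/(r!\,q!)$. So when $r<l$ it has exactly one positive root (e.g.\ $N=2$, $k=11$, $l=1$, $m=2$), and the ``real-negative-root'' argument fails. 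When $r\ge l$, asserting a PF coefficient sequence merely restates real-rootedness without supplying a mechanism.

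\textbf{What is missing.} In (ii), the missing piece is the matrix itself, which is the whole content of that case. The paper first sets aside $m\equiv0\pmod k$, where $W_N^{[m,k,l]}=z^\Gamma$. It then extracts $z^{n_i-j+1}/n_i!$ from the entries and writes the Wronskian matrix as $G+z^{-m}B$. Here $G_{ij}=\Phi_{j-1}(n_i)$ are falling factorials with $\det G=\prod_{i<j}(n_j-n_i)$, and $B$ is supported on the $r$ rows with $n_i\ge m$. Lagrange interpolation at the nodes gives $B=KG$ with $K$ block lower triangular, hence $W_N^{[m,k,l]}(z)=z^{\Gamma-rm}\det(z^mI_r+M)$ with $M_{ab}=\frac{\xi_a!}{y_a!}\ell_{i_0+b-1}(y_a)$ and $y_a=\xi_a-m$.

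The arithmetic progression enters exactly here. It shows that the number of nodes below $y_a$ increases by one with $a$, so $M$ has a checkerboard sign pattern. Consequently $\pm SMS$, with $S=\mathrm{diag}((-1)^{a-1})$, is the positive Cauchy-like matrix $\phi_a\psi_b/(\xi_b-y_a)$ with all $y_a<m\le\xi_b$. The Cauchy determinant formula gives strict sign-regularity, and Gantmacher--Krein gives real simple eigenvalues.

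In (i), the missing input is the zero structure of Gould--Hopper polynomials. In their normalization the factor is $(r-l)F+mXF'=kQ_q^{(r)}+mc_mQ_{q-1}^{(r)}$ with $F=Q_q^{(r)}$. The zeros of $Q_q^{(r)}$ and $Q_{q-1}^{(r)}$ are positive, simple and strictly interlacing, because they come from an $(m-1)$-symmetric $(m-1)$-orthogonal family with positive recurrence coefficients (Ben Romdhane). The paper then checks proper position and applies Obreschkoff's theorem. Given that input, Rolle's theorem for $X^{(r-l)/m}F(X)$ on $(0,\infty)$, plus a sign check at $X=0$ when $r<l$, would also suffice.

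\textbf{Case (iii).} Your outline matches the paper's: a Toda-type relation together with $(D_z^2-\frac{z}{2}D_z-\frac{l+3N}{2})W_N\cdot W_{N+1}=0$. Two corrections are needed.
\begin{enumerate}
\item The Toda relation does not come from the Noumi--Yamada bilinear forms. The paper derives it directly from the Jacobi identity and a column identity expressing $(n+1)(n+2)(n+3)(n+4)p_{n+4}^{[2]}+16p_{n-4}^{[2]}$ as a first-order differential operator applied to $p_n^{[2]}$.
\item Its coefficient of $W_NW_N''-(W_N')^2$ is $8z$, not a constant, and this factor is why only nonzero roots are controlled. A multiple root $z_0\ne0$ of $W_{N+1}$ is a root of $W_N$ by the Hirota equation. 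The Toda relation at $z_0$ then reduces to $-8z_0W_N'(z_0)^2=0$, so $z_0$ is a multiple root of $W_N$.
\end{enumerate}
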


\begin{remark}\label{rem:more-than-half}
Theorem~\ref{thm:main-intro} covers, in a natural counting sense, more than
half of the relevant parameter range. Indeed, for a fixed
arithmetic-progression index set, it suffices to consider
\(2\le m\le n_N\), since \(W_N^{[m,k,l]}(z)\) is a monomial whenever \(m>n_N\) (see Lemma \ref{lem:monomial-cases}). When \(N=2\), case~(i) covers \(k=n_2-n_1\) of the \(n_2-1\)
integers in this range. Since \(n_1=l\le k-1\), we have
\[
    k>\frac{n_2-1}{2}.
\]
When \(N\ge3\), case~(ii) covers
\[
    \left\lceil\frac{n_N}{2}\right\rceil >
    \frac{n_N-1}{2}
\]
of the \(n_N-1\) integers in this range.
\end{remark}

The elementary cases (see Lemma \ref{lem:monomial-cases}) in which \(W_N^{[m,k,l]}(z)\) is a monomial are immediate, so we focus on the nontrivial cases. 
We briefly outline the proof of Theorem~\ref{thm:main-intro}. For the first case,  the introduction of the Gould--Hopper polynomials reduces the problem
to showing that a certain linear combination of two strictly hyperbolic
polynomials with strictly interlacing zeros is itself strictly hyperbolic.
The required hyperbolicity and interlacing properties of the two polynomials follow from the theory of \(d\)-symmetric \(d\)-orthogonal
polynomials \cite{romdhane2008zeros}, while the strict hyperbolicity of their
linear combination is then guaranteed by Obreschkoff's theorem
\cite{obreschkoff1963verteilung,borcea2009polya}.
In the second (nontrivial) case, a two-term determinant reduction expresses
the factor containing all nonzero roots of
\(W_N^{[m,k,l]}(z)\) in the form
\[
\det(z^m I_r+M),
\]
where $1 \leq r \leq N-1$ and \(M\) is a diagonal scaling of a Cauchy-type matrix. The desired zero distribution then follows from the
Gantmacher--Krein spectral theory and properties of strictly sign-regular
matrices
\cite{GantmacherKrein2002,AlseidiMargaliotGarloff2019}.
In the third case,
where \(m=2\) and \(k=4\), we derive explicit recurrence relations and
bilinear equations satisfied by these polynomials. These identities provide
the inductive mechanism required to prove the conjecture for all \(N\) in this
case.

The rest of this paper is organized as follows.
Section~\ref{sec:preliminary} collects some preliminary results used in the
proof of Theorem~\ref{thm:main-intro}, while Section~\ref{sec:proof of main results} proves Theorem \ref{thm:main-intro}.
More precisely, Section~\ref{sec:proof of main results} is divided into three
subsections, corresponding respectively to the three cases of the theorem. For the first two cases, the zero distributions are also
investigated. In Section~\ref{sec:WH with partition}, we further study the
multiplicities of $z=0$ as a root of the generalized Wronskian--Hermite polynomials and
give an interpretation of this multiplicity from the viewpoint of partitions.
Section~\ref{sec:WH with Painleve} is devoted to showing that \(W_N^{[2,k,l]}(z)\) can be used to represent rational solutions of higher-order Painlev\'e equations introduced by Noumi and Yamada \cite{noumi1998affine,noumi1999symmetries}. Finally, Section~\ref{sec:WH with application} applies our results to the large-parameter asymptotic analysis of rogue wave patterns for the
multi-component Hirota equation, in which the root-simplicity results established in this paper play an essential role.

\section{Notations and preliminaries} \label{sec:preliminary}
\subsection{Notations}
Throughout this paper, \(\mathbb Z\), \(\mathbb R\), and
\(\mathbb C\) denote the sets of integers, real numbers, and complex
 numbers, respectively. 
For \(a,r\in\mathbb Z\) and \(k\in\mathbb Z_{>0}\), we use the notations
\begin{eqnarray*}
&&    \mathbb Z_{<a}
    :=
    \{q\in\mathbb Z:q<a\}, \quad \mathbb Z_{>a}
    :=
    \{q\in\mathbb Z:q>a\},
\\
&&
\mathbb Z_{\leq a}
    :=
    \{q\in\mathbb Z:q\leq a\}, \quad \mathbb Z_{\geq a}
    :=
    \{q\in\mathbb Z:q \geq a\},
    \qquad
    k\mathbb Z_{<a}+r
    :=
    \{kq+r:q\in\mathbb Z,\ q<a\}.
\end{eqnarray*}
For a finite set \(A\), we denote its cardinality by \(\#A\).
For \(x\in\mathbb R\), \(\lfloor x\rfloor\) denotes the greatest
integer less than or equal to \(x\), and
\(\operatorname{sgn}(x)\) denotes the sign of \(x\).
For a finite vector
\(\mathbf a=(a_1,\ldots,a_n)\), we write
\begin{equation}
    |\mathbf a|
    :=
    \sum_{j=1}^{n}a_j.
\end{equation}
When the ambient dimension is clear, \(\mathbf 0\), \(\mathbf 1\),
and \(\mathbf e_j\) denote, respectively, the zero vector, the
all-ones vector, and the \(j\)-th standard basis vector. Moreover,
\(I_n\) denotes the \(n\times n\) identity matrix.

For sufficiently differentiable functions
\(f_1,\ldots,f_N\) of \(z\), their Wronskian is defined by
\begin{equation*}
    \operatorname{Wr}[f_1,\ldots,f_N](z)
    :=
    \det_{1\le i,j\le N}\left(
        \frac{d^{\,i-1}f_j(z)}{dz^{\,i-1}}
    \right).
\end{equation*}
We adopt the conventions
\begin{equation*}
    \operatorname{Wr}[\varnothing]=1,
    \qquad
    \sum_{j\in\varnothing}a_j=0,
    \qquad
    \prod_{j\in\varnothing}a_j=1.
\end{equation*}
For sufficiently differentiable functions \(f\) and \(g\),
Hirota's bilinear operator is defined by
\cite{hirota2004direct}
\begin{equation}
D_x^m D_t^n f\cdot g
=
\left.
\left(
\frac{\partial}{\partial x}
-
\frac{\partial}{\partial x'}
\right)^m
\left(
\frac{\partial}{\partial t}
-
\frac{\partial}{\partial t'}
\right)^n
\left[
f(x,t)g(x',t')
\right]
\right|_{x'=x,\;t'=t}.
\end{equation}

\subsection{Hyperbolic polynomials and Descartes' rule of signs}

\begin{definition}\label{def:hyperbolic}
A nonzero polynomial $P(z) \in \mathbb{R}[z]$ is said to be \emph{hyperbolic} if all of its roots are real. Furthermore, it is called \emph{strictly hyperbolic} if all of its roots are real and simple.
\end{definition}

\begin{definition}\label{def:proper-position}
For two hyperbolic polynomials $f(z), g(z) \in \mathbb{R}[z]$, we say that $f$ and $g$ are in \emph{proper position} and write $f \ll g$, if their roots interlace and their Wronskian satisfies
\begin{equation}
    \operatorname{Wr}[f,g](z) = f(z)g'(z) - f'(z)g(z) \ge 0 \qquad \text{for all } z \in \mathbb{R}.
\end{equation}
We also note the convention that the roots of the zero polynomial interlace the
roots of any nonzero hyperbolic polynomial, and write \(0\ll f\) and \(f\ll0\).
\end{definition}

\begin{theorem}[Obreschkoff's theorem, {\cite{obreschkoff1963verteilung,borcea2009polya}}]\label{thm:obreschkoff}
Let \(f(z),g(z)\in\R[z]\).  Then 
\(\alpha f(z)+\beta g(z)\) for all \(\alpha,\beta\in\R\) is hyperbolic or identically zero
if and only if either \(f\ll g\), or \(g\ll f\), or \(f=g\equiv0\).
Moreover, \(\alpha f(z)+\beta g(z)\) is strictly hyperbolic for all \(\alpha,\beta\in\R\) with  \(\alpha^2+\beta^2\not=0\) if and
only if \(f(z)\) and \(g(z)\) are strictly hyperbolic with no common roots and
either \(f\ll g\) or \(g\ll f\).
\end{theorem}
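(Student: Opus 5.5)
Since Obreschkoff's theorem is classical, the plan is to reprove it via the Hermite--Biehler/Hermite--Kakeya correspondence between real-rootedness of the pencil $\alpha f+\beta g$ and the mapping properties of the rational function $g/f$ on the upper half-plane $\mathbb H=\{\Im z>0\}$. Throughout, I would first strip off the trivial cases ($f\equiv0$ or $g\equiv0$, or $f,g$ proportional), and factor out a common real factor $h=\gcd(f,g)$. Every $\alpha f+\beta g$ contains $h$, so $h$ must be hyperbolic in either direction. This reduces matters to coprime $f,g$.

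\emph{Necessity.} Assume every nonzero $\alpha f+\beta g$ is hyperbolic. Then for $z\in\mathbb H$ the value $g(z)/f(z)$ can never be real. Indeed, if $g(z)/f(z)=t\in\R$, then $tf-g$ would have the nonreal root $z$; if instead $f(z)=0$, then $f$ itself has a nonreal root. Hence $\phi=g/f$ is a holomorphic map from the connected set $\mathbb H$ into $\mathbb C\setminus\R$. It therefore lands in a single half-plane, say $\Im\phi>0$ (otherwise swap the roles of $f$ and $g$).

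Letting $z\to x\in\R$ along the normal direction, $\Im\phi(x+\i\varepsilon)\approx\varepsilon\,\phi'(x)$. This gives $\phi'(x)=\operatorname{Wr}[f,g](x)/f(x)^2\ge0$ away from the roots of $f$, so $\operatorname{Wr}[f,g]\ge0$ on $\R$. Consequently $\phi$ is nondecreasing on each interval between consecutive real poles. At a pole, the condition $\Im\phi>0$ near the pole forces the pole to be simple with negative residue. A monotone function with simple poles of this type takes each real value exactly once between consecutive poles. This yields strict interlacing of the roots of $g$ and of $f$, hence $f\ll g$.

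\emph{Sufficiency.} Assume $f\ll g$ with $f,g$ coprime. Interlacing implies $\deg f$ and $\deg g$ differ by at most one, and the roots $x_j$ of $f$ are simple. Partial fractions give
\[
\frac{g(z)}{f(z)}=az+b+\sum_j\frac{c_j}{z-x_j}.
\]
Here $c_j=g(x_j)/f'(x_j)$, and $\operatorname{Wr}[f,g](x_j)=-f'(x_j)g(x_j)\ge0$ forces $c_j\le0$ (in fact $<0$ by coprimality). Similarly, the sign of the leading terms together with the Wronskian condition forces $a\ge0$. Then
\[
\Im\frac{g}{f}(z)=\Im z\Bigl(a+\sum_j\frac{-c_j}{|z-x_j|^2}\Bigr)>0\quad\text{on }\mathbb H,
\]
unless $g/f$ is constant. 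Hence $\alpha f+\beta g$, which equals $f\,(\alpha+\beta g/f)$, has no roots in $\mathbb H$, and by conjugation none in the lower half-plane. Restoring $h$ gives the first statement.

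\emph{Strict version.}
\begin{itemize}
\item If $f,g$ have a common root, or if one of them has a multiple root, some nonzero combination ($f$ or $g$ itself) fails to be strictly hyperbolic.
\item Conversely, a double root $x$ of $\alpha f+\beta g$ gives $\alpha f(x)+\beta g(x)=0=\alpha f'(x)+\beta g'(x)$, hence $\operatorname{Wr}[f,g](x)=0$. In the coprime strictly interlacing case this is excluded, because the computation above gives $\phi'(x)>0$ wherever $f(x)\neq0$, and $\operatorname{Wr}[f,g](x_j)=-f'(x_j)g(x_j)\ne0$ at the roots of $f$.
\end{itemize}

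\emph{Main obstacle.} I expect the main obstacle to be the boundary analysis in the necessity direction: passing from $\Im\phi>0$ on $\mathbb H$ to interlacing and the Wronskian sign. This needs careful treatment of poles on $\R$ and of the behaviour at infinity, where the degrees must differ by at most one. I would handle it by invoking the Nevanlinna--Pick representation of maps $\mathbb H\to\mathbb H$ that are rational and real on $\R$, which directly yields the partial-fraction form with $a\ge0$ and $c_j\le0$.
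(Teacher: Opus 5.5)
The paper gives no proof of this theorem. It is quoted from Obreschkoff and Borcea--Br\"and\'en and used as a black box, and only the ``if'' half of the strict statement is ever invoked, in the proof of Proposition~\ref{prop:GH-Wronskian}.

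Your proposal supplies the classical self-contained argument, which identifies proper position of coprime $f,g$ with $g/f$ mapping the upper half-plane into itself. It is sound in substance. The local analysis at real poles of $\phi=g/f$, together with the behaviour at infinity, gives $\phi(z)=az+b+\sum_j c_j/(z-x_j)$ with $a\ge0$ and $c_j<0$. The same analysis applied to $-f/g$ at real zeros is also needed, to get simplicity of the roots of $g$, which your ``takes each value exactly once'' does not by itself give. From this partial-fraction form everything follows: interlacing, the sign of $\operatorname{Wr}[f,g]$, the absence of nonreal roots in the pencil, and, via $\phi'>0$, the absence of real double roots.

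Beyond self-containment, this is useful for the paper itself. Your sufficiency step is exactly the partial-fraction computation already carried out there for $G/F=\sum_i A_i/(X-a_i)$ with $A_i>0$. That proof can therefore be closed directly, without appealing to Obreschkoff: $\Im(G/F)<0$ on the upper half-plane excludes nonreal roots of $kF+mc_mG$, and $\operatorname{Wr}[G,F]>0$ on $\R$ excludes real double roots.

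Two small repairs are needed, plus one caveat about the statement.

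\emph{The common-root step in the strict part.} A common root $r$ of two strictly hyperbolic polynomials does not make $f$ or $g$ itself fail, contrary to your parenthetical. Use instead the combination $g'(r)f-f'(r)g$, which is nontrivial since $f'(r)\neq0$. It vanishes to order at least two at $r$ or vanishes identically, so it is not strictly hyperbolic.

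\emph{The gcd reduction.} It tacitly uses that interlacing survives cancelling or restoring a common real-rooted factor $h$. This is true: in the merged alternating list of roots, a block of equal values contains two adjacent entries of opposite type, and these can be deleted or inserted without destroying alternation. It must be stated, though, because your claim that the roots of $f/h$ are simple rests on it.

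\emph{The caveat.} The ``if'' direction of the strict statement is false as written when $f,g$ are nonzero constants. For example, $f=g=1$ with $\alpha=-\beta$ gives the zero polynomial, which is not hyperbolic. This is precisely the case your proviso ``unless $g/f$ is constant'' sets aside. So your proof establishes the strict part under the additional, necessary hypothesis that $f$ and $g$ are linearly independent.
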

For a multivariate extension of this theorem, see  \cite[Theorem~2.9]{borcea2009lee}.

\begin{theorem}[Descartes' rule of signs]
\label{thm:descartes-rule-of-signs}
Let $P(z)\in\mathbb R[z]\setminus\{0\}$ be 
\begin{equation}
    P(z)=a_Nz^N+a_{N-1}z^{N-1}+\cdots+a_0, \qquad a_N\neq0.
\end{equation}
Let \(V(P)\) be the number of sign variations in the coefficient sequence $a_N,a_{N-1},\ldots,a_0,$ after all zero entries are removed. Then the number of real positive roots of
\(P(z)\), counted with multiplicity, is at most \(V(P)\). Moreover, the number of real negative roots of \(P(z)\), counted with multiplicity, is at most
\(V(P(-z))\).
\end{theorem}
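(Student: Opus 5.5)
We prove the bound for positive roots by induction on \(N=\deg P\), combining Rolle's theorem with a parity argument; the statement for negative roots then follows by applying the positive case to \(P(-z)\), whose positive roots are exactly the negatives of the negative roots of \(P\), with the same multiplicities. Write \(Z(P)\) for the number of positive roots of \(P\) counted with multiplicity.

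First we reduce to \(a_0\neq0\). If \(a_0=\dots=a_{j-1}=0\) and \(a_j\neq0\), then \(P(z)=z^jQ(z)\) with \(Q(0)=a_j\neq0\). The positive roots of \(P\) and \(Q\) coincide with the same multiplicities. The coefficient sequence of \(Q\) is that of \(P\) with trailing zeros removed, so \(V(Q)=V(P)\). If the reduced polynomial has degree \(0\), it has no roots, which settles the base case.

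Next comes the parity step. Suppose \(a_0\neq0\), so \(P(0)=a_0\neq0\). As \(z\to+\infty\), \(P(z)\) has the sign of \(a_N\). At a positive root of odd multiplicity \(P\) changes sign, and at one of even multiplicity it does not. Hence \(Z(P)\) is even if and only if \(a_Na_0>0\). The nonzero coefficient sequence runs from \(a_N\) to \(a_0\), so \(V(P)\) is even exactly when \(a_Na_0>0\). Therefore
\[
Z(P)\equiv V(P)\pmod 2 .
\]

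For the inductive step, assume the claim holds for all polynomials of degree \(N-1\) and consider \(P'\). Its coefficients are \(ja_j\), \(j=1,\dots,N\), which have the same signs as \(a_1,\dots,a_N\). Thus the coefficient sequence of \(P'\) is that of \(P\) with \(a_0\) deleted, and so \(V(P')\le V(P)\).

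We also need \(Z(P')\ge Z(P)-1\). Let the distinct positive roots of \(P\) be \(r_1<\dots<r_s\) with multiplicities \(\mu_1,\dots,\mu_s\). Each \(r_i\) is a root of \(P'\) of multiplicity \(\mu_i-1\). By Rolle's theorem, \(P'\) has an additional root in each open interval \((r_i,r_{i+1})\). This gives
\[
Z(P')\ge\sum_{i=1}^{s}(\mu_i-1)+(s-1)=Z(P)-1 .
\]

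Combining these with the induction hypothesis applied to \(P'\) yields
\[
Z(P)\le Z(P')+1\le V(P')+1\le V(P)+1 .
\]
By the parity step, \(Z(P)\) and \(V(P)\) have the same parity, so \(Z(P)=V(P)+1\) is impossible, and hence \(Z(P)\le V(P)\). The negative-root statement is the positive case applied to \(P(-z)\).

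The only delicate point is the parity step, which is what improves the naive bound \(V(P)+1\) to \(V(P)\). It requires \(a_0\neq0\), which the initial reduction guarantees, and it must count roots with multiplicity. The Rolle count likewise has to include the multiplicities \(\mu_i-1\) inherited by \(P'\).
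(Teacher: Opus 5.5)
Your proof is correct. The paper gives no proof of this statement. It quotes Descartes' rule as a classical tool in Section~\ref{sec:preliminary} and uses it only in the proof of Proposition~\ref{prop:GH-Wr-sign-distribution}. There it is combined with the already-established fact that $R_q^{(r,l,k)}$ has $q$ real simple roots, so only the upper bounds are needed, not any parity refinement.

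What you supply is the standard self-contained argument. Rolle's theorem, with the inherited multiplicities $\mu_i-1$, gives $Z(P)\le Z(P')+1\le V(P)+1$. The sign comparison between $0^+$ and $+\infty$ gives $Z(P)\equiv V(P)\pmod 2$ once $a_0\neq0$, which removes the extra $1$. As a by-product you get the sharper classical form: $V(P)-Z(P)$ is a nonnegative even integer. This transfers to the case $a_0=0$ because $Z(z^jQ)=Z(Q)$ and $V(z^jQ)=V(Q)$.

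One presentational point. The reduction $P=z^jQ$ can lower the degree by more than one. So the induction hypothesis should cover all polynomials of degree less than $N$ (strong induction), not only those of degree exactly $N-1$. With that wording every step goes through as written.
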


\subsection{Multiple orthogonal polynomials}

Multiple orthogonal polynomials are a generalization of orthogonal polynomials and satisfy orthogonality conditions with respect to several measures
\cite{kuijlaars2010multiple,aptekarev1998multiple,VanAsscheCoussement2001}. They originate from Hermite--Pad\'e
approximation and appear in the study of number theory, random matrix theory and integrable systems \cite{bleher2011random,aptekarev1998multiple,kuijlaars2010multiple}.
There are two types of multiple orthogonal polynomials, called type I
and type II. In this paper, we are concerned with those of type~II. Let \(w_1,\ldots,w_d\) be a finite collection of weight
functions on \(\mathbb R\), and let
\[
    \mathbf n=(n_1,\ldots,n_d)\in\mathbb Z_{\geq 0}^d,
    \qquad
    |\mathbf n|=n_1+\cdots+n_d.
\]
A type II multiple orthogonal polynomial associated with these data is a monic polynomial \(P_{\mathbf n}\) of
degree \(|\mathbf n|\) satisfying the orthogonality conditions
\begin{equation}\label{eq:type-II-MOP}
    \int_{-\infty}^{\infty}
    P_{\mathbf n}(x)x^j w_k(x)\,\mathrm{d}x=0,
    \qquad
    j=0,\ldots,n_k-1,\quad k=1,\ldots,d.
\end{equation}
When all the multi-indices are taken along the so-called stepline, the
corresponding type II multiple orthogonal polynomials form a
\(d\)-orthogonal polynomial sequence, also known as a sequence of vector
orthogonal polynomials
\cite{VanIseghem1987,filipuk2015computing,sokal2024multiple}.

\begin{definition}[\cite{VanIseghem1987,Maroni1989}]
Let \(u_0,u_1,\ldots,u_{d-1}\) be \(d\) linear functionals, and let
\(\{P_n\}_{n\geq 0}\) be a polynomial set over $\mathbb{C}$. We say that
\(\{P_n\}_{n\geq 0}\) is a \(d\)-orthogonal polynomial set (\(d\)-OPS) with respect to the $d$-dimensional functional
\begin{equation}
    \mathcal U={}^{t}(u_0,u_1,\ldots,u_{d-1}),
\end{equation}
if, for each \(j=0,1,\ldots,d-1\), it fulfills
\begin{equation}
\begin{cases}
    \langle u_j, P_m P_n\rangle=0,
    \qquad \text{if} \quad m>nd+j,\quad n\geq 0,\\
    \langle u_j, P_n P_{nd+j}\rangle\neq 0,
    \qquad n\geq 0.
\end{cases} 
\end{equation}
The case \(d=1\) is precisely the classical orthogonality condition.
\end{definition}

\begin{definition}
    A polynomial set \(\{P_n\}_{n\geq 0}\) over $\mathbb{C}$ is called \(d\)-symmetric if
\begin{equation}
    P_n(\omega_d z)=\omega_d^n P_n(z), 
    \qquad
    \omega_d=\exp\left(\frac{2\pi \mathrm{i}}{d+1}\right).
\end{equation}
\end{definition}

\begin{lemma}[{\cite{DouakMaroni1992,romdhane2008zeros}}] \label{lem:d-symmetric-recurrence} 
Let \(\{P_n\}_{n\geq 0}\) be a \(d\)-orthogonal polynomial set. Then the following statements are equivalent: 
\begin{enumerate} 
\item[(a)] \(\{P_n\}_{n\geq 0}\) is \(d\)-symmetric; 
\item[(b)] there exists a sequence \(\{\gamma_{n+1}\}_{n\geq 0}\)  with \(\gamma_{n+1}\neq 0\), such that 
\begin{equation} \label{eqn:d-sym-d-othognal,recurrence relations}
\begin{cases} P_{n+d+1}(z)=z P_{n+d}(z)-\gamma_{n+1}P_n(z), \qquad n\geq 0,\\ 
P_n(z)=z^n, \qquad 0\leq n\leq d. 
\end{cases} 
\end{equation} 
\end{enumerate} 
\end{lemma}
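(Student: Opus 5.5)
The plan is to reduce both directions to the standard recurrence characterization of $d$-orthogonality, due to Van Iseghem and Maroni \cite{VanIseghem1987,Maroni1989}. I will assume, as the statement implicitly does, that the polynomials are \emph{monic} with $\deg P_n=n$; a $d$-OPS is only determined up to normalization, and the form of \eqref{eqn:d-sym-d-othognal,recurrence relations} forces monicity.

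The characterization I will use is this. A monic polynomial set $\{P_n\}_{n\ge0}$ is a $d$-OPS if and only if it satisfies a recurrence of order $d+1$,
\begin{equation*}
zP_n(z)=P_{n+1}(z)+\sum_{\nu=0}^{d}\gamma^{\nu}_{n}P_{n-\nu}(z),\qquad n\ge0,
\end{equation*}
with the conventions $P_{-j}\equiv0$ and $\gamma^{d}_{n}\neq0$ for $n\ge d$. I will quote this rather than reprove it. If a proof were needed, the expansion of $zP_n$ in the basis $\{P_j\}$ follows by testing against the functionals $u_j$ and using the vanishing conditions. The nonvanishing of the last coefficient comes from the condition $\langle u_j,P_nP_{nd+j}\rangle\neq0$.

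\textbf{(a)$\Rightarrow$(b).} Write $\omega=\omega_d$. Replace $z$ by $\omega z$ in the recurrence and use $P_j(\omega z)=\omega^jP_j(z)$. Multiplying through by $\omega^{-(n+1)}$ gives
\begin{equation*}
zP_n=P_{n+1}+\sum_{\nu=0}^{d}\gamma^\nu_n\,\omega^{-(\nu+1)}P_{n-\nu}.
\end{equation*}
The expansion of $zP_n$ in the basis $\{P_j\}$ is unique, so comparing coefficients yields $\gamma^\nu_n(\omega^{-(\nu+1)}-1)=0$. For $0\le\nu\le d-1$ we have $1\le\nu+1\le d$, hence $\omega^{\nu+1}\neq1$ and therefore $\gamma^\nu_n=0$. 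This leaves
\begin{equation*}
zP_n=P_{n+1}+\gamma^d_nP_{n-d}.
\end{equation*}
For $n\le d-1$ the last term vanishes, so $P_{n+1}=zP_n$; together with $P_0=1$ this gives $P_n=z^n$ for $0\le n\le d$. Shifting $n\mapsto n+d$ and setting $\gamma_{n+1}:=\gamma^d_{n+d}\neq0$ produces exactly \eqref{eqn:d-sym-d-othognal,recurrence relations}.

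\textbf{(b)$\Rightarrow$(a).} Since $\{P_n\}$ is assumed $d$-orthogonal, only the symmetry needs checking, and I will do this by strong induction on $n$. The base cases $P_n=z^n$ for $0\le n\le d$ are immediate. For the inductive step, assume the symmetry for $P_n$ and $P_{n+d}$. Then
\begin{equation*}
P_{n+d+1}(\omega z)=\omega^{n+d+1}zP_{n+d}(z)-\gamma_{n+1}\omega^nP_n(z).
\end{equation*}
Since $\omega^{d+1}=1$, we have $\omega^n=\omega^{n+d+1}$, so the right-hand side equals $\omega^{n+d+1}P_{n+d+1}(z)$.

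The only genuinely nontrivial input is the recurrence characterization of $d$-orthogonality, in particular that the coefficient $\gamma^d_n$ is nonzero. I expect this to be the main obstacle, and I will handle it by citation. Everything else is bookkeeping with roots of unity.
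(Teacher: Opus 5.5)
Your proof is correct. The paper gives no argument of its own for this lemma and simply quotes it from Douak--Maroni and Ben Romdhane, where it is obtained essentially as you describe: inserting $P_n(\omega_d z)=\omega_d^nP_n(z)$ into the Maroni--Van Iseghem recurrence of order $d+1$ kills every coefficient except the one in front of $P_{n-d}$, and the converse follows by the same root-of-unity induction. You are also right that monicity must be assumed. It is implicit in the statement, because (b) forces every $P_n$ to be monic, whereas $d$-orthogonality and $d$-symmetry are both preserved when each $P_n$ is rescaled.
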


\begin{theorem}[{\cite[Theorem~2.2]{romdhane2008zeros}}]
\label{thm:ben-romdhane-zero-theorem}
Let \(\{P_n\}_{n\geq 0}\) be a \(d\)-orthogonal polynomial set satisfying \eqref{eqn:d-sym-d-othognal,recurrence relations} with $\gamma_{n+1}>0$.
Then, for each \(n\geq 1\) and \(j=0,1,\ldots,d\), the following statements hold.
\begin{itemize}
    \item[(a)] If \(z\) is a root of \(P_{n(d+1)+j}(z)\), then
    \(\omega_d^k z\), \(k=1,\ldots,d\), are also zeros of
    \(P_{n(d+1)+j}(z)\), where $\omega_d=\exp\left(2\pi\mathrm{i}/(d+1)\right)$.

    \item[(b)] The origin is a root of \(P_{n(d+1)+j}(z)\) of multiplicity \(j\).

    \item[(c)] The polynomial \(P_{n(d+1)+j}(z)\) has exactly \(n\) distinct positive zeros. We denote them by
    \begin{equation}
        0<x_{n,j;1}<x_{n,j;2}<\cdots<x_{n,j;n}.
    \end{equation}

    \item[(d)] The positive roots of
    \(P_{n(d+1)+j}(z)\), \(j=0,1,\ldots,d\), are interlaced as follows:
    \begin{equation}
    \label{eq:BR-same-level-interlacing}
        0
        <
        x_{n,0;1}
        <
        x_{n,1;1}
        <
        \cdots
        <
        x_{n,d;1}
        <
        x_{n,0;2}
        <
        \cdots
        <
        x_{n,d;2}
        <
        \cdots
        <
        x_{n,0;n}
        <
        \cdots
        <
        x_{n,d;n}.
    \end{equation}
\end{itemize}
\end{theorem}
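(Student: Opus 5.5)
The plan is to reduce everything to polynomials in \(x=z^{d+1}\) and then run an induction on \(n\), locating the new zeros by sign changes at the old ones.

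\textbf{Reduction and parts (a), (b).} Writing \(m=n(d+1)+j\), the initial data \(P_k(z)=z^k\) for \(0\le k\le d\) and the three-term recurrence of Lemma~\ref{lem:d-symmetric-recurrence} show by induction on \(m\) that
\[
P_{n(d+1)+j}(z)=z^{j}R_{n,j}(z^{d+1})
\]
for a real polynomial \(R_{n,j}\) of degree \(n\) with leading coefficient \(1\). Substituting this form into the recurrence gives, with \(\gamma:=\gamma_{n(d+1)+j+1}>0\) in each instance,
\begin{equation*}
R_{n+1,0}(x)=xR_{n,d}(x)-\gamma R_{n,0}(x),\qquad
R_{n+1,j}(x)=R_{n+1,j-1}(x)-\gamma R_{n,j}(x)\quad(1\le j\le d),
\end{equation*}
with \(R_{0,j}\equiv1\). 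From these relations and \(\gamma>0\), induction gives \(\operatorname{sgn}R_{n,j}(0)=(-1)^n\); in particular \(R_{n,j}(0)\neq0\). Hence part (b) holds: \(z=0\) is a root of exact multiplicity \(j\). Part (a) is immediate from the dependence on \(z^{d+1}\). Part (c) becomes the claim that \(R_{n,j}\) has \(n\) simple positive zeros \(x^{d+1}_{n,j;i}\). Since \(x\mapsto x^{1/(d+1)}\) is increasing on \((0,\infty)\), the interlacing (d) may be checked for the zeros of the \(R_{n,j}\) in the variable \(x\).

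\textbf{Inductive hypothesis and the step \(j=0\).} Let \(H_n\) state that each \(R_{n,j}\) has \(n\) simple positive zeros \(\xi_{n,j;1}<\dots<\xi_{n,j;n}\), ordered as in \eqref{eq:BR-same-level-interlacing}. I will strengthen \(H_n\) as needed during the proof with cross-level interlacings between the \(R_{n+1,j-1}\) and the \(R_{n,j}\). For the step \(j=0\), evaluate \(R_{n+1,0}\) at the zeros \(\xi_{n,d;i}\) of \(R_{n,d}\), where it equals \(-\gamma R_{n,0}(\xi_{n,d;i})\). By \(H_n\), exactly \(i\) zeros of \(R_{n,0}\) lie below \(\xi_{n,d;i}\), so \(\operatorname{sgn}R_{n+1,0}(\xi_{n,d;i})=(-1)^{n-i+1}\). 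Combining this with \(\operatorname{sgn}R_{n+1,0}(0)=(-1)^{n+1}\) and positivity at \(+\infty\), the intermediate value theorem places one zero in \((0,\xi_{n,d;1})\), one in each gap \((\xi_{n,d;i},\xi_{n,d;i+1})\), and one in \((\xi_{n,d;n},\infty)\). That accounts for \(n+1\) zeros, which is all of them since the degree is \(n+1\), and all are simple.

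\textbf{Steps \(j\ge1\) and the main obstacle.} For \(j\ge1\), I will use \(R_{n+1,j}=R_{n+1,j-1}-\gamma R_{n,j}\) at two sets of points. At the zeros of \(R_{n+1,j-1}\), the value is \(-\gamma R_{n,j}\). At the zeros of \(R_{n,j}\), the value is \(R_{n+1,j-1}\). Counting sign changes then pins exactly one zero of \(R_{n+1,j}\) in each interval between a zero of \(R_{n,j}\) and the next zero of \(R_{n+1,j-1}\). This yields both \(\xi_{n+1,j-1;i}<\xi_{n+1,j;i}\) and \(\xi_{n+1,j;i}<\xi_{n,j+1;i}\).

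The main obstacle is choosing the correct strengthened hypothesis so that each step supplies exactly what the next one needs. The positions of the zeros of \(R_{n,j}\) relative to those of \(R_{n+1,j-1}\) are not part of \eqref{eq:BR-same-level-interlacing}, so they must be added to \(H_n\). My first attempt is the chain
\[
\xi_{n+1,j-1;i}<\xi_{n,j;i}<\xi_{n+1,j-1;i+1},
\]
together with \(\xi_{n,d;i}<\xi_{n+1,0;i+1}\) obtained in the \(j=0\) step. I will then verify that this chain closes under both recurrences, tracking the boundary intervals near \(0\) and \(+\infty\) via the signs at \(x=0\).
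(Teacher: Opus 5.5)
Your reduction $P_{n(d+1)+j}(z)=z^jR_{n,j}(z^{d+1})$ is the component decomposition $P^{(j)}_n$ of Corollary~\ref{thm:defined-polynomials-roots}, and sign-counting induction is the right mechanism. The paper does not reprove the theorem; it cites \cite{romdhane2008zeros}. The remark after the theorem records the three relations that proof carries through the induction: $x_{n,d;s}<x_{n+1,0;s+1}$, $x_{n+1,j;s}<x_{n,j;s}$, and $x_{n,j;s}<x_{n,j+1;s}$.

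\textbf{First gap: the chain does not close at $j=1$.} Your $j=0$ step evaluates $R_{n+1,0}$ only at the zeros of $R_{n,d}$, so it gives only $\xi_{n,d;i-1}<\xi_{n+1,0;i}<\xi_{n,d;i}$. The $j=1$ step needs $\xi_{n+1,0;i}<\xi_{n,1;i}<\xi_{n+1,0;i+1}$. The right half follows from $\xi_{n,1;i}\le\xi_{n,d;i}$. For $d\ge2$, however, nothing you recorded stops $\xi_{n+1,0;i}$ from lying in $(\xi_{n,1;i},\xi_{n,d;i})$. In that case the zeros of $R_{n+1,0}$ and $R_{n,1}$ need not interlace, and the sign count for $R_{n+1,1}$ fails. (For $d=1$ the problem disappears, since $R_{n,1}=R_{n,d}$.)

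The missing ingredient is the second relation at $j=0$, namely $\xi_{n+1,0;i}<\xi_{n,0;i}$. To get it, also evaluate $R_{n+1,0}$ at the zeros of $R_{n,0}$, where it equals $xR_{n,d}(x)$. Since $\xi_{n,d;i-1}<\xi_{n,0;i}<\xi_{n,d;i}$, this value has sign $(-1)^{n-i+1}$. That is opposite to the sign $(-1)^{n-i}$ at $\xi_{n,d;i-1}$ (or at $0$ when $i=1$). Hence the unique zero of $R_{n+1,0}$ in $(\xi_{n,d;i-1},\xi_{n,d;i})$ lies below $\xi_{n,0;i}$, which gives $\xi_{n+1,0;i}<\xi_{n,0;i}<\xi_{n,1;i}$.

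\textbf{Second error: the interval in the $j\ge1$ step is reversed.} Under the chain, $R_{n+1,j}$ has sign $(-1)^{n-i}$ at $\xi_{n+1,j-1;i}$ and sign $(-1)^{n-i+1}$ at both $\xi_{n,j;i}$ and $\xi_{n+1,j-1;i+1}$. So there is no forced zero between a zero of $R_{n,j}$ and the next zero of $R_{n+1,j-1}$. Instead, the zeros lie in $(\xi_{n+1,j-1;i},\xi_{n,j;i})$ for $i\le n$, plus one in $(\xi_{n+1,j-1;n+1},\infty)$. The correct output is therefore
\[
\xi_{n+1,j-1;i}<\xi_{n+1,j;i}<\xi_{n,j;i}.
\]
You need it in this sharp form, not the weaker $<\xi_{n,j+1;i}$, which is undefined at $j=d$. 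At $j=d$ it closes \eqref{eq:BR-same-level-interlacing} at level $n+1$ via $\xi_{n+1,d;i}<\xi_{n,d;i}<\xi_{n+1,0;i+1}$.

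\textbf{With both corrections the induction closes.} The left half of the chain at index $j+1$ is $\xi_{n+1,j;i}<\xi_{n,j;i}<\xi_{n,j+1;i}$. The right half is $\xi_{n,j+1;i}\le\xi_{n,d;i}<\xi_{n+1,0;i+1}<\cdots<\xi_{n+1,j;i+1}$. So $H_n$ only needs (c) and (d) at level $n$; all cross-level relations are produced inside the step.
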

\begin{remark}
In the proof of Theorem \ref{thm:ben-romdhane-zero-theorem} (see \cite{romdhane2008zeros}), it was shown that the positive roots of \(P_{n(d+1)+j}(z)\) satisfy
\begin{equation} 
x_{n,d;s}<x_{n+1,0;s+1}, \qquad s=1,\ldots,n, \label{eq:BR-root-relation-1} 
\end{equation} 
\begin{equation} 
x_{n+1,j;s}<x_{n,j;s}, \qquad j=0,\ldots,d,\quad s=1,\ldots,n, \label{eq:BR-root-relation-2} 
\end{equation} 
and 
\begin{equation} 
0<x_{n,j;s}<x_{n,j+1;s}, \qquad j=0,\ldots,d-1,\quad s=1,\ldots,n. 
\label{eq:BR-root-relation-3} 
\end{equation}
\end{remark}

\begin{corollary}\label{cor:fixed-level-interlacing}
Under the assumptions of Theorem~\ref{thm:ben-romdhane-zero-theorem}, fix
\(j\; (0\le j\le d)\). Let
\begin{equation}
    0<x_{n,j;1}<x_{n,j;2}<\cdots<x_{n,j;n}
\end{equation}
be the positive roots of \(P_{n(d+1)+j}(z)\). Then, for \(n\geq1\), we have
\begin{equation}
    0<
    x_{n+1,j;1}
    <
    x_{n,j;1}
    <
    x_{n+1,j;2}
    <
    x_{n,j;2}
    <
    \cdots
    <
    x_{n,j;n}
    <
    x_{n+1,j;n+1}.
\end{equation}
\end{corollary}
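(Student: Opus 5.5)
The corollary follows by chaining the two root relations \eqref{eq:BR-root-relation-1}--\eqref{eq:BR-root-relation-2} with the same-level interlacing \eqref{eq:BR-same-level-interlacing}. Fix \(j\) and \(n\ge1\). Theorem~\ref{thm:ben-romdhane-zero-theorem}(c) says that \(P_{(n+1)(d+1)+j}\) has exactly \(n+1\) distinct positive roots \(x_{n+1,j;1}<\cdots<x_{n+1,j;n+1}\), and that \(P_{n(d+1)+j}\) has exactly \(n\) distinct positive roots. The claimed chain is equivalent to two families of strict inequalities:
\[
x_{n+1,j;s}<x_{n,j;s}\quad(1\le s\le n),\qquad x_{n,j;s}<x_{n+1,j;s+1}\quad(1\le s\le n),
\]
together with \(0<x_{n+1,j;1}\), which holds by positivity.

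The first family is exactly \eqref{eq:BR-root-relation-2}.

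For the second family, fix \(s\) with \(1\le s\le n\). Applying \eqref{eq:BR-same-level-interlacing} at level \(n\) and index block \(s\) gives \(x_{n,j;s}\le x_{n,d;s}\), with equality when \(j=d\) and strict inequality otherwise. Relation \eqref{eq:BR-root-relation-1} gives \(x_{n,d;s}<x_{n+1,0;s+1}\). Applying \eqref{eq:BR-same-level-interlacing} at level \(n+1\), block \(s+1\), gives \(x_{n+1,0;s+1}\le x_{n+1,j;s+1}\) (equivalently, iterate \eqref{eq:BR-root-relation-3}). Combining these three steps yields
\[
x_{n,j;s}\le x_{n,d;s}<x_{n+1,0;s+1}\le x_{n+1,j;s+1},
\]
which is the desired strict inequality.

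Interleaving the two families for \(s=1,\dots,n\) produces the stated chain. The only point needing care is the bookkeeping of the index \(s+1\le n+1\), which is legitimate because level \(n+1\) has \(n+1\) positive roots. The cross-level inequality \eqref{eq:BR-root-relation-1}, which links the top residue class at level \(n\) to the bottom class at level \(n+1\), is what makes the argument work; no further obstacle is expected.
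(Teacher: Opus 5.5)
Your proof is correct and follows the paper's argument. The first family of inequalities is \eqref{eq:BR-root-relation-2}, and the second comes from the chain $x_{n,j;s}\le x_{n,d;s}<x_{n+1,0;s+1}\le x_{n+1,j;s+1}$ via \eqref{eq:BR-root-relation-3} and \eqref{eq:BR-root-relation-1}; the only difference is that the paper treats $j=0$, $1\le j\le d-1$, and $j=d$ as separate cases, whereas your non-strict inequalities at the two ends handle all three at once.
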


\begin{proof}
We first consider the case \(j=0\). By
\eqref{eq:BR-root-relation-2},
\begin{equation}
    x_{n+1,0;s}<x_{n,0;s},
    \qquad s=1,\ldots,n.
\end{equation}
On the other hand, repeated use of \eqref{eq:BR-root-relation-3} gives
\begin{equation}
    x_{n,0;s}<x_{n,d;s},
\end{equation}
and then \eqref{eq:BR-root-relation-1} yields
\begin{equation}
    x_{n,d;s}<x_{n+1,0;s+1}.
\end{equation}
Hence, we have
\begin{equation}
    x_{n+1,0;s}<x_{n,0;s}<x_{n+1,0;s+1},
    \qquad s=1,\ldots,n.
\end{equation}

Next, let \(1\leq j\leq d-1 \; (d\ge2)\).  Again,
\eqref{eq:BR-root-relation-2} gives
\begin{equation}
    x_{n+1,j;s}<x_{n,j;s},
    \qquad s=1,\ldots,n.
\end{equation}
For the other inequality, repeated use of \eqref{eq:BR-root-relation-3} from
level \(j\) to level \(d\), then \eqref{eq:BR-root-relation-1}, and finally
repeated use of \eqref{eq:BR-root-relation-3} with \(n\) replaced by \(n+1\)
give
\begin{equation}
    x_{n,j;s}
    <
    x_{n,d;s}
    <
    x_{n+1,0;s+1}
    <
    x_{n+1,j;s+1}.
\end{equation}
Therefore
\begin{equation}
    x_{n+1,j;s}<x_{n,j;s}<x_{n+1,j;s+1},
    \qquad s=1,\ldots,n.
\end{equation}

Finally, let \(j=d\). By \eqref{eq:BR-root-relation-2},
\begin{equation}
    x_{n+1,d;s}<x_{n,d;s},
    \qquad s=1,\ldots,n.
\end{equation}
Moreover, by \eqref{eq:BR-root-relation-1} and repeated use of
\eqref{eq:BR-root-relation-3} with \(n\) replaced by \(n+1\),
\begin{equation}
    x_{n,d;s}
    <
    x_{n+1,0;s+1}
    <
    x_{n+1,d;s+1}.
\end{equation}
Thus
\begin{equation}
    x_{n+1,d;s}<x_{n,d;s}<x_{n+1,d;s+1},
    \qquad s=1,\ldots,n.
\end{equation}
\end{proof}

\begin{corollary}\label{thm:defined-polynomials-roots}
Let \(\{P_n\}_{n\geq0}\) be a \(d\)-symmetric \(d\)-orthogonal polynomial set satisfying the assumptions of
Theorem~\ref{thm:ben-romdhane-zero-theorem}.
For \(j=0,1,\ldots,d\), define the polynomials
\(\{P_n^{(j)}\}_{n\geq0}\) by
\begin{equation}
    P_n^{(j)}(X)=x^{-j}P_{n(d+1)+j}(x),
    \qquad X=x^{d+1}.
\end{equation}
Then, for \(n\geq1\), the polynomial \(P_n^{(j)}(X)\) has \(n\) positive
simple roots. Moreover, if
\begin{equation}
    0<\xi_{n,j;1}<\xi_{n,j;2}<\cdots<\xi_{n,j;n}
\end{equation}
are the roots of \(P_n^{(j)}(X)\), then the roots of \(P_n^{(j)}(X)\) and
\(P_{n+1}^{(j)}(X)\) interlace:
\begin{equation}
    0<
    \xi_{n+1,j;1}
    <
    \xi_{n,j;1}
    <
    \xi_{n+1,j;2}
    <
    \xi_{n,j;2}
    <
    \cdots
    <
    \xi_{n,j;n}
    <
    \xi_{n+1,j;n+1}.
\end{equation}
\end{corollary}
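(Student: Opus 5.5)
The plan is to transfer the statements of Theorem~\ref{thm:ben-romdhane-zero-theorem} and Corollary~\ref{cor:fixed-level-interlacing} from the variable \(x\) to the variable \(X=x^{d+1}\). The first task is to check that \(P_n^{(j)}\) is a genuine polynomial in \(X\) of degree exactly \(n\). From the recurrence \eqref{eqn:d-sym-d-othognal,recurrence relations}, each \(P_N\) is monic of degree \(N\). By \(d\)-symmetry, \(P_N(\omega_d x)=\omega_d^N P_N(x)\), so \(P_N\) contains only the monomials \(x^{N-s(d+1)}\) with \(s\ge0\). Taking \(N=n(d+1)+j\), this gives
\[
    P_{n(d+1)+j}(x)=x^{j}\sum_{s=0}^{n}c_s\,x^{(n-s)(d+1)},\qquad c_0=1.
\]
Hence \(P_n^{(j)}(X)=\sum_{s}c_sX^{n-s}\) is a monic polynomial of degree \(n\). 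Here the exponent \(j\) is legitimate because \(0\le j\le d\).

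Next comes the root count. By Theorem~\ref{thm:ben-romdhane-zero-theorem}(b), the factor \(x^{j}\) removes exactly the zero at the origin, so \(P_n^{(j)}(0)\neq0\). Part (c) supplies \(n\) distinct positive zeros \(x_{n,j;1}<\cdots<x_{n,j;n}\) of \(P_{n(d+1)+j}\). Their images \(\xi_{n,j;s}:=x_{n,j;s}^{d+1}\) are \(n\) distinct positive zeros of \(P_n^{(j)}\), distinct because \(x\mapsto x^{d+1}\) is strictly increasing on \((0,\infty)\). Since \(\deg P_n^{(j)}=n\), these are all of its roots, and each is simple. One could alternatively argue via part (a) and the total degree count, but the degree argument is enough.

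Finally, the interlacing follows from Corollary~\ref{cor:fixed-level-interlacing}:
\[
    0<x_{n+1,j;1}<x_{n,j;1}<x_{n+1,j;2}<\cdots<x_{n,j;n}<x_{n+1,j;n+1}.
\]
Raising each term to the power \(d+1\) preserves all the strict inequalities, since the map is strictly increasing on positive reals. By the previous step, \(\{\xi_{n+1,j;s}\}\) is exactly the root set of \(P_{n+1}^{(j)}\), which gives the claimed chain. I expect no real obstacle here; the only point needing care is the first step, showing that dividing by \(x^{j}\) yields a polynomial in \(x^{d+1}\) of exact degree \(n\), which rests on the \(d\)-symmetry.
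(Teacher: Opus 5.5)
Your argument is correct and is exactly the intended one: the paper gives this corollary no separate proof, treating it as an immediate consequence of Theorem~\ref{thm:ben-romdhane-zero-theorem}(c) and Corollary~\ref{cor:fixed-level-interlacing} under the strictly increasing substitution \(X=x^{d+1}\) on \((0,\infty)\). Your use of \(d\)-symmetry to show that \(x^{-j}P_{n(d+1)+j}(x)\) is a monic polynomial of degree \(n\) in \(X\) fills in the one detail the paper leaves implicit; the appeal to part (b) is harmless but unnecessary, since \(n\) distinct positive roots already account for every root of a degree-\(n\) polynomial.
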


\subsection{Strictly sign-regular matrices}

\begin{definition}
Let \(A\) be an \(n\times n\) real matrix, that is,
\(A\in\mathbb{R}^{n\times n}\). For \(1\le k\le n\), we say that \(A\) is
sign-regular of order \(k\), denoted by \(SR_k\), if all \(k\times k\)
minors of \(A\) are either non-negative or non-positive. It is called strictly sign-regular of order \(k\), denoted by \(SSR_k\), if all
\(k\times k\) minors of \(A\) are nonzero and have the same sign.
The matrix \(A\) is called strictly sign-regular, denoted by \(SSR\), if it is
\(SSR_k\) for all \(k=1,\ldots,n\).
\end{definition}

We recall the classical spectral theorem for strictly
sign-regular matrices.

\begin{theorem}[{\cite[Section 3.3]{AlseidiMargaliotGarloff2019}; see also \cite[Chapter V]{GantmacherKrein2002}}]\label{thm:GK}
If \(A\in\mathbb{R}^{n\times n}\) is \(SSR\), then
all eigenvalues of \(A\) are real, nonzero, and algebraically simple. Moreover,
they can be ordered as
\begin{equation}
    |\lambda_1|>|\lambda_2|>\cdots>|\lambda_n|>0.
\end{equation}
\end{theorem}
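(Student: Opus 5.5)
The plan is the classical Gantmacher--Krein argument: apply Perron's theorem to the compound matrices of \(A\). For \(1\le k\le n\), let \(C_k(A)\) be the \(k\)-th compound matrix. It is the \(\binom nk\times\binom nk\) matrix whose entries are the \(k\times k\) minors of \(A\), with row and column index sets ordered lexicographically.

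\textbf{Step 1 (positivity of the compounds).} Since \(A\) is \(SSR\), for each \(k\) there is a sign \(\varepsilon_k\in\{\pm1\}\) such that every \(k\times k\) minor of \(A\) has sign \(\varepsilon_k\). Hence \(\varepsilon_kC_k(A)\) is an entrywise strictly positive matrix. In particular, for \(k=n\) we get \(\det A\neq0\), so all eigenvalues of \(A\) are nonzero.

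\textbf{Step 2 (spectrum of the compounds).} I will use the standard fact that if \(\lambda_1,\dots,\lambda_n\) are the eigenvalues of \(A\), listed with algebraic multiplicity, then the eigenvalues of \(C_k(A)\), again with multiplicity, are the products
\[
\lambda_{i_1}\cdots\lambda_{i_k},\qquad 1\le i_1<\cdots<i_k\le n .
\]
This follows from the Binet--Cauchy formula \(C_k(AB)=C_k(A)C_k(B)\) applied to a Schur triangularization \(A=UTU^{-1}\), which reduces the claim to the triangular matrix \(C_k(T)\). Order the eigenvalues so that \(|\lambda_1|\ge|\lambda_2|\ge\cdots\ge|\lambda_n|>0\).

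\textbf{Step 3 (Perron's theorem and induction on \(k\)).} By Perron's theorem, the positive matrix \(\varepsilon_kC_k(A)\) has a positive eigenvalue \(\rho_k\). This eigenvalue is algebraically simple, and every other eigenvalue has modulus strictly less than \(\rho_k\). The largest modulus among the products in Step 2 is \(|\lambda_1\cdots\lambda_k|\), so \(\varepsilon_k\lambda_1\cdots\lambda_k=\rho_k\).

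Now fix \(k<n\) and consider the index set \(\{1,\dots,k-1,k+1\}\). Since \(\rho_k\) is algebraically simple, the product \(\lambda_1\cdots\lambda_{k-1}\lambda_{k+1}\) is an eigenvalue of \(C_k(A)\) of modulus strictly less than \(\rho_k\). It cannot equal \(\rho_k\) even as a number, because that would give \(\rho_k\) multiplicity at least two in the multiset of Step 2. Dividing by the nonzero quantity \(|\lambda_1\cdots\lambda_{k-1}|\) gives
\[
|\lambda_k|>|\lambda_{k+1}|.
\]

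\textbf{Step 4 (conclusion).} Realness follows by taking ratios: \(\lambda_1=\varepsilon_1\rho_1\) is real, and
\[
\lambda_k=\frac{\varepsilon_k\rho_k}{\varepsilon_{k-1}\rho_{k-1}}\in\mathbb R .
\]
The strict inequalities \(|\lambda_1|>\cdots>|\lambda_n|>0\) show that all eigenvalues are distinct, hence algebraically simple, and nonzero.

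The main point needing care is the multiplicity bookkeeping in Step 3. Strictness has to be derived from algebraic simplicity of the Perron root inside the \emph{multiset} of products, not merely from a comparison of moduli. Otherwise the case \(\lambda_k=\lambda_{k+1}\) would slip through. Establishing the spectrum of \(C_k(A)\) via Binet--Cauchy is routine, and I would cite it rather than reprove it.
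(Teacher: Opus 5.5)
Your proof is correct. It is the classical Gantmacher--Krein argument, applying Perron's theorem to the compound matrices \(\varepsilon_k C_k(A)\), which is the proof the paper relies on by citing \cite{GantmacherKrein2002,AlseidiMargaliotGarloff2019}; the paper gives no argument of its own. Your Step~3 for a single fixed \(k\) also reproduces the order-wise statement recorded as Theorem~\ref{thm:SSRk}, and you handle the multiplicity bookkeeping needed for strictness correctly.
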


We shall also use a weaker, order-wise version of this result. Let \(A\) be
\(SSR_k\), and denote by \(\varepsilon_k\in\{1,-1\}\) the common sign of all
\(k\times k\) minors of \(A\). The eigenvalues are labeled so that
\begin{equation}
    |\lambda_1|\ge |\lambda_2|\ge \cdots \ge |\lambda_n|.
\end{equation}

\begin{theorem}[\cite{AlseidiMargaliotGarloff2019}]\label{thm:SSRk}
Let \(A\in\mathbb{R}^{n\times n}\) be nonsingular. Suppose \(A\) is
\(SSR_k\) for some \(k\in\{1,\ldots,n-1\}\). Then
\(\lambda_1\lambda_2\cdots\lambda_k\) is real and satisfies
\begin{equation}
    \varepsilon_k\lambda_1\lambda_2\cdots\lambda_k>0.
\end{equation}
Moreover,  the eigenvalues satisfy
\begin{equation}
    |\lambda_k|>|\lambda_{k+1}|.
\end{equation}
\end{theorem}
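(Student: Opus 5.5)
The plan is to pass to the $k$-th multiplicative compound matrix $A^{(k)}$ and apply Perron's theorem for entrywise positive matrices.

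\textbf{Step 1: positivity of the compound.} Let $A^{(k)}$ be the $\binom{n}{k}\times\binom{n}{k}$ matrix whose entries are the $k\times k$ minors $\det A[\alpha,\beta]$, with $\alpha,\beta$ running over the $k$-element subsets of $\{1,\ldots,n\}$ in lexicographic order. Since $A$ is $SSR_k$, every entry of $A^{(k)}$ is nonzero with sign $\varepsilon_k$. Hence $B:=\varepsilon_k A^{(k)}$ is an entrywise positive matrix.

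\textbf{Step 2: Perron's theorem.} Applied to $B$, it gives three facts:
\begin{itemize}
\item[(a)] the spectral radius $\rho(B)$ is a positive eigenvalue of $B$;
\item[(b)] $\rho(B)$ is algebraically simple;
\item[(c)] every other eigenvalue $\mu$ of $B$ satisfies $|\mu|<\rho(B)$.
\end{itemize}

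\textbf{Step 3: spectrum of the compound.} By the classical Kronecker theorem, the eigenvalues of $A^{(k)}$, counted with algebraic multiplicity, are exactly the products $\lambda_{i_1}\lambda_{i_2}\cdots\lambda_{i_k}$ with $1\le i_1<\cdots<i_k\le n$. To see this, triangularize $A=UTU^{-1}$ over $\mathbb C$. The Cauchy--Binet formula gives $A^{(k)}=U^{(k)}T^{(k)}(U^{(k)})^{-1}$, and $T^{(k)}$ is triangular in lexicographic order with diagonal entries equal to these products.

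Order the eigenvalues so that $|\lambda_1|\ge\cdots\ge|\lambda_n|$. The product of maximal modulus is then $\lambda_1\cdots\lambda_k$, so its modulus equals $\rho(A^{(k)})=\rho(B)$. Nonsingularity of $A$ guarantees that all $\lambda_i\neq0$, so this modulus is strictly positive. The eigenvalues of $B$ are $\varepsilon_k$ times the products above.

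\textbf{Step 4: the sign statement.} By (c), $\rho(B)$ is the unique eigenvalue of $B$ of maximal modulus. The eigenvalue $\varepsilon_k\lambda_1\cdots\lambda_k$ of $B$ has maximal modulus, so it must equal $\rho(B)$. Therefore $\lambda_1\cdots\lambda_k$ is real and $\varepsilon_k\lambda_1\cdots\lambda_k=\rho(B)>0$.

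\textbf{Step 5: the strict gap.} Suppose, for contradiction, that $|\lambda_k|=|\lambda_{k+1}|$. Then the index set $\{1,\ldots,k-1,k+1\}$ differs from $\{1,\ldots,k\}$. So $\varepsilon_k\lambda_1\cdots\lambda_{k-1}\lambda_{k+1}$ is a second eigenvalue of $B$ in the multiset of Step 3, and it has modulus $\rho(B)$. It cannot differ from $\rho(B)$, by (c). It cannot equal $\rho(B)$ either, since then $\rho(B)$ would have algebraic multiplicity at least $2$, contradicting (b). Hence $|\lambda_k|>|\lambda_{k+1}|$.

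This argument is independent of how ties among $|\lambda_1|,\ldots,|\lambda_k|$ are broken in the labeling.

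\textbf{Main obstacle.} The delicate point is the bookkeeping of multiplicities in Step 3. We must use that Kronecker's theorem lists the products \emph{with algebraic multiplicity}, so that two distinct index sets yield two eigenvalue slots even when the products coincide numerically. The triangularization argument provides exactly this. Nonsingularity is used only to ensure that the maximal-modulus product is nonzero, so that Perron's strict dominance actually separates it from the remaining eigenvalues.
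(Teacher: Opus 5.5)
Your argument is correct and is essentially the standard proof in \cite{AlseidiMargaliotGarloff2019}; the paper itself only cites this result. You make $\varepsilon_k A^{(k)}$ entrywise positive, identify its spectrum with the $k$-fold products of eigenvalues counted with algebraic multiplicity via Kronecker's theorem, and use Perron's strict dominance and simplicity to get both the sign of $\lambda_1\cdots\lambda_k$ and the gap $|\lambda_k|>|\lambda_{k+1}|$. One small remark: nonsingularity is not actually needed here, since Perron's theorem already gives $\rho(B)>0$ for an entrywise positive $B$, which by itself forces $\lambda_1,\ldots,\lambda_k\neq0$.
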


As an immediate consequence, if two consecutive strictly sign-regular conditions hold, then one obtains the following corollary.

\begin{corollary}[\cite{AlseidiMargaliotGarloff2019}]\label{cor:SSR-consecutive}
Let \(A\in\mathbb{R}^{n\times n}\) be nonsingular. Assume \(A\) is
\(SSR_i\) and \(SSR_{i+1}\) for some \(i\in\{1,\ldots,n-2\}\). Let
\(\varepsilon_i\) and \(\varepsilon_{i+1}\) be the common signs of the
\(i\times i\) and \((i+1)\times(i+1)\) minors of \(A\), respectively. Then, we have:
\begin{enumerate}
    \item[(a)] The signed eigenvalue \(\varepsilon_i\varepsilon_{i+1}\lambda_{i+1}\) is real and positive.
    \item[(b)] The eigenvalues satisfy the inequalities
    \begin{equation}
    |\lambda_i|>|\lambda_{i+1}|>|\lambda_{i+2}|.
\end{equation}
\end{enumerate}

\end{corollary}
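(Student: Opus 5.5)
The statement just made is Corollary~\ref{cor:SSR-consecutive}. I will deduce it from Theorem~\ref{thm:SSRk}, applied twice, with the eigenvalues labeled by decreasing modulus.

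\textbf{Step 1: strict modulus gaps.} Apply Theorem~\ref{thm:SSRk} with \(k=i\) and then with \(k=i+1\); both are admissible because \(i+1\le n-1\). This gives
\[
|\lambda_i|>|\lambda_{i+1}|>|\lambda_{i+2}|,
\]
which is part~(b).

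\textbf{Step 2: the partial products.} Theorem~\ref{thm:SSRk} also gives that \(\pi_i:=\lambda_1\cdots\lambda_i\) and \(\pi_{i+1}:=\lambda_1\cdots\lambda_{i+1}\) are real, with
\[
\varepsilon_i\pi_i>0, \qquad \varepsilon_{i+1}\pi_{i+1}>0 .
\]
Since \(A\) is nonsingular, every eigenvalue is nonzero, so \(\pi_i\neq0\). Hence
\[
\lambda_{i+1}=\frac{\pi_{i+1}}{\pi_i}
\]
is real. Its sign is \(\varepsilon_{i+1}/\varepsilon_i=\varepsilon_i\varepsilon_{i+1}\), because \(\varepsilon_i^2=1\). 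Therefore \(\varepsilon_i\varepsilon_{i+1}\lambda_{i+1}>0\), which is part~(a).

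\textbf{Step 3: well-definedness of the labeling.} The only subtle point is that the ordering by modulus might allow ties, which would make the individual \(\lambda_j\) ambiguous. The products \(\pi_i\) and \(\pi_{i+1}\) are nevertheless unambiguous. The strict gaps \(|\lambda_i|>|\lambda_{i+1}|\) and \(|\lambda_{i+1}|>|\lambda_{i+2}|\) from Step~1 show that the multisets \(\{\lambda_1,\dots,\lambda_i\}\) and \(\{\lambda_1,\dots,\lambda_{i+1}\}\) are uniquely determined. So \(\lambda_{i+1}\) is well defined as the quotient \(\pi_{i+1}/\pi_i\), and no real obstacle arises.
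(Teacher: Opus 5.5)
Your argument is correct and is essentially the paper's: the paper treats this corollary as an immediate consequence of Theorem~\ref{thm:SSRk}, applied with $k=i$ and $k=i+1$ (both admissible since $i\le n-2$), with $\lambda_{i+1}$ obtained as the quotient of the two partial products $\lambda_1\cdots\lambda_{i+1}$ and $\lambda_1\cdots\lambda_i$, exactly as you do. Your Step~3 remark, that the strict modulus gaps make these products independent of how ties are labeled, is a correct extra detail the paper does not spell out.
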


\section{Proof of Theorem \ref{thm:main-intro}}\label{sec:proof of main results}

We first present some elementary monomial cases where the root structure is immediate. The remaining parts of this section are devoted to the proof of Theorem \ref{thm:main-intro} for other cases.
\begin{lemma}\label{lem:monomial-cases}
If either \(m>n_N\) or \(m\equiv0\pmod{k}\), then the generalized Wronskian--Hermite polynomial $W_N^{[m,k,l]}(z)$ given in \eqref{eqn:Wronskian-Hermite-poly-with-jump-k} has the form:
\begin{equation}
    W_N^{[m,k,l]}(z)=z^\Gamma,
\end{equation}
where
\begin{equation} \label{def:Gamma}
    \Gamma
    =
    \frac{N}{2}\bigl((k-1)(N-1)+2l\bigr).
\end{equation}
\end{lemma}

\begin{remark}\label{remark:conjecture not true roe arbitrary m}
Denote the generalized Wronskian--Hermite polynomials associated with arbitrary index set by
\begin{equation} \label{Wronskian-Hermite hierarchy}
    W_{\Lambda}^{[m]}(z) = \Wr \!\left[
        p_{n_1}^{[m]}(z),p_{n_2}^{[m]}(z),\ldots,p_{n_N}^{[m]}(z)
    \right],
\end{equation}
where $\Lambda=(n_1,n_2,\cdots,n_N) \in \Z^N$ with $0< n_1<n_2<\cdots<n_N$. For \(m=2\), Veselov conjectured that, for any index $\Lambda$, all nonzero roots of \(W_{\Lambda}^{[2]}(z)\) are
simple; this conjecture was explicitly stated in
\cite{felder2012zeros}. However, this conjecture does not hold for $m \ge 3$. For instance, 
\begin{eqnarray}
    && W_{(3,4)}^{[3]}(z)=\frac{1}{144} \left(z^3-12\right)^2,\\
       && W_{(2,3,6,10)}^{[5]}(z) = \frac{1}{11664000}\left(z^5-180\right)^3,\\
       && W_{(2,5,6,9)}^{[5]}(z) = \frac{z}{62208000} \left(z^5+120\right) \left(z^5+720\right)^2,\\
       &&  W_{(1,4,6,8)}^{[3]}(z) = \frac{z}{414720} \left(z^6-60 z^3-720\right)\left(z^3-12\right)^2.
\end{eqnarray}
In this paper, Conjecture \ref{conjecture:nonzero simple of WH} is restricted to the generalized Wronskian--Hermite polynomials $W_N^{[m,k,l]}(z)$, which correspond to $\Lambda = (l,l+k,l+2k, \dots, l+(N-1)k)$. 
\end{remark}

\subsection{The second-order case}
We first consider the second-order Wronskians of the Gould--Hopper polynomials \cite{GouldHopper1962}, which are defined by the generating function 
\begin{equation}\label{eq:GH-generating}
    \sum_{n=0}^{\infty}\mathcal H_n^{[m]}(z)\frac{t^n}{n!}
    =
    \exp\left(zt-c_m t^m\right),
    \qquad
    c_m=\frac{1}{(m-1)!m^2}, \qquad m\ge2.
\end{equation}
Alternatively, they can be defined by the recurrence relation \cite{GouldHopper1962}
\begin{equation}
    \mathcal H_{n+m}^{[m]}(z)
    =
    z\mathcal H_{n+m-1}^{[m]}(z)
    -
    \frac{1}{m}\binom{n+m-1}{m-1}\mathcal H_n^{[m]}(z),
    \qquad n\geq0,
\end{equation}
along with the initial conditions
\begin{equation}
    \mathcal H_n^{[m]}(z)=z^n,
    \qquad 0\leq n\leq m-1.
\end{equation} 
According to \cite{douak1995relation}, they constitute an $(m-1)$-symmetric $(m-1)$-orthogonal polynomial set characterized by positive recurrence coefficients \cite{romdhane2008zeros}. 
Hence, by Theorem~\ref{thm:ben-romdhane-zero-theorem}, for each
\(q\ge 0\) and \(0\le r\le m-1\), all nonzero roots of
\(\mathcal H_{qm+r}^{[m]}(z)\) are simple, and \(z=0\) is a root of multiplicity \(r\).

\begin{proposition}\label{prop:GH-Wronskian}
Let \(m\ge2\) and \(1\le l<m\).  For each \(k\ge2\), define
\begin{equation}
    \widetilde W_{l,k}^{[m]}(z):= \operatorname{Wr}\!\left[\mathcal H_l^{[m]}(z),\mathcal H_{l+k}^{[m]}(z)\right],
\end{equation}
where $\mathcal H_n^{[m]}(z)$ are the Gould--Hopper polynomials defined in \eqref{eq:GH-generating}. Then, all nonzero roots of $\widetilde W_{l,k}^{[m]}(z)$ are simple.
\end{proposition}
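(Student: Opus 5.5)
Write \(f=\mathcal H_l^{[m]}\) and \(g=\mathcal H_{l+k}^{[m]}\), and put \(n=l+k\). The plan is to reduce \(\widetilde W_{l,k}^{[m]}\) to a power of \(z\) times a real linear combination of two polynomials in \(X=z^m\) whose roots strictly interlace, and then apply Obreschkoff's theorem (Theorem~\ref{thm:obreschkoff}).

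\emph{Step 1: derivative and initial values.} From the generating function \eqref{eq:GH-generating}, differentiating in \(z\) gives \(\frac{d}{dz}\mathcal H_n^{[m]}=n\mathcal H_{n-1}^{[m]}\). Since \(l<m\), the initial conditions give \(f=z^l\). Therefore
\[
\widetilde W_{l,k}^{[m]}(z)=z^{l-1}\bigl[(l+k)\,z\,\mathcal H_{n-1}^{[m]}(z)-l\,\mathcal H_{n}^{[m]}(z)\bigr].
\]

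\emph{Step 2: the monomial case \(n<m\).} Here \(\mathcal H_{n-1}^{[m]}=z^{n-1}\) and \(\mathcal H_n^{[m]}=z^n\). The bracket equals \(kz^n\), so the Wronskian is a monomial and there are no nonzero roots.

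\emph{Step 3: the case \(n\ge m\).} Apply the three-term recurrence with index \(n-m\ge0\):
\[
z\mathcal H_{n-1}^{[m]}=\mathcal H_n^{[m]}+\gamma\,\mathcal H_{n-m}^{[m]},\qquad \gamma=\tfrac1m\tbinom{n-1}{m-1}>0.
\]
Substituting, the bracket becomes
\[
k\,\mathcal H_n^{[m]}+(l+k)\gamma\,\mathcal H_{n-m}^{[m]}.
\]
Write \(n=qm+r\) with \(0\le r\le m-1\) and \(q\ge1\). The polynomials \(\mathcal H_n^{[m]}\) form an \((m-1)\)-symmetric \((m-1)\)-orthogonal set with positive recurrence coefficients. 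So, in the notation of Corollary~\ref{thm:defined-polynomials-roots} with \(d=m-1\),
\[
\mathcal H_n^{[m]}=z^rP_q^{(r)}(X),\qquad \mathcal H_{n-m}^{[m]}=z^rP_{q-1}^{(r)}(X),\qquad X=z^m.
\]
Hence
\[
\widetilde W_{l,k}^{[m]}(z)=z^{l-1+r}\,Q(X),\qquad Q(X)=k\,P_q^{(r)}(X)+(l+k)\gamma\,P_{q-1}^{(r)}(X).
\]

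\emph{Step 4: strict hyperbolicity of \(Q\).} By Corollary~\ref{thm:defined-polynomials-roots}, \(P_q^{(r)}\) and \(P_{q-1}^{(r)}\) are strictly hyperbolic with strictly interlacing positive roots, hence no common roots. When \(q=1\), \(P_0^{(r)}\) is a nonzero constant and the interlacing is trivial.

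Two real polynomials whose degrees differ by one and whose simple real roots strictly interlace are in proper position in one order or the other. This follows from the standard sign analysis of \(\Wr[P_q^{(r)},P_{q-1}^{(r)}]\), which has constant sign on \(\R\) under strict interlacing. The strict part of Theorem~\ref{thm:obreschkoff} then shows that \(Q\) is strictly hyperbolic: all its roots are real and simple.

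\emph{Step 5: \(X=0\) is not a root of \(Q\).} I need \(Q(0)\neq0\). Both \(P_q^{(r)}\) and \(P_{q-1}^{(r)}\) have only positive roots and positive leading coefficients, so \(\operatorname{sgn}P_q^{(r)}(0)=(-1)^q\) and \(\operatorname{sgn}P_{q-1}^{(r)}(0)=(-1)^{q-1}\). These signs are opposite, so the positive-coefficient combination could in principle vanish at \(0\); this is the delicate point.

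The first attempt is the recurrence of Lemma~\ref{lem:d-symmetric-recurrence} evaluated at \(z=0\). It gives \(\mathcal H_{n}^{[m]}(0)\)-type values through products of the \(\gamma\)'s, so one can compare \(k\,P_q^{(r)}(0)\) with \((l+k)\gamma P_{q-1}^{(r)}(0)\) explicitly. The Gould--Hopper values at \(0\) are explicit from the generating function, namely \(\mathcal H_{qm+r}^{[m]}(0)=0\) unless \(r=0\), with a closed form in terms of \(c_m\). If a cancellation does occur, the root \(X=0\) of \(Q\) is still simple, because \(Q\) is strictly hyperbolic; it then only contributes to the multiplicity of \(z=0\) and does not affect the claim about nonzero roots.

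\emph{Step 6: lifting back to \(z\).} Every nonzero root \(z_0\) of \(\widetilde W_{l,k}^{[m]}\) has \(X_0=z_0^m\neq0\), and \(X_0\) is a simple root of \(Q\). Since \(\frac{d}{dz}Q(z^m)=mz^{m-1}Q'(z^m)\neq0\) at \(z_0\), the root \(z_0\) is simple.

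The main obstacle is Step 4: checking that strict interlacing yields proper position in the sense of Definition~\ref{def:proper-position}, so that the strict form of Obreschkoff's theorem applies to this specific positive-coefficient combination.
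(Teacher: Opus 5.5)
Your proposal is correct and follows essentially the paper's route: after factoring out $z^{l+r-1}$, your $Q(X)$ equals $(l+k)!\,\bigl(kQ_q^{(r)}(X)+mc_mQ_{q-1}^{(r)}(X)\bigr)$ in the paper's notation, and both arguments then combine the interlacing of Corollary~\ref{thm:defined-polynomials-roots} with Obreschkoff's theorem. The only differences are minor: you obtain the two-term form from the three-term recurrence and $\frac{d}{dz}\mathcal H_n^{[m]}=n\mathcal H_{n-1}^{[m]}$, whereas the paper uses the identity $X(Q_q^{(r)})'=qQ_q^{(r)}+c_mQ_{q-1}^{(r)}$; the proper-position step you call ``standard'' is exactly what the paper checks via $G/F=\sum_i A_i/(X-a_i)$ with all $A_i>0$, giving $\Wr[G,F]>0$ on $\R$; and your Step~5 detour is unnecessary, because a root at $X=0$ (which does occur when $r=l$) is simple and only affects $z=0$, as you correctly conclude.
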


\begin{proof}
From \eqref{eq:GH-generating}, we get
\begin{equation}\label{eq:GH-expansion}
    \mathcal H_n^{[m]}(z)
    =
    n!\sum_{j=0}^{\lfloor n/m\rfloor}
    \frac{(-c_m)^jz^{n-mj}}{j!(n-mj)!}.
\end{equation}
Since \(1\leq l<m\), we have
\begin{equation}\label{eq:GH-l}
    \mathcal H_l^{[m]}(z)=z^l.
\end{equation}
Write \(l+k=qm+r\), where $0 \le r\le m-1$. Then
\begin{equation}
    \mathcal H_{l+k}^{[m]}(z)
    =
    (qm+r)!z^rQ_q^{(r)}(z^m),
\end{equation}
where
\begin{equation}\label{eq:Q-definition}
    Q_q^{(r)}(X)
    :=
    \sum_{j=0}^{q}
    \frac{(-c_m)^jX^{q-j}}{j!(r+(q-j)m)!}.
\end{equation}

If \(q=0\), then
\begin{equation}
    \widetilde W_{l,k}^{[m]}(z)=k\,z^{l+r-1}.
\end{equation}
This implies $\widetilde W_{l,k}^{[m]}(z)$ has no nonzero roots. Hence, for the rest of the proof, we
assume \(q\geq1\). Since \(\mathcal H_{l+k}^{[m]}(z)\) is $(m-1)$-symmetric $(m-1)$-orthogonal,  by Theorem~\ref{thm:ben-romdhane-zero-theorem}, all
roots of \(Q_q^{(r)}(X)\) are positive and simple. Hence, \(Q_q^{(r)}(X)\) is strictly hyperbolic.

Differentiating
\(\mathcal H_{l+k}^{[m]}(z)\), we obtain
\begin{align}
    \frac{d}{dz}\mathcal H_{l+k}^{[m]}(z)
    &=
    (qm+r)!
    \left(
        rz^{r-1}Q_q^{(r)}(X)
        +
        mz^{r+m-1}\left(Q_q^{(r)}\right)'(X)
    \right), \quad X=z^m,
\end{align}
and
\begin{align}
    \widetilde W_{l,k}^{[m]}(z)
   =
    (qm+r)!z^{l+r-1}
    \left(
        (r-l)Q_q^{(r)}(X)
        +
        mX\left(Q_q^{(r)}\right)'(X)
    \right).
\end{align}
Next, from \eqref{eq:Q-definition},
\begin{equation}
    X\left(Q_q^{(r)}\right)'(X)
    =
    \sum_{j=0}^{q}
    \frac{(q-j)(-c_m)^jX^{q-j}}{j!(r+(q-j)m)!}
    =
    qQ_q^{(r)}(X)
    +
    c_mQ_{q-1}^{(r)}(X).
\end{equation}
Therefore, since \(r-l+mq=k\), we obtain
\begin{equation}\label{eq:GH-Wr-factorization}
    \widetilde W_{l,k}^{[m]}(z)
    =
    (qm+r)!z^{l+r-1}
    \left(kQ_q^{(r)}(z^m)+mc_mQ_{q-1}^{(r)}(z^m)\right).
\end{equation}

It remains to prove that \(kQ_q^{(r)}(X)+mc_mQ_{q-1}^{(r)}(X)\) has only simple
roots. The case $q=1$ is immediate, since this polynomial has degree one. Thus, in what follows,
we assume $q\geq2.$

Recall that \(Q_q^{(r)}(X)\) and
\(Q_{q-1}^{(r)}(X)\) are strictly hyperbolic and all their roots are positive and simple. By Corollary~\ref{thm:defined-polynomials-roots}, these two polynomials have no common roots, and their roots strictly interlace. Let \(0<a_1<a_2<\cdots<a_q\) be the roots of \(Q_q^{(r)}(X)\), and let \(0<b_1<b_2<\cdots<b_{q-1}\) be the roots of \(Q_{q-1}^{(r)}(X)\). By the strict interlacing property, these roots satisfy \begin{equation}\label{eq:FG-root-order} 0<a_1<b_1<a_2<b_2<\cdots<b_{q-1}<a_q. 
\end{equation}
Set \(F(X):=Q_q^{(r)}(X)\) and \(G(X):=Q_{q-1}^{(r)}(X)\). Then by
\eqref{eq:Q-definition}, both of them have positive leading coefficients, so
\begin{equation}
    F(X)=c_F\prod_{i=1}^{q}(X-a_i),
    \qquad G(X)=c_G\prod_{i=1}^{q-1}(X-b_i),
    \qquad c_F,c_G>0.
\end{equation}

The inequality \(\deg G(X)<\deg F(X)\), together with the facts that \(F(X)\) has only simple roots \(a_1,\ldots,a_q\) and that \(F(X)\) and \(G(X)\) have no common roots, implies that the rational function \(G(X)/F(X)\) has only simple poles at \(a_1,\ldots,a_q\) and vanishes at infinity. Hence, it admits the partial fraction decomposition
\begin{equation}\label{eq:partial-fraction}
    \frac{G(X)}{F(X)}
    =
    \sum_{i=1}^{q}\frac{A_i}{X-a_i}, \quad A_i=\frac{G(a_i)}{F'(a_i)}.
\end{equation}
Moreover, by \eqref{eq:FG-root-order}, we have
\begin{equation}
    G(a_i)
    =
    c_G\prod_{j=1}^{q-1}(a_i-b_j).
\end{equation}
For \(j<i\), \(a_i-b_j\) is positive, while for \(j\geq i\), \(a_i-b_j\) is negative. Thus the number of negative factors in \(G(a_i)\)
is \(q-i\), and
\begin{equation}
    \operatorname{sgn}G(a_i)=(-1)^{q-i}.
\end{equation}
Similarly, we have
\begin{equation}
    F'(a_i)
    =
    c_F\prod_{\substack{j=1\\j\neq i}}^{q}(a_i-a_j),
\end{equation}
and
\begin{equation}
    \operatorname{sgn}F'(a_i)=(-1)^{q-i}.
\end{equation}
Consequently,
\begin{equation}
    A_i=\frac{G(a_i)}{F'(a_i)}>0,
    \qquad i=1,\ldots,q.
\end{equation}

Differentiating \eqref{eq:partial-fraction}, we obtain
\begin{equation}
    \left(\frac{G}{F}\right)'(X)
    =
    -\sum_{i=1}^{q}\frac{A_i}{(X-a_i)^2}<0,
    \qquad
    X\in\mathbb R\setminus\{a_1,\ldots,a_q\}.
\end{equation}
On the other hand, one has
\begin{align}
    \left(\frac{G}{F}\right)'(X)
    =
    \frac{G'(X)F(X)-G(X)F'(X)}{F(X)^2}
     =
    -\frac{\operatorname{Wr}[G,F](X)}{F(X)^2}.
\end{align}
Since \(F(X)^2>0\) for \(X\neq a_i\), we get
\begin{equation}\label{eq:WrFG-negative-offroots}
    \operatorname{Wr}[G,F](X)>0,
    \qquad
    X\in\mathbb R\setminus\{a_1,\ldots,a_q\}.
\end{equation}
Next, we consider the points \(X=a_i\). Since \(F(a_i)=0\), we have
\begin{equation}
    \operatorname{Wr}[G,F](a_i)
    =
    G(a_i)F'(a_i).
\end{equation}
As shown above, \(F'(a_i)\) and \(G(a_i)\) have the same sign. Hence
\begin{equation}
    \operatorname{Wr}[G,F](a_i)>0,
    \qquad i=1,\ldots,q.
\end{equation}
Combining this with \eqref{eq:WrFG-negative-offroots}, we conclude that
\begin{equation}
    \operatorname{Wr}[G,F](X)>0,
    \qquad X\in\mathbb R.
\end{equation}
Together with the strict interlacing property \eqref{eq:FG-root-order}, this yields
\begin{equation}\label{eq:G-proper-F}
    G(X)\ll F(X).
\end{equation}

By Theorem \ref{thm:obreschkoff}, since \(F(X)\) and \(G(X)\) are strictly hyperbolic, have no
common roots, and are in proper position, each nontrivial linear combination
of \(F(X)\) and \(G(X)\)  is strictly hyperbolic. In particular, since \(k>0\) and
\(mc_m>0\),
the polynomial
\begin{equation}
    kQ_q^{(r)}(X)+mc_mQ_{q-1}^{(r)}(X)
    =
    kF(X)+mc_mG(X)
\end{equation}
is strictly hyperbolic. Hence, all its roots are real and simple.

Finally, by \eqref{eq:GH-Wr-factorization}, all nonzero roots of $\widetilde W_{l,k}^{[m]}(z)$ are simple.

\end{proof}

Let \(\alpha=c_m^{-1/m}\exp(\pi\i/m)\).  Then \(\alpha^m=-c_m^{-1}\), and
\eqref{eq:GH-generating} gives
\begin{equation}
    p_n^{[m]}(z)
    =
    \frac{\alpha^n}{n!}\,
    \mathcal H_n^{[m]}(\alpha^{-1}z),
    \qquad n\ge0.
\end{equation}
Thus, we have 
\begin{equation}
    W_2^{[m,k,l]}(z)  = c_2^{[m,k,l]} \Wr\left[p_l^{[m]},p_{l+k}^{[m]}\right](z)
    =
    c_2^{[m,k,l]}\frac{\alpha^{2l+k-1}}{l!(l+k)!}
    \Wr\left[\mathcal H_l^{[m]},\mathcal H_{l+k}^{[m]}\right](\alpha^{-1}z).
\end{equation}  
Therefore, all nonzero roots of $W_2^{[m,k,l]}(z)$ are simple. This completes the proof of case (i) in Theorem \ref{thm:main-intro}.

We next describe the zero distributions of the generalized Wronskian--Hermite polynomials
\(W_2^{[m,k,l]}(z)\).

\begin{proposition}
\label{prop:GH-Wr-sign-distribution}
Let \(m\geq2\), \(1\leq l<m\), and \(k\geq2\). Write
\begin{equation}
    l+k=qm+r,
    \qquad q\geq0,
    \qquad 0\leq r<m.
\end{equation}
Let $Q_{-1}^{(r)}(X):=0,$
and set
\begin{equation}
    R_q^{(r,l,k)}(X)
    :=
    kQ_q^{(r)}(X)+mc_mQ_{q-1}^{(r)}(X).
\end{equation}
Then the following statements hold.

\begin{enumerate}[(i)]
\item If \(q=0\), then \(R_q^{(r,l,k)}(X)\) has no nonzero roots.

\item If \(q\geq1\) and \(r>l\), then all roots of
\(R_q^{(r,l,k)}(X)\) are positive and simple.

\item If \(q\geq1\) and \(r=l\), then \(X=0\) is a simple root of
\(R_q^{(l,l,k)}(X)\), and all other roots are positive and simple.

\item If \(q\geq1\) and \(r<l\), then \(R_q^{(r,l,k)}(X)\) has one simple negative root and \(q-1\) simple positive roots. 
\end{enumerate}
\end{proposition}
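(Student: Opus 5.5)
Throughout, write \(F=Q_q^{(r)}\) and \(G=Q_{q-1}^{(r)}\) as in the proof of Proposition~\ref{prop:GH-Wronskian}. By that proof, \(R:=R_q^{(r,l,k)}=kF+mc_mG\) is strictly hyperbolic of degree \(q\) whenever \(q\ge1\). It is also nonzero, with positive leading coefficient \(k/(qm+r)!\). So for (ii)--(iv) all roots are real and simple, and it only remains to locate them. Case (i) is immediate: for \(q=0\) we have \(Q_{-1}^{(r)}=0\) and \(Q_0^{(r)}=1/r!\), so \(R\) is a nonzero constant.

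For \(q\ge1\), I will combine the interlacing data \(0<a_1<b_1<\cdots<b_{q-1}<a_q\) with the sign of \(R(0)\). At \(a_i\) we have \(R(a_i)=mc_mG(a_i)\), and \(\operatorname{sgn}G(a_i)=(-1)^{q-i}\). The signs of \(R\) therefore alternate along \(a_1,\ldots,a_q\), which gives one root in each interval \((a_i,a_{i+1})\), \(i=1,\ldots,q-1\). These account for \(q-1\) positive roots. Moreover \(\operatorname{sgn}R(a_1)=(-1)^{q-1}\), while \(R(X)\sim (-1)^q|X|^q\cdot\)(positive) as \(X\to-\infty\). So the remaining root lies in \((-\infty,a_1)\), and its location relative to \(0\) is decided by \(\operatorname{sgn}R(0)\) compared with \((-1)^{q-1}\).

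The key computation is \(R(0)\). From \eqref{eq:Q-definition}, \(F(0)=(-c_m)^q/(q!\,r!)\) and \(G(0)=(-c_m)^{q-1}/((q-1)!\,r!)\). Hence
\[
R(0)=\frac{(-c_m)^{q-1}}{(q-1)!\,r!}\left(-\frac{kc_m}{q}+mc_m\right)
=\frac{(-1)^{q-1}c_m^{q}}{q!\,r!}\,(mq-k)=\frac{(-1)^{q-1}c_m^{q}}{q!\,r!}\,(l-r),
\]
using \(mq-k=l-r\). There are three cases.
\begin{itemize}
\item If \(r>l\), then \(\operatorname{sgn}R(0)=(-1)^q\), which is opposite to \(\operatorname{sgn}R(a_1)\). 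So the extra root lies in \((0,a_1)\), and all roots are positive, giving (ii).
\item If \(r=l\), then \(R(0)=0\). By simplicity \(X=0\) is a simple root, and the other \(q-1\) roots are the positive ones found above, giving (iii).
\item If \(r<l\), then \(\operatorname{sgn}R(0)=(-1)^{q-1}=\operatorname{sgn}R(a_1)\). Since there are only \(q\) roots and \(q-1\) already lie in \((a_1,a_q)\), the interval \([0,a_1]\) contains no root. Indeed, a root there would force a second one by the sign agreement at the endpoints, which would exceed the count. The remaining root therefore lies in \((-\infty,0)\) and is negative, giving (iv).
\end{itemize}

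The only delicate step is this root count in \([0,a_1]\) and on the negative axis: we must make sure no additional roots hide there. This follows because the degree is exactly \(q\) and \(q-1\) roots have already been located in \((a_1,a_q)\). Alternatively, it can be cross-checked with Descartes' rule (Theorem~\ref{thm:descartes-rule-of-signs}): the coefficients of \(R(-X)\) are all of one sign except possibly the constant term, whose sign is governed by \(l-r\). This gives at most one negative root, and none when \(r>l\).
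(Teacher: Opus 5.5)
Your proof is correct, but it follows a different route from the paper's. The paper imports strict hyperbolicity of \(R_q^{(r,l,k)}\) from Proposition~\ref{prop:GH-Wronskian}, i.e.\ from Obreschkoff's theorem. It then rewrites \(R_q^{(r,l,k)}=(r-l)Q_q^{(r)}+mX\bigl(Q_q^{(r)}\bigr)'\) to obtain the explicit coefficients \(C_s=(ms+r-l)A_s\), and applies Descartes' rule of signs to \(R(X)\) and \(R(-X)\) (or to \(R(X)/X\) when \(r=l\)). The resulting upper bounds become exact counts only because the \(q\) roots are already known to be real and simple.

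You instead reuse the interlacing \(a_1<b_1<\cdots<b_{q-1}<a_q\) and the signs \(\operatorname{sgn}G(a_i)=(-1)^{q-i}\) from that earlier proof, apply the intermediate value theorem, and need only one new quantity: the constant term \(R(0)=(-1)^{q-1}c_m^q(l-r)/(q!\,r!)\), which agrees with the paper's \(C_0\).

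What your route buys:
\begin{itemize}
\item The sign changes already produce \(q\) distinct real roots in disjoint intervals, so your argument re-proves strict hyperbolicity without Obreschkoff's theorem.
\item It localizes the roots more finely: exactly one in each \((a_i,a_{i+1})\), and the remaining one in \((0,a_1)\), at \(0\), or in \((-\infty,0)\). In particular the roots of \(R\) interlace those of \(Q_q^{(r)}\), which the Descartes count does not give.
\end{itemize}

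The paper's version, in turn, reads all three cases off a single coefficient formula once hyperbolicity is available, without further use of the interlacing data.

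Your parity argument excluding a root in \([0,a_1]\) when \(r<l\) is sound. The case \(q=1\) also goes through: the interlacing data reduce to \(0<a_1\), and \(G=1/r!\) is a positive constant.
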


\begin{proof}
If \(q=0\), then
\begin{equation}
    R_0^{(r,l,k)}(X)
    =
    kQ_0^{(r)}(X)
    =
    \frac{k}{r!}.
\end{equation}
Thus, \(R_0^{(r,l,k)}(X)\) is a nonzero constant and has no nonzero roots.
Now assume \(q\geq1\). As shown in the proof of
Proposition~\ref{prop:GH-Wronskian}, the polynomial
\(R_q^{(r,l,k)}(X)\) has \(q\) real simple roots. It remains to determine their signs.

Using
\begin{equation}
    R_q^{(r,l,k)}(X)
    =
    (r-l)Q_q^{(r)}(X)
    +
    mX\left(Q_q^{(r)}\right)'(X),
\end{equation}
where
\begin{equation}
    Q_q^{(r)}(X)=\sum_{s=0}^{q}A_sX^s,
    \qquad
    A_s=
    \frac{(-c_m)^{q-s}}{(q-s)!(r+sm)!},
\end{equation}
we obtain
\begin{equation}
    R_q^{(r,l,k)}(X)
    =
    \sum_{s=0}^{q}C_sX^s,
\end{equation}
where
\begin{equation}
    C_s
    =
    (ms+r-l)A_s
    =
    \frac{(ms+r-l)(-c_m)^{q-s}}{(q-s)!(r+sm)!},
    \qquad s=0,1,\ldots,q.
\end{equation}

First, suppose \(r>l\), then we have
\begin{equation}
    ms+r-l>0,
    \qquad s=0,1,\ldots,q.
\end{equation}
Hence
\begin{equation}
    \operatorname{sgn}C_s=(-1)^{q-s},
    \qquad s=0,1,\ldots,q.
\end{equation}
The coefficient of \(X^s\) in \(R_q^{(r,l,k)}(-X)\) is \((-1)^sC_s\), and
therefore
\begin{equation}
    \operatorname{sgn}\left((-1)^sC_s\right)=(-1)^q,
    \qquad s=0,1,\ldots,q.
\end{equation}
Thus, the coefficient sequence of \(R_q^{(r,l,k)}(-X)\) has no sign variation.
By Descartes' rule of signs (see Theorem \ref{thm:descartes-rule-of-signs}), \(R_q^{(r,l,k)}(X)\) has no negative roots. Since $C_0 
    \neq0,$ zero is not a root. As all \(q\) roots are real and simple, all of them must be positive.

Next, suppose \(r=l\), then we have $C_0=0$ and $C_1 \neq0$.
Thus \(X=0\) is a simple root of \(R_q^{(l,l,k)}(X)\). Put
\begin{equation}
    S(X):=\frac{R_q^{(l,l,k)}(X)}{X}.
\end{equation}
For \(s=1,\ldots,q\), we have
\begin{equation}
    \operatorname{sgn}C_s=(-1)^{q-s}.
\end{equation}
The coefficient of \(X^{s-1}\) in \(S(-X)\) is \((-1)^{s-1}C_s\), and hence
\begin{equation}
    \operatorname{sgn}\left((-1)^{s-1}C_s\right)=(-1)^{q-1},
    \qquad s=1,\ldots,q.
\end{equation}
Therefore, \(S(-X)\) has no sign variation. By Descartes' rule of signs,
\(S\) has no negative roots. Since \(R_q^{(l,l,k)}\) is strictly hyperbolic and
has one simple root at \(X=0\), all its remaining \(q-1\) roots are positive and
simple.

Finally, suppose $r < l$ and define $\delta := l - r$, which satisfies $1 \le \delta \le m - 1$. For each $s = 1, \ldots, q$, we clearly have
\begin{equation}
    ms + r - l = ms - \delta > 0.
\end{equation}
It then follows that the signs of the coefficients are given by
\begin{equation}
    \operatorname{sgn}C_s = (-1)^{q-s} \quad \text{for } s = 1, \ldots, q, \qquad \text{and} \qquad \operatorname{sgn}C_0 = (-1)^{q+1}.
\end{equation}
Consequently, the coefficient sequence of $R_q^{(r,l,k)}(X)$, when arranged in descending order of powers, exhibits exactly $q-1$ sign variations. By Descartes' rule of signs, $R_q^{(r,l,k)}(X)$ has at most $q-1$ positive roots.
On the other hand, for \(s=1,\ldots,q\), the coefficient of \(X^s\) in
\(R_q^{(r,l,k)}(-X)\) has sign
\begin{equation}
    \operatorname{sgn}\left((-1)^sC_s\right)=(-1)^q,
\end{equation}
whereas its constant coefficient has sign
\begin{equation}
    \operatorname{sgn}C_0=(-1)^{q+1}.
\end{equation}
Therefore the coefficient sequence of \(R_q^{(r,l,k)}(-X)\), arranged in
descending powers, has exactly one sign variation. By Descartes' rule of signs,
\(R_q^{(r,l,k)}(X)\) has at most one negative root.
Moreover, the fact that $C_0 \neq 0$ ensures that zero is not a root. Since $R_q^{(r,l,k)}(X)$ has $q$ distinct real roots in total, where at most $q-1$ of them are positive and at most one is negative, this forces it to have exactly $q-1$ positive roots and exactly one negative root.

\end{proof}

\begin{corollary}
\label{cor:W2-ray-distribution}
Under the assumptions of
Proposition~\ref{prop:GH-Wr-sign-distribution}, define two families of rays by
\begin{equation}
    \mathcal R_{\mathrm{odd}}^{[m]}
    :=
    \left\{
    z\in\mathbb C\setminus\{0\}:
    \arg(z)\equiv\frac{(2\nu+1)\pi}{m}\pmod{2\pi},
    \;
    \nu=0,1,\ldots,m-1
    \right\},
\end{equation}
and
\begin{equation}
    \mathcal R_{\mathrm{even}}^{[m]}
    :=
    \left\{
    z\in\mathbb C\setminus\{0\}:
    \arg(z)\equiv\frac{2\pi\nu}{m}\pmod{2\pi},
    \;
    \nu=0,1,\ldots,m-1
    \right\}.
\end{equation}
Then, the nonzero roots of \(W_2^{[m,k,l]}(z)\) are distributed as follows.

\begin{enumerate}[(i)]
\item If \(q=0\), then there is no nonzero root.

\item If \(q\geq1\) and \(r\ge l\), then all nonzero roots lie on
\(\mathcal R_{\mathrm{odd}}^{[m]}\).

\item If \(q\geq1\) and \(r<l\), there are \((q-1)m\) roots lying on
\(\mathcal R_{\mathrm{odd}}^{[m]}\), while the remaining $m$ roots lie on \(\mathcal R_{\mathrm{even}}^{[m]}\).
\end{enumerate}
\end{corollary}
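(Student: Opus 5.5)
The proof will transfer the root description of \(R_q^{(r,l,k)}(X)\) from Proposition~\ref{prop:GH-Wr-sign-distribution} to the \(z\)-plane in two substitutions. First \(X=\zeta^m\) gives the roots of \(\widetilde W_{l,k}^{[m]}(\zeta)\). Then the rescaling \(z=\alpha\zeta\), with \(\alpha=c_m^{-1/m}\exp(\pi\i/m)\), gives the roots of \(W_2^{[m,k,l]}(z)\).

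By \eqref{eq:GH-Wr-factorization},
\begin{equation*}
\widetilde W_{l,k}^{[m]}(\zeta)=(qm+r)!\,\zeta^{l+r-1}R_q^{(r,l,k)}(\zeta^m).
\end{equation*}
The relation between \(W_2^{[m,k,l]}\) and \(\widetilde W_{l,k}^{[m]}\) derived before the proposition shows that \(W_2^{[m,k,l]}(z)\) is a nonzero constant times \(\widetilde W_{l,k}^{[m]}(\alpha^{-1}z)\). Hence the nonzero roots of \(W_2^{[m,k,l]}\) are exactly the points \(z=\alpha\zeta\) such that \(\zeta\neq0\) and \(X=\zeta^m\) is a nonzero root of \(R_q^{(r,l,k)}\).

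Next we track the arguments. If \(X=\xi>0\), its \(m\)-th roots are \(\zeta=\xi^{1/m}e^{2\pi\i\nu/m}\) for \(\nu=0,\ldots,m-1\). Multiplying by \(\alpha\) shifts the argument by \(\pi/m\), so \(\arg z=(2\nu+1)\pi/m\) and these roots lie on \(\mathcal R_{\mathrm{odd}}^{[m]}\). If \(X=\xi<0\), then \(\arg\zeta=(2\nu+1)\pi/m\), and after the shift \(\arg z=2(\nu+1)\pi/m\), so these roots lie on \(\mathcal R_{\mathrm{even}}^{[m]}\).

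The three cases then follow from the corresponding cases of Proposition~\ref{prop:GH-Wr-sign-distribution}:
\begin{enumerate}[(i)]
\item If \(q=0\), \(R_0^{(r,l,k)}\) is a nonzero constant, so there are no nonzero roots.
\item If \(q\ge1\) and \(r\ge l\), every nonzero root of \(R_q^{(r,l,k)}\) is positive. The root \(X=0\) that occurs when \(r=l\) only contributes to \(z=0\). Hence all nonzero roots lie on \(\mathcal R_{\mathrm{odd}}^{[m]}\).
\item If \(q\ge1\) and \(r<l\), \(R_q^{(r,l,k)}\) has \(q-1\) simple positive roots and one simple negative root. Each simple nonzero root \(\xi\) gives \(m\) distinct simple roots in \(\zeta\). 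This yields \((q-1)m\) roots on \(\mathcal R_{\mathrm{odd}}^{[m]}\) and \(m\) roots on \(\mathcal R_{\mathrm{even}}^{[m]}\).
\end{enumerate}

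The only delicate step is the argument bookkeeping under the rescaling by \(\alpha\). I would verify it by checking the relation \(\alpha^m=-c_m^{-1}\) and the shift of \(\arg\) by \(\pi/m\) modulo \(2\pi\). No genuine obstacle is expected.
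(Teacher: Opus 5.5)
Your proposal is correct and is the argument the paper has in mind; the paper gives no separate proof, since the corollary follows directly from the factorization \eqref{eq:GH-Wr-factorization}, the relation $W_2^{[m,k,l]}(z)\propto \widetilde W_{l,k}^{[m]}(\alpha^{-1}z)$, and Proposition~\ref{prop:GH-Wr-sign-distribution}. Under this relation $z^m=\alpha^m X=-c_m^{-1}X$, so positive roots $X$ of $R_q^{(r,l,k)}$ go to the odd rays and negative ones to the even rays, which is exactly your argument bookkeeping with $\arg\alpha=\pi/m$.
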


\begin{example}
We illustrate Corollary~\ref{cor:W2-ray-distribution} in
Fig.~\ref{fig:WHZeros-N2-k11-l1} by taking
\begin{equation}
    N=2,\quad k=11,\quad l=1.
\end{equation}
The red points denote the roots lying on
\(\mathcal R_{\mathrm{odd}}^{[m]}\), the blue points denote the roots lying on
\(\mathcal R_{\mathrm{even}}^{[m]}\), and the black point denotes the root at the origin. For example, when \(m=2\), we have \(l+k=12=6m+0\), so that
\(q=6\) and \(r=0<l\). Hence, Corollary~\ref{cor:W2-ray-distribution}
implies that \(10\) nonzero roots lie on \(\mathcal R_{\mathrm{odd}}^{[2]}\), while
\(2\) nonzero roots lie on \(\mathcal R_{\mathrm{even}}^{[2]}\), as shown in
Fig.~\ref{fig:WHZeros-N2-k11-l1} (a).

\begin{figure}[H]
    \centering
    \includegraphics[width=0.9\linewidth]{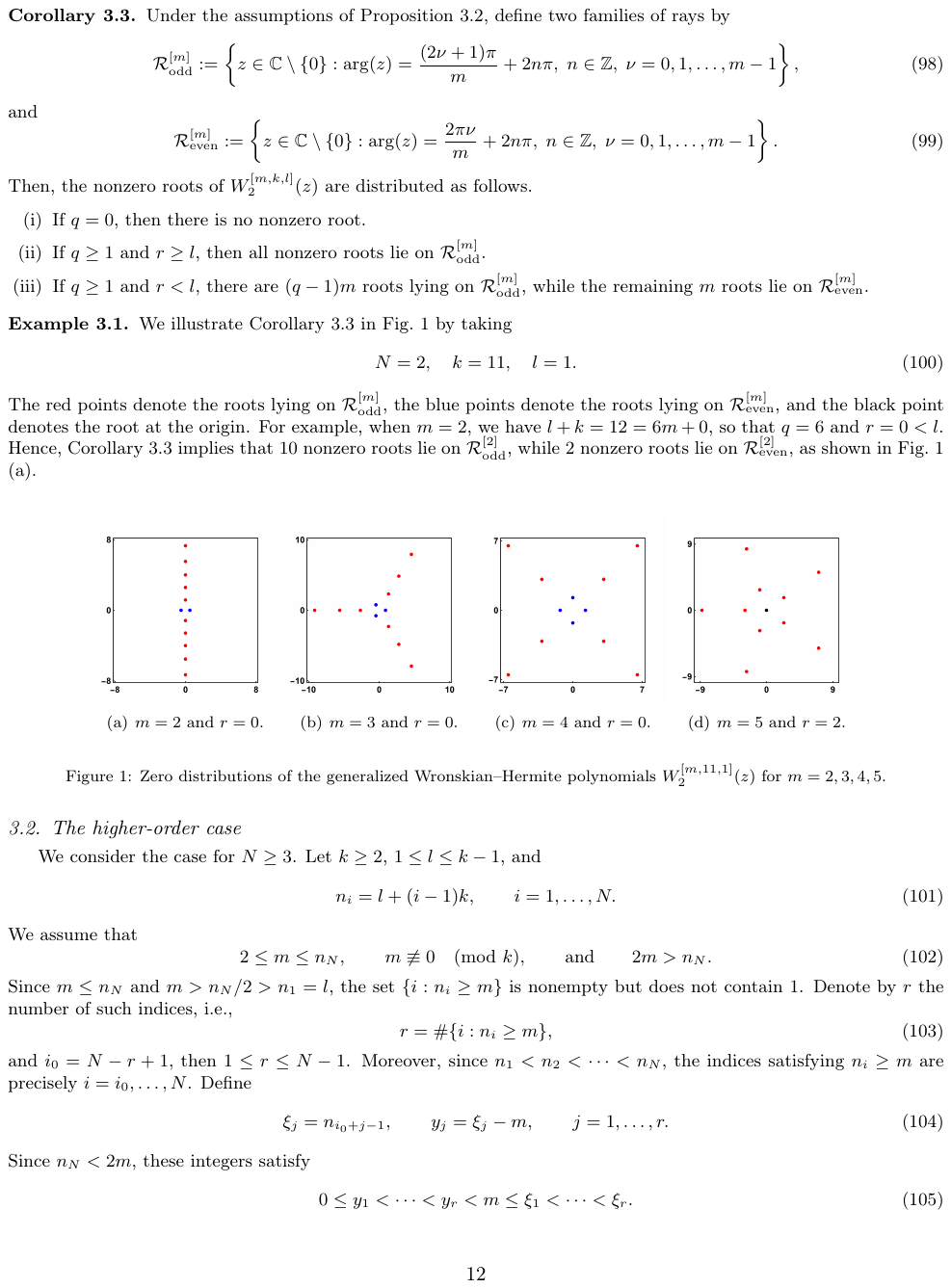}
    \caption{Zero distributions of the generalized Wronskian--Hermite polynomials $W_{2}^{[m,11,1]}(z)$ for $m=2,3,4,5$.}
    \label{fig:WHZeros-N2-k11-l1}
\end{figure}
\end{example}

\subsection{The higher-order case}
We consider the case for \(N\ge3\).   Let
\(k\ge2\), \(1\le l\le k-1\), and
\begin{equation}\label{eq:higher-order-ni}
    n_i=l+(i-1)k,\qquad i=1,\ldots,N.
\end{equation}
We assume that
\begin{equation}\label{eq:large-m-assumption}
    2\le m\le n_N,\qquad m\not\equiv0\pmod{k},\qquad\text{and} \qquad 2m>n_N.
\end{equation}
Since \(m\le n_N\) and \(m>n_N/2>n_1=l\), the set \(\{i:n_i\ge m\}\) is nonempty but does not contain \(1\). Denote by \(r\) the number of such indices, i.e.,
\begin{equation}\label{eq:r-i0-definition}
    r=\#\{i:n_i\ge m\},
\end{equation}
and
$i_0=N-r+1,$ then \(1\le r\le N-1\). Moreover, since \(n_1<n_2<\cdots<n_N\), the indices satisfying \(n_i\ge m\) are precisely \(i=i_0,\ldots,N\).
Define
\begin{equation}\label{eq:xi-y}
    \xi_j=n_{i_0+j-1},
    \qquad
    y_j=\xi_j-m,
    \qquad j=1,\ldots,r.
\end{equation}
Since \(n_N<2m\), these integers satisfy
\begin{equation}\label{eq:y-xi-separation-large-m}
    0\le y_1<\cdots<y_r<m\le \xi_1<\cdots<\xi_r.
\end{equation}

\begin{proposition}\label{prop:two-term-reduction}
Under the assumptions \eqref{eq:large-m-assumption} with $N\ge3$, we have
\begin{equation}\label{eq:two-term-factor}
    W_N^{[m,k,l]}(z)=z^{\Gamma-rm}\det(z^mI_r+M),
\end{equation}
where \(\Gamma\) is defined in
\eqref{def:Gamma},  \(r\) is given in \eqref{eq:r-i0-definition} and \(M=(M_{ab})_{a,b=1}^r\) is the  \(r\times r\)  real matrix with
\begin{equation}\label{eq:M-definition}
    M_{ab}
    =
    \frac{\xi_a!}{y_a!}\ell_{i_0+b-1}(y_a),
    \qquad a,b=1,\ldots,r,
\end{equation}
and
\begin{equation}
    \ell_h(u)
    =
    \prod_{\substack{1\le i\le N\\ i\ne h}}
    \frac{u-n_i}{n_h-n_i}.
\end{equation}
\end{proposition}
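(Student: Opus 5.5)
The plan is to use that, under \eqref{eq:large-m-assumption}, every $p_{n_i}^{[m]}$ has at most two terms. Then I expand the Wronskian by multilinearity and identify the resulting sum with the expansion of $\det(z^mI_r+M)$ into principal minors.

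\textbf{Step 1 (two-term structure).} From \eqref{eq:p-generating-intro},
\[
p_n^{[m]}(z)=\sum_{j\ge0}\frac{z^{n-mj}}{j!\,(n-mj)!}.
\]
Since $n_N<2m$, for $i<i_0$ only the term $j=0$ survives, so $p_{n_i}^{[m]}(z)=z^{n_i}/n_i!$. For $i\ge i_0$ the terms $j=0,1$ survive, so $p_{n_i}^{[m]}(z)=z^{n_i}/n_i!+z^{n_i-m}/(n_i-m)!$, with $n_i-m=y_{i-i_0+1}\ge0$.

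\textbf{Step 2 (multilinear expansion).} Expand the Wronskian column by column. This gives a sum over subsets $S\subseteq\{1,\dots,r\}$, where for $a\in S$ the exponent $\xi_a$ is replaced by $y_a$. Let $\mathbf a^{S}$ be the resulting exponent vector. For monomials we have the standard identity
\[
\Wr\!\left[\tfrac{z^{a_1}}{a_1!},\dots,\tfrac{z^{a_N}}{a_N!}\right]=\frac{\prod_{i<j}(a_j-a_i)}{\prod_i a_i!}\,z^{|\mathbf a|-N(N-1)/2}.
\]
It follows by factoring $z^{a_j}$ out of each column and powers of $z$ out of the rows, which leaves a Vandermonde determinant in the $a_j$. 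The identity also holds when some $a_i$ coincide, since both sides then vanish.

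Since $|\mathbf n|-N(N-1)/2=Nl+(k-1)N(N-1)/2=\Gamma$, the $S$-term equals
\[
z^{\Gamma-m|S|}\,\prod_{a\in S}\frac{\xi_a!}{y_a!}\cdot\frac{\Delta(\mathbf a^S)}{\Delta(\mathbf n)},
\]
once the normalisation $c_N^{[m,k,l]}=\prod n_i!/\Delta(\mathbf n)$ is included. Here $\Delta(\mathbf x)=\prod_{i<j}(x_j-x_i)$.

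On the other side, $\det(z^mI_r+M)=\sum_S z^{m(r-|S|)}\det M_{S,S}$. After multiplying by $z^{\Gamma-rm}$, the powers of $z$ match term by term. It therefore suffices to prove the following for each $S$:
\[
\frac{\Delta(\mathbf a^S)}{\Delta(\mathbf n)}=\det\bigl(\ell_{i_0+b-1}(y_a)\bigr)_{a,b\in S}.
\]
Once this holds, the row factors $\xi_a!/y_a!$ in \eqref{eq:M-definition} account exactly for the factorial prefactor.

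\textbf{Step 3 (Vandermonde ratio via Lagrange interpolation; main step).} Let $V(\mathbf x)=(x_i^{\,j-1})_{i,j=1}^N$. Then $V(\mathbf a^S)V(\mathbf n)^{-1}$ has $(i,h)$-entry $\ell_h(a_i^S)$. This is because each row $(1,u,\dots,u^{N-1})$ is reproduced by Lagrange interpolation at the distinct nodes $n_1,\dots,n_N$, which gives $(1,u,\dots,u^{N-1})=\sum_h\ell_h(u)(1,n_h,\dots,n_h^{N-1})$.

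For unchanged rows $a_i^S=n_i$, so $\ell_h(n_i)=\delta_{ih}$ and the row is a standard basis vector. Expanding the determinant along these unit rows leaves exactly the principal $S\times S$ block $(\ell_{i_0+b-1}(y_a))_{a,b\in S}$, where the rows indexed by $a\in S$ correspond to matrix rows $i_0+a-1$. The row and column index sets coincide, so no sign arises.

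Finally, $\det V(\mathbf a^S)/\det V(\mathbf n)=\Delta(\mathbf a^S)/\Delta(\mathbf n)$ because the nodes keep their order labels. This proves the identity in Step 2 and hence \eqref{eq:two-term-factor}. The matrix $M$ is real because it is built from rational numbers.

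The only care needed is bookkeeping: checking the exponent $\Gamma-m|S|$ against $\Gamma-rm+m(r-|S|)$, and checking that the case $y_a$ equal to some $n_i$ with $i\notin S$ is consistent. In that case both sides vanish, since $\ell_h(n_i)=\delta_{ih}$ produces a repeated row. The hypothesis $m\not\equiv0\pmod k$ is not needed for this identity; it only excludes the monomial case of Lemma~\ref{lem:monomial-cases}.
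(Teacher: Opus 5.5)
Your proof is correct and rests on the same key step as the paper's: Lagrange interpolation at the nodes $n_1,\dots,n_N$, which rewrites each shifted Vandermonde row as a combination of unshifted rows with coefficients $\ell_h(y_a)$. The paper packages this as the single factorization $E(z)=(I_N+z^{-m}K)G$ with $K$ block lower triangular, whereas you unroll it into the subset expansion and match principal minors one by one.

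The paper adds one check that you omit, namely $\Gamma-rm\ge0$, so that $z^{\Gamma-rm}$ is a genuine polynomial factor. This is where $m\not\equiv0\pmod k$ is actually used, contrary to your closing remark: it makes $n_1,\dots,n_{i_0-1},y_1,\dots,y_r$ distinct. In your framework the check is immediate, since the $S=\{1,\dots,r\}$ term is then a nonzero monomial Wronskian of polynomials and so has nonnegative exponent $\Gamma-rm$.
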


\begin{proof}
From the definition of $p_n^{[m]}(z)$ in \eqref{eq:p-generating-intro}, we have
\begin{equation}\label{eq:p-expansion}
    p_n^{[m]}(z)
    =
    \sum_{s=0}^{\lfloor n/m\rfloor}
    \frac{z^{n-ms}}{(n-ms)!s!},
    \qquad n\ge0.
\end{equation}
We keep the convention \(p_n^{[m]}(z)=0\) for \(n<0\).
Since \(n_i\le n_N<2m\) for \(i=1,2,\cdots,N\), each entry
\(p_{n_i-j+1}^{[m]}(z)\) in $W_N^{[m,k,l]}(z)$ has at most two terms:
the unshifted term \(s=0\) and the one-shifted term
\(s=1\). 

Define the polynomials $\Phi_q(u)$ by
\begin{equation}
    \Phi_0(u)=1,
    \qquad
    \Phi_q(u)=u(u-1)\cdots(u-q+1)
    =
    \prod_{s=0}^{q-1}(u-s),
    \qquad q\ge1.
\end{equation}
In particular, if
\(u_0\in\mathbb{Z}_{\ge0}\) and \(q>u_0\), then the product contains a zero factor, so \(\Phi_q(u_0)=0\).

Set $q=j-1$. For the unshifted part,  if \(q\le n_i\), then we have
\begin{equation}
    \frac{z^{n_i-q}}{(n_i-q)!}
    =
    \frac{z^{n_i-q}}{n_i!} \Phi_q(n_i).
\end{equation}
If \(q>n_i\), then
\(\Phi_q(n_i)=0\).
For the shifted part, suppose that \(n_i\ge m\), and write
\(y=n_i-m\). If \(q\le y\), then
\begin{equation}
    \frac{z^{n_i-m-q}}{(n_i-m-q)!}
    =
    \frac{z^{n_i-q-m}}{n_i!} 
    \frac{n_i!}{y!}\Phi_q(y).
\end{equation}
If \(q>y\), then \(\Phi_q(y)=0\). If
\(n_i<m\), there is no shifted contribution.
Thus
\begin{equation}
    W_N^{[m,k,l]}(z)=c_N^{[m,k,l]}\det_{1\le i,j\le N}\left[p_{n_i-j+1}^{[m]}(z)\right]
    =
    \frac{z^\Gamma}{\prod_{1\le i<j\le N}(n_j-n_i)}\det\left( E(z)\right),
    \label{eq:det-factorized-large-m}
\end{equation}
where $\Gamma$ is given in \eqref{def:Gamma} and 
the entries of the matrix \(E(z)\) are given by
\begin{equation}
    E_{ij} = 
\begin{cases} 
\Phi_{j-1}(n_i), & i < i_0, \\ 
\Phi_{j-1}(n_i) + z^{-m}\dfrac{\xi_{i-i_0+1}!}{y_{i-i_0+1}!} \Phi_{j-1}(y_{i-i_0+1}), & i \ge i_0.
\end{cases}
\end{equation}

Define the $N \times N$ matrix $G=\left(g_{ij}\right)$ by
\begin{equation}
    g_{ij}=\Phi_{j-1}(n_i).
    \label{eq:G-definition-large-m}
\end{equation}
Since \(\Phi_0,\Phi_1,\ldots,\Phi_{N-1}\) are monic polynomials of degree
\(0,1,\ldots,N-1\),
\begin{equation}
    \det \left(G\right)
    =
    \prod_{1\le i<j\le N}(n_j-n_i).
    \label{eq:G-vandermonde-large-m}
\end{equation}
Combining \eqref{eq:det-factorized-large-m} with the normalization
\(c_N^{[m,k,l]}\), we obtain
\begin{equation}
    W_N^{[m,k,l]}(z)
    =
    z^\Gamma\frac{\det \left(E(z)\right)}{\det \left(G\right)}.
    \label{eq:W-normalized-extracted-large-m}
\end{equation}
Denote
\begin{equation}
    E(z)=G+z^{-m}B,
    \label{eq:E-G-B-large-m}
\end{equation}
where the entries of matrix $B$ are defined as follows:
\begin{equation}
    B_{ij}=0,\qquad i<i_0,
\end{equation}
and
\begin{equation}
    B_{i_0+a-1,j}
    =
    \frac{\xi_a!}{y_a!}\Phi_{j-1}(y_a),
    \qquad a=1,\ldots,r,\quad j=1,\ldots,N.
    \label{eq:B-entry-large-m}
\end{equation}

We now perform a matrix factorization of \(B\). 
For \(h=1,\ldots,N\), let
\begin{equation}
    \ell_h(u)
    =
    \prod_{\substack{1\le i\le N\\ i\neq h}}
    \frac{u-n_i}{n_h-n_i}.
    \label{eq:lagrange-basis-large-m}
\end{equation}
Then \(\ell_h(n_i)=\delta_{hi}\). Since \(\deg\Phi_q=q\le N-1\), Lagrange
interpolation gives, for \(q=0,\ldots,N-1\),
\begin{equation}
    \Phi_q(t)
    =
    \sum_{h=1}^{N}\Phi_q(n_h)\ell_h(t).
    \label{eq:falling-factorial-interpolation-large-m}
\end{equation}
Since the first $i_0-1$ rows of $B$ are zero and
\begin{equation}
    B_{i_0+a-1,j}=\frac{\xi_a!}{y_a!}\Phi_{j-1}(y_a)=
    \sum_{h=1}^{N}
    \frac{\xi_a!}{y_a!}\ell_h(y_a)\Phi_{j-1}(n_h),
\end{equation}
we have
\begin{equation*}
    B=KG
\end{equation*}
with
\begin{equation}
    K=
    \begin{pmatrix}
    O & O\\
    K_1&M
    \end{pmatrix},
    \label{eq:K-block-form-large-m}
\end{equation}
where each \(O\) denotes a zero block of the appropriate size,
\begin{equation}
    K_1=
    \left(
    \frac{\xi_a!}{y_a!}\ell_h(y_a)
    \right)_{1\le a\le r,\ 1\le h\le i_0-1},
\end{equation}
and
\begin{equation}
    M
    =\left(
    \frac{\xi_a!}{y_a!}\ell_{i_0+b-1}(y_a)\right)_{1\le a,b\le r}.
    \label{eq:M-active-block-large-m}
\end{equation}
Therefore
\begin{equation}
    E(z)=G+z^{-m}KG=(I_N+z^{-m}K)G.
    \label{eq:E-KG-factorization-large-m}
\end{equation}
Since
\begin{equation}
    I_N+z^{-m}K
    =
    \begin{pmatrix}
    I_{i_0-1}&O\\
    z^{-m}K_1&I_r+z^{-m}M
    \end{pmatrix},
\end{equation}
we have
\begin{equation}
    \det\left(E(z)\right)
    =
    \det\left(G\right)\det\left(I_r+z^{-m}M\right).
\end{equation}
Substituting this into \eqref{eq:W-normalized-extracted-large-m} gives
\begin{align}
    W_N^{[m,k,l]}(z)
    =
    z^\Gamma\det(I_r+z^{-m}M) =
    z^{\Gamma-rm}\det(z^mI_r+M).
    \label{eq:W-reduction-large-m}
\end{align}

It remains to prove that the exponent \(\Gamma-rm\) is nonnegative. Note that
\begin{align}
    \sum_{i=1}^{N}n_i-rm
    =
    \sum_{i=1}^{i_0-1}n_i+\sum_{a=1}^{r}(\xi_a-m) =
    \sum_{i=1}^{i_0-1}n_i+\sum_{a=1}^{r}y_a.
\end{align}
Since \(m\not\equiv0\pmod{k}\) and \(n_i\equiv l\pmod{k}\), we have
\begin{equation}
    y_a=\xi_a-m\equiv l-m\not\equiv l\pmod{k},
\end{equation}
which implies \(y_a\neq n_i\).
Moreover, since the \(n_i\)'s are distinct and the \(y_a\)'s are distinct,
the \(N\) nonnegative integers
\(n_1,\ldots,n_{i_0-1},y_1,\ldots,y_r\) are distinct. Consequently,
their sum is at least \(\binom{N}{2}\), and 
\begin{equation}
    \Gamma-rm
    =
    \sum_{i=1}^{N}n_i-rm-\binom{N}{2}
    \ge0.
\end{equation}
This completes the proof.
\end{proof}

\begin{proposition}\label{prop:reduced-roots}
Under the assumptions of Proposition \ref{prop:two-term-reduction}, the polynomial
\(\det(XI_r+M)\) has roots \(X_1,\ldots,X_r\) satisfying
\begin{equation}
    X_\nu\in\R\setminus\{0\}\quad
    \text{ and } \quad
    |X_\mu|\ne |X_\nu|,
    \quad \mu\ne\nu,
\end{equation}
where $1\le\mu,\nu\le r.$
Consequently, all roots of \(\det(X I_r+M)\) are nonzero and simple.
\end{proposition}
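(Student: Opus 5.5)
Roots of \(\det(XI_r+M)\) are \(X=-\lambda\) for the eigenvalues \(\lambda\) of \(M\). It therefore suffices to show that \(M\) is strictly sign-regular. Theorem~\ref{thm:GK} then gives that its eigenvalues are real, nonzero, and have pairwise distinct moduli, which is exactly the claim. The plan is to factor \(M\) as a diagonal matrix times a Cauchy-type matrix times a diagonal matrix, and then read off the signs of all minors.

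\textbf{Step 1: factorization.} For the Lagrange basis we write
\[
\ell_h(u)=\frac{\omega(u)}{(u-n_h)\,\omega'(n_h)},\qquad \omega(u)=\prod_{i=1}^N(u-n_i).
\]
This is valid because \(y_a\ne n_h\), which the proof of Proposition~\ref{prop:two-term-reduction} shows using \(m\not\equiv0\pmod k\). Hence
\[
M=D_1\,C\,D_2,\qquad
D_1=\operatorname{diag}\!\left(\frac{\xi_a!}{y_a!}\,\omega(y_a)\right),\quad
C=\left(\frac{1}{y_a-\xi_b}\right)_{a,b=1}^r,\quad
D_2=\operatorname{diag}\!\left(\frac{1}{\omega'(\xi_b)}\right),
\]
using \(n_{i_0+b-1}=\xi_b\). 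Both diagonal matrices are nonsingular and real.

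\textbf{Step 2: the Cauchy matrix is SSR.} Every \(p\times p\) submatrix of \(C\) is again a Cauchy matrix, indexed by \(a_1<\dots<a_p\) and \(b_1<\dots<b_p\). Its determinant is
\[
\frac{\prod_{s<t}(y_{a_t}-y_{a_s})(\xi_{b_s}-\xi_{b_t})}{\prod_{s,t}(y_{a_s}-\xi_{b_t})}.
\]
By \eqref{eq:y-xi-separation-large-m} we have \(y_a<\xi_b\) for all \(a,b\), so every factor in the denominator is negative. In the numerator, each \(y\)-difference is positive and each \(\xi\)-difference is negative. The sign is therefore
\[
(-1)^{p^2}(-1)^{p(p-1)/2},
\]
which depends only on \(p\). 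Thus \(C\) is SSR.

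\textbf{Step 3: the diagonal scalings.} A diagonal scaling \(D_1CD_2\) multiplies the minor on rows \(\alpha\) and columns \(\beta\) by \(\prod_{a\in\alpha}d^{(1)}_a\prod_{b\in\beta}d^{(2)}_b\). This factor has constant sign over all \(\alpha,\beta\) of a fixed size only if the entries of each diagonal matrix have constant sign, possibly after an alternating reindexing \((-1)^a\) that can be absorbed. So we must determine \(\operatorname{sgn}\omega(y_a)\) and \(\operatorname{sgn}\omega'(\xi_b)\).

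For \(\omega'(\xi_b)\), where \(\xi_b=n_{i_0+b-1}\), the sign is \((-1)^{N-(i_0+b-1)}\). This alternates in \(b\).

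For \(\omega(y_a)\), the sign is \((-1)^{\#\{i:\,n_i>y_a\}}\). Since \(y_a<m\le\xi_1\), all \(n_i\) with \(i\ge i_0\) exceed \(y_a\). Consecutive \(y_a\)'s differ by \(k\), so each interval \((y_a,y_{a+1})\) contains exactly one point of the progression \(n_i\), and that point lies below \(m\) or not depending on \(a\). The expected conclusion is that this sign also alternates, or at least stays constant up to an alternating factor.

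An alternating factor \((-1)^a\) on rows or \((-1)^b\) on columns is handled by conjugating with \(J=\operatorname{diag}((-1)^a)\). Conjugation preserves eigenvalues. The matrix \(JCJ\) has minors of sign \(\prod(-1)^{a}(-1)^{b}\) times the Cauchy sign; with index sums \(\sum a_s\) and \(\sum b_s\) varying, this is still not of uniform sign.

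I therefore expect the main obstacle to be exactly this sign bookkeeping: showing that \(D_1\) and \(D_2\) each carry alternating signs of the same parity pattern. If so, \(D_1=\sigma J|D_1|\) and \(D_2=\tau J|D_2|\) with constant signs \(\sigma,\tau\). Then \(M=\sigma\tau\,|D_1|\,(JCJ)\,|D_2|\). For \(JCJ\), the row and column index sums contribute \((-1)^{\sum a_s+\sum b_s}\), which is not uniform.

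Because of this, I would instead handle the alternation through similarity. Note that \(J D_1 C D_2 J\) has the same spectrum as \(M\). This reduces the problem to the case in which both \(D_1\) and \(D_2\) have constant signs, and then \(M\) is SSR by Step 2.

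Concretely, I would verify that \(\operatorname{sgn}\omega(y_a)\cdot\operatorname{sgn}\omega'(\xi_a)\) is independent of \(a\). With that in hand, write \(D_1=\epsilon_1 J\Delta_1\) and \(D_2=\epsilon_2 J\Delta_2\) with \(\Delta_i\) positive. Then
\[
M=\epsilon_1\epsilon_2\,J\Delta_1 C\Delta_2 J,
\]
which is similar to \(\epsilon_1\epsilon_2\Delta_1C\Delta_2\). This last matrix is SSR, and Theorem~\ref{thm:GK} finishes the proof.

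If the parity count in Step 3 fails, the fallback is to apply Theorem~\ref{thm:SSRk} to \(D_1C\) and \(D_2\) separately. Note that \(M\) is similar to \(D_2D_1C\), which is a single diagonal matrix times \(C\). So the fallback is to check that \(D_2D_1\) has entries of constant sign, and this is the same parity computation.
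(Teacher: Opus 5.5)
Your proposal is correct and follows essentially the paper's route. You factor \(M\) as diagonal \(\times\) Cauchy \(\times\) diagonal, obtain strict sign-regularity of the Cauchy part from the Cauchy determinant and the separation \(y_a<\xi_b\), remove the sign pattern of the diagonal factors by a similarity, and apply Theorem~\ref{thm:GK}. Your variant \(D_2MD_2^{-1}=D_2D_1C\) is a tidy way to do the last step, since it only needs \(\operatorname{sgn}\omega(y_a)\cdot\operatorname{sgn}\omega'(\xi_a)\) to be independent of \(a\).

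The sign fact you leave as ``expected'' is already proved by your own interval observation. Because \(y_a\not\equiv l \pmod k\) and \(y_{a+1}=y_a+k\), the interval \((y_a,y_{a+1})\) contains exactly one element of \(l+k\Z\). That element lies in \((0,n_N)\), so it is one of \(n_1,\dots,n_{N-1}\). It is always below \(m\), so the aside ``below \(m\) or not depending on \(a\)'' is a red herring. Hence \(\#\{i:n_i>y_a\}\) drops by exactly one from \(a\) to \(a+1\), and \(\operatorname{sgn}\omega(y_a)\) alternates, just as \(\operatorname{sgn}\omega'(\xi_b)=(-1)^{N-i_0-b+1}\) does. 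You should commit to this rather than offer a ``fallback'', since the fallback rests on the same parity computation.

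The paper reaches the same conclusion by writing \(m=sk+\mu\) and counting \(j_a-s-1\) nodes below \(y_a\). This yields \(\operatorname{sgn}M_{ab}=(-1)^{a+b-s}\), and the paper then conjugates to the positive matrix \(A=(-1)^sSMS\).
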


\begin{proof}
Denote
\begin{equation}
    P(u)=\prod_{i=1}^{N}(u-n_i).
\end{equation}
Then, for \(h=1,\ldots,N\), we have
\begin{equation}
    \ell_h(u)
    =
    \frac{P(u)}{(u-n_h)P'(n_h)}.
\end{equation}
Using \(n_{i_0+b-1}=\xi_b\), we obtain
\begin{equation}
    M_{ab}
    =
    \frac{\xi_a!}{y_a!}
    \frac{P(y_a)}
    {(y_a-\xi_b)P'(\xi_b)}.
    \label{eq:M-cauchy-raw-large-m}
\end{equation}
Since the factorial term is positive and $y_a - \xi_b < 0$ from \eqref{eq:y-xi-separation-large-m}, we next determine the signs of $P(y_a)$ and $P'(\xi_b)$, respectively.

Let us write $m = sk + \mu$, where $s \in \mathbb{Z}_{\ge0}$ and $1 \le \mu \le k-1$. Note that the condition $1 \le \mu \le k-1$ comes from $m \not\equiv 0 \pmod{k}$. For each $a = 1, \ldots, r$, we set $j_a = i_0 + a - 1$, which implies $\xi_a = n_{j_a}$ and leads to
\begin{equation}
    y_a = n_{j_a} - m= l + (j_a - s - 1)k - \mu.
\end{equation}

To determine the sign of $P(y_a)$, we first count the number of nodes $n_i$ less than $y_a$. The inequality $n_i < y_a$ can be explicitly written as
\begin{equation}
    l+(i-1)k < l+(j_a-s-1)k-\mu.
\end{equation}
Let $q_a = j_a - s - 1$. Since $y_a \ge 0$, $q_a$ must be nonnegative; indeed, if $q_a < 0$, then $$y_a = l + q_ak - \mu \le l - k - \mu < 0,$$ which yields a contradiction. Moreover, since $0 < \mu < k$, the inequalities
\begin{equation}
    (q_a-1)k < q_ak-\mu < q_ak
\end{equation}
imply that 
$$0\le y_a<n_{1}, \quad\text{ for }\quad q_a =0,$$
and
$$n_{q_a}<y_a<n_{q_{a}+1}, \quad\text{ for }\quad q_a \ge 1$$
Consequently, the number of nodes $n_i$ strictly less than $y_a$ is
\begin{equation}
    q_a = j_a - s - 1.
\end{equation}
Furthermore, there is no node $n_i$ equal to $y_a$. 
As a result, the product $P(y_a) = \prod_{i=1}^{N}(y_a-n_i)$ contains exactly $q_a$ positive factors and $N-q_a$ negative factors, which shows that
\begin{equation}
    \operatorname{sgn}P(y_a) = (-1)^{N-q_a}. \label{eq:sign-P-y-large-m}
\end{equation}

Similarly, considering $j_b = i_0 + b - 1$ so that $\xi_b = n_{j_b}$, the derivative evaluated at the node $n_{j_b}$ yields
\begin{equation}
    P'(\xi_b) = \prod_{\substack{1\le i\le N\\ i\neq j_b}}(n_{j_b}-n_i).
\end{equation}
Here, the factors corresponding to $i > j_b$ are strictly negative, and there are precisely $N-j_b$ such factors. This implies
\begin{equation}
    \operatorname{sgn}P'(\xi_b) = (-1)^{N-j_b}. \label{eq:sign-P-prime-large-m}
\end{equation} 

Substituting \eqref{eq:sign-P-y-large-m} and \eqref{eq:sign-P-prime-large-m} into \eqref{eq:M-cauchy-raw-large-m}, since $q_a = j_a - s - 1$, $j_a = i_0 + a - 1$ and $j_b = i_0 + b - 1$, the sign of the matrix entries can be computed as
\begin{equation}
    \operatorname{sgn}M_{ab} = (-1)^{a+b-s}. \label{eq:sign-M-large-m}
\end{equation}

Let $\eta = (-1)^s$ and
$$S = \operatorname{diag}(1, -1, 1, -1, \ldots, (-1)^{r-1}).$$ Define the matrix $A = \eta SMS$, whose entries are given by
\begin{equation}
    A_{ab} = \eta(-1)^{a-1}M_{ab}(-1)^{b-1}.
\end{equation}
By virtue of \eqref{eq:sign-M-large-m}, the sign of $A_{ab}$ can be evaluated as
\begin{equation}
    (-1)^s(-1)^{a-1}(-1)^{b-1}(-1)^{a+b-s}  = 1,
\end{equation}
which ensures that all entries of $A$ are positive.

We now present the explicit expressions for these positive entries.  Combining $y_a - \xi_b < 0$, \eqref{eq:M-cauchy-raw-large-m} and the aforementioned sign cancellation, we obtain
\begin{equation}
    A_{ab} = |M_{ab}| = \frac{\xi_a!}{y_a!} \frac{|P(y_a)|}{(\xi_b-y_a)|P'(\xi_b)|} = \frac{u_a v_b}{\xi_b-y_a}, \label{eq:A-cauchy-type-large-m}
\end{equation}
where the positive factors $u_a$ and $v_b$ are defined by
\begin{equation}
    u_a = \frac{\xi_a!|P(y_a)|}{y_a!} > 0 \qquad \text{and} \qquad v_b = \frac{1}{|P'(\xi_b)|} > 0.
\end{equation}

We now prove that \(A\) is SSR. Fix \(q\in\{1,\ldots,r\}\), and choose arbitrary
row and column indices
\begin{equation}
    1\le a_1<\cdots<a_q\le r,
    \qquad
    1\le b_1<\cdots<b_q\le r.
\end{equation}
By \eqref{eq:A-cauchy-type-large-m}, we have
\begin{align}
    \det\left(A_{a_s b_t}\right)
    &=
    \left(\prod_{s=1}^{q}u_{a_s}\right)
    \left(\prod_{t=1}^{q}v_{b_t}\right)
    \det\left(
    \frac{1}{\xi_{b_t}-y_{a_s}}
    \right),
\end{align}
where 
\begin{equation}
    \left(\prod_{s=1}^{q}u_{a_s}\right)
    \left(\prod_{t=1}^{q}v_{b_t}\right)>0.
\end{equation}
By the Cauchy determinant formula,
\begin{equation}
    \det\left(
    \frac{1}{\xi_{b_t}-y_{a_s}}
    \right)
    =
    \frac{
    \prod_{1\le s<t\le q}(y_{a_s}-y_{a_t})
    \prod_{1\le s<t\le q}(\xi_{b_t}-\xi_{b_s})
    }{
    \prod_{s,t=1}^{q}(\xi_{b_t}-y_{a_s})
    }.
    \label{eq:cauchy-determinant-large-m}
\end{equation}
By
\eqref{eq:y-xi-separation-large-m},
\begin{equation}
    y_{a_1}<\cdots<y_{a_q}<\xi_{b_1}<\cdots<\xi_{b_q}.
\end{equation}
Therefore each factor in the denominator of
\eqref{eq:cauchy-determinant-large-m} is positive, and each factor in
\begin{equation}
    \prod_{1\le s<t\le q}(\xi_{b_t}-\xi_{b_s})
\end{equation}
is positive. On the other hand, each factor in
\begin{equation}
    \prod_{1\le s<t\le q}(y_{a_s}-y_{a_t})
\end{equation}
is negative. There are \(\binom q2\) such factors. Hence, we have
\begin{equation}
    \operatorname{sgn}
    \det\left(A_{a_s b_t}\right)
    =
    (-1)^{\binom q2}.
\end{equation}
Therefore, for every
\(q=1,\ldots,r\), all \(q\times q\) minors of \(A\) are nonzero and have the
same sign. Hence, \(A\) is an SSR matrix. In particular, \(\det A\neq0\).

By Theorem~\ref{thm:GK}, the eigenvalues of \(A\) are real, nonzero,
algebraically simple, and have distinct moduli. Since
\begin{equation}
    A=\eta SMS,
    \qquad
    S^{-1}=S,
    \qquad
    \eta\in\{1,-1\},
\end{equation}
the eigenvalues of \(M\) are also real, nonzero, algebraically simple, and have distinct moduli.

By denoting the eigenvalues of $M$ as $-X_1, -X_2, \ldots, -X_r$, we have
\begin{equation}
    \det(X I_r+M) = \prod_{\nu=1}^{r}(X -X_\nu),
\end{equation}
where $X_\nu \in \mathbb{R}\setminus\{0\}$ and $|X_\mu| \neq |X_\nu|$ for all $\mu \neq \nu$,
which immediately implies that all roots of $\det(X I_r+M)$ are simple.

\end{proof}

According to Propositions
\ref{prop:two-term-reduction}-\ref{prop:reduced-roots}, there exist real nonzero numbers \(X_1,\ldots,X_r\) such that
\begin{equation}\label{eq:Nge3-X-properties}
    X_\mu\ne X_\nu,\qquad |X_\mu|\ne |X_\nu|,
    \qquad \mu\ne\nu,
\end{equation}
and
\begin{equation}\label{eq:Nge3-factorization}
    W_N^{[m,k,l]}(z)
    =
    z^{\Gamma-rm}\prod_{\nu=1}^{r}(z^m-X_\nu).
\end{equation}
This completes the proof of case (ii) in Theorem \ref{thm:main-intro}.

The factorization obtained in Propositions~\ref{prop:two-term-reduction}-\ref{prop:reduced-roots} yields a precise geometric description of the nonzero roots.
\begin{corollary}
\label{cor:alternating-ring-structure}
Under the assumptions of Proposition \ref{prop:two-term-reduction}, let 
\begin{equation}
\label{eq:alternating-ring-factorization}
    W_N^{[m,k,l]}(z)
    =
    z^{\Gamma-rm}
    \prod_{\nu=1}^{r}(z^m-X_\nu),
\end{equation}
where \(X_1,X_2,\ldots,X_r\in\mathbb R\setminus\{0\}\) are labeled so that
\begin{equation}
    |X_1|>|X_2|>\cdots>|X_r|>0.
\end{equation}
Then the following statements hold.

\begin{enumerate}[(i)]
\item The signs of \(X_1,X_2,\ldots,X_r\) alternate, namely
\begin{equation}
    X_\nu X_{\nu+1}<0,
    \qquad
    \nu=1,\ldots,r-1.
\end{equation}

\item The nonzero roots of \(W_N^{[m,k,l]}(z)\) form an alternating
concentric-ring structure: they lie on \(r\) distinct concentric circles
with radii
\begin{equation}
    \rho_\nu=|X_\nu|^{1/m},
    \qquad
    \rho_1>\rho_2>\cdots>\rho_r>0,
\end{equation}
form a regular \(m\)-gon on each circle, and the regular \(m\)-gons on
adjacent circles are rotated by the angle \(\pi/m\) relative to each other.
\end{enumerate}
\end{corollary}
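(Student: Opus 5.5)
The plan is to derive the sign pattern of the \(X_\nu\) from the sign-regularity structure of the matrix \(A=\eta SMS\) established in the proof of Proposition~\ref{prop:reduced-roots}. There we showed that every \(q\times q\) minor of \(A\) has sign \(\varepsilon_q=(-1)^{\binom q2}\), for \(q=1,\ldots,r\). The eigenvalues of \(A\) are \(\lambda_\nu=-\eta X_\nu\), since \(A\) is similar to \(\eta M\) and the eigenvalues of \(M\) are \(-X_\nu\). The ordering \(|\lambda_1|>\cdots>|\lambda_r|\) therefore coincides with the labeling \(|X_1|>\cdots>|X_r|\) in the statement.

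\textbf{Step (i).} Apply Theorem~\ref{thm:SSRk} for \(k=1,\ldots,r-1\) to the nonsingular \(SSR\) matrix \(A\). This gives \(\varepsilon_k\lambda_1\cdots\lambda_k>0\). For \(k=r\) the same conclusion holds directly, because \(\det A=\lambda_1\cdots\lambda_r\) has sign \(\varepsilon_r\). Dividing consecutive products, we get
\[
\operatorname{sgn}\lambda_{k}=\varepsilon_{k}\varepsilon_{k-1}=(-1)^{\binom k2-\binom{k-1}2}=(-1)^{k-1},
\]
with the convention \(\varepsilon_0=1\). Equivalently, Corollary~\ref{cor:SSR-consecutive}(a) gives \(\varepsilon_i\varepsilon_{i+1}\lambda_{i+1}>0\). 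Hence \(\operatorname{sgn}\lambda_\nu=(-1)^{\nu-1}\), and \(\operatorname{sgn}X_\nu=-\eta(-1)^{\nu-1}\). The signs therefore alternate, so \(X_\nu X_{\nu+1}<0\). The only care needed is bookkeeping the sign relation \(\lambda_\nu=-\eta X_\nu\) and checking that the similarity \(A=\eta SMS\) preserves the moduli ordering. This step is the main point of the proof, but it is essentially immediate from the cited theorems.

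\textbf{Step (ii).} By \eqref{eq:alternating-ring-factorization}, the nonzero roots are the solutions of \(z^m=X_\nu\) for \(\nu=1,\ldots,r\). These are simple and disjoint across \(\nu\) because the moduli \(|X_\nu|\) are distinct. For \(X_\nu>0\), the roots are \(\rho_\nu e^{2\pi\i j/m}\), lying on \(\mathcal R^{[m]}_{\mathrm{even}}\). For \(X_\nu<0\), they are \(\rho_\nu e^{(2j+1)\pi\i/m}\), lying on \(\mathcal R^{[m]}_{\mathrm{odd}}\). In both cases they form a regular \(m\)-gon on the circle of radius \(\rho_\nu=|X_\nu|^{1/m}\), and the radii are strictly decreasing. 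By (i), adjacent circles carry polygons of opposite type, which differ by a rotation of \(\pi/m\). This completes the planned argument.
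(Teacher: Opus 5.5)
Your proposal is correct and follows the paper's proof essentially step for step. The paper also works with $A=\eta SMS$ and the minor signs $\varepsilon_q=(-1)^{\binom q2}$, gets $\lambda_1>0$ from Theorem~\ref{thm:SSRk}, the intermediate signs from Corollary~\ref{cor:SSR-consecutive}(a), and the last sign from $\operatorname{sgn}\det A=\varepsilon_r$, and then transfers the alternation via $X_\nu=-\eta\lambda_\nu$ to the $m$-gons solving $z^m=X_\nu$. Your division of consecutive products from Theorem~\ref{thm:SSRk} is only an equivalent repackaging of that same step.
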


\begin{proof}
The case \(r=1\) is immediate, so we assume \(r\ge2\) in subsequent discussions.

We adopt the notation from the proof of Proposition~\ref{prop:reduced-roots}, where the matrix $M$ is transformed into $A = \eta SMS$. Here, $\eta \in \{1, -1\}$ and $S = S^{-1}$ is a diagonal matrix such that $A$ is a nonsingular, strictly sign-regular matrix. Furthermore, for each $q = 1, 2, \ldots, r$, all $q \times q$ minors of $A$ share the common sign $\varepsilon_q = (-1)^{\binom{q}{2}}$. Let the eigenvalues of $A$ be ordered such that
\begin{equation}
    |\lambda_1| > |\lambda_2| > \cdots > |\lambda_r| > 0.
\end{equation}
where $\lambda_i \in \R \; (1\le i \le r).$

We now determine their signs sequentially. First, since $\varepsilon_1 = 1$, applying Theorem~\ref{thm:SSRk} with $k = 1$ yields $\lambda_1 > 0$. Next, for each $i = 1, \ldots, r - 2$, Corollary~\ref{cor:SSR-consecutive} guarantees that $\varepsilon_i \varepsilon_{i+1} \lambda_{i+1} > 0$. Since the sign product simplifies to
\begin{equation}
    \varepsilon_i \varepsilon_{i+1} = (-1)^{\binom{i}{2} + \binom{i+1}{2}} = (-1)^i,
\end{equation}
we immediately obtain
\begin{equation}
    \operatorname{sgn}(\lambda_\nu) = (-1)^{\nu-1} \qquad \text{for } \nu = 1,2, \ldots, r - 1.
\end{equation}
Finally, to determine the sign of the remaining eigenvalue $\lambda_r$, we recall that $\det A=\lambda_1 \lambda_2 \cdots \lambda_r $. Given that the common sign of the $r \times r$ minors is $\operatorname{sgn}(\det A) = \varepsilon_r = (-1)^{\binom{r}{2}}$, it follows that
\begin{equation}
    \operatorname{sgn}(\lambda_r) =  (-1)^{\binom{r}{2}-\binom{r-1}{2}} = (-1)^{r-1}.
\end{equation}
Consequently, we conclude that the signs of all eigenvalues alternate according to
\begin{equation}
    \operatorname{sgn}(\lambda_\nu) = (-1)^{\nu-1} \qquad \text{for } \nu = 1,2, \ldots, r.
\end{equation}

Since $A = \eta SMS$ and the matrix $SMS$ is similar to $M$, the eigenvalues of $M$ are given by $\mu_\nu = \eta \lambda_\nu$ for $\nu = 1, \ldots, r$ under the same ordering by moduli. In the aforementioned factorization, we have $X_\nu = -\mu_\nu = -\eta \lambda_\nu$, which ensures that the signs of $X_\nu$ likewise alternate, yielding
\begin{equation}
    X_\nu X_{\nu+1} < 0 \qquad \text{for } \nu = 1, \ldots, r-1.
\end{equation}

For each \(\nu\), the equation \(z^m=X_\nu\) has \(m\) distinct roots forming a regular \(m\)-gon on the circle
\begin{equation}
    |z|=\rho_\nu,
    \qquad
    \rho_\nu=|X_\nu|^{1/m}.
\end{equation}
Since adjacent \(X_\nu\)'s have opposite signs, the corresponding regular
\(m\)-gons are rotated by \(\pi/m\) relative to each other. Moreover,
\(|X_1|>\cdots>|X_r|>0\) implies
\begin{equation}
    \rho_1>\cdots>\rho_r>0.
\end{equation}
\end{proof}

\begin{example}
We give some examples in Fig. \ref{fig:WHZeros-N8-k2-l1-m9-15} to illustrate Corollary~\ref{cor:alternating-ring-structure}.
For \(N=8\), \(k=2\), and \(l=1\), the nonzero roots of
the Yablonskii--Vorob'ev polynomial hierarchy \(W_{8}^{[m,2,1]}(z)\) lie on one or more distinct concentric circles. On each
circle, the roots form a regular \(m\)-gon, and the regular \(m\)-gons on two adjacent circles are rotated by an angle \(\pi/m\) relative to each other. 
\begin{figure}[H]
    \centering
    \includegraphics[width=0.9\linewidth]{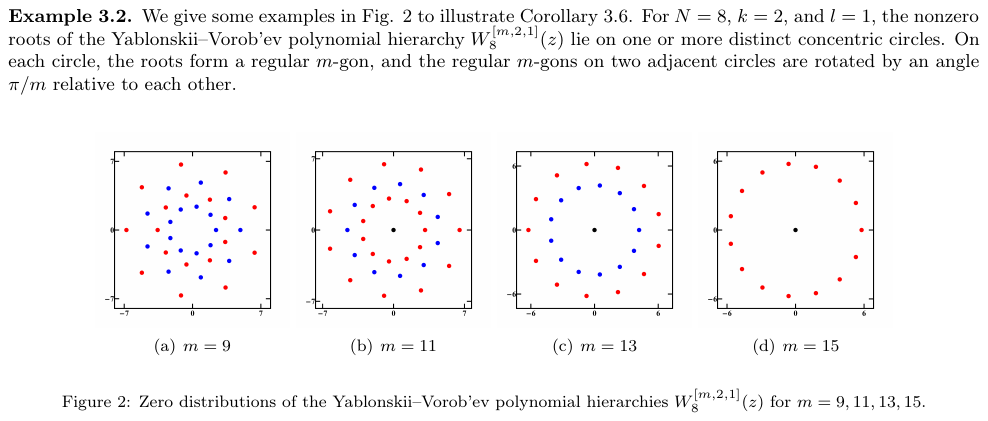}
    \caption{Zero distributions of the Yablonskii--Vorob'ev polynomial hierarchies $W_{8}^{[m,2,1]}(z)$ for $m=9,11,13,15$.}
    \label{fig:WHZeros-N8-k2-l1-m9-15}
\end{figure}

We also present another example, namely the Okamoto polynomial hierarchies $W_{10}^{[m,3,2]}(z)$, in Fig. \ref{fig:WHZeros-N10-k3-l2-m16-20}. For
\(m=16,17,19,20\), direct calculations give
\(r=5,5,4,4\), respectively. Hence, the nonzero roots lie on \(5,5,4,4\) distinct concentric circles.
\begin{figure}[H]
    \centering
    \includegraphics[width=0.9\linewidth]{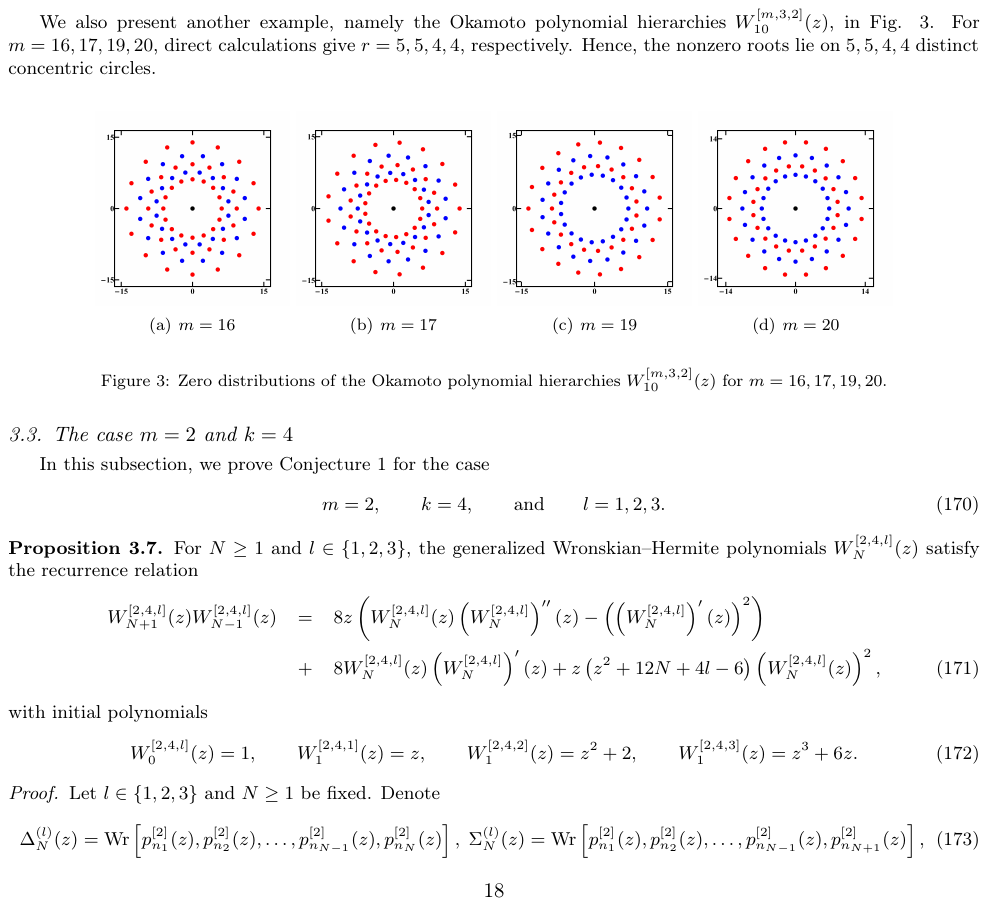}
    \caption{Zero distributions of the Okamoto polynomial hierarchies $W_{10}^{[m,3,2]}(z)$ for $m=16,17,19,20$.}
    \label{fig:WHZeros-N10-k3-l2-m16-20}
\end{figure}

\end{example}

\subsection{The case \texorpdfstring{\(m=2\) and \(k=4\)}{m=2 and k=4}}
\label{subsec:k4-m2-proof}
In this subsection, we prove Conjecture~\ref{conjecture:nonzero simple of WH} for the case 
\begin{equation}
    m=2,\qquad k=4,\qquad \text{and} \qquad l=1,2,3.
\end{equation}

\begin{proposition}
\label{prop:k4-W-recurrence}
For $N\geq1$ and $l \in \{1,2,3\}$, the generalized Wronskian--Hermite polynomials \(W_N^{[2,4,l]}(z)\) satisfy the recurrence relation
\begin{eqnarray}
\label{eq:k4-W-recurrence}
    W_{N+1}^{[2,4,l]}(z)W_{N-1}^{[2,4,l]}(z)&=&8z\left(W_N^{[2,4,l]}(z)\left(W_N^{[2,4,l]}\right)''(z)-\left(\left(W_N^{[2,4,l]}\right)'(z)\right)^2\right) \nonumber\\
    &+&8W_N^{[2,4,l]}(z)\left(W_N^{[2,4,l]}\right)'(z)+z\left(z^2+12N+4l-6\right)\left(W_N^{[2,4,l]}(z)\right)^2,
\end{eqnarray}
with initial polynomials
\begin{equation}
\label{eq:k4-W-initial-values}
    W_0^{[2,4,l]}(z)=1,\qquad W_1^{[2,4,1]}(z)=z,\qquad W_1^{[2,4,2]}(z)=z^2+2,\qquad W_1^{[2,4,3]}(z)=z^3+6z.
\end{equation}
\end{proposition}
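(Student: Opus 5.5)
The plan is to lift \(W_N^{[2,4,l]}\) to a tau function of the KP hierarchy with extra flow variables, and then obtain \eqref{eq:k4-W-recurrence} by combining three identities: a \(4\)-reduction, a weighted homogeneity, and one Toda-type bilinear relation linking \(\tau_{N-1},\tau_N,\tau_{N+1}\). The reduction and homogeneity steps are routine; the main obstacle is finding the right Toda-type relation.

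\emph{Lift and normalization.} Define \(p_n(x;\mathbf t)\) by \(\sum_n p_n\epsilon^n=\exp(x\epsilon+\sum_{j\ge2}t_j\epsilon^j)\), so that \(\partial_{t_j}p_n=\partial_x^jp_n=p_{n-j}\). Put
\[
\tau_N(x;\mathbf t)=c_N^{[2,4,l]}\Wr_x[p_{n_1},\ldots,p_{n_N}],\qquad n_i=l+4(i-1).
\]
Then \(W_N^{[2,4,l]}(z)=\tau_N(z;t_2=1,t_j=0\ (j\ge3))\).

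\emph{Step 1: reduction and homogeneity.} Each \(\tau_N\) is a Schur function, so it solves the KP hierarchy. Differentiating the Wronskian column by column gives \(\partial_{t_4}\tau_N=\sum_i\Wr[\ldots,p_{n_i-4},\ldots]\). For \(i\ge2\) the replaced entry \(p_{n_i-4}=p_{n_{i-1}}\) duplicates another column, and for \(i=1\) it is \(p_{l-4}=0\). Hence \(\partial_{t_4}\tau_N=0\), i.e. \(\tau_N\) is \(4\)-reduced.

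The same column argument gives that \(\tau_N\) is weighted homogeneous of degree \(\Gamma\) with \(\deg x=1\), \(\deg t_j=j\). This yields the Euler relation
\[
x\partial_x\tau_N+2t_2\partial_{t_2}\tau_N+3t_3\partial_{t_3}\tau_N=\Gamma\tau_N,
\]
with no \(t_4\)-term by the reduction. Since \(\partial_{t_2}\) acting on Wronskians of the \(p_n\) can be written through \(\partial_x^2\) plus a bilinear correction, this will let me trade \(t_2\)-derivatives for \(x\)-derivatives. At \(t_2=1\) it produces the explicit factors of \(z\) seen in \eqref{eq:k4-W-recurrence}.

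\emph{Step 2: a Toda-type relation (main obstacle).} I would apply the Desnanot--Jacobi (Sylvester) identity to the \((N+1)\)-Wronskian \(\Wr[p_{n_1},\ldots,p_{n_{N+1}}]\). This expresses \(\tau_{N+1}\tau_{N-1}\) through \(\tau_N\), its derivatives, and an auxiliary tau function \(\sigma_N\). Here \(\sigma_N\) is the Wronskian in which \(p_{n_N}\) is replaced by \(p_{n_N+4}\), possibly together with \(\partial_{t_3}\)- or \(\partial_{t_2}\)-derivatives of \(\tau_N\).

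The auxiliary quantities must then be eliminated. I would use the KP bilinear equation \((D_x^4-4D_xD_{t_3}+3D_{t_2}^2)\tau_N\cdot\tau_N=0\) together with the \(4\)-reduction, which makes the \(t_3\)-flow compatible, and then set \(t_3=0\) via homogeneity. What I expect to remain is a single bilinear equation of the shape
\[
\tau_{N+1}\tau_{N-1}=\bigl(aD_x^2+b\,x\,D_x^2+\ldots\bigr)\tau_N\cdot\tau_N ,
\]
whose coefficients are linear in \(N\) and \(l\) through \(\Gamma\). Concretely, the identity to aim for is
\[
8z\,W_N^2(\log W_N)''+8W_NW_N'+z(z^2+12N+4l-6)W_N^2 .
\]
If the purely algebraic elimination stalls, my fallback is the Noumi--Yamada symmetric form of \(A_3^{(1)}\) (the \(k=4\) analogue of \(P_{\rm IV}\) and \(P_{\rm V}\)). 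There one knows that such Schur-function tau functions satisfy Toda equations under Bäcklund translations, and I would match \(\tau_N\) to the relevant translation orbit.

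\emph{Step 3: constants.} Finally, the normalization \(c_N^{[2,4,l]}\) fixes leading coefficients, making each \(W_N\) monic of degree \(\Gamma\). I would fix the constant term \(12N+4l-6\) either through the Euler relation or by comparing subleading coefficients in \(z\). The initial values \eqref{eq:k4-W-initial-values} come directly from \(p_n^{[2]}\). As a consistency check on all constants, I would verify the \(N=1\) case explicitly.
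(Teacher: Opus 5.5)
There is a genuine gap in Step~2. Your Desnanot--Jacobi step is correct and is exactly how the paper begins. With $\Sigma_N=\Wr[p_{n_1},\ldots,p_{n_{N-1}},p_{n_N+4}]$ and its normalization $\widetilde\Sigma_N$, one has $W_{N+1}W_{N-1}=W_N\widetilde\Sigma_N'-W_N'\widetilde\Sigma_N$.

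The gap is everything after that: you never determine $\Sigma_N$, and the tools you list cannot determine it.

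\begin{itemize}
\item The KP equation $(D_x^4-4D_xD_{t_3}+3D_{t_2}^2)\tau_N\cdot\tau_N=0$ is an identity for the single Schur function $\tau_N$. It carries no information about the different Schur function $\Sigma_N$, or about $\tau_{N\pm1}$.
\item At $t_3=0$ your Euler relation says nothing about $\partial_{t_3}\tau_N$, because its coefficient $3t_3$ vanishes there.
\item More fundamentally, the flows $\partial_{t_j}$ act on these Wronskians by \emph{lowering} indices, and the $4$-reduction and homogeneity only relate such flows. But $\Sigma_N$ is obtained by \emph{raising} $n_N$ by $4$. For the lifted Wronskians, $\partial_{t_4}\Sigma_N=\Delta_N$ fixes $\Sigma_N$ only up to its $t_4$-independent part.
\end{itemize}

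So the formula you ``aim for'' in Step~2 is asserted, not derived. The coefficient comparison in Step~3 presupposes an ansatz you have not justified. The Noumi--Yamada fallback does not fill this either. The equations available there only couple adjacent tau functions via $(D_z^2-\frac{z}{2}D_z-\frac{K_i}{2})T_i\cdot T_{i+1}=0$. A Toda equation along the translation $N\mapsto N\pm1$, with the correct gauge factors and normalizations, would still have to be derived, and you do not do this.

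The missing ingredient is a raising mechanism specific to the specialization $t_2=1$, $t_j=0$ ($j\ge3$). The paper combines $(n+1)p^{[2]}_{n+1}=zp^{[2]}_n+2(p^{[2]}_n)'$ with the Hermite equation $2(p_n^{[2]})''+z(p_n^{[2]})'-np_n^{[2]}=0$. This gives the first-order ladder identity
\[
\Bigl(32z\frac{d}{dz}+z^4+(8n+12)z^2+8n^2+8n+12\Bigr)p_n^{[2]}=(n+1)(n+2)(n+3)(n+4)\,p_{n+4}^{[2]}+16\,p_{n-4}^{[2]} .
\]
On $p_n^{[2]}$ the operator on the left agrees with $(z+2\frac{d}{dz})^4+16\frac{d^4}{dz^4}$.

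Apply this identity in each column of $\Delta_N$ and sum. Every resulting term except the one containing $p_{n_N+4}$ vanishes, either through a repeated column (from the step-$4$ progression) or because $p_{l-4}=0$ for $l\le3$. Evaluating the same sum through the Leibniz rule, the row rules and the Hermite equation gives $32Nz\Delta_N'+[\cdots]\Delta_N$. Hence
\[
\widetilde\Sigma_N=8zW_N'+Q(z)W_N,\qquad Q(z)=\frac{z^4}{4}+(6N+2l-3)z^2+\mathrm{const}.
\]
The Jacobi identity then yields the recurrence at once, since
\[
W_N\widetilde\Sigma_N'-W_N'\widetilde\Sigma_N=8z\bigl(W_NW_N''-(W_N')^2\bigr)+8W_NW_N'+Q'(z)W_N^2,
\]
with $Q'(z)=z(z^2+12N+4l-6)$; the constant in $Q$ drops out. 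Without an identity of this kind, expressing $\Sigma_N$ through $W_N$ and $W_N'$, your argument does not close.
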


\begin{proof}
Let \(l\in\{1,2,3\}\) and \(N\geq1\) be fixed. Denote 
\begin{equation}
    \Delta_N^{(l)}(z)=\operatorname{Wr} \left[p_{n_1}^{[2]}(z),p_{n_2}^{[2]}(z),\ldots,p_{n_{N-1}}^{[2]}(z),p_{n_N}^{[2]}(z)\right],\;
    \Sigma_N^{(l)}(z)=\operatorname{Wr} \left[p_{n_1}^{[2]}(z),p_{n_2}^{[2]}(z),\ldots,p_{n_{N-1}}^{[2]}(z),p_{n_{N+1}}^{[2]}(z)\right],
\end{equation}
where $n_j=l+4(j-1),\; j=1,2,\ldots,N+1$, and $$ \operatorname{Wr} [f_1,f_2, \dots, f_N] = \det \left( f_j^{(i-1)}(z) \right)_{i,j=1}^N.$$ By the Jacobi formula for determinants, we obtain
\begin{equation}\label{eqn:Jacobi formula for determinants}
    \Delta_{N+1}^{(l)}(z)\Delta_{N-1}^{(l)}(z)
    =
    \Delta_N^{(l)}(z)\left(\Sigma_N^{(l)}\right)'(z)
    -
    \left(\Delta_N^{(l)}\right)'(z)\Sigma_N^{(l)}(z).
\end{equation}
Furthermore, recalling the relation $W_N^{[2,4,l]}(z)=c_N^{[2,4,l]}\Delta_N^{(l)}(z)$ and multiplying both sides by \(c_{N+1}^{[2,4,l]}c_{N-1}^{[2,4,l]}\), we obtain
\begin{equation}
\label{eq:k4-normalized-Jacobi-corrected}
    W_{N+1}^{[2,4,l]}(z)W_{N-1}^{[2,4,l]}(z)
    =
    W_N^{[2,4,l]}(z)\left(\widetilde{\Sigma}_N^{(l)}\right)'(z)
    -
    \left(W_N^{[2,4,l]}\right)'(z)\widetilde{\Sigma}_N^{(l)}(z),
\end{equation}
where
\begin{equation}
    \widetilde{\Sigma}_N^{(l)}(z)
    =
    \frac{c_{N+1}^{[2,4,l]}c_{N-1}^{[2,4,l]}}{c_N^{[2,4,l]}}\Sigma_N^{(l)}(z) = \frac{(n_N+1)(n_N+2)(n_N+3)(n_N+4)}{4N} c_N^{[2,4,l]}\Sigma_N^{(l)}(z).
\end{equation}

From the definition of $p_n^{[2]}(z)$ from \eqref{eq:p-generating-intro}, its differential and recurrence relations are governed by
\begin{equation}
    \left(p_n^{[2]}\right)'(z)=p_{n-1}^{[2]}(z),
    \qquad
    (n+1)p_{n+1}^{[2]}(z)=zp_n^{[2]}(z)+2\left(p_n^{[2]}\right)'(z).
\end{equation}
Consequently, we deduce that
\begin{equation}
    z\left(p_n^{[2]}\right)'(z)=n p_n^{[2]}(z)-2p_{n-2}^{[2]}(z),
\end{equation}
and
\begin{equation}
    z^2p_n^{[2]}(z)=(n+1)(n+2)p_{n+2}^{[2]}(z)-(4n+2)p_n^{[2]}(z)+4p_{n-2}^{[2]}(z).
\end{equation}
Applying the latter identity repeatedly yields
\begin{equation}
\begin{aligned}
    z^4p_n^{[2]}(z)
    ={}&(n+1)(n+2)(n+3)(n+4)p_{n+4}^{[2]}(z)-4(n+1)(n+2)(2n+3)p_{n+2}^{[2]}(z) \\
    &+12(2n^2+2n+1)p_n^{[2]}(z)
      -16(2n-1)p_{n-2}^{[2]}(z)+16p_{n-4}^{[2]}(z).
\end{aligned}
\end{equation}
Combining the last two formulas, we get
\begin{equation}
\label{eq:k4-one-step-column-identity}
    \left(32z\frac{d}{dz}+z^4+(8n+12)z^2+8n^2+8n+12\right)p_n^{[2]}(z)
    =(n+1)(n+2)(n+3)(n+4)p_{n+4}^{[2]}(z)+16p_{n-4}^{[2]}(z).
\end{equation}

We now define the auxiliary sum of determinants $\mathcal{S}_N^{(l)}(z)$ as
\begin{equation}
\label{eq:k4-SN-definition}
    \mathcal S_N^{(l)}(z)=\sum_{j=1}^{N}\operatorname{Wr} \left[p_{n_1}^{[2]},\ldots,p_{n_{j-1}}^{[2]},\left(32z\frac{d}{dz}+z^4+(8n_j+12)z^2+8n_j^2+8n_j+12\right)p_{n_j}^{[2]},p_{n_{j+1}}^{[2]},\ldots,p_{n_N}^{[2]}\right].
\end{equation}
On the one hand, substituting \eqref{eq:k4-one-step-column-identity} with $n=n_j$, the replaced column in the $j$-th Wronskian takes the form
\begin{equation}
    (n_j+1)(n_j+2)(n_j+3)(n_j+4)p_{n_j+4}^{[2]}(z)+16p_{n_j-4}^{[2]}(z).
\end{equation}
For \(1\leq j\leq N-1\), we have $p_{n_j+4}^{[2]}=p_{n_{j+1}}^{[2]}$, which implies that the first term creates two identical columns, causing the corresponding Wronskian to vanish. Similarly, for \(2\leq j\leq N\), the relation $p_{n_j-4}^{[2]}=p_{n_{j-1}}^{[2]}$
ensures that the second term also yields two identical columns. Finally, for $j=1$, we have
\begin{equation}
    p_{n_1-4}^{[2]}=p_{l-4}^{[2]}=0,
    \qquad
    l=1,2,3.
\end{equation}
As a result, only the term  \(p_{n_N+4}^{[2]}=p_{n_{N+1}}^{[2]}\) in the last summand survives, which gives
\begin{equation}
\label{eq:k4-SN-to-Sigma}
    \mathcal S_N^{(l)}(z)
    =
    (n_N+1)(n_N+2)(n_N+3)(n_N+4)\Sigma_N^{(l)}(z).
\end{equation}

On the other hand, we can express \(\mathcal S_N^{(l)}(z)\) in terms of \(\Delta_N^{(l)}(z)\) and its derivative. 
To facilitate this calculation, we denote
\begin{equation}
    B_j(z)=z^4+(8n_j+12)z^2+8n_j^2+8n_j+12.
\end{equation}
Applying the Leibniz rule, for  \(r=0,1,\ldots,N-1\), the $(r+1)$-th row of the \(j\)-th replaced column in \(\mathcal S_N^{(l)}\) can be expanded as
\begin{equation}\label{eqn:expantion of the j-th column}
\begin{aligned}
    \frac{d^r}{dz^r}\left[32z\left(p_{n_j}^{[2]}\right)' +B_j p_{n_j}^{[2]} \right] &=
    32z\left(p_{n_j}^{[2]}\right)^{(r+1)} 
    +32r\left(p_{n_j}^{[2]}\right)^{(r)} 
    +B_j \left(p_{n_j}^{[2]}\right)^{(r)} +rB_j' \left(p_{n_j}^{[2]}\right)^{(r-1)} 
     \\
    &+\binom{r}{2}B_j'' \left(p_{n_j}^{[2]}\right)^{(r-2)} 
    +\binom{r}{3}B_j^{(3)} \left(p_{n_j}^{[2]}\right)^{(r-3)} 
    +\binom{r}{4}B_j^{(4)} \left(p_{n_j}^{[2]}\right)^{(r-4)},
\end{aligned}
\end{equation}
where, for \(r<q\), the term containing 
\(\left(p_{n_j}^{[2]}\right)^{(r-q)}(z)\) is understood to be zero, and the derivatives of $B_j(z)$ are given by
\begin{equation}
    B_j'(z)=4z^3+(16n_j+24)z,\quad
    B_j''(z)=12z^2+16n_j+24,\quad
    B_j^{(3)}(z)=24z,\quad B_j^{(4)}(z)=24.
\end{equation}

For the subsequent determinant manipulations, given a column vector \(\mathbf{a}=(a_0,\ldots,a_{N-1})^T\), we denote by
\(\Delta_N^{(l)}[j;\mathbf{a}]\) the determinant obtained from \(\Delta_N^{(l)}\) by replacing its \(j\)-th column with \(\mathbf{a}\). Differentiating the Wronskian \(\Delta_N^{(l)}(z)\) yields 
\begin{equation}
\label{eq:k4-column-derivative-rule}
    \left(\Delta_N^{(l)}\right)'(z)=\sum_{j=1}^{N}
    \Delta_N^{(l)}\!\left[j;\left(\left(p_{n_j}^{[2]}\right)^{(r+1)}(z)\right)_{r=0}^{N-1}\right],
\end{equation}
where we use the convention
\begin{equation}
    \left(\alpha\left(r\right)\right)_{r=0}^{N-1}= \left(\alpha(0),\alpha(1),\cdots,\alpha(N-1)\right)^T.
\end{equation}
If the column vector \(\mathbf{a}\) does not depend on \(j\), then we have
\begin{equation}
\label{eq:k4-diagonal-row-rule}
    \sum_{j=1}^{N}
    \Delta_N^{(l)}\!\left[j;\left(a_r\left(p_{n_j}^{[2]}\right)^{(r)}(z)\right)_{r=0}^{N-1}\right]
    =
    \left(\sum_{r=0}^{N-1}a_r\right)\Delta_N^{(l)}(z).
\end{equation}
Similarly, if the column vector \(\mathbf{a}\) does not depend on \(j\), for \(s\geq1\), we get
\begin{equation}
\label{eq:k4-lower-row-rule}
    \sum_{j=1}^{N}
    \Delta_N^{(l)}\!\left[j;\left(a_r\left(p_{n_j}^{[2]}\right)^{(r-s)}(z)\right)_{r=0}^{N-1}\right]
    =
    0,
\end{equation}
where the corresponding element of the
replacement column is understood to be $0$ for $r<s$. Moreover, we also have
\begin{equation}
\label{eq:k4-r-upper-row-rule}
    \sum_{j=1}^{N}
    \Delta_N^{(l)}\!\left[j;\left(r\left(p_{n_j}^{[2]}\right)^{(r+1)}(z)\right)_{r=0}^{N-1}\right]
    =
    (N-1)\left(\Delta_N^{(l)}\right)'(z).
\end{equation}
Furthermore, from the relation
\begin{equation}
    2\left(p_n^{[2]}\right)''(z)+z\left(p_n^{[2]}\right)'(z)-np_n^{[2]}(z)=0,
\end{equation}
we get
\begin{equation}
\label{eq:k4-differentiated-Hermite-corrected}
    2\left(p_n^{[2]}\right)^{(s+2)}(z)
    +z\left(p_n^{[2]}\right)^{(s+1)}(z)
    +(s-n)\left(p_n^{[2]}\right)^{(s)}(z)
    =
    0.
\end{equation}
Taking \(n=n_j\) and \(s=r-1\), for \(r\geq1\), gives
\begin{equation}
    r n_j\left(p_{n_j}^{[2]}\right)^{(r-1)}(z)
    =
    2r\left(p_{n_j}^{[2]}\right)^{(r+1)}(z)
    +rz\left(p_{n_j}^{[2]}\right)^{(r)}(z)
    +r(r-1)\left(p_{n_j}^{[2]}\right)^{(r-1)}(z).
\end{equation}
Using \eqref{eq:k4-diagonal-row-rule}-\eqref{eq:k4-r-upper-row-rule}, we obtain
\begin{equation}
\label{eq:k4-nj-first-lower-shift}
\sum_{j=1}^{N}
    \Delta_N^{(l)}\!\left[j;\left(rn_j\left(p_{n_j}^{[2]}\right)^{(r-1)}(z)\right)_{r=0}^{N-1}\right]
    =
    2(N-1)\left(\Delta_N^{(l)}\right)'(z)
    +\frac{N(N-1)}{2}z\Delta_N^{(l)}(z).
\end{equation}
Similarly, taking \(n=n_j\) and \(s=r-2\), for \(r\geq2\), we have
\begin{equation}
    n_j\left(p_{n_j}^{[2]}\right)^{(r-2)}(z)
    =
    2\left(p_{n_j}^{[2]}\right)^{(r)}(z)
    +z\left(p_{n_j}^{[2]}\right)^{(r-1)}(z)
    +(r-2)\left(p_{n_j}^{[2]}\right)^{(r-2)}(z),
\end{equation}
which implies that
\begin{equation}
\label{eq:k4-nj-second-lower-shift}
\begin{aligned}
     \sum_{j=1}^{N}
    \Delta_N^{(l)}\!\left[j;\left(\binom{r}{2}n_j\left(p_{n_j}^{[2]}\right)^{(r-2)}(z)\right)_{r=0}^{N-1}\right]=
    2\sum_{r=0}^{N-1}\binom{r}{2}\Delta_N^{(l)}(z)
    =
    \frac{N(N-1)(N-2)}{3}\Delta_N^{(l)}(z).
\end{aligned}
\end{equation}
Thus, we can collect all contributions in the expansion of \(\mathcal S_N^{(l)}(z)\) by using \eqref{eqn:expantion of the j-th column}, which results in
\begin{equation}
\label{eq:k4-SN-collected-before-sums}
    \mathcal S_N^{(l)}(z)
    =
    32Nz\left(\Delta_N^{(l)}\right)'(z)
    +
    \Bigg[
        \sum_{j=1}^{N}B_j(z)
        +8N(N-1)z^2
        +16N(N-1)
        +\frac{16N(N-1)(N-2)}{3}
    \Bigg]\Delta_N^{(l)}(z),
\end{equation}
where
\begin{equation}
\begin{aligned}
    \sum_{j=1}^{N}B_j(z)
    =
    Nz^4
    +\left(8\sum_{j=1}^{N}n_j+12N\right)z^2
    +8\sum_{j=1}^{N}n_j^2
    +8\sum_{j=1}^{N}n_j
    +12N.
\end{aligned}
\end{equation}
Since
\begin{equation}
    \sum_{j=1}^{N}n_j
    =
    \sum_{j=1}^{N}\left(l+4(j-1)\right)
    =
    N(l+2N-2),
\end{equation}
and
\begin{equation}
    \sum_{j=1}^{N}n_j^2
    =
    \sum_{j=1}^{N}\left(l+4(j-1)\right)^2=
    \frac{N}{3}\left(16N^2+(12l-24)N+3l^2-12l+8\right),
\end{equation}
substituting these sums into \eqref{eq:k4-SN-collected-before-sums}, we obtain
\begin{equation}
\label{eq:k4-SN-final}
\begin{aligned}
    \mathcal S_N^{(l)}(z)
    =
    32Nz\left(\Delta_N^{(l)}\right)'(z)
    +
    \Big[
        Nz^4
        +4N(6N+2l-3)z^2
        +4N\left(12N^2+(8l-12)N+2l^2-6l+3\right)
    \Big]\Delta_N^{(l)}(z).
\end{aligned}
\end{equation}

Finally, combining \eqref{eq:k4-SN-to-Sigma} and \eqref{eq:k4-SN-final}, we have
\begin{equation}
\label{eq:k4-Sigma-tilde-corrected}
    \widetilde{\Sigma}_N^{(l)}(z)
    =
    8z\left(W_N^{[2,4,l]}\right)'(z)
    +
    \left[
        \frac{z^4}{4}
        +(6N+2l-3)z^2
        +12N^2+(8l-12)N+2l^2-6l+3
    \right]W_N^{[2,4,l]}(z),
\end{equation}
and
\begin{equation}\label{eq:k4-Sigma-tilde-Prime-corrected}
\begin{aligned}
    \left(\widetilde{\Sigma}_N^{(l)}\right)'(z)
    =
    8\left(W_N^{[2,4,l]}\right)'(z)
    +8z\left(W_N^{[2,4,l]}\right)''(z)
    +\left(z^3+2(6N+2l-3)z\right)W_N^{[2,4,l]}(z) \\
    +
    \left[
        \frac{z^4}{4}
        +(6N+2l-3)z^2
        +12N^2+(8l-12)N+2l^2-6l+3
    \right]\left(W_N^{[2,4,l]}\right)'(z).
\end{aligned}
\end{equation}
Substituting \eqref{eq:k4-Sigma-tilde-corrected} and \eqref{eq:k4-Sigma-tilde-Prime-corrected} into \eqref{eq:k4-normalized-Jacobi-corrected}, we obtain \eqref{eq:k4-W-recurrence}.
This completes the proof.
\end{proof}

\begin{proposition}
\label{prop:k4-adjacent-Hirota}
For \(l=1,2,3\) and \(N\ge1\), we have
\begin{equation}\label{eq:k4-adjacent-Hirota}
    \left(D_z^2-\frac{z}{2}D_z-\frac{l+3N}{2}\right)W_N^{[2,4,l]}(z)\cdot W_{N+1}^{[2,4,l]}(z)=0,
\end{equation} 
where $D_z$ is the Hirota operator.
\end{proposition}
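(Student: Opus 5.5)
Write \(W_N:=W_N^{[2,4,l]}\). Since \(W_{N+1}\) is, up to a constant, the Wronskian of \(W_N\)'s column set with one extra column \(p_{n_{N+1}}^{[2]}\), the plan is to realize both \(W_N\) and \(W_{N+1}\) as \(\tau\)-functions of a Hermite-type (KP/Schur) hierarchy. Once this is done, the bilinear equation reduces to a one-variable reduction of a standard Plücker-type identity. Concretely, I would introduce the multi-time generalization
\[
p_n(x_1,x_2,\ldots),\qquad \sum_n p_n\epsilon^n=\exp\Big(\sum_j x_j\epsilon^j\Big),
\]
with \(x_1=z\), \(x_2=1\) and \(x_j=0\) otherwise. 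Then \(\tau_N=\Wr[p_{n_1},\ldots,p_{n_N}]\) (Wronskian in \(x_1\)) and \(\tau_{N+1}\) are related by adding one column. The classical bilinear identity between such adjacent Wronskians (the ``modified KP'' equation \((D_{x_1}^2-D_{x_2})\tau_N\cdot\tau_{N+1}=0\)) follows from the Jacobi/Plücker relations, exactly as \eqref{eqn:Jacobi formula for determinants} was used in Proposition~\ref{prop:k4-W-recurrence}. Since \(\partial_{x_2}p_n=\partial_{x_1}^2p_n\), this is the standard Sato-theory statement for \(N\) versus \(N+1\) columns. I would cite it or prove it by expanding the \((N+1)\)-Wronskian along the extra column.

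Next I would eliminate \(x_2\) by a scaling (weighted homogeneity) argument. Each \(p_n\) is weighted homogeneous of degree \(n\) in \((x_1,x_2)\) with weights \(1,2\). Hence \(\tau_N\) is homogeneous of degree \(d_N=\Gamma_N=\sum n_i-\binom N2\), with \(\Gamma_N=N(3N+l-3)\) from \eqref{def:Gamma}. Euler's relation gives
\[
(x_1\partial_{x_1}+2x_2\partial_{x_2})\tau_N=d_N\tau_N .
\]
In Hirota form this reads
\[
(x_1D_{x_1}+2x_2D_{x_2})\tau_N\cdot\tau_{N+1}=(d_N-d_{N+1})\tau_N\tau_{N+1},
\]
with \(d_{N+1}-d_N=6N+l\). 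Setting \(x_2=1\) lets me solve for the \(D_{x_2}\) term: \(D_{x_2}\tau_N\cdot\tau_{N+1}=-\tfrac{x_1}{2}D_{x_1}(\cdot)-\tfrac{6N+l}{2}\tau_N\tau_{N+1}\). Substituting this into \((D_{x_1}^2-D_{x_2})\tau_N\cdot\tau_{N+1}=0\) produces an equation of exactly the shape of \eqref{eq:k4-adjacent-Hirota}, possibly with opposite signs of the \(z/2\) term.

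Then I would reconcile the normalizations and constants. The constants \(c_N^{[2,4,l]}\) cancel, since the equation is bilinear and homogeneous, so only the constant coefficient must be checked against \(-\tfrac{l+3N}{2}\). If the naive computation gives \(\tfrac{6N+l}{2}\) or a sign mismatch, the discrepancy must come from the precise form of the mKP-type identity. The correct adjacent identity for the Hermite case \(m=2\) may instead be \((D_{x_1}^2+D_{x_2})\) combined with a constant term from the missing \(p_{n_N+1},p_{n_N+2},p_{n_N+3}\) structure, i.e. from the jump \(k=4\). I would therefore also sanity-check the identity directly at \(N=1\), using \(W_1\) from \eqref{eq:k4-W-initial-values} and \(W_2\) computed from \eqref{eq:k4-W-recurrence}, to fix signs and the constant. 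Alternatively, I would derive the constant by comparing leading coefficients: with \(W_N\sim z^{\Gamma_N}\), the \(z^{\Gamma_N+\Gamma_{N+1}}\) coefficient of \(-\tfrac z2D_z-c\) gives \(\tfrac12(\Gamma_{N+1}-\Gamma_N)-c\). Since \(D_z^2\) lowers the degree, this forces \(c=\tfrac{6N+l}{2}\cdot\tfrac12\)-type relations, and I would match this carefully.

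The main obstacle is the middle step: pinning down the exact adjacent-Wronskian bilinear identity (which Hirota operator and sign) valid for column sets differing by one index, since \(n_{N+1}-n_N=4\) rather than 1. If the Sato argument does not directly apply, my fallback is a direct determinant proof. I would use \eqref{eq:k4-normalized-Jacobi-corrected} together with the explicit formula \eqref{eq:k4-Sigma-tilde-corrected} for \(\widetilde\Sigma_N\) to express \(W_{N+1}W_{N-1}\). I would then verify the bilinear equation by induction on \(N\), using the recurrence \eqref{eq:k4-W-recurrence} to reduce the claim to a polynomial differential identity in \(W_N\) and \(W_{N-1}\) that holds by the induction hypothesis.
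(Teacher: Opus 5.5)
Your main route (the first modified-KP equation for adjacent Wronskians, then weighted homogeneity in $(x_1,x_2)$ to eliminate $x_2$) is sound and genuinely different from the paper's. As written, however, two computations are wrong, and your explanation for the resulting mismatch is also wrong.

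\textbf{The sign of the mKP identity.} For Wronskians of functions with $\partial_{x_2}f=\partial_{x_1}^2f$, the identity is
\[
(D_{x_1}^2+D_{x_2})\,\tau_N\cdot\tau_{N+1}=0,\qquad\text{equivalently}\qquad (D_{x_1}^2-D_{x_2})\,\tau_{N+1}\cdot\tau_N=0 .
\]
Your version $(D_{x_1}^2-D_{x_2})\tau_N\cdot\tau_{N+1}=0$ already fails at $N=0$. There $\tau_0=1$, $\tau_1=f$, and it gives $f_{x_1x_1}+f_{x_2}=2f_{x_1x_1}$, which is nonzero in general.

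\textbf{The weighted degree.} It is $d_N=\sum_i n_i-\binom N2=Nl+\tfrac32N(N-1)$, which is exactly $\Gamma$ in \eqref{def:Gamma} with $k=4$, not $N(3N+l-3)$. Hence $d_{N+1}-d_N=3N+l$, not $6N+l$.

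\textbf{With both fixed, the argument closes.} The Euler relation at $x_2=1$ gives
\[
D_{x_2}\tau_N\cdot\tau_{N+1}=-\tfrac{x_1}{2}D_{x_1}\tau_N\cdot\tau_{N+1}-\tfrac{3N+l}{2}\tau_N\tau_{N+1}.
\]
Substituting this into the corrected identity yields \eqref{eq:k4-adjacent-Hirota} exactly, and the constants $c_N^{[2,4,l]}$ drop out by bilinearity.

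\textbf{The jump plays no special role.} There is no extra constant term coming from the jump $k=4$. The mKP identity holds for any added column and has no constant term; the jump enters only through $d_{N+1}-d_N=n_{N+1}-N$. Your leading-coefficient comparison, redone with the correct $\Gamma$, gives $c=\tfrac12(\Gamma_{N+1}-\Gamma_N)=\tfrac{3N+l}{2}$, consistent with the statement. But it only calibrates a constant and proves nothing. The $N=1$ check and the inductive fallback via \eqref{eq:k4-W-recurrence} become unnecessary once the two errors are fixed.

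\textbf{Comparison with the paper.} The paper deduces the statement from Proposition~\ref{prop:single-block-WH-adjacent-bilinear}. That proposition is proved by the Maya-diagram argument of Theorem~\ref{thm:k-reduced-WH-NY-bilinear}. One writes the polynomials as Schur functions of $k$-core partitions, places $W_N$ and $W_{N+1}$ at adjacent sites of Tsuda's $\tau$-chain (Theorem~\ref{thm:Tsuda-Schur-tau}), takes the constant from $K_i=ka_{i+1}-|\mathbf a|$, and rescales $z=\rho x$ with $\rho^2=-2/k$. That route embeds the identity in the full Noumi--Yamada $A^{(1)}_{k-1}$ system, which Section~\ref{sec:WH with Painleve} needs for the Painlev\'e connection. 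It rests, however, on substantial external machinery and is stated only for $k\ge3$. Your corrected route is elementary and self-contained, and it proves more: for any index set $\mathbf n$ of length $N$ and any added index $n'$,
\[
\bigl(D_z^2-\tfrac z2D_z-\tfrac{n'-N}{2}\bigr)\,W_{\mathbf n}^{[2]}\cdot W_{\mathbf n\cup\{n'\}}^{[2]}=0,
\]
which includes the case $k=2$.
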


\begin{proof}
The conclusion follows immediately from Proposition~\ref{prop:single-block-WH-adjacent-bilinear} whose proof is provided in Section \ref{sec:WH with Painleve}.
\end{proof}

\begin{theorem}\label{thm:k4-m2-simple-nonzero}
For \(l=1,2,3\) and \(N\ge1\), all nonzero roots of \(W_N^{[2,4,l]}(z)\) are simple.
\end{theorem}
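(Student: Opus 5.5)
We argue by induction on \(N\), using the recurrence of Proposition~\ref{prop:k4-W-recurrence} together with the bilinear identity of Proposition~\ref{prop:k4-adjacent-Hirota}. This is the standard Toda/Painlev\'e-type mechanism used for the Yablonskii--Vorob'ev and Okamoto polynomials in \cite{fukutani2000special}. We prove simultaneously, for all \(N\ge0\), two statements. The first, (a), is that all nonzero roots of \(W_N:=W_N^{[2,4,l]}\) are simple. The second, (b), is that \(W_N\) and \(W_{N+1}\) have no common nonzero root. The base cases follow from \eqref{eq:k4-W-initial-values}: \(W_0=1\), and \(W_1\) is one of \(z\), \(z^2+2\), \(z^3+6z\), each of which has simple nonzero roots. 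Statement (b) for \(N=0\) is trivial, and for \(N=1\) it can be checked from the explicit \(W_2\) obtained from \eqref{eq:k4-W-recurrence}.

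\textbf{Step 1: (b) at level \(N-1\) gives (b) at level \(N\).} Suppose \(z_0\neq0\) is a common root of \(W_N\) and \(W_{N+1}\). Evaluating \eqref{eq:k4-W-recurrence} at \(z_0\) kills every term containing a factor \(W_N\) or \(W_N'\) except \(-8z_0(W_N'(z_0))^2\). Hence \(W_N'(z_0)=0\), so \(z_0\) is a multiple root of \(W_N\).

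To exclude this, expand the bilinear identity \eqref{eq:k4-adjacent-Hirota} as
\[
f''g-2f'g'+fg''-\tfrac{z}{2}(f'g-fg')-\tfrac{l+3N}{2}fg=0,\qquad f=W_N,\ g=W_{N+1}.
\]
Write \(f=(z-z_0)^a\tilde f\) and \(g=(z-z_0)^b\tilde g\) with \(a\ge2\), \(b\ge1\), and \(\tilde f(z_0)\tilde g(z_0)\neq0\). The lowest-order term of \(D_z^2 f\cdot g\) at \(z_0\) is \(((a-b)^2-(a+b))(z-z_0)^{a+b-2}\tilde f\tilde g\), and the other terms are of higher order. So \((a-b)^2=a+b\), which forces \(a,b\) to be consecutive triangular numbers.

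Since this alone does not give a contradiction, we instead iterate downward using the recurrence. Rewrite \eqref{eq:k4-W-recurrence} as
\[
W_{N+1}W_{N-1}=8zW_N^2(\log W_N)''+8W_NW_N'+z(z^2+c_N)W_N^2 .
\]
A root of \(W_N\) of order \(a\ge2\) makes the right-hand side vanish to order exactly \(2a-2\), with leading coefficient \(-8z_0a\neq0\). Thus \(\operatorname{ord}_{z_0}W_{N+1}+\operatorname{ord}_{z_0}W_{N-1}=2a-2>0\).

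Since \(a\ge2\) and \(b\ge1\), we get \(\operatorname{ord}_{z_0}W_{N-1}=2a-2-b\). The relation \((a-b)^2=a+b\) with \(b\ge1\) gives \(b\in\{a-1+\dots\}\) of the triangular type, and in each case \(2a-2-b\ge1\) for \(a\ge2\). So \(z_0\) is also a common root of \(W_{N-1}\) and \(W_N\), contradicting (b) at level \(N-1\).

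\textbf{Step 2: (b) gives (a).} Suppose \(z_0\neq0\) is a root of \(W_{N+1}\) of order \(b\ge2\). By (b), \(W_N(z_0)\neq0\), so \(a=0\). Then \((a-b)^2=a+b\) gives \(b^2=b\), i.e.\ \(b\le1\). Therefore every nonzero root of \(W_{N+1}\) is simple, which closes the induction.

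\textbf{Main obstacle.} The delicate point is the order bookkeeping in Step 1. One must check that the leading term of the right-hand side of \eqref{eq:k4-W-recurrence} really is \(-8z_0a(z-z_0)^{2a-2}\tilde f^2\), and that the Hirota constraint gives \(b\le 2a-3\), so that the descent to \(W_{N-1}\) is valid. If that inequality fails in some branch of \((a-b)^2=a+b\), the alternative is to iterate the same argument and show that \(\operatorname{ord}_{z_0}W_{n}\) stays positive down to \(n=0\), contradicting \(W_0=1\). This is where the computation needs care, and we would first check the pairs \((a,b)=(3,1),(3,6),(6,3),\dots\) explicitly.
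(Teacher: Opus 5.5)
Your ingredients are the paper's. The Hirota relation forces a multiple nonzero root of $W_{N+1}$ to be a root of $W_N$ (your Step~2), and evaluating the recurrence at a common nonzero root forces $W_N'(z_0)=0$ (the first half of your Step~1). However, the order-counting descent in Step~1 rests on a false claim.

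Parametrize the solutions of $(a-b)^2=a+b$ as $a=(d^2+d)/2$, $b=(d^2-d)/2$. The pair $(a,b)=(6,10)$, i.e.\ $d=-4$, satisfies $a\ge2$ and $b\ge1$, yet gives $\operatorname{ord}_{z_0}W_{N-1}=2a-2-b=0$. So $z_0$ is not a root of $W_{N-1}$, statement (b) at level $N-1$ is not contradicted, and the inequality $b\le 2a-3$ you hoped for fails. The pair $(3,6)$ gives $2a-2-b=-2$; that case is impossible outright, but it also shows that ``$2a-2-b\ge1$ in each case'' is wrong as stated. Your fallback of iterating down to $W_0$ is not carried out, so as written the induction does not close.

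The fix is that no order bookkeeping is needed. Your Step~2, applied one level down, shows that (b) at level $N-1$ already implies (a) at level $N$. Step~1 produces $W_N(z_0)=W_N'(z_0)=0$ with $z_0\ne0$, which contradicts (a) at level $N$ directly. In the bad branch $(6,10)$ this is just the Hirota relation between $W_{N-1}$ and $W_N$ with orders $(0,6)$, which would require $36=6$.

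After this reorganization, statement (b) is superfluous, and you can induct on simplicity alone. If $z_0\ne0$ is a multiple root of $W_{N+1}$, the bilinear identity forces $W_N(z_0)=0$. The recurrence then forces $W_N'(z_0)=0$, contradicting the hypothesis for $W_N$. This is exactly the paper's proof.
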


\begin{proof}
Fix \(l\in\{1,2,3\}\). For brevity, we denote
\begin{equation}
    W_N(z)=W_N^{[2,4,l]}(z).
\end{equation}
For \(N=1\), from \eqref{eq:k4-W-initial-values}, we have
\begin{equation}
    W_1^{[2,4,1]}(z)=z,\qquad
    W_1^{[2,4,2]}(z)=z^2+2,\qquad
    W_1^{[2,4,3]}(z)=z^3+6z.
\end{equation}
It is clear that all nonzero roots of \(W_1(z)\) are simple.
Assume that, for some \(N\ge1\), all nonzero roots of \(W_N(z)\) are simple. We prove that all nonzero roots of \(W_{N+1}(z)\) are simple by induction.

Suppose that \(z_0\neq0\) is a multiple root of \(W_{N+1}(z)\), then there exist an integer \(s\geq2\) and a polynomial \(h(z)\) such that
\begin{equation}
\label{eq:k4-WNplus1-local-factor}
    W_{N+1}(z)=(z-z_0)^s h(z),
    \qquad
    h(z_0)\neq0,
\end{equation}
which implies
\begin{equation}
\label{eq:k4-WNplus1-root-derivative}
    W_{N+1}(z_0)=0,
    \qquad
    W_{N+1}'(z_0)=0.
\end{equation}

By \eqref{eq:k4-adjacent-Hirota} in Proposition \ref{prop:k4-adjacent-Hirota}, we have
\begin{equation}
    \left(D_z^2-\frac{z}{2}D_z-\frac{l+3N}{2}\right)W_N(z)\cdot W_{N+1}(z)=0.
\end{equation}
Expanding the Hirota operators gives
\begin{equation}
\label{eq:k4-Hirota-expanded-direct}
\begin{aligned}
    &W_N''(z)W_{N+1}(z)-2W_N'(z)W_{N+1}'(z)+W_N(z)W_{N+1}''(z) \\
    &\quad
    -\frac{z}{2}\left(W_N'(z)W_{N+1}(z)-W_N(z)W_{N+1}'(z)\right)
    -\frac{l+3N}{2}W_N(z)W_{N+1}(z)=0.
\end{aligned}
\end{equation}

We claim that $W_N(z_0) = 0$. Assume for contradiction that $W_N(z_0) \neq 0$. Since $W_N(z)$ is analytic at $z=z_0$, 
the local expansion of  $W_N(z)W_{N+1}''(z)$ around $z=z_0$ is 
\begin{equation}
    W_N(z)W_{N+1}''(z) = s(s-1)W_N(z_0)h(z_0)(z-z_0)^{s-2} + O\left((z-z_0)^{s-1}\right).
\end{equation}
As $s \ge 2$, $W_N(z_0) \neq 0$ and $h(z_0) \neq 0$, the coefficient of $(z-z_0)^{s-2}$ is nonzero. However, direct calculation reveals that all other terms in \eqref{eq:k4-Hirota-expanded-direct} are $O\left((z-z_0)^{s-1}\right)$.
This is a contradiction. Hence, we must have
\begin{equation}
    W_N(z_0) = 0.
\end{equation}

Finally, evaluating the recurrence relation \eqref{eq:k4-W-recurrence} in Proposition \ref{prop:k4-W-recurrence} at \(z=z_0\), and using
\begin{equation}
    W_{N+1}(z_0)=0,
    \qquad
    W_N(z_0)=0,
\end{equation}
we obtain
\begin{equation}
    -8z_0\left(W_N'(z_0)\right)^2=0.
\end{equation}
Since \(z_0\neq0\), it follows that
\begin{equation}
    W_N'(z_0)=0.
\end{equation}
Thus, \(z_0\) is a multiple nonzero root of \(W_N(z)\), contradicting the induction hypothesis.
Therefore, \(W_{N+1}(z)\) has no multiple nonzero roots. 

By induction, all nonzero roots of \(W_N^{[2,4,l]}(z)\) are simple for \(N\geq1\). 
\end{proof}

\section{Generalized Wronskian--Hermite polynomials associated with Young diagrams}\label{sec:WH with partition}
In the previous section, we studied the nonzero roots of the generalized
Wronskian--Hermite polynomials. In this section, we focus on the multiplicity of the root of
\(W_N^{[m,k,l]}(z)\) at the origin. More precisely, we compare two
descriptions of this multiplicity: the algebraic description in terms of the
integers \(N_i\) in \cite{lin2024rogue}, and the Young-diagram description in terms of the
\(m\)-core of the associated partition in \cite{bonneux2020coefficients}. We show that these two theorems
are equivalent by identifying the integers \(N_i\) with the bead numbers on
the nonzero runners of the \(k\)-abacus of \(\mathrm{core}_m(Y)\).

\subsection{Young diagrams, \texorpdfstring{\(m\)-cores}{m-cores}, and the Schur-function representation}
\label{subsec:young-diagrams-mcores-schur}
We first recall the basic notions that will be used to describe the multiplicity of zero roots of \(W_N^{[m,k,l]}(z)\).

\begin{definition}[Integer partition]
A partition of a positive integer $N$ is a decreasing sequence 
of positive integers $\lambda = (\lambda_1,\lambda_2,\ldots,\lambda_\ell)$ such that
\begin{equation}
\lambda_1 \ge \lambda_2 \ge \cdots \ge \lambda_\ell > 0,
\qquad
|\lambda| := \sum_{i=1}^\ell \lambda_i = N.
\end{equation}
By convention, the only partition of zero is the empty partition
\(\varnothing\), with length \(\ell(\varnothing)=0\).
\end{definition}
A partition $\lambda=(\lambda_1,\ldots,\lambda_\ell)$ can be represented graphically
by its \emph{Young diagram} (or Ferrers diagram), denoted by $[\lambda]$.
It consists of $\ell$ left-justified rows of boxes, where the $k$-th ($1\le k \le \ell$) row contains
$\lambda_k$ boxes.

\begin{definition}[Hook]
Let \(x\) be a box in a Young diagram. The hook of \(x\) is the set consisting of \(x\) itself, together with all boxes to the right of \(x\) in the same row and all boxes below \(x\) in the same column. The number of boxes in this set is called the hook length of \(x\). A hook with hook length \(m\) is called an \(m\)-hook. The rightmost box in the same row of the hook is called its hand, and the lowest box in the same column of the hook is called its foot.
\end{definition}

\begin{definition}[Rim \(m\)-hook, \cite{james2006representation}]\label{def:rim-m-hook}
Let \(H\) be an \(m\)-hook of a Young diagram. The corresponding rim \(m\)-hook is the part of the rim of the Young diagram which is of the same length as \(H\). It consists of the boxes on the rim between the hand and the foot of \(H\), including the hand and the foot. Removing such a rim \(m\)-hook from the Young diagram again yields a Young diagram.
\end{definition}

\begin{definition}[$m$-core, \cite{james2006representation}]\label{def:core-of-lambda}
A Young diagram  is called an $m$-core if it contains no $m$-hooks.
\end{definition}

\begin{remark}
The \(m\)-core of a partition \(\lambda\) can be obtained as follows. Repeatedly remove a rim \(m\)-hook from the Young diagram of \(\lambda\) in such a way that the remaining diagram is still a partition. When no further rim \(m\)-hooks can be removed, the resulting partition is called the \(m\)-core of \(\lambda\). In this paper, we denote the \(m\)-core of \(\lambda\) by \(\mathrm{core}_m(\lambda)\). Moreover, for any partition, the \(m\)-core is uniquely determined (see Theorem~2.7.16 in \cite{james2006representation}).
\end{remark}

\begin{remark}[Computing the $m$-core of a partition via $\beta$-numbers and the $m$-abacus, \cite{james2006representation}]\label{rem:compute-m-core}
Let $\lambda=(\lambda_1,\lambda_2,\ldots,\lambda_\ell)$ be a partition and let $m\ge 2$ be an integer.
The $m$-core $\mathrm{core}_m(\lambda)$ can be obtained by the following abacus algorithm, which is equivalent to
successively removing rim $m$-hooks.

\begin{enumerate}[(Step 1)]
\item Choose $r$ and form the $\beta$-numbers.
Choose any integer $r\ge \ell(\lambda)$ and extend $\lambda$ by zeros to length $r$ (i.e., set $\lambda_i=0$ for $i>\ell(\lambda)$), where $\ell(\lambda)$ denotes the length of the partition $\lambda$.
Define the $\beta$-numbers
\begin{equation}\label{eq:beta-numbers-abacus}
\beta_i=\lambda_i+r-i,\qquad i=1,2,\ldots,r.
\end{equation}
Throughout this paper, we take $r=\ell(\lambda)$ unless otherwise specified.

\item Place beads on the $m$-abacus.
Consider $m$ runners indexed by $0,1,\ldots,m-1$.
For each $\beta_i$, place a bead at position $\beta_i$ on the runner labeled by $\beta_i \bmod m$.
Equivalently, runner $a$ contains positions
\[
a,\ a+m,\ a+2m,\ \ldots \qquad (a=0,1,\ldots,m-1).
\]

\item Slide beads upward.
On each runner, repeatedly apply the move
\[
\beta \longmapsto \beta-m
\]
whenever the position $\beta-m\ge 0$ on the same runner is empty.
Continue until no bead can be moved upward on any runner.
Denote the resulting set of positions by $\beta^{(\infty)}_1 > \beta^{(\infty)}_2 > \cdots > \beta^{(\infty)}_r.$

\item Read off the $m$-core from the final $\beta$-numbers.
Define
\begin{equation}
\tilde{\lambda}^{(\infty)}_i=\beta^{(\infty)}_i-r+i,\qquad i=1,2,\ldots,r.
\end{equation}
Let \(\lambda^{(\infty)}\) be the partition consisting of all positive parts among
\(\tilde{\lambda}^{(\infty)}_1,\tilde{\lambda}^{(\infty)}_2,\ldots,\tilde{\lambda}^{(\infty)}_r\).
Then
\begin{equation}\label{eq:core-from-final-beta}
    \mathrm{core}_m(\lambda)=\lambda^{(\infty)}.
\end{equation}
\end{enumerate}

\end{remark}

\begin{remark}
Each successful upward move of a bead corresponds to removing one rim $m$-hook from the Young diagram of $\lambda$.
Hence the total number of moves equals the $m$-weight
\begin{equation}
    w_m(\lambda)=\frac{|\lambda|-|\mathrm{core}_m(\lambda)|}{m}.
\end{equation}
\end{remark}

For an infinite vector $\mathbf{x}=(x_1,x_2,\ldots)$ and a Young diagram $Y=(i_1,i_2,\ldots,i_\ell)$, the
corresponding Schur function $s_Y(\mathbf{x})$ is defined by the following determinant \cite{macdonald1998symmetric}
\begin{equation}\label{eq:schur-jacobi-trudi}
s_Y(\mathbf{x})
=\det_{1\le j,k\le \ell}\!\Bigl[\,S_{\,i_j-j+k}(\mathbf{x})\,\Bigr],
\end{equation}
where $S_n(\mathbf{x})$ denotes the elementary Schur polynomials \cite{yang2023rogue}. If we choose
\begin{equation}\label{eq:young-diagram-Y}
Y=\left((N-1)(k-1)+l,\; (N-2)(k-1)+l,\; \ldots,\; (k-1)+l,\; l\right),
\end{equation}
then the generalized Wronskian--Hermite polynomials $W_N^{[m,k,l]}(z)$ can be expressed as \cite{clarkson2003second,yang2023rogue}
\begin{equation}\label{eq:WH-as-schur}
W_N^{[m,k,l]}(z)
=\frac{s_Y\!\bigl(z,0,\ldots,0,1_m,0,\ldots\bigr)}{s_Y(1,0,0,\ldots)},
\end{equation}
where $1_m$ indicates that the entry $1$ is placed in the $m$-th position of the vector, while all other entries
(except the first entry $z$) are set to zero.
Note that $Y$ given in \eqref{eq:young-diagram-Y} is a $k$-core partition, since its $\beta$-numbers are  (see Corollary $1.2$ in \cite{olsson2009core})
\begin{equation}
    \beta(Y)=\{\,l+jk: j=0,1,\dots,N-1\,\}.
\end{equation}

\subsection{Two theorems on the multiplicity of the zero root}
We first recall the algebraic description of the root structure of \(W_N^{[m,k,l]}(z)\). In this description, the multiplicity of the zero root
is encoded by an explicitly defined integer \(\Gamma_0\).

\begin{theorem}[\cite{lin2024rogue}, Theorem $2$]\label{lemma:root structure of WH polynomials}
Let \(m,k,l\) be positive integers satisfying $m > 1$, $k \nmid m$, and $1 \leq l < k$, the generalized Wronskian--Hermite polynomials $W_N^{[m,k,l]}(z)$ in \eqref{eqn:Wronskian-Hermite-poly-with-jump-k} are monic with degree
\begin{equation} \label{degree Gamma}
    \Gamma = \frac{N}{2} \left( (k - 1)(N - 1) + 2l \right),
\end{equation}
and can be expressed as
\begin{equation} \label{W_N structure}
    W_N^{[m,k,l]}(z) = z^{\Gamma_0} \widehat{W}_N^{[m,k,l]}(\hat{z}),
\end{equation}
where $\hat{z} = z^m$, $\widehat{W}_N^{[m,k,l]}(\hat{z})$ is a monic polynomial with real coefficients and $\widehat{W}_N^{[m,k,l]}(0)\neq0$. Here, $\Gamma_0$ is a nonnegative integer given by
\begin{equation} \label{Gamma_0 formula}
    \Gamma_0 = \sum_{i=1}^{k-1} \frac{N_i}{2} \left( (k - 1)(N_i - 1) + 2i \right) - \sum_{1 \leq i < j \leq k-1} N_i N_j,
\end{equation}
where the integers $N_i$ $(1\le i\le k-1)$ are determined by an explicit arithmetic procedure.
For completeness, the calculations of $N_i$ are summarized in \ref{app:Ni-procedure}.
\end{theorem}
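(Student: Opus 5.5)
This theorem is quoted from \cite{lin2024rogue}; here is how I would reprove it from the tools above.

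The starting point is the Schur-function representation \eqref{eq:WH-as-schur}, which I would pair with the quasi-homogeneity of $p_n^{[m]}$. From \eqref{eq:p-generating-intro} we have
\[
p_n^{[m]}(\omega z)=\omega^n p_n^{[m]}(z), \qquad \omega^m=1 .
\]
Since differentiation lowers the degree by one, every term in the Wronskian determinant transforms the same way, so $W_N^{[m,k,l]}(\omega z)=\omega^{\Gamma}W_N^{[m,k,l]}(z)$. Consequently only powers $z^{j}$ with $j\equiv\Gamma \pmod m$ occur, and we can write $W_N^{[m,k,l]}(z)=z^{\Gamma_0}\widehat W_N^{[m,k,l]}(z^m)$ with $\widehat W_N^{[m,k,l]}(0)\neq0$. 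Reality of the coefficients follows from \eqref{eq:p-expansion}. The degree and leading coefficient come from the top terms $z^{n_i-j+1}/(n_i-j+1)!$. These form exactly the determinant treated in the proof of Proposition~\ref{prop:two-term-reduction}, namely $z^\Gamma\det G/\prod_{i<j}(n_j-n_i)\cdot\prod_i n_i!^{-1}$, and after the normalization $c_N^{[m,k,l]}$ this gives a monic polynomial of degree $\Gamma$. This settles the degree, monicity and the structure \eqref{W_N structure}; what remains is to compute $\Gamma_0$.

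To get $\Gamma_0$ I would identify the lowest-order term. Expanding the Wronskian multilinearly using \eqref{eq:p-expansion}, each column $i$ chooses a shift $s_i\ge0$, so $n_i$ is replaced by $n_i-ms_i$. The resulting term is $z^{\sum(n_i-ms_i)-\binom N2}$ times a Vandermonde-type determinant in the shifted indices. That determinant vanishes unless the shifted indices are distinct and nonnegative. Therefore $\Gamma_0$ is the minimum of $|\beta|-\binom N2$ over all admissible $m$-slides of the $\beta$-set $\beta(Y)=\{l+jk\}$, provided the minimal term does not cancel. The minimal configuration is precisely the $m$-abacus slide of Remark~\ref{rem:compute-m-core}, which is unique because the core is unique. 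So I expect $\Gamma_0=|\mathrm{core}_m(Y)|$.

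The final step is to show that this equals formula \eqref{Gamma_0 formula} through the procedure defining the $N_i$ in \ref{app:Ni-procedure}. I would read off the final bead set on the $k$-abacus of the core: $N_i$ beads on runner $i$ occupying positions $i,i+k,\dots$. Evaluating $|\lambda|=\sum\beta-\binom{\#}{2}$ for this bead set gives $\sum_i \bigl(N_i i+k\binom{N_i}{2}\bigr)-\binom{\sum N_i}{2}$, which simplifies algebraically to \eqref{Gamma_0 formula}.

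I expect two obstacles. The first is showing that the minimal-degree term has a nonzero coefficient: other shift patterns can reach the same final bead set, possibly with signs. I would try to handle this using the known nonvanishing of $s_\mu$ evaluated at the core, following the Bonneux-type argument \cite{bonneux2020coefficients}. The second is matching the bead counts with the arithmetic procedure for the $N_i$ in the appendix.
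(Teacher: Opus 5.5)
The paper does not reprove this statement; it quotes it from \cite{lin2024rogue}. That argument, recalled in Section~\ref{sec:WH with partition}, deforms the generating function to $\exp(z\epsilon+a\epsilon^m)$ and reads the lowest power of $z$ off the highest power of $a$. It needs row operations, because keeping only the top $a$-term of each entry makes rows proportional.

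Your route is different, and it is sound up to $\Gamma_0=|\mathrm{core}_m(Y)|$. Quasi-homogeneity and the top-term determinant give the structure, degree, reality and monicity. The column-wise multilinear expansion avoids the row reductions entirely.

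Your first ``obstacle'' in fact disappears. Take $e_a=z^a/a!$ (with $e_a=0$ for $a<0$), so that $p_n^{[m]}=\sum_{s\ge0}e_{n-ms}/s!$. The minimal-degree terms are exactly those whose shifted set equals the fully slid set $T$, since on each $m$-runner distinct positions have minimal sum only when they are the top ones. Their total coefficient is
\[
c_N^{[m,k,l]}\,\frac{\prod_{t,t'\in T,\,t<t'}(t'-t)}{\prod_{t\in T}t!}\,\det\left[\frac{\chi_{j,t}}{((n_j-t)/m)!}\right]_{1\le j\le N,\ t\in T},
\]
where $\chi_{j,t}=1$ if $t\equiv n_j\pmod m$ and $t\le n_j$, and $\chi_{j,t}=0$ otherwise. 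Grouping rows and columns by $m$-runner makes this determinant block diagonal up to sign. The block of a runner with beads at $\rho+m\beta_1<\cdots<\rho+m\beta_c$ is
\[
\det[1/(\beta_i-q+1)!]_{i,q=1}^{c}=\prod_{i<i'}(\beta_{i'}-\beta_i)\Big/\prod_i\beta_i!\neq0 .
\]
This is exactly Theorem~\ref{thm:WH-mcore-factorization}, which you could also simply cite.

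The genuine gap is your last step. The statement asserts \eqref{Gamma_0 formula} for the specific integers $N_i$ produced by the procedure of Appendix~\ref{app:Ni-procedure}. You simply posit that the core has ``$N_i$ beads on runner $i$ at $i,i+k,\dots$'', and two things are hidden there.

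First, flushness on the $k$-abacus needs $\mathrm{core}_m(Y)$ to be again a $k$-core; this is the fact from \cite{gramain2012core} used in the paper. You must also pass from the $N$-bead set $T$ to the normalized $\beta$-set ($r=\ell(\lambda)$) by stripping an initial run $0,1,\dots,n-1$. That step cyclically relabels the runners and decrements some counts, and only afterwards is runner $0$ empty, so that the sum runs over $i=1,\dots,k-1$.

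Second, and more seriously, nothing in your argument shows that the resulting bead counts $b_i$ coincide with the output of the appendix's case analysis. That analysis has Cases I/II in $m_0,m_1,N_0$, rotations when $\hat N_0\neq0$, and a separate branch for $\gcd(m,k)\neq1$. What you have proved is the formula with $N_i$ replaced by $b_i$, i.e.\ Theorem~\ref{thm:WH-mcore-factorization} combined with Lemma~\ref{lem:size-of-core-by-runner-beads}.

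The paper closes this loop only through \cite{lin2024rogue}. There $N_i$ is \emph{defined} as the length of the progression $i,i+k,\dots$ formed by the first-column exponents after the $a$-expansion and row reduction. The paper identifies those exponents with the fully slid $m$-abacus positions, whence $b_i=N_i$. To finish, you must either reproduce that identification or verify, case by case, that the appendix procedure returns exactly these bead counts.
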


The next theorem gives a different description
of the same multiplicity, but in terms of the \(m\)-core of the associated Young diagram.

\begin{theorem}[\cite{bonneux2020coefficients}, Theorem $6$]\label{thm:WH-mcore-factorization}
Fix $m\ge2$ and let $W_N^{[m,k,l]}(z)$ be the generalized Wronskian--Hermite polynomial defined by
\eqref{eq:WH-as-schur}, where the associated Young diagram is
\begin{equation}\label{eq:WH-young-diagram-repeat}
Y=\left((N-1)(k-1)+l,\; (N-2)(k-1)+l,\; \ldots,\; (k-1)+l,\; l\right).
\end{equation}
Then $W_N^{[m,k,l]}(z)$ has $|Y|$ zeros (counted with multiplicity), and there exists a polynomial $q_N^{[m,k,l]}(\zeta)$ such that
\begin{equation}\label{eq:WH-mcore-factorization}
W_N^{[m,k,l]}(z)=z^{|\mathrm{core}_m(Y)|}\,q_N^{[m,k,l]}(z^m),
\end{equation}
where $|X|$ denotes the size of a partition $X$ and $q_N^{[m,k,l]}(0)\neq0$.
\end{theorem}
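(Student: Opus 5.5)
The plan is to work directly from the Schur-function representation \eqref{eq:WH-as-schur} and to read off both the $z\mapsto z^m$ structure and the order of vanishing at $z=0$ from the character expansion of $s_Y$. Write $n=|Y|$.

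\textbf{Step 1: the factorization shape.} Assign weight $j$ to the variable $x_j$. Then $S_n$, and hence every entry $S_{i_j-j+k}$ of \eqref{eq:schur-jacobi-trudi}, is weighted-homogeneous, so $s_Y(cx_1,c^2x_2,\ldots)=c^{n}s_Y(\mathbf x)$. Take $c=\omega$ with $\omega^m=1$. Then the specialization $(z,0,\ldots,0,1_m,0,\ldots)$ is sent to $(\omega z,0,\ldots,0,1_m,0,\ldots)$, so $W_N^{[m,k,l]}(\omega z)=\omega^{n}W_N^{[m,k,l]}(z)$. Consequently only the powers $z^{n-mj}$, $j\ge0$, occur. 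The top term $z^{n}$ comes from the specialization $x=(z,0,0,\ldots)$, which gives $s_Y(1,0,\ldots)z^n\neq0$. Hence the degree is $n$ and $W_N^{[m,k,l]}(z)=z^{n-mJ}q(z^m)$ with $q(0)\ne0$, where $J$ is the largest $j$ whose coefficient is nonzero. It remains to show $J=w_m(Y)$, so that $n-mJ=|\mathrm{core}_m(Y)|$.

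\textbf{Step 2: coefficients as characters.} Switch to power sums, using $x_j=p_j/j$ in the generating function $\exp(\sum x_j\epsilon^j)$. Under our specialization the only nonzero power sums are $p_1=z$ and $p_m=m$. The expansion $s_Y=\sum_\mu \chi^Y_\mu p_\mu/z_\mu$ then shows that the coefficient of $z^{n-mj}$ is a nonzero constant times $\chi^Y(m^j1^{n-mj})$. Apply the Murnaghan--Nakayama rule, removing the $m$-cycles first. This gives
\[
\chi^Y(m^j1^{n-mj})=\sum_{T}\operatorname{sgn}(T)\, f^{Y\setminus T},
\]
where $T$ runs over sequences of $j$ successive rim $m$-hook removals, $\operatorname{sgn}(T)=\prod(-1)^{\mathrm{leg}}$, and $f^\nu$ is the number of standard tableaux of shape $\nu$. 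For $j>w_m(Y)$ no such sequence exists, so the character vanishes. For $j=w_m(Y)$, every sequence ends at the same partition $\mathrm{core}_m(Y)$, by uniqueness of the core as recalled after Definition~\ref{def:core-of-lambda}.

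\textbf{Step 3 (main obstacle): no cancellation at $j=w_m(Y)$.} I must show that all complete removal sequences carry the same sign. For this I would use the $m$-abacus of Remark~\ref{rem:compute-m-core}. Removing a rim $m$-hook is the move $\beta\mapsto\beta-m$, and its leg length equals the number of beads at positions strictly between $\beta-m$ and $\beta$. Hence $(-1)^{\mathrm{leg}}$ is the sign of the cyclic permutation that re-sorts the $\beta$-numbers. The total sign of a complete sequence is therefore the sign of the permutation sorting the final bead configuration relative to the initial ordering of the $\beta$-numbers. This depends only on the initial and final configurations, which are fixed (cf.\ the $m$-sign $\delta_m(Y)$ in James--Kerber, Thm.~2.7.26). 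The character is then $\pm(\#\text{sequences})\,f^{\mathrm{core}_m(Y)}\neq0$, so $J=w_m(Y)$. Combining with Step 1 yields \eqref{eq:WH-mcore-factorization}.
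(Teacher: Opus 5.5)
Your proof is correct. The paper does not prove this statement itself; it quotes it from Bonneux--Dunning--Stevens. So there is no in-paper argument to match, and your Frobenius/Murnaghan--Nakayama argument is a complete, self-contained proof of the standard character-theoretic kind. In it, the coefficient of $z^{|Y|-mj}$ is a nonzero multiple of $\chi^Y(m^j1^{|Y|-mj})$. This vanishes for $j>w_m(Y)$ and equals $\pm(\#\text{removal sequences})\,f^{\mathrm{core}_m(Y)}\neq0$ at $j=w_m(Y)$.

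The paper's only in-house handle on the multiplicity of $z=0$ is a genuinely different route. It combines Lin--Ling's determinant reduction in the auxiliary parameter $a$, which gives $\Gamma_0$ via the integers $N_i$ from the appendix procedure, with the theorem identifying those $N_i$ as bead counts on the $k$-abacus of $\mathrm{core}_m(Y)$. Your argument needs neither $k\nmid m$ nor the arithmetic-progression shape of $Y$, and it works verbatim for any partition. It also produces every coefficient of $W_N^{[m,k,l]}$ explicitly. The Lin--Ling route, by contrast, is tailored to the lowest-order term and gives the runner-by-runner formula for $\Gamma_0$.

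One sentence in Step 3 should be sharpened. The accumulated sign is the sign of the permutation recording which initial bead ends at which final position, not merely a function of the final set. This labelled final configuration is forced, because a bead moving from $\beta$ to $\beta-m$ only passes positions on other runners. Hence beads on the same runner never overtake one another, and the $t$-th bead from the top of runner $a$ always ends at $a+(t-1)m$. With that remark, or by directly citing the well-definedness of the $m$-sign, the nonvanishing at $j=w_m(Y)$ is airtight.
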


\subsection{Connection between the two theorems on the multiplicity of the zero root}
We now establish a connection between the algebraic and Young-diagram
descriptions above. We first express the size of the \(m\)-core in terms of
the bead numbers on the \(k\)-abacus, and then show that these bead numbers
coincide with the integers \(N_i\) appearing in the algebraic formula. 
\begin{lemma}\label{lem:size-of-core-by-runner-beads}
Let $\lambda=\mathrm{core}_m(Y),$ where $Y$ is the partition associated with $W_N^{[m,k,l]}(z)$.
Represent $\lambda$ on the $k$-abacus, and for each runner $i\in\{0,1,\dots,k-1\}$, let $b_i$
denote the number of beads on runner $i$.
Assume that the $\beta$-numbers of $\lambda$ are taken with $r=\ell(\lambda),$
then 
\begin{equation}\label{eq:core-size-ni-lemma}
|\lambda|
=
\sum_{i=1}^{k-1} \frac{b_i}{2} \left( (k - 1)(b_i - 1) + 2i \right)
-\sum_{1 \le i < j \le k-1} b_i b_j.
\end{equation}
\end{lemma}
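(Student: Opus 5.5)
The plan is to compute $|\lambda|$ directly from its $\beta$-numbers. This works once we know that the beads of $\lambda$ on the $k$-abacus are packed at the top of each runner and that runner $0$ is empty.

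\emph{Step 1: $\lambda$ is a $k$-core.}
As noted after \eqref{eq:WH-as-schur}, $Y$ is a $k$-core, since its $\beta$-numbers are $\{l+jk\}_{j=0}^{N-1}$. I would invoke Olsson's theorem \cite{olsson2009core}: the $m$-core of a $k$-core is again a $k$-core. Hence $\lambda=\mathrm{core}_m(Y)$ is a $k$-core. This is the step I expect to be the main obstacle, because it is the only genuinely nontrivial combinatorial input; I would quote it from \cite{olsson2009core} rather than reprove it. If a self-contained argument were wanted, I would try to show that each upward move $\beta\mapsto\beta-m$ on the $m$-abacus preserves the property that the $\beta$-set is closed under $\beta\mapsto\beta-k$ within $\mathbb Z_{\ge0}$. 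This would be done by an induction on the number of moves, using that the $\beta$-set of $Y$ is an initial segment $l, l+k,\dots$ of a single residue class mod $k$.

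\emph{Step 2: shape of the abacus.}
Take $r=\ell(\lambda)$ in \eqref{eq:beta-numbers-abacus}. Then $\beta_r=\lambda_r\ge1$, so position $0$ is empty. By the standard abacus characterization, a partition is a $k$-core if and only if on every runner of its $k$-abacus no bead lies below an empty position. Since position $0$ heads runner $0$ and is empty, runner $0$ carries no beads, i.e. $b_0=0$. For $1\le i\le k-1$, the beads on runner $i$ occupy exactly the positions $i, i+k,\dots,i+(b_i-1)k$. In particular,
\[
r=\sum_{i=1}^{k-1}b_i .
\]

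\emph{Step 3: compute the size.}
From $\beta_i=\lambda_i+r-i$ we get
\[
|\lambda|=\sum_{i=1}^r\beta_i-\binom r2 .
\]
The contribution of runner $i$ to $\sum_i\beta_i$ is $ib_i+k\binom{b_i}{2}$. Expanding
\[
\binom{r}{2}=\sum_{i}\binom{b_i}{2}+\sum_{1\le i<j\le k-1}b_ib_j
\]
and subtracting gives
\[
|\lambda|=\sum_{i=1}^{k-1}\Bigl(ib_i+(k-1)\tfrac{b_i(b_i-1)}{2}\Bigr)-\sum_{1\le i<j\le k-1}b_ib_j ,
\]
which is exactly \eqref{eq:core-size-ni-lemma}. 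This step is routine bookkeeping once Steps 1 and 2 are in place.
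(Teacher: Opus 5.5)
Your proof is correct and is essentially the paper's argument: the paper likewise takes as known that $\mathrm{core}_m(Y)$ is a $k$-core (citing Gramain--Nath rather than Olsson). It then concludes that runner $0$ is empty and that each runner $i$ carries beads exactly at $i, i+k, \dots, i+(b_i-1)k$, and subtracts $\binom{r}{2}$ from $\sum_j \beta_j$ using $r=\sum_{i=1}^{k-1} b_i$. Do keep quoting that theorem rather than using your self-contained fallback. Single moves $\beta\mapsto\beta-m$ need not preserve $k$-coreness: for $k=2$, $m=3$, $Y=(3,2,1)$ with $\beta$-set $\{1,3,5\}$, the two available moves give $\{1,2,3\}$ or $\{0,1,5\}$, i.e.\ $(1,1,1)$ or $(3)$, and neither is a $2$-core, so the induction on the number of moves you sketched cannot work.
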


\begin{proof}
If $\lambda=\varnothing$, the formula is immediate. We therefore assume $\lambda\not=\varnothing$. Take $r=\ell(\lambda)$, then the $\beta$-numbers of $\lambda$ are defined by
\begin{equation}
    \beta_i=\lambda_i+r-i>0,\qquad i=1,\dots,r.
\end{equation}
In particular, position \(0\) is empty.
Since $Y$ is a $k$-core, it follows that $\lambda$ is also a $k$-core \cite{gramain2012core}. Consequently, in the corresponding $k$-abacus display, the beads on each runner are pushed up to their highest possible positions. In addition, runner $0$ contains no bead, so that $b_0=0$.
Hence, runner $i$ contains beads exactly at the positions
\begin{equation}
i,\ i+k,\ i+2k,\ \dots,\ i+(b_i-1)k,
\qquad i=1,\dots,k-1.
\end{equation}
Therefore, the sum of the $\beta$-numbers of $\lambda$ is
\begin{equation}
\sum_{j=1}^{r}\beta_j
=
\sum_{i=1}^{k-1}\frac{b_i}{2}\left(2i +k(b_i-1)\right).
\end{equation}

On the other hand, since
\begin{equation}
\beta_j=\lambda_j+r-j,\qquad j=1,\dots,r,
\end{equation}
we have
\begin{equation}
\sum_{j=1}^{r}\beta_j
=
|\lambda|+\frac{r(r-1)}{2}.
\end{equation}
Thus
\begin{equation} \label{eqn: calculation steps in proof for lambda sum}
|\lambda|
=
\sum_{i=1}^{k-1}\frac{b_i}{2}\left(2i +k(b_i-1)\right)-\frac{r(r-1)}{2}.
\end{equation}

Since runner $0$ is empty, we have
\begin{equation}
r=\sum_{i=1}^{k-1}b_i.
\end{equation}
Hence
\begin{equation}
\frac{r(r-1)}{2}
=
\frac{1}{2}\sum_{i=1}^{k-1}b_i^2
+\sum_{1\le i<j\le k-1}b_ib_j
-\frac{1}{2}\sum_{i=1}^{k-1}b_i.
\end{equation}
Substituting this into \eqref{eqn: calculation steps in proof for lambda sum} yields
\begin{equation}
|\lambda|
=
\sum_{i=1}^{k-1} \frac{b_i}{2} \left( (k - 1)(b_i - 1) + 2i \right)
-\sum_{1 \le i < j \le k-1} b_i b_j.
\end{equation}
\end{proof}

In what follows, we establish the connection between Theorem \ref{lemma:root structure of WH polynomials} and Theorem \ref{thm:WH-mcore-factorization}, both of which describe the root structures of $W_N^{[m,k,l]}(z)$.

\begin{theorem}\label{thm:relation between N_i and n_i}
Let \(W_N^{[m,k,l]}(z)\) be a generalized Wronskian--Hermite polynomial, and let \(Y\) be the associated partition. The \(m\)-core \(\mathrm{core}_m(Y)\) can be represented on the \(k\)-abacus by choosing the \(\beta\)-numbers with
\begin{equation}
r=\ell\bigl(\mathrm{core}_m(Y)\bigr),
\end{equation}
where $\ell(\lambda)$ denotes the length of the partition $\lambda$. 
Then the number \(b_i\) of beads on runner \(i\) agrees with the integers \(N_i\) in Theorem~\ref{lemma:root structure of WH polynomials}; namely,
\begin{equation}
b_i=N_i,\qquad i=1,\dots,k-1.
\end{equation}
\end{theorem}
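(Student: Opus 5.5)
The plan is to follow the $m$-abacus algorithm of Remark~\ref{rem:compute-m-core} for $Y$ explicitly, read the resulting bead configuration on the $k$-abacus, and check step by step that it reproduces the arithmetic procedure defining $N_i$ in \ref{app:Ni-procedure}.

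We do \emph{not} try to deduce $b_i=N_i$ from equality of zero multiplicities. Theorems~\ref{lemma:root structure of WH polynomials} and~\ref{thm:WH-mcore-factorization}, combined with Lemma~\ref{lem:size-of-core-by-runner-beads}, only give
\[
\Gamma_0=|\mathrm{core}_m(Y)|,
\]
that is, equality of one quadratic function of $(N_i)$ and of $(b_i)$. Different $k$-cores can have the same size, so this is not enough. It will serve only as a consistency check at the end.

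\textbf{Step 1: the $m$-core on the $m$-abacus.} Since $l\ge1$, we have $\ell(Y)=N$. The $\beta$-set of $Y$ with $r=N$ is $B=\{l+jk:0\le j\le N-1\}$. Apply the sliding $\beta\mapsto\beta-m$ on the $m$-abacus.
\begin{itemize}
\item On runner $a$ of the $m$-abacus, the final configuration occupies the lowest $c_a$ positions $a,a+m,\dots,a+(c_a-1)m$.
\item Here $c_a=\#\{j: l+jk\equiv a \pmod m\}$.
\end{itemize}
Call the resulting $\beta$-set $B^{(\infty)}$. In general $B^{(\infty)}$ contains an initial segment $\{0,1,\dots,t-1\}$. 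Discarding it and shifting every remaining entry by $-t$ gives the $\beta$-set $B'$ of $\mathrm{core}_m(Y)$ with $r=\ell(\mathrm{core}_m(Y))$, which is the normalization required in the statement.

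\textbf{Step 2: reading $B'$ on the $k$-abacus.} Since $\mathrm{core}_m(Y)$ is a $k$-core, its beads on each runner of the $k$-abacus are top-justified. Runner $0$ is empty because $0\notin B'$. Hence
\[
b_i=\#\{\beta\in B':\beta\equiv i\pmod k\}.
\]
Combining with Step 1, this becomes
\[
b_i=\#\{x\in B^{(\infty)},\ x\ge t:\ x-t\equiv i\pmod k\}.
\]
So $b_i$ is an explicit count of integers of the form $a+sm$ with $0\le s<c_a$, sorted by their residue modulo $k$ after the shift by $t$.

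\textbf{Step 3: matching the procedure for $N_i$.} The procedure in \ref{app:Ni-procedure} (from \cite{lin2024rogue}) is, as I expect, an iterative reduction. It starts from $N_l=N$ and the other $N_i=0$, and repeatedly transfers counts between residue classes according to $m \bmod k$. The idea is to interpret each of its steps as moving a batch of beads from one runner of the $k$-abacus to another. Equivalently, each step is one layer of the $m$-sliding in Step 1, with the residue changing by $-m\pmod k$. Here $k\nmid m$ ensures that beads genuinely change runner. We then prove by induction on the number of steps that the intermediate counts coincide with runner occupancies of the intermediate $\beta$-sets. The final renormalization (removing the segment $\{0,\dots,t-1\}$) should correspond in the procedure to a cyclic relabelling of the indices $i$, which we will verify.

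\textbf{Main obstacle.} The hardest part is the bookkeeping of the shift $t$. It rotates the runner labels of the $k$-abacus, and one must show that the procedure's termination rule produces exactly the same rotation. I would first handle the case $t=0$, which holds, e.g., whenever $l>m$ or no bead reaches position $0$. I would then treat general $t$ by showing that a full segment $\{0,\dots,t-1\}$ corresponds to emptying runner $0$ in the procedure. Finally, the identity $\Gamma_0=|\mathrm{core}_m(Y)|$ together with Lemma~\ref{lem:size-of-core-by-runner-beads} serves as a check on the constants.
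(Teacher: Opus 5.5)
Your Steps 1 and 2 are correct. The sliding leaves runner $a$ of the $m$-abacus filled at $a,a+m,\dots,a+(c_a-1)m$. Deleting the initial run $\{0,\dots,t-1\}$ and shifting by $-t$ gives the $\beta$-set with $r=\ell(\mathrm{core}_m(Y))$. Because $\mathrm{core}_m(Y)$ is a $k$-core, you correctly obtain $b_i$ as residue counts with runner $0$ empty. You are also right that $\Gamma_0=|\mathrm{core}_m(Y)|$ alone cannot give $b_i=N_i$.

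The gap is Step 3, which carries the entire content of the theorem and rests on a guess (``as I expect'') that does not match how the $N_i$ are defined. The procedure in \ref{app:Ni-procedure} is not iterative: it does not start from $N_l=N$ and transfer counts between residue classes. It writes down $\hat N_0,\dots,\hat N_{k-1}$ in closed form (Cases I and II, in terms of $m_0$, $m_1$, $N_0$, $l$, $k$). It then obtains the $N_i$ either by a relabelling (Case 1, $d=1$) or via the auxiliary $\tilde N^{(1)},\tilde N^{(2)}$ involving $k_N$ and $\hat k$ (Case 2, $d>1$). So there is nothing to run an induction ``on the number of steps'' over, and the identification $b_i=N_i$ is simply postponed.

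Your guess about the final stage is in fact right. If the $\hat N_i$ are the $k$-runner counts of a top-justified $\beta$-set, then Case 1(ii) is exactly deleting $\{0,\dots,j-1\}$ and shifting by $-j$: runner $i$ becomes runner $i-j$, and runners $0,\dots,j-1$ lose their top bead. Case 1(iii) first strips $\bar r$ complete rows. But you would still have to prove that the closed-form $\hat N_i$ are such counts. For $d=1$ this means showing two things: the $k_N$ complete blocks fill positions $0,\dots,k_Nm-1$, leaving (after the shift) $\{\alpha_1,\dots,\alpha_{N_0}\}$; and the $k$-runner counts of this set are given by the Case I/II formulas. For $d>1$ a separate count is needed. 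None of this is in your proposal.

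The paper avoids this arithmetic by going back to where the $N_i$ come from in \cite{lin2024rogue}. One keeps the top power of $a$ in $\widetilde W_N^{[m,k,l]}(z;a)$ and performs the row operations forced by proportional rows. The exponents of $z$ in the first column are then exactly the $\alpha_j+sm$, i.e.\ your $B^{(\infty)}$. The further reduction of the determinant when these exponents begin with $0,1,\dots,n-1$ is exactly your deletion of $\{0,\dots,t-1\}$ and shift by $-t$. Finally, $N_i$ is \emph{defined} there as the length of the progression $i,i+k,\dots$ in the resulting sequence. With that characterization, your Steps 1 and 2 essentially finish the proof.

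Separately, your aside that $t=0$ whenever $l>m$ is false. For $m=2$, $k=5$, $l=3$, $N=2$ one has $B=\{3,8\}$, which slides to $\{1,0\}$, so $t=2$ and the core is empty.
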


\begin{proof}
Since \(Y\) is a \(k\)-core, its \(m\)-core \(\mathrm{core}_m(Y)\) is again a \(k\)-core \cite{gramain2012core}. Hence, \(\mathrm{core}_m(Y)\) is represented on the \(k\)-abacus by a bead configuration in which no bead can be moved upward. In this setting, runner \(0\) is empty, so \(b_0=0\) (see the proof of Lemma \ref{lem:size-of-core-by-runner-beads}). It remains to prove that $b_i=N_i,\; i=1,\dots,k-1.$

First, we recall the definition of \(N_i\). In the proof of Theorem~\ref{lemma:root structure of WH polynomials} (see Appendix~A of \cite{lin2024rogue}), the generalized Wronskian--Hermite polynomial \(W_N^{[m,k,l]}(z)\) is redefined as a determinant by introducing a parameter $a$. After expanding its entries and performing suitable row operations, the coefficients of the highest-power terms in \(a\) for the entries in the first column are constant multiples of certain powers of \(z\). These powers can then be divided into \(k-1\) segments, each of which is of the form
\begin{equation}
    z^i,\; z^{i+k},\; z^{i+2k},\; \cdots,
\end{equation}
where \(1\le i\le k-1\). Then, \(N_i\) is defined as the length of the segment whose first term is \(z^i\), and hence $N_i \ge 0$.

To establish the relation between \(b_i\) and \(N_i\), we describe two procedures involved: one for deriving \(\mathrm{core}_m(Y)\), and the other for obtaining the above \(k-1\) segments of powers of \(z\).

For each \(j\geq1\), let \(\alpha_j\) be the unique integer in
\(\{0,1,\ldots,m-1\}\) such that
\begin{equation}
    \alpha_j
    \equiv
    l+(j-1)k
    \pmod m.
\end{equation} 
As proved in \cite{lin2024rogue}, the set
\begin{equation}
    \mathcal A
    =
    \{\alpha_j\mid 1\leq j\leq N\}
\end{equation}
contains
at most \(\hat m\) elements, where $\hat m=m/\gcd(m,k)$.
Then, we assume that \(N_0\) is the remainder when \(N\) is divided by \(\hat{m}\), i.e.,
\begin{equation}
N = k_N \hat{m} + N_0, \quad 0 \le N_0 \le \hat{m}-1,
\end{equation}
where $k_N$ is a nonnegative integer. 

In order to determine the lowest power of $z$ in $W_N^{[m,k,l]}(z)$, one can introduce the following determinant \cite{yang2023rogue,zhang2022rogue,lin2024rogue}
\begin{equation}\label{eqn:redefine WH polynomials}
\widetilde{W}_{N}^{[m,k,l]}(z;a)
=
c_{N}^{[m,k,l]} \mathrm{Wr}\left[S_{l}^{[m]}(z;a),S_{l+k}^{[m]}(z;a), \cdots, S_{l+(N-1)k}^{[m]}(z;a)\right],
\end{equation}
where 
\begin{equation}
\sum_{j=0}^{\infty} S_j^{[m]}(z;a)\,\epsilon^j
=
\exp\!\left(z\epsilon+a\epsilon^m\right).
\end{equation}
Then, by using
\begin{equation}\label{eqn:expansion of S_j(a,z)}
S_{j}^{[m]}(z;a)
=
\sum_{i=0}^{\lfloor j/m \rfloor}
\frac{a^{i}}{i!(j-im)!}\,
z^{\,j-im},
\end{equation}
where \(\lfloor x \rfloor\) denotes the greatest integer less than or equal to \(x\), each entry of the determinant in \eqref{eqn:redefine WH polynomials} can be expanded as a polynomial in both \(a\) and \(z\). To determine the lowest power of \(z\) in \eqref{eqn:redefine WH polynomials}, it suffices to examine the highest-power term in \(a\) \cite{lin2024rogue}. 

A natural way of extracting the highest-power term in \(a\) is to retain, in each entry, only its highest-power term in \(a\), which corresponds to the lowest power of \(z\). However, in many cases, this causes two or more rows to become proportional.
One must therefore perform suitable row operations. As a result, the powers of \(z\) arising from the highest-power term of \(a\) in \eqref{eqn:redefine WH polynomials} can eventually be arranged as follows, with the constant factors in each entry omitted. For the first column, they are
{\footnotesize
\begin{equation}\label{eqn:power of z after row operation}
z^{\alpha_1},
\; \cdots, \;
z^{\alpha_{\hat m}};
\;
z^{\alpha_1+m},
\; \cdots, \;
z^{\alpha_{\hat m}+m};
\; \cdots; \;
z^{\alpha_1+(k_N-1)m},
\; \cdots, \;
z^{\alpha_{\hat m}+(k_N-1)m};
\;
z^{\alpha_1+k_N m},
\; \cdots, \;
z^{\alpha_{N_0}+k_N m},
\end{equation}}
while for the second column, they are
{\scriptsize\begin{equation}
z^{\alpha_1-1},\cdots,
z^{\alpha_{\hat m}-1};
z^{\alpha_1+m-1},
\cdots,
z^{\alpha_{\hat m}+m-1};
\cdots;
z^{\alpha_1+(k_N-1)m-1},
\cdots,
z^{\alpha_{\hat m}+(k_N-1)m-1};
z^{\alpha_1+k_N m-1},\cdots,
z^{\alpha_{N_0}+k_N m-1}.
\end{equation}}
The remaining columns have similar structures, namely, the power decreases by one from column to column. Here, the \(\alpha_j\) \((1\le j\le \hat m)\) are distinct, and if some entry is \(z^0\), then all subsequent entries in the same row are zero.

On the other hand, the associated partition of \(W_N^{[m,k,l]}(z)\) is
\begin{equation}
Y=\left((N-1)(k-1)+l,\; (N-2)(k-1)+l,\; \ldots,\; (k-1)+l,\; l\right).
\end{equation}
Since \(r=\ell(Y)=N\), one can compute \(\beta\)-numbers of \(Y\) as
\begin{equation}\label{eqn:beta number for redefine WH polynomials}
\beta=(l+(N-1)k, \; l+(N-2)k, \; \cdots l+2k,\; l+k,\; l).
\end{equation}
We place the \(\beta\)-numbers of \(Y\) on the \(m\)-abacus (see Fig. \ref{fig:m-abacus-empty} for an empty \(m\)-abacus). Initially, the bead positions are
\begin{equation}
l,\ l+k,\ l+2k,\ \dots,\ l+(N-1)k.
\end{equation}
To obtain the \(m\)-core (see Remark \ref{rem:compute-m-core}), one repeatedly slides each bead upward along its runner until no bead can be moved any further. 

\begin{figure}[htbp]
\centering
\begin{tikzpicture}[x=1.2cm,y=0.95cm]

    \draw[thick] (0,0.3)--(0,-2.7);
    \draw[thick] (1,0.3)--(1,-2.7);
    \draw[thick] (2,0.3)--(2,-2.7);
    \draw[thick] (4,0.3)--(4,-2.7);

    \node[above] at (0,0.45) {\(0\)};
    \node[above] at (1,0.45) {\(1\)};
    \node[above] at (2,0.45) {\(2\)};
    \node[above] at (3,0.45) {\(\cdots\)};
    \node[above] at (4,0.45) {\(m-1\)};

    \node[below left] at (0,0) {\scriptsize \(0\)};
    \node[below left] at (1,0) {\scriptsize \(1\)};
    \node[below left] at (2,0) {\scriptsize \(2\)};
    \node at (3,0) {\(\cdots\)};
    \node[below left] at (4,0) {\scriptsize \(m-1\)};

    \node[below left] at (0,-1) {\scriptsize \(m\)};
    \node[below left] at (1,-1) {\scriptsize \(m+1\)};
    \node[below left] at (2,-1) {\scriptsize \(m+2\)};
    \node at (3,-1) {\(\cdots\)};
    \node[below left] at (4,-1) {\scriptsize \(2m-1\)};

    \node[below left] at (0,-2) {\scriptsize \(2m\)};
    \node[below left] at (1,-2) {\scriptsize \(2m+1\)};
    \node[below left] at (2,-2) {\scriptsize \(2m+2\)};
    \node at (3,-2) {\(\cdots\)};
    \node[below left] at (4,-2) {\scriptsize \(3m-1\)};

    \draw (0,0) circle (2.2pt);
    \draw (1,0) circle (2.2pt);
    \draw (2,0) circle (2.2pt);
    \draw (4,0) circle (2.2pt);

    \draw (0,-1) circle (2.2pt);
    \draw (1,-1) circle (2.2pt);
    \draw (2,-1) circle (2.2pt);
    \draw (4,-1) circle (2.2pt);

    \draw (0,-2) circle (2.2pt);
    \draw (1,-2) circle (2.2pt);
    \draw (2,-2) circle (2.2pt);
    \draw (4,-2) circle (2.2pt);

    \node at (0,-3) {\(\vdots\)};
    \node at (1,-3) {\(\vdots\)};
    \node at (2,-3) {\(\vdots\)};
    \node at (4,-3) {\(\vdots\)};

\end{tikzpicture}
\caption{An empty \(m\)-abacus display. The positions on runner \(i\) are \(i,\, i+m,\, i+2m,\, \dots\). Each empty circle represents a bead position.}
\label{fig:m-abacus-empty}
\end{figure}

Recall that runner \(i\) consists of the positions
\begin{equation}
    i,\; i+m,\; i+2m,\; \cdots,
\end{equation}
then, a bead at position \(j\) on runner \(i\) must satisfy
\begin{equation}
    j \equiv i \pmod{m}.
\end{equation}
Hence, these beads are distributed among at most \(\hat m\) runners, namely,
\begin{equation}
    \alpha_1,\; \alpha_2,\; \alpha_3,\; \cdots,\; \alpha_{\hat m}.
\end{equation}
Note that the initial bead positions, namely the \(\beta\)-numbers in \eqref{eqn:beta number for redefine WH polynomials} are of the form
\begin{equation}
l+(j-1)k, \qquad j=1,2,\ldots,N.
\end{equation}
It follows that the final bead positions are taken from the set
\begin{equation}\label{eqn:final bead positions for redefine WH polynomials}
\left\{
\alpha_1,
\; \cdots, \;
\alpha_{\hat m}
\; ; \;
\alpha_1+m,
\; \cdots, \;
\alpha_{\hat m}+m
\; ; \; \cdots \; ; \;
\alpha_1+(k_N-1)m,
\; \cdots, \;
\alpha_{\hat m}+(k_N-1)m
\; ; \;
\alpha_1+k_Nm,
\; \cdots, \;
\alpha_{N_0}+k_Nm
\right\}.
\end{equation}

It is clear that the final bead positions coincide with the powers of \(z\) given in \eqref{eqn:power of z after row operation}. To facilitate the subsequent analysis, we rearrange the sequence in \eqref{eqn:final bead positions for redefine WH polynomials} in strictly increasing order as
\begin{equation}\label{eqn:rearrange sequence hat_beta_n}
    0\le\widehat{\beta}^{(\infty)}_1< \widehat{\beta}^{(\infty)}_2< \widehat{\beta}^{(\infty)}_3<\cdots< \widehat{\beta}^{(\infty)}_{N-1}< \widehat{\beta}^{(\infty)}_N.
\end{equation}

In the calculations of \(N_i\), the powers of \(z\) arising from the highest-power term of \(a\) in \eqref{eqn:redefine WH polynomials} can be arranged as  \cite{lin2024rogue}
\begin{equation}\label{eq:det-beta-infinity-structure}
\begin{vmatrix}
z^{\widehat{\beta}^{(\infty)}_1} & z^{\widehat{\beta}^{(\infty)}_1-1} & \cdots & z & 1 & 0 & \cdots & 0 \\
z^{\widehat{\beta}^{(\infty)}_2} & z^{\widehat{\beta}^{(\infty)}_2-1} & \cdots & z^{\widehat{\beta}^{(\infty)}_2-\widehat{\beta}^{(\infty)}_1+1} & z^{\widehat{\beta}^{(\infty)}_2-\widehat{\beta}^{(\infty)}_1} & z^{\widehat{\beta}^{(\infty)}_2-\widehat{\beta}^{(\infty)}_1-1} & \cdots & 0 \\
\vdots & \vdots & \ddots & \vdots & \vdots & \vdots & \ddots & \vdots \\
z^{\widehat{\beta}^{(\infty)}_{N}} & z^{\widehat{\beta}^{(\infty)}_{N}-1} & \cdots & z^{\widehat{\beta}^{(\infty)}_{N}-\widehat{\beta}^{(\infty)}_1+1} & z^{\widehat{\beta}^{(\infty)}_{N}-\widehat{\beta}^{(\infty)}_1} & z^{\widehat{\beta}^{(\infty)}_{N}-\widehat{\beta}^{(\infty)}_1-1} & \cdots & z^{\widehat{\beta}^{(\infty)}_{N}-N+1}
\end{vmatrix}_{N\times N},
\end{equation}
where each entry \(z^s\) is taken to be zero whenever \(s<0\). Indeed, before omitting the constant factors in each entry, the determinant corresponding to \eqref{eq:det-beta-infinity-structure} is a Wronskian. We note that this determinant can be further reduced whenever the sequence \eqref{eqn:rearrange sequence hat_beta_n} begins with an arithmetic progression of the form
\begin{equation}
0,1,2,\ldots,n-1.
\end{equation}
To illustrate this, we distinguish three cases.
\begin{itemize}
    \item If \(\widehat{\beta}^{(\infty)}_1 \neq 0\), then \eqref{eq:det-beta-infinity-structure} cannot be reduced further, and hence \eqref{eqn:rearrange sequence hat_beta_n} is already the final sequence.
    
    \item If \(\widehat{\beta}^{(\infty)}_i=i-1\) for all \(1\le i\le N\), then \eqref{eq:det-beta-infinity-structure} can be reduced completely, and consequently \eqref{eqn:rearrange sequence hat_beta_n} reduces to the empty set.
    
    \item If \(\widehat{\beta}^{(\infty)}_1=0\), and there exists an integer \(n\) such that
    \begin{equation}
        \widehat{\beta}^{(\infty)}_i=i-1,\qquad 1\le i\le n,
    \end{equation}
    while
    \begin{equation}
        n<\widehat{\beta}^{(\infty)}_{n+1}<\widehat{\beta}^{(\infty)}_{n+2}<\cdots<\widehat{\beta}^{(\infty)}_N,
    \end{equation}
    then \eqref{eq:det-beta-infinity-structure} can be reduced to a determinant of size \(\widehat{N}\times \widehat{N}\), where \(\widehat{N}=N-n\), whose first column is
    \begin{equation}
        z^{\widehat{\beta}^{(\infty)}_{n+1}-n},
        \; z^{\widehat{\beta}^{(\infty)}_{n+2}-n},
        \; \cdots,\;
        z^{\widehat{\beta}^{(\infty)}_{N}-n}.
    \end{equation}
\end{itemize}

To compute \(b_i\), let
\begin{equation}\label{eqn: beta-numbers of final positions}
\beta^{(\infty)}=
\left(\widehat{\beta}^{(\infty)}_N,\widehat{\beta}^{(\infty)}_{N-1},\ldots,\widehat{\beta}^{(\infty)}_2,\widehat{\beta}^{(\infty)}_1
\right),
\end{equation}
then we recover the $m$-core partition \(\mathrm{core}_m(Y)\) as
\begin{equation}\label{eq:standard-recover-core-lemma}
    \tilde{\lambda}^{(\infty)}_i
    =
    \widehat{\beta}^{(\infty)}_{N-i+1}-N+i,
    \qquad i=1,2,\ldots,N.
\end{equation}
Let \(\lambda^{(\infty)}\) be the partition consisting of all positive parts among
\(\tilde{\lambda}^{(\infty)}_1,\tilde{\lambda}^{(\infty)}_2,\ldots,\tilde{\lambda}^{(\infty)}_N\). Then
\begin{equation}
    \mathrm{core}_m(Y)=\lambda^{(\infty)},
\end{equation}
which shows that the \(\beta\)-numbers in \eqref{eqn: beta-numbers of final positions} are reducible whenever the sequence \eqref{eqn:rearrange sequence hat_beta_n} begins with an arithmetic progression of the form
\begin{equation}
0,1,2,\ldots,n-1.
\end{equation}
Then we distinguish the following three cases.
\begin{itemize}
    \item If \(\widehat{\beta}^{(\infty)}_1 \neq 0\), then we have
    \begin{equation}
        \tilde{\lambda}^{(\infty)}_i
        =
        \widehat{\beta}^{(\infty)}_{N-i+1}-N+i>0,
        \qquad i=1,2,\ldots,N.
    \end{equation}
    Therefore, the sequence \eqref{eqn:rearrange sequence hat_beta_n} cannot be reduced further and is already in its final form.
    \item If \(\widehat{\beta}^{(\infty)}_i=i-1\) for all \(1\le i\le N\), then
     it follows that
    \begin{equation}
            \tilde{\lambda}^{(\infty)}_i
            =
            \widehat{\beta}^{(\infty)}_{N-i+1}-N+i=0,
            \qquad i=1,2,\ldots,N.
        \end{equation}
    Therefore, the sequence \eqref{eqn:rearrange sequence hat_beta_n} can be reduced completely, and thus reduces to the empty set. In this case, $\mathrm{core}_m(Y)=\emptyset$.
    \item If \(\widehat{\beta}^{(\infty)}_1=0\), and there exists an integer \(n\) with \(1\le n < N\) such that
    \begin{equation}
        \widehat{\beta}^{(\infty)}_i=i-1,
        \qquad i=1,2,\ldots,n,
    \end{equation}
    while
    \begin{equation}
        n<\widehat{\beta}^{(\infty)}_{n+1}<\widehat{\beta}^{(\infty)}_{n+2}<\cdots<\widehat{\beta}^{(\infty)}_N,
    \end{equation}
    then, by \eqref{eq:standard-recover-core-lemma}, one has
    \begin{equation}
        \tilde{\lambda}^{(\infty)}_i
        =
        \widehat{\beta}^{(\infty)}_{N-i+1}-N+i>0,
        \qquad i=1,2,\ldots,N-n,
    \end{equation}
    and
    \begin{equation}
        \tilde{\lambda}^{(\infty)}_i
        =
        \widehat{\beta}^{(\infty)}_{N-i+1}-N+i=0,
        \qquad i=N-n+1,N-n+2,\ldots,N.
    \end{equation}
    Consequently, the sequence \eqref{eqn:rearrange sequence hat_beta_n} can be reduced by deleting the initial arithmetic progression
    \begin{equation}
        0,1,\ldots,n-1,
    \end{equation}
    and the remaining sequence
    \begin{equation}
        \left(
        \widehat{\beta}^{(\infty)}_N-n,\,
        \widehat{\beta}^{(\infty)}_{N-1}-n,\,
        \ldots,\,
        \widehat{\beta}^{(\infty)}_{n+1}-n
        \right)
    \end{equation}
    is its final form. Hence,
    \begin{equation}
        \lambda^{(\infty)}
        =
        \left(
        \widehat{\beta}^{(\infty)}_N-N+1,\,
        \widehat{\beta}^{(\infty)}_{N-1}-N+2,\,
        \ldots,\,
        \widehat{\beta}^{(\infty)}_{n+1}-n
        \right),
    \end{equation}
    and thus
    \begin{equation}
        \mathrm{core}_m(Y)=\lambda^{(\infty)}.
    \end{equation}
\end{itemize}

Hence, regardless of the initial form of the sequence \eqref{eqn:rearrange sequence hat_beta_n}, these two reduction procedures yield the same final sequence. Moreover, recall that \(\mathrm{core}_m(Y)\) is also a \(k\)-core. Therefore, the final sequence, namely the \(\beta\)-numbers of \(\mathrm{core}_m(Y)\), can be represented on the \(k\)-abacus by a bead configuration in which no bead can be moved upward, and runner \(0\) is empty. It follows that, for each \(i=1,2,\ldots,k-1\), the beads on runner \(i\) must occupy the positions
\begin{equation}
    i,\; i+k,\; i+2k,\; \ldots,\; i+(b_i-1)k,
\end{equation}
where \(b_i\ge 0\) denotes the number of beads on runner \(i\), and an empty runner corresponds to the empty set. Hence, the final \(\beta\)-numbers are exactly the disjoint union of these runner positions:
\begin{equation}\label{eqn:arithmetic progressions with first term i}
    \bigsqcup_{i=1}^{k-1}
    \left\{
    i,\; i+k,\; \ldots,\; i+(b_i-1)k
    \right\}.
\end{equation}
Therefore, the final sequence can be decomposed into at most \(k-1\) arithmetic progressions with first terms \(i\in\{1,\ldots,k-1\}\) and common difference \(k\).

On the other hand, in the determinant reduction in \cite{lin2024rogue}, the integers \(N_i\) are defined as the lengths of the segments obtained by decomposing the same final sequence into arithmetic progressions, which are the same as \eqref{eqn:arithmetic progressions with first term i}.  Hence,
\begin{equation}
    b_i=N_i,\qquad i=1,2,\ldots,k-1.
\end{equation}
This completes the proof.

\end{proof}

This theorem connects Theorem~\ref{lemma:root structure of WH polynomials} and Theorem~\ref{thm:WH-mcore-factorization}, both of which describe the root structure of \(W_N^{[m,k,l]}(z)\). Consequently, it provides an alternative way to determine the order of the rogue wave pattern at the origin in Theorem~\ref{thm:Rogue wave patterns}. On the other hand, it yields an explicit procedure for determining the \(m\)-core of the partition \(Y\).

\begin{corollary}
\label{cor:m-core-from-Ni}
Let \(Y\) be the partition defined by
\begin{equation}
Y=
\left((N-1)(k-1)+l,\; (N-2)(k-1)+l,\; \ldots,\; (k-1)+l,\; l\right),
\end{equation}
where \(1\le l<k\), \(k\ge2\),  \(m\ge2\),  and
\(k\nmid m\).
Let \(N_i\; (i=1,2,\ldots,k-1)\) be the integers obtained from the arithmetic procedure in \ref{app:Ni-procedure}. Denote $r=\sum_{i=1}^{k-1}N_i$ and
\begin{equation}
B=
\bigcup_{i=1}^{k-1}
\{\,i+jk: 0\le j<N_i\,\}.
\end{equation}
If \(r=0\), then \(B=\varnothing\) and
\begin{equation}
    \mathrm{core}_m(Y)=\varnothing.
\end{equation}
If \(r\ge1\), we can write the elements of \(B\) in decreasing order as
\begin{equation}
\beta_1>\beta_2>\cdots>\beta_r.
\end{equation}
Then, \(\mathrm{core}_m(Y)\) can be obtained from the sequence
\begin{equation}
(\beta_1-r+1,\; \beta_2-r+2,\; \ldots,\;\beta_{r-1}-1,\; \beta_r),
\end{equation}
after deleting all zeros.
\end{corollary}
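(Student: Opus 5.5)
This corollary follows by combining Theorem~\ref{thm:relation between N_i and n_i} with the standard abacus reconstruction of a partition from its \(\beta\)-numbers (Remark~\ref{rem:compute-m-core}). We take \(\lambda=\mathrm{core}_m(Y)\) and represent it on the \(k\)-abacus with \(r'=\ell(\lambda)\) beads. As in the proof of Lemma~\ref{lem:size-of-core-by-runner-beads}, \(\lambda\) is a \(k\)-core because \(Y\) is, and position \(0\) is empty because \(\beta_{r'}=\lambda_{r'}>0\). Hence runner \(0\) carries no bead, and runner \(i\) carries beads exactly at \(i,i+k,\ldots,i+(b_i-1)k\). Theorem~\ref{thm:relation between N_i and n_i} gives \(b_i=N_i\). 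It follows that the \(\beta\)-set of \(\lambda\) is exactly \(B\), and \(r'=\sum_i b_i=\sum_i N_i=r\).

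If \(r=0\), then \(\ell(\lambda)=0\), so \(\lambda=\varnothing\) and \(B=\varnothing\).

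If \(r\ge1\), we list the elements of \(B\) in decreasing order \(\beta_1>\cdots>\beta_r\). Since these are the \(\beta\)-numbers of \(\lambda\) with respect to \(r=\ell(\lambda)\), the defining relation \(\beta_i=\lambda_i+r-i\) inverts to \(\lambda_i=\beta_i-r+i\). This is precisely the displayed sequence \((\beta_1-r+1,\ldots,\beta_{r-1}-1,\beta_r)\).

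Because \(r=\ell(\lambda)\), all these parts are in fact positive, so deleting zeros changes nothing. We keep the deletion in the statement for consistency with Step 4 of Remark~\ref{rem:compute-m-core}, which also covers any non-minimal choice of \(r\).

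The only delicate point is justifying that the bead representation taken with \(r=\ell(\mathrm{core}_m(Y))\) is the one to which Theorem~\ref{thm:relation between N_i and n_i} applies, so that \(\sum N_i\) really equals \(\ell(\lambda)\). This is exactly the normalization fixed in the hypothesis of that theorem. Beyond that, the argument is just bookkeeping.
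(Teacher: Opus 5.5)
Your proof is correct and is essentially the paper's argument. The paper gives no separate proof: it reads the corollary off from Theorem~\ref{thm:relation between N_i and n_i}, whose proof shows that the $\beta$-numbers of $\mathrm{core}_m(Y)$ with $r=\ell(\mathrm{core}_m(Y))$ are exactly the disjoint union of the runner progressions $\{i,i+k,\ldots,i+(N_i-1)k\}$, i.e.\ $B$, and then applies the inversion $\lambda_i=\beta_i-r+i$ from Step~4 of Remark~\ref{rem:compute-m-core}. Your additional observation that deleting zeros is vacuous is also correct, since $0\notin B$ gives $\beta_i-r+i\ge\beta_r\ge1$.
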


\section{Generalized Wronskian--Hermite polynomials associated with Painlev\'e equations}
\label{sec:WH with Painleve}
The six Painlev\'e equations P\(_{\mathrm I}\)-P\(_{\mathrm {VI}}\) are nonlinear second-order ordinary differential equations characterized by the Painlev\'e property.
In general, their solutions are transcendental. 
However, for special parameters, Painlev\'e equations (except \(P_{\mathrm I}\)) admit algebraic or rational solutions. For example, rational solutions of \(P_{\mathrm{II}}\) are expressed in terms of the Yablonskii--Vorob'ev polynomials \cite{Vorobev1965RationalPII,Yablonskii1959RationalPII}, while those of \(P_{\mathrm{IV}}\) are described by the generalized Hermite polynomials or generalized Okamoto polynomials \cite{Okamoto1986StudiesPainleveIII,noumi1999symmetries}. Moreover, these three families of special polynomials can be represented as Wronskian--Hermite polynomials \cite{clarkson2003second,clarkson2003fourth,KajiwaraOhta1996PII,KajiwaraOhta1998PIV}. In addition, the zero distributions of the polynomials associated with rational solutions of Painlev\'e equations have also been extensively studied, especially their large-degree asymptotics \cite{BuckinghamMiller2014PII,BuckinghamMiller2015PIICritical,bertola2015zeros,buckingham2020large,BothnerMiller2020PIIIAsymptotics,BuckinghamMiller2022PIVIsomonodromy,BalogounBertola2025PVHankel}.

The Noumi--Yamada systems, introduced by Noumi and Yamada, form a family of higher-order generalizations of the Painlev\'e equations and include the symmetric forms of P\(_{\mathrm{IV}}\) and P\(_{\mathrm{V}}\) as special cases \cite{noumi1998affine,noumi2004painleve,tsuda2005universal,gomez2021complete}. In this section, we investigate the relation between the Noumi--Yamada systems and the generalized Wronskian--Hermite polynomials \(W_N^{[2,k,l]}(z)\).

\subsection{The Noumi--Yamada systems and their rational solutions}
\begin{definition}[\cite{noumi1998affine}]
\label{def:Noumi--Yamada-Ak}
Let \(k\ge3\) be an integer and let $\alpha_0, \alpha_1, \ldots, \alpha_{k-1}$ be complex constants satisfying  $\sum_{i=0}^{k-1}\alpha_i=1$.  The Noumi--Yamada system of type \(A_{k-1}^{(1)}\),
denoted by \(P(A_{k-1})\), is defined as follows.
If \(k=2g+1\) is odd, then
\begin{equation}\label{eq:PA-even}
    P(A_{2g}):\qquad
    \frac{df_i}{dx}
    =
    f_i
    \left(
        \sum_{j=1}^{g} f_{i+2j-1}
        -
        \sum_{j=1}^{g} f_{i+2j}
    \right)
    +
    \alpha_i, \quad i=0,1,2,\cdots,k-1.
\end{equation}
If \(k=2g+2\) is even, then
\begin{equation}\label{eq:PA-odd}
\begin{aligned}
    P(A_{2g+1}):\qquad
    \frac{x}{2}\frac{df_i}{dx}
    &=
    f_i
    \left(
        \sum_{1\le j\le r\le g} f_{i+2j-1}f_{i+2r}
        -
        \sum_{1\le j\le r\le g} f_{i+2j}f_{i+2r+1}
    \right) \\
    &\quad
    +
    \left(
        \frac12-\sum_{j=1}^{g}\alpha_{i+2j}
    \right)f_i
    +
    \alpha_i\sum_{j=1}^{g}f_{i+2j}, \quad i=0,1,2,\cdots,k-1.
\end{aligned}
\end{equation}
Here, \(f_i\) and \(\alpha_i\) are \(k\)-periodic chains, i.e., $f_{i+k}=f_i, \; \alpha_{i+k}=\alpha_i$. Moreover, we adopt the normalization
\begin{equation}
    \sum_{i=0}^{k-1}f_i=x.
\end{equation}

\end{definition}
Note that \(P(A_2)\) and \(P(A_3)\) are the symmetric forms of the P\(_{\mathrm {IV}}\) and P\(_{\mathrm {V}}\), respectively. Under the dependent-variable and parameter
transformations
\begin{equation}
\label{eq:f-from-tau-PAkminus1}
    f_i(x)
    =
    \frac{x}{k}
    -
    \frac{d}{dx}
    \log
    \frac{\tau_{i+1}(x)}{\tau_{i-1}(x)},
    \qquad
    \alpha_i
    =
    \frac{K_i-K_{i-1}+1}{k},
    \qquad
    i=0,\ldots,k-1,
\end{equation}
the Noumi--Yamada system \(P(A_{k-1})\) takes the bilinear form
\cite{tsuda2005universal}
\begin{equation}
\label{eq:PAkminus1-tau-bilinear}
    \left(
        D_x^2+\frac{x}{k}D_x+\frac{K_i}{k}
    \right)
    \tau_i(x)\cdot\tau_{i+1}(x)=0,
    \qquad
    i=0,\ldots,k-1,
\end{equation}
where \(D\) denotes Hirota's bilinear differential operator
\cite{hirota2004direct}, and the sequences \(\{\tau_i(x)\}_{i\in\mathbb Z}\) and
\(\{K_i\}_{i\in\mathbb Z}\) are \(k\)-periodic.

\begin{definition}[\cite{tsuda2005universal}]
A subset \(M \subseteq\mathbb Z\) is called a Maya diagram if
\begin{equation}
    m\in M \quad (m\ll0),
    \qquad \text{and} \qquad
    m\notin M \quad (m\gg0).
\end{equation}
\end{definition}
For a Maya diagram $M=\{\cdots,m_3,m_2,m_1\}$ satisfying $m_1>m_2>m_3>\cdots$, there exists a unique partition \(\lambda=(\lambda_1,\lambda_2,\cdots)\) such that $m_i-m_{i+1}=\lambda_i-\lambda_{i+1}+1$. For a vector $\mathbf a=(a_1,a_2,\ldots,a_k)\in\mathbb Z^k$,  we can define its associated Maya diagram $M(\mathbf a)$ by
\begin{equation}
\label{eq:Maya-from-charge}
    M(\mathbf a)
    =
    (k\mathbb Z_{<a_1}+1)
    \cup
    (k\mathbb Z_{<a_2}+2)
    \cup
    \cdots
    \cup
    (k\mathbb Z_{<a_k}+k),
\end{equation}
where $k\mathbb Z_{<a_r}+r =  \{kq+r:\ q<a_r,\; q\in\mathbb Z\}.$
Let $\lambda(\mathbf{a})$ denote the partition corresponding to $M(\mathbf{a})$. Then we have
\begin{equation}
    \lambda(\mathbf a+\mathbf 1)=\lambda(\mathbf a),
    \quad
    \mathbf 1=(1,1,\ldots,1).
\end{equation}
Moreover, $\lambda(\mathbf{a})$ is a \(k\)-reduced partition \cite{tsuda2005universal}, which is also called a \(k\)-core partition.

\begin{theorem}[{\cite[Theorem~5.5]{noumi2004painleve}} and {\cite[Theorem~2.4]{tsuda2005universal}}]
\label{thm:Tsuda-Schur-tau}
Let \(k\ge3\) be an integer and \(\mathbf a=(a_1,\ldots,a_k)\in\mathbb Z^k\).  Denote
\begin{equation}
    \mathbf a^{(0)}=\mathbf a, \quad \text{and} \quad 
    \mathbf a^{(i)}
    =
    (a_1+1,a_2+1,\ldots,a_i+1,a_{i+1},\ldots,a_{k-1}, a_k),
    \qquad
    1\le i\le k-1.
\end{equation}
Moreover, define
\begin{equation}
\label{eq:Tsuda-Schur-tau}
    \tau_i(x)
    =
    s_{\lambda\left(\mathbf a^{(i)}\right)}
    \left(x,-\frac{k}{2},0,0,\ldots\right),
    \qquad i=0,\ldots,k-1,
\end{equation}
where $s_{\lambda}\left(\mathbf y\right)$ denotes the Schur function defined in \eqref{eq:schur-jacobi-trudi}, and $\tau_{i+k}(x)=\tau_i(x)$.  Then,
\( \tau_i(x) \) satisfies
\begin{equation}
\label{eq:Tsuda-bilinear}
    \left(
        D_x^2+\frac{x}{k}D_x+\frac{K_i}{k}
    \right)
    \tau_i(x)\cdot\tau_{i+1}(x)=0, 
\end{equation}
where
\begin{equation}
\label{eq:Tsuda-K}
    K_i=ka_{i+1}-|\mathbf a|,
    \qquad
    |\mathbf a|=a_1+\cdots+a_k.
\end{equation}
\end{theorem}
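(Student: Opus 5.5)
The plan is to obtain \eqref{eq:Tsuda-bilinear} by specializing the first equation of the modified KP hierarchy satisfied by Schur functions. The time-$t_2$ derivative is then traded for $x$-derivatives using weighted homogeneity.

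Write $t=(t_1,t_2,t_3,\ldots)$ and $\sigma_M(t):=s_{\lambda(M)}(t)$ for a Maya diagram $M$.

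\textbf{Step 1 (neighbouring diagrams).} Since $\mathbf a^{(i+1)}=\mathbf a^{(i)}+\mathbf e_{i+1}$ for $0\le i\le k-2$, definition \eqref{eq:Maya-from-charge} gives
\[
M(\mathbf a^{(i+1)})=M(\mathbf a^{(i)})\cup\{ka_{i+1}+i+1\}.
\]
For $i=k-1$ we have $\mathbf a^{(k)}=\mathbf a+\mathbf 1$. Because $\lambda(\mathbf a+\mathbf 1)=\lambda(\mathbf a)$, this case is the same one-point extension of $M(\mathbf a^{(k-1)})$, read modulo the charge shift. This is consistent with the periodicity $\tau_k=\tau_0$.

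\textbf{Step 2 (bilinear identity).} I would invoke the standard fact that Schur functions of Maya diagrams $M\subset M'$ with $\#(M'\setminus M)=1$ form a modified-KP (Plücker) pair. They therefore satisfy
\[
(D_{t_1}^2+D_{t_2})\,\sigma_M\cdot\sigma_{M'}=0,
\]
with the sign of $D_{t_2}$ fixed by checking the simplest case $\sigma_\varnothing=1$, $\sigma_{(1)}=t_1$. This can be taken from the fermionic or Plücker description of the Sato Grassmannian, or verified via Jacobi--Trudi \eqref{eq:schur-jacobi-trudi} together with the Jacobi determinant identity, as was done for \eqref{eqn:Jacobi formula for determinants}.

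\textbf{Step 3 (removing $t_2$).} Schur functions are weighted homogeneous: $s_\lambda(ct_1,c^2t_2,c^3t_3,\ldots)=c^{|\lambda|}s_\lambda(t)$. Differentiating in $c$ at $c=1$ gives the Euler identity $\sum_j jt_j\partial_{t_j}s_\lambda=|\lambda|s_\lambda$. At $t=(x,t_2,0,0,\ldots)$ only the $j=1,2$ terms survive. Hence, at $t_2=-k/2$,
\[
\partial_{t_2}=-\tfrac1k\bigl(|\lambda|-x\partial_x\bigr).
\]
Inserting this into $D_{t_2}\tau_i\cdot\tau_{i+1}=(\partial_{t_2}\tau_i)\tau_{i+1}-\tau_i\,\partial_{t_2}\tau_{i+1}$ gives
\[
\tfrac1k\Bigl(xD_x\,\tau_i\cdot\tau_{i+1}+\bigl(|\lambda(\mathbf a^{(i+1)})|-|\lambda(\mathbf a^{(i)})|\bigr)\tau_i\tau_{i+1}\Bigr).
\]
Combined with Step 2, this yields exactly the form \eqref{eq:Tsuda-bilinear}, with
\[
K_i=|\lambda(\mathbf a^{(i+1)})|-|\lambda(\mathbf a^{(i)})|.
\]

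\textbf{Step 4 (the constant).} It remains to show $K_i=ka_{i+1}-|\mathbf a|$. I would use the abacus computation of Lemma \ref{lem:size-of-core-by-runner-beads}. Normalize $\mathbf a$ by $\mathbf a\mapsto \mathbf a-c\mathbf 1$ so that all $a_j\ge0$; this leaves both $\lambda$ and $ka_{i+1}-|\mathbf a|$ invariant. Then take a finite $\beta$-set with $a_j$ beads on runner $j$. Summing $\beta$-numbers gives a quadratic formula of the type $|\lambda(\mathbf a)|=\sum_j\bigl(\tfrac k2a_j(a_j-1)+ja_j\bigr)-\binom{|\mathbf a|}{2}$, up to the index shift between runners $1,\ldots,k$ and $0,\ldots,k-1$. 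Taking the difference under $a_{i+1}\mapsto a_{i+1}+1$ gives $ka_{i+1}+(i+1)-|\mathbf a|$ minus a term depending on the runner labelling, which must reduce to $ka_{i+1}-|\mathbf a|$.

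\textbf{Main obstacle.} I expect the bookkeeping in Step 4 and the sign conventions in Step 2 to be the main difficulty. This includes the charge offset in \eqref{eq:Maya-from-charge}, the wrap-around case $i=k-1$, and the consistency with the sign of $K_i/k$. I would fix all of these by checking a small example, for instance $k=3$ with $\mathbf a=(1,0,0)$, before finalizing.
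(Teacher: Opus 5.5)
Your argument is correct. The paper gives no proof of its own and only quotes Noumi and Tsuda; your route (the first modified KP equation for Maya diagrams $M\subset M'$ with $\#(M'\setminus M)=1$, followed by the weighted-homogeneity reduction at $t_2=-k/2$) is the standard similarity-reduction derivation behind the cited result. Steps 1--3 check out, including the wrap-around case via $M(\mathbf a+\mathbf 1)=M(\mathbf a^{(k-1)})\cup\{k(a_k+1)\}$. Step 3 correctly gives $K_i=|\lambda(\mathbf a^{(i+1)})|-|\lambda(\mathbf a^{(i)})|$.

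Two details need repair.

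\textbf{The sign test in Step 2.} The pair $\sigma_\varnothing=1$, $\sigma_{(1)}=t_1$ cannot fix the sign, because $D_{t_1}^2\,1\cdot t_1$ and $D_{t_2}\,1\cdot t_1$ both vanish. Use instead $\mathbb Z_{\le 0}\subset \mathbb Z_{\le 0}\cup\{3\}$, i.e.\ the pair $1$ and $s_{(2)}=t_1^2/2+t_2$. Then $(D_{t_1}^2+\epsilon D_{t_2})\,1\cdot s_{(2)}=1-\epsilon$, which forces $\epsilon=+1$ when the smaller diagram is written first. This sign matters, since it is what produces $+\frac{x}{k}D_x$ rather than $-\frac{x}{k}D_x$.

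\textbf{The bookkeeping in Step 4.} With runners labelled $1,\ldots,k$ and all $a_j\ge0$, the size formula should read
\[
|\lambda(\mathbf a)|=\sum_j\bigl(\tfrac k2a_j(a_j-1)+ja_j\bigr)-\binom{|\mathbf a|+1}{2},
\]
not with $\binom{|\mathbf a|}{2}$. The increment must also be taken at $\mathbf a^{(i)}$, whose total is $|\mathbf a|+i$, not $|\mathbf a|$. The leftover $i+1$ is then cancelled by this charge, not by a runner-labelling convention.

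The cleanest way to see this: adjoining a point $p\notin M$ to a Maya diagram of charge $c$ raises $|\lambda|$ by $p-c-1$, where charge is normalized so that $\mathbb Z_{\le0}$ has charge $0$ and hence $M(\mathbf a)$ has charge $|\mathbf a|$. Here $c=|\mathbf a|+i$ and $p=ka_{i+1}+i+1$, which gives $K_i=ka_{i+1}-|\mathbf a|$ immediately.
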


\subsection{The \texorpdfstring{\(k\)-reduced}{k-reduced} Wronskian--Hermite polynomials}
\label{subsec:block-WH-Painleve}

We first recall the definition of the Wronskian--Hermite polynomial associated with an arbitrary
index set. For \(N=0\), we take
\(\mathbf n\) to be the empty sequence. For $N\ge1$, we define 
\begin{equation}
    \mathbf n=(n_1,n_2,\ldots,n_N)\in\mathbb{Z}^N,
    \qquad
    0<n_1<n_2<\cdots<n_N.
\end{equation}
The Wronskian--Hermite polynomial
associated with nonempty \(\mathbf n\) is defined by
\begin{equation}
\label{eq:general-index-WH}
    W_{\mathbf n}^{[2]}(z)
    =
    C_{\mathbf n}^{[2]}
    \operatorname{Wr}
    \left[
        p_{n_1}^{[2]}(z),
        p_{n_2}^{[2]}(z),
        \ldots,
        p_{n_N}^{[2]}(z)
    \right],
\end{equation}
where \(p_n^{[2]}(z)\) is defined in \eqref{eq:p-generating-intro} and
\begin{equation}
\label{eq:general-index-WH-normalization}
    C_{\mathbf n}^{[2]}
    =
    \frac{\prod_{j=1}^{N}n_j!}
    {\prod_{1\le i<j\le N}(n_j-n_i)}.
\end{equation}
When \(\mathbf n\) is empty, $W_{\mathbf n}^{[2]}(z)$ is defined to be $1$.

We now consider a special class of index sets whose associated
partitions are \(k\)-reduced. Let \(k\ge3\) and
\begin{equation}
    \mathbf N=(N_1,\ldots,N_{k-1})
    \in\mathbb Z_{\ge0}^{k-1},
    \qquad
    N=N_1+\cdots+N_{k-1}.
\end{equation}
We define
\begin{equation}
\label{eq:k-reduced-index-WH}
    I_k(\mathbf N)
    =
    \bigcup_{r=1}^{k-1}
    \{r+kq:q\in\mathbb Z,\ 0\le q<N_r\},
\end{equation}
and denote the elements of
\(I_k(\mathbf N)\) in increasing order by
\begin{equation}
    I_k(\mathbf N)
    =
    \{\tilde n_1<\tilde n_2<\cdots<\tilde n_N\}.
\end{equation}
We define the \(k\)-reduced Wronskian--Hermite polynomial $\mathcal W_{\mathbf N}^{[2,k]}(z)$ by
\begin{equation}
\label{eq:k-reduced-WH}
    \mathcal W_{\mathbf N}^{[2,k]}(z)
    =
    C_{\mathbf N}^{[2,k]}
    \operatorname{Wr}
    \left[
        p_{\tilde n_1}^{[2]}(z),
        p_{\tilde n_2}^{[2]}(z),
        \ldots,
        p_{\tilde n_N}^{[2]}(z)
    \right],
    \qquad
    C_{\mathbf N}^{[2,k]}
    =
    \frac{\prod_{j=1}^{N}\tilde n_j!}
    {\prod_{1\le i<j\le N}(\tilde n_j-\tilde n_i)} .
\end{equation}

\begin{remark}
For \(k=3\) and \(\mathbf N=(N_1,N_2)\), the index set is
\begin{equation}
    I_3(\mathbf N)
    =
    \{1+3q:q\in\mathbb Z,\ 0\le q<N_1\}
    \cup
    \{2+3q:q\in\mathbb Z,\ 0\le q<N_2\}.
\end{equation}
In this case, the \(3\)-reduced Wronskian--Hermite polynomial $\mathcal W_{(N_1,N_2)}^{[2,3]}(z)$
corresponds to the generalized Okamoto polynomial \cite{Yang2023Yang,clarkson2003fourth}.
In particular,
\begin{equation}
    \mathcal W_{(N,0)}^{[2,3]}(z)
    =
    W_N^{[2,3,1]}(z),
    \qquad
    \mathcal W_{(0,N)}^{[2,3]}(z)
    =
    W_N^{[2,3,2]}(z).
\end{equation}
More generally, let \(k\ge3\) and \(1\le l\le k-1\). If $\mathbf N=N\mathbf e_l$,
where \(\mathbf e_l\) denotes the \(l\)-th standard basis vector of
\(\mathbb Z^{k-1}\), then
\begin{equation}
    I_k(N\mathbf e_l)
    =
    \{l+kq:q\in\mathbb Z,\ 0\le q<N\}.
\end{equation}
Therefore, the generalized Wronskian--Hermite polynomial
\(W_N^{[2,k,l]}(z)\) is recovered as
\begin{equation}
\label{eq:single-block-recovery}
    W_N^{[2,k,l]}(z)
    =
    \mathcal W_{N\mathbf e_l}^{[2,k]}(z).
\end{equation}

\end{remark}

It is also useful to recall the Schur-function representation of
\(\mathcal W_{\mathbf N}^{[2,k]}(z)\).  Define the partition 
\begin{equation}
    \lambda(\mathbf N)=\left(\lambda_1(\mathbf N),\lambda_2(\mathbf N),\cdots,\lambda_N(\mathbf N)\right),
\end{equation}
by
\begin{equation}
\label{eq:k-reduced-partition-from-beta}
    \lambda_j(\mathbf N)
    =
    \tilde n_{N+1-j}-N+j,
    \qquad
    j=1,\ldots,N.
\end{equation}
Note that \(\lambda(\mathbf N)\) is \(k\)-reduced.
Using the Jacobi-Trudi formula \cite{macdonald1998symmetric},
we obtain
\begin{equation}
\label{eq:k-reduced-WH-as-Schur}
    \mathcal W_{\mathbf N}^{[2,k]}(z)
    =
    \frac{
        s_{\lambda(\mathbf N)}(z,1,0,\ldots)
    }{
        s_{\lambda(\mathbf N)}(1,0,0,\ldots)
    },
\end{equation}
where
\begin{equation}
\label{eq:Schur-Wronskian-k-reduced}
    s_{\lambda(\mathbf N)}(z,1,0,\ldots)
    =
    \operatorname{Wr}
    \left[
        p_{\tilde n_1}^{[2]}(z),
        p_{\tilde n_2}^{[2]}(z),
        \ldots,
        p_{\tilde n_N}^{[2]}(z)
    \right],
\end{equation}
and
\begin{equation}
\label{eq:Schur-denominator-k-reduced}
    s_{\lambda(\mathbf N)}(1,0,0,\ldots)
    =
    \frac{
        \prod_{1\le i<j\le N}(\tilde n_j-\tilde n_i)
    }{
        \prod_{j=1}^{N}\tilde n_j!
    }.
\end{equation}

\subsection{The \texorpdfstring{\(k\)-reduced}{k-reduced}
Wronskian--Hermite polynomials and the Noumi--Yamada system
\texorpdfstring{\(P(A_{k-1})\)}{P(A\_{k-1})}}
\label{subsec:k-reduced-WH-Painleve}
In this subsection, we relate the \(k\)-reduced Wronskian--Hermite polynomials $\mathcal W_{\mathbf N}^{[2,k]}(z)$ in
\eqref{eq:k-reduced-WH} to the bilinear form of the Noumi--Yamada system \(P(A_{k-1})\).

For \(0\le i\le k-1\), we define
\begin{equation}
\label{eq:shifted-block-vector}
    \mathbf N^{(0)}=\mathbf N,
    \qquad
    \mathbf N^{(i)}
    =
    (N_1+1,\ldots,N_i+1,N_{i+1},\ldots,N_{k-1}),
    \qquad
    1\le i\le k-1.
\end{equation}
where
\begin{equation}
    \mathbf N=(N_1,\ldots,N_{k-1})\in\mathbb Z_{\ge0}^{k-1},
    \qquad
    N=N_1+\cdots+N_{k-1}.
\end{equation}
Moreover, denote
\begin{equation}
\label{eq:k-reduced-Ti-def}
    T_i(z)
    =
    \mathcal W_{\mathbf N^{(i)}}^{[2,k]}(z),
    \qquad
    i=0,\ldots,k-1,
\end{equation}
where we extend \(T_i\) by \(T_{i+k}=T_i\).

\begin{theorem}
\label{thm:k-reduced-WH-NY-bilinear}
The \(k\)-reduced Wronskian--Hermite polynomials \(T_i(z)\) satisfy
\begin{equation}
\label{eq:k-reduced-WH-NY-z-bilinear}
    \left(
        D_z^2-\frac{z}{2}D_z-\frac{K_i}{2}
    \right)
    T_i(z)\cdot T_{i+1}(z)=0,
    \qquad
    i=0,\ldots,k-1,
\end{equation}
where 
\begin{equation}
\label{eq:k-reduced-WH-Ki}
    K_i=
    \begin{cases}
        kN_{i+1}-N+1, & 0\le i\le k-2,\\
        -N-k+1, & i=k-1.
    \end{cases}
\end{equation}
\end{theorem}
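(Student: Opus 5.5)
The plan is to derive Theorem~\ref{thm:k-reduced-WH-NY-bilinear} from Tsuda's Theorem~\ref{thm:Tsuda-Schur-tau} by a change of variables. First I will identify, for a suitable charge vector $\mathbf a\in\mathbb Z^k$, the partitions $\lambda(\mathbf a^{(i)})$ with $\lambda(\mathbf N^{(i)})$. The natural choice is $\mathbf a=(N_1,\ldots,N_{k-1},0)$, so that runner $r$ ($1\le r\le k-1$) carries exactly $N_r$ beads above the level where runner $k$ (equivalently runner $0$) is cut off. Then $M(\mathbf a)$ consists of all negative integers together with $\{r+kq: 0\le q<N_r\}$ and $\{0,-k,\ldots\}$ suitably. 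After shifting by the charge, the finite $\beta$-set is precisely $I_k(\mathbf N)$, so $\lambda(\mathbf a)=\lambda(\mathbf N)$ via \eqref{eq:k-reduced-partition-from-beta}. Similarly, $\mathbf a^{(i)}$ adds one bead to each of runners $1,\ldots,i$, giving $\lambda(\mathbf a^{(i)})=\lambda(\mathbf N^{(i)})$.

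Care is needed at $i=k-1$ versus the periodic wrap $i=k\equiv 0$. Here $\mathbf a^{(k)}=\mathbf a+\mathbf 1$, and $\lambda(\mathbf a+\mathbf 1)=\lambda(\mathbf a)$ recovers $T_k=T_0$, consistent with the periodicity imposed on $T_i$.

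Second, I will relate the Schur arguments. By \eqref{eq:k-reduced-WH-as-Schur}, $T_i(z)$ is a nonzero constant multiple of $s_{\lambda}(z,1,0,\ldots)$, while Tsuda uses $s_\lambda(x,-k/2,0,\ldots)$. Schur functions in power-sum/Miwa-type coordinates are weighted homogeneous: $s_\lambda(cx_1,c^2x_2,\ldots)=c^{|\lambda|}s_\lambda(x_1,x_2,\ldots)$. With $c^2=-k/2$, i.e.\ $c=\i\sqrt{k/2}$, this gives
\begin{equation*}
    \tau_i(x)=c^{|\lambda_i|}\,s_{\lambda_i}(x/c,1,0,\ldots)\propto T_i(x/c).
\end{equation*}
Constant factors are harmless because the bilinear equation is bilinear. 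I then substitute $x=cz$. Under this substitution $D_x=c^{-1}D_z$, so
\begin{equation*}
    D_x^2+\frac{x}{k}D_x+\frac{K_i}{k}=c^{-2}\left(D_z^2+\frac{c^2 z}{k}D_z+\frac{c^2K_i}{k}\right)=c^{-2}\left(D_z^2-\frac z2 D_z-\frac{K_i}{2}\right),
\end{equation*}
using $c^2/k=-1/2$. This produces exactly \eqref{eq:k-reduced-WH-NY-z-bilinear}.

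Third, I will compute $K_i=ka_{i+1}-|\mathbf a|$ from \eqref{eq:Tsuda-K}. With the naive $\mathbf a$ this gives $kN_{i+1}-N$ for $i\le k-2$ and $-N$ for $i=k-1$, whereas the claim is $kN_{i+1}-N+1$ and $-N-k+1$. The correct charge is therefore a shifted one, for instance $\mathbf a=(N_1,\ldots,N_{k-1},0)$ with a global offset, or with the runner labelling shifted so that runner $k$ plays the role of runner $0$ at level $-1$. Such a shift changes $K_i$ by a constant through $|\mathbf a|$, and the $-k$ at $i=k-1$ must come from $a_k$ being one less than expected. Taking $a_k=-1$ together with a suitable global shift should give $|\mathbf a|=N-1$, hence $K_i=kN_{i+1}-N+1$ and $K_{k-1}=-k-N+1$, which matches.

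So I will fix $\mathbf a=(N_1,\ldots,N_{k-1},-1)$. I then have to recheck that the Maya diagram of this $\mathbf a$, and of each $\mathbf a^{(i)}$, still yields $\lambda(\mathbf N^{(i)})$. Since the partition depends only on gaps, a uniform translation of the Maya diagram does not affect it. \textbf{This bookkeeping of the Maya diagram and charge conventions is the main obstacle}, and I would verify it on $k=3$ against the Okamoto case before writing it in general.
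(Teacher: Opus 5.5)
Your overall route is the paper's: the same charge vector $\mathbf a=(N_1,\ldots,N_{k-1},-1)$, the same weighted-homogeneity rescaling (your $c$ is the paper's $\rho^{-1}$), and the same evaluation $K_i=ka_{i+1}-|\mathbf a|$ with $|\mathbf a|=N-1$. Your transformation of the Hirota operator under $x=cz$ is also correct. The gap is the step you flag yourself, $\lambda(\mathbf a^{(i)})=\lambda(\mathbf N^{(i)})$, and the reasons you give for it are wrong.

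\begin{itemize}
\item Your first paragraph is false. With $a_k=0$ one has $k\mathbb Z_{<0}+k=\{0,-k,-2k,\ldots\}$, so $M(\mathbf a)=\mathbb Z_{\le 0}\sqcup I_k(\mathbf N)$. Its partition is $\lambda(\mathbf N)$ with the first column deleted, not $\lambda(\mathbf N)$. For example, $k=3$, $\mathbf N=(1,0)$ gives $M(\mathbf a)=\mathbb Z_{\le 1}$ and $\lambda(\mathbf a)=\varnothing$ instead of $(1)$.
\item Changing only $a_k$ from $0$ to $-1$ is not a uniform translation of the Maya diagram; it deletes the single point $0$. So translation invariance of the partition is irrelevant here.
\item A global shift $\mathbf a\mapsto\mathbf a+c\mathbf 1$ leaves every $K_i$ unchanged, since $ka_{i+1}$ and $|\mathbf a|$ both increase by $ck$. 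So no ``global offset'' plays any role.
\end{itemize}

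The identification should instead be checked directly, with no shift at all. Write $N^{(i)}_r$ for the $r$-th entry of $\mathbf N^{(i)}$.
\begin{itemize}
\item The $k$-th block of \eqref{eq:Maya-from-charge} is $k\mathbb Z_{<-1}+k=\{-k,-2k,\ldots\}$.
\item For $1\le r\le k-1$ the $r$-th block is $\{r+kq:q<0\}\sqcup\{r+kq:0\le q<N^{(i)}_r\}$.
\item The negative pieces together exhaust $\mathbb Z_{<0}$. Hence $M(\mathbf a^{(i)})=\mathbb Z_{<0}\sqcup I_k(\mathbf N^{(i)})$ exactly, and $0\notin M(\mathbf a^{(i)})$.
\end{itemize}
Now write $I_k(\mathbf N^{(i)})=\{\tilde n_1<\cdots<\tilde n_{N'}\}$. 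Then $m_j=\tilde n_{N'+1-j}$ for $j\le N'$ and $m_{N'+s}=-s$ for $s\ge1$. The rule $m_j-m_{j+1}=\lambda_j-\lambda_{j+1}+1$, together with $\lambda_j=0$ for $j>N'$, forces $\lambda_j=\tilde n_{N'+1-j}-N'+j$. This is $\lambda(\mathbf N^{(i)})$ by \eqref{eq:k-reduced-partition-from-beta}.

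Insert this verification and drop the $a_k=0$ discussion; your proof is then complete and coincides with the paper's.
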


\begin{proof}
Choose $\mathbf a
    =
    (N_1,\ldots,N_{k-1},-1)\in\mathbb Z^k .$
For \(0\le i\le k-1\), define
\begin{equation}
\label{eq:shifted-charge-k-reduced-proof}
    \mathbf a^{(0)}=\mathbf a, \quad \text{and} \quad 
    \mathbf a^{(i)}
    =
    (N_1+1,\ldots,N_i+1,N_{i+1},\ldots,N_{k-1},-1).
\end{equation}
We first compute the Maya diagram of \(\mathbf a^{(i)}\) from
\eqref{eq:Maya-from-charge}.  By definition, we have
\begin{equation}
\label{eq:Maya-ai-direct-first}
    M\left(\mathbf a^{(i)}\right)
    =
    \bigcup_{r=1}^{i}
    \left(k\mathbb Z_{<N_r+1}+r\right)
    \cup
    \bigcup_{r=i+1}^{k-1}
    \left(k\mathbb Z_{<N_r}+r\right)
    \cup
    \left(k\mathbb Z_{<-1}+k\right).
\end{equation}
The right-hand side of \eqref{eq:Maya-ai-direct-first} contains all negative integers but does not contain \(0\). Therefore, we may rewrite it as
\begin{equation}
\label{eq:Maya-ai-direct-second}
    M\left(\mathbf a^{(i)}\right)
    =
    \mathbb Z_{<0}
    \cup
    \bigcup_{r=1}^{i}
    \left\{
        r+kq:
        q\in\mathbb Z,\ 0\le q<N_r+1
    \right\}\cup
    \bigcup_{r=i+1}^{k-1}
    \left\{
        r+kq:
        q\in\mathbb Z,\ 0\le q<N_r
    \right\}.
\end{equation}
Hence, by the definition of \(I_k(\mathbf N)\), we have
\begin{equation}
\label{eq:Maya-ai-block-index}
    M\left(\mathbf a^{(i)}\right)
    =
    \mathbb Z_{<0}
    \cup
    I_k\left(\mathbf N^{(i)}\right).
\end{equation}
Moreover, the partition \(\lambda\left(\mathbf N^{(i)}\right)\) associated with the index set \(I_k\left(\mathbf N^{(i)}\right)\) coincides with the partition \(\lambda\left(\mathbf a^{(i)}\right)\) associated with the Maya diagram \(M\left(\mathbf a^{(i)}\right)\). Hence, we have
\begin{equation}
\label{eq:k-reduced-WH-as-Schur-proof}
    T_i(z)
    =
    \mathcal W_{\mathbf N^{(i)}}^{[2,k]}(z)
    =
    C_{\mathbf N^{(i)}}^{[2,k]}
    s_{\lambda\left(\mathbf a^{(i)}\right)}(z,1,0,\ldots),
    \qquad
    i=0,\ldots,k-1,
\end{equation}
where \(C_{\mathbf N^{(i)}}^{[2,k]}\) is the normalization constant in
\eqref{eq:k-reduced-WH}.

We now apply Theorem~\ref{thm:Tsuda-Schur-tau} to the initial vector $\mathbf a=(N_1,\ldots,N_{k-1},-1).$
Since $|\mathbf a|=N-1,$
the constants in \eqref{eq:Tsuda-K} are
\begin{equation}
    K_i=ka_{i+1}-|\mathbf a|,
    \qquad
    i=0,\ldots,k-1.
\end{equation}
Therefore, we obtain \eqref{eq:k-reduced-WH-Ki}.

By Theorem~\ref{thm:Tsuda-Schur-tau}, the Schur functions
\begin{equation}
\label{eq:hat-tau-k-reduced-proof}
    \widehat \tau_i(x)
    =
    s_{\lambda\left(\mathbf a^{(i)}\right)}
    \left(x,-\frac{k}{2},0,\ldots\right),
    \qquad
    i=0,\ldots,k-1,
\end{equation}
extended periodically by $\widehat\tau_{i+k}(x)=\widehat\tau_i(x),$ satisfy
\begin{equation}\label{eq:hat-tau-bilinear-proof}
    \left(
        D_x^2+\frac{x}{k}D_x+\frac{K_i}{k}
    \right)
    \widehat \tau_i(x)\cdot\widehat \tau_{i+1}(x)=0,
    \qquad
    i=0,\ldots,k-1.
\end{equation}
Choose \(\rho\in\mathbb C\) such that
\begin{equation}
\label{eq:rho-k-reduced-proof}
    \rho^2=-\frac{2}{k}.
\end{equation}
By the property of Schur functions, for any partition \(\lambda\), we have
\begin{equation}
\label{eq:Schur-homogeneity-proof}
    s_{\lambda}(\rho x,1,0,\ldots)
    =
    \rho^{|\lambda|}
    s_{\lambda}\left(x,\rho^{-2},0,\ldots\right),
\end{equation}
which results in
\begin{equation}
\label{eq:Schur-scaling-rho-proof}
    s_{\lambda\left(\mathbf a^{(i)}\right)}(\rho x,1,0,\ldots)
    =
    \rho^{|\lambda\left(\mathbf a^{(i)}\right)|}
    s_{\lambda\left(\mathbf a^{(i)}\right)}
    \left(x,-\frac{k}{2},0,\ldots\right).
\end{equation}
Combining \eqref{eq:k-reduced-WH-as-Schur-proof} with
\eqref{eq:Schur-scaling-rho-proof}, we obtain
\begin{equation}
\label{eq:T-rho-hat-tau-proof}
    T_i(\rho x)
    =
    C_{\mathbf N^{(i)}}^{[2,k]}
    \rho^{|\lambda\left(\mathbf a^{(i)}\right)|}
    \widehat \tau_i(x),
    \qquad
    i=0,\ldots,k-1.
\end{equation}
Using \(z=\rho x\) and \(\rho^2=-2/k\), 
\eqref{eq:hat-tau-bilinear-proof} becomes
\begin{equation}
    \left(
        D_z^2-\frac{z}{2}D_z-\frac{K_i}{2}
    \right)
    T_i(z)\cdot T_{i+1}(z)=0.
\end{equation}
\end{proof}

Using similar arguments as above, we can find the bilinear equations satisfied by the generalized Wronskian--Hermite polynomials
\begin{equation}
    W_N^{[2,k,l]}(z) =\mathcal W_{N\mathbf e_l}^{[2,k]}(z), \quad 1\le l\le k-1.
\end{equation} 
\begin{proposition}
\label{prop:single-block-WH-adjacent-bilinear}
Let \(k\ge3\), \(1\le l\le k-1\) and \(N\ge0\).  Then, the generalized Wronskian--Hermite polynomials \(W_N^{[2,k,l]}(z)\) satisfy
\begin{equation}
\label{eq:single-block-WH-adjacent-z-bilinear}
    \left(
        D_z^2-\frac{z}{2}D_z-\frac{(k-1)N+l}{2}
    \right)
    W_N^{[2,k,l]}(z)\cdot W_{N+1}^{[2,k,l]}(z)=0.
\end{equation}
\end{proposition}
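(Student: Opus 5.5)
The plan is to reproduce the proof of Theorem~\ref{thm:k-reduced-WH-NY-bilinear}, but with a different charge vector $\mathbf a$. The aim is for the two adjacent $\tau$-functions $\tau_{l-1},\tau_l$ in Theorem~\ref{thm:Tsuda-Schur-tau} to be exactly $W_N^{[2,k,l]}$ and $W_{N+1}^{[2,k,l]}$. Theorem~\ref{thm:k-reduced-WH-NY-bilinear} with $\mathbf N=N\mathbf e_l$ does not give this directly, since there $T_i$ and $T_{i+1}$ differ by raising the first $i$ blocks. Instead I will use the freedom in $\mathbf a\in\mathbb Z^k$, exploiting that entries equal to $-1$ in positions $r\le l-1$ become $0$ after the shift and contribute nothing. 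Concretely, I take
\begin{equation*}
    \mathbf a=(\underbrace{-1,\ldots,-1}_{l-1},\,N,\,0,\ldots,0,\,-1)\in\mathbb Z^k ,
\end{equation*}
so that
\begin{equation*}
    \mathbf a^{(l-1)}=(0,\ldots,0,N,0,\ldots,0,-1),
    \qquad
    \mathbf a^{(l)}=(0,\ldots,0,N+1,0,\ldots,0,-1),
\end{equation*}
with $N$ and $N+1$ respectively in position $l$. For $l=1$ this is simply $\mathbf a=(N,0,\ldots,0,-1)$.

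The first step is to compute the Maya diagrams from \eqref{eq:Maya-from-charge}, as in \eqref{eq:Maya-ai-direct-first}--\eqref{eq:Maya-ai-block-index}. For $r\ne l$ with $r\le k-1$, the block $k\mathbb Z_{<0}+r$ consists only of negative integers. The last block $k\mathbb Z_{<-1}+k$ is also negative and omits $0$. Hence
\begin{equation*}
    M(\mathbf a^{(l-1)})=\mathbb Z_{<0}\cup I_k(N\mathbf e_l),
    \qquad
    M(\mathbf a^{(l)})=\mathbb Z_{<0}\cup I_k((N+1)\mathbf e_l).
\end{equation*}
It follows that $\lambda(\mathbf a^{(l-1)})=\lambda(N\mathbf e_l)$ and $\lambda(\mathbf a^{(l)})=\lambda((N+1)\mathbf e_l)$. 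By \eqref{eq:k-reduced-WH-as-Schur} and \eqref{eq:single-block-recovery}, $W_N^{[2,k,l]}(z)$ and $W_{N+1}^{[2,k,l]}(z)$ are therefore nonzero constant multiples of the corresponding Schur functions evaluated at $(z,1,0,\ldots)$. The case $N=0$ is covered as well, since then $\lambda=\varnothing$ and $W_0=1$.

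Next I apply Theorem~\ref{thm:Tsuda-Schur-tau} with $i=l-1$; this requires $k\ge3$, which is assumed. Since $|\mathbf a|=N-(l-1)-1=N-l$, the constant is
\begin{equation*}
    K_{l-1}=ka_l-|\mathbf a|=kN-N+l=(k-1)N+l .
\end{equation*}
Thus $\widehat\tau_{l-1}(x)=s_{\lambda(N\mathbf e_l)}(x,-k/2,0,\ldots)$ and $\widehat\tau_l(x)=s_{\lambda((N+1)\mathbf e_l)}(x,-k/2,0,\ldots)$ satisfy $\bigl(D_x^2+\tfrac{x}{k}D_x+\tfrac{K_{l-1}}{k}\bigr)\widehat\tau_{l-1}\cdot\widehat\tau_l=0$.

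Finally, I rescale exactly as in \eqref{eq:rho-k-reduced-proof}--\eqref{eq:T-rho-hat-tau-proof}, with $\rho^2=-2/k$ and $z=\rho x$. The homogeneity relation \eqref{eq:Schur-homogeneity-proof} gives $W_{N}^{[2,k,l]}(\rho x)\propto\widehat\tau_{l-1}(x)$ and $W_{N+1}^{[2,k,l]}(\rho x)\propto\widehat\tau_l(x)$. The bilinear equation is homogeneous in each argument, so the constants drop out. Under $z=\rho x$ the operator transforms as $D_x^2=\rho^2D_z^2$ and $xD_x=zD_z$; multiplying through by $\rho^{-2}=-k/2$ then yields \eqref{eq:single-block-WH-adjacent-z-bilinear}.

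The main obstacle is the first step: choosing a charge vector whose $(l-1)$-st and $l$-th shifts give exactly the single-block index sets. One must check that the entries $-1$ at positions $<l$ and the entry $-1$ at position $k$ produce no nonnegative Maya elements and do not include $0$. Once this is verified, the computation of $K_{l-1}$ and the rescaling are routine and identical to the proof of Theorem~\ref{thm:k-reduced-WH-NY-bilinear}.
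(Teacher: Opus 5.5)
Your proof is correct and follows the route the paper intends. The paper only says the result follows by arguments similar to the proof of Theorem~\ref{thm:k-reduced-WH-NY-bilinear}, and your choice $\mathbf a=(-1,\ldots,-1,N,0,\ldots,0,-1)$ with $i=l-1$ supplies exactly those details, namely the identification $M(\mathbf a^{(l-1)})=\mathbb Z_{<0}\cup I_k(N\mathbf e_l)$, the constant $K_{l-1}=(k-1)N+l$, and the rescaling $z=\rho x$ with $\rho^2=-2/k$.
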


\subsection{Rational solutions to the fifth Painlev\'e equation}
\label{subsec:4-reduced-WH-PV}
 In this subsection, we consider the \(4\)-reduced Wronskian--Hermite polynomials.  In this case, the Noumi--Yamada system \(P(A_3)\) is 
the symmetric form of the fifth Painlev\'e equation 
\begin{equation}
\label{eq:PV-general-form}
\frac{d^2 y}{dt^2}
=
\left(\frac{1}{2y}+\frac{1}{y-1}\right)
\left(\frac{dy}{dt}\right)^2
-\frac{1}{t}\frac{dy}{dt}
+\frac{(y-1)^2(\alpha y^2+\beta)}{t^2y}
+\frac{\gamma y}{t}
-\frac{y(y+1)}{2(y-1)},
\end{equation}
where \(\alpha,\beta\) and \(\gamma\) are complex constants.

\begin{theorem}[{\cite{masuda2002determinant}}]
\label{thm:MOK-PV-symmetric-form}
Let \(\alpha_0,\alpha_1,\alpha_2,\alpha_3\in\mathbb C\) satisfy
\begin{equation}
    \alpha_0+\alpha_1+\alpha_2+\alpha_3=1.
\end{equation}
Let \(f_0,f_1,f_2,f_3\) be the symmetric variables of
the fifth Painlev\'e equation satisfying
\begin{equation}
\label{eq:MOK-PV-symmetric-form}
    t\frac{df_i}{dt}
    =
    f_i f_{i+2}(f_{i+1}-f_{i+3})
    +
    \left(\frac12-\alpha_{i+2}\right)f_i
    +
    \alpha_i f_{i+2},
    \qquad i=0,1,2,3,
\end{equation}
where all subscripts are understood modulo \(4\). Then
\begin{equation}
\label{eq:MOK-PV-y}
    y(t)=-\frac{f_3(t)}{f_1(t)}
\end{equation}
satisfies the fifth Painlev\'e equation \eqref{eq:PV-general-form} with
\begin{equation}
\label{eq:MOK-PV-parameter-relation}
    \alpha=\frac12\alpha_1^2,
    \qquad
    \beta=-\frac12\alpha_3^2,
    \qquad
    \gamma=\alpha_0-\alpha_2.
\end{equation}
\end{theorem}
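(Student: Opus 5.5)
The statement is the classical result of \cite{masuda2002determinant}. I would prove it by direct elimination: reduce the four-component system \eqref{eq:MOK-PV-symmetric-form} to a single second-order equation for \(y=-f_3/f_1\).

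\textbf{Step 1: first integrals.} Add the equations for \(i=0\) and \(i=2\). The cubic terms \(f_0f_2(f_1-f_3)\) and \(f_2f_0(f_3-f_1)\) cancel. The linear terms combine to \(\tfrac12(f_0+f_2)\) because \(\alpha_0f_2-\alpha_0f_2+\alpha_2f_0-\alpha_2f_0=0\). Hence
\[
t\frac{d}{dt}(f_0+f_2)=\tfrac12(f_0+f_2),
\]
and in the same way \(t\frac{d}{dt}(f_1+f_3)=\tfrac12(f_1+f_3)\). Therefore \(f_0+f_2=\kappa\sqrt t\) and \(f_1+f_3=\kappa'\sqrt t\) for constants \(\kappa,\kappa'\). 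I would fix these by the normalization implicit in the symmetric form, which I expect to be \(\kappa\kappa'=1\), possibly after rescaling \(t\); the coefficient of \(\gamma y/t\) and the term \(-y(y+1)/(2(y-1))\) in \eqref{eq:PV-general-form} will confirm the right choice. This normalization is the one delicate bookkeeping point.

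\textbf{Step 2: parametrize by \(y\) and one extra variable.} From \(f_3=-yf_1\) and \(f_1+f_3=\kappa'\sqrt t\) we get
\[
f_1=\frac{\kappa'\sqrt t}{1-y},\qquad f_3=-\frac{\kappa' y\sqrt t}{1-y}.
\]
We also write \(f_2=\kappa\sqrt t-f_0\). Differentiate \(y=-f_3/f_1\) using the \(i=1\) and \(i=3\) equations. Since \(f_0-f_2=2f_0-\kappa\sqrt t\), this gives \(t\,y'\) as an expression that is affine in \(f_0\), with coefficients rational in \(y\) and \(\sqrt t\). Solve it for \(f_0\) in terms of \(y\), \(y'\) and \(t\).

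\textbf{Step 3: close the system.} Differentiate the expression for \(f_0\) once more and substitute the \(i=0\) equation for \(t f_0'\). Then eliminate \(f_0\) and \(f_2\) using Step 2. The result is a second-order equation for \(y\). Collect terms and match them against \eqref{eq:PV-general-form}:
\begin{itemize}
\item the \((y')^2\) coefficient should be \(\frac1{2y}+\frac1{y-1}\);
\item the constants \(\alpha_1^2\) and \(\alpha_3^2\) should appear as perfect squares, giving \(\alpha=\tfrac12\alpha_1^2\) and \(\beta=-\tfrac12\alpha_3^2\);
\item the mixed term should give \(\gamma=\alpha_0-\alpha_2\), using \(\sum\alpha_i=1\).
\end{itemize}
The \(\sqrt t\) factors should cancel throughout.

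\textbf{Main obstacle.} The difficulty is the algebraic bookkeeping in Step 3. Two things need care: verifying that \(\alpha_1\) and \(\alpha_3\) enter only through squares, and pinning down the normalization constants from Step 1. I would first carry out the computation with \(\kappa,\kappa'\) left general, and only at the end impose the condition needed for the \(-y(y+1)/(2(y-1))\) term to appear with coefficient one.
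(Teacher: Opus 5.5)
Your direct elimination is correct. The paper gives no proof of this statement; it is quoted from \cite{masuda2002determinant}, where it is the standard reduction of the $A_3^{(1)}$ symmetric form to P$_{\mathrm V}$. Your plan is therefore a self-contained version of that computation, and it closes as you predict.

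With $f_0+f_2=\kappa\sqrt t$, $f_1+f_3=\kappa'\sqrt t$ and $\theta=t\,d/dt$, the $i=1,3$ equations give
\[
\theta y=\kappa'\sqrt t\,y\,(f_0-f_2)+(y-1)(\alpha_1y+\alpha_3).
\]
Differentiating once more and using the $i=0$ equation yields
\[
t^2y''=\Big(\frac1{2y}+\frac1{y-1}\Big)(ty')^2-ty'+\frac{(y-1)^2(\alpha_1^2y^2-\alpha_3^2)}{2y}+(\alpha_0-\alpha_2)\kappa\kappa'\,ty-\frac{(\kappa\kappa')^2}{2}\,t^2\frac{y(y+1)}{y-1}.
\]
Only $(\sqrt t)^2$ and $(\sqrt t)^4$ survive in this expression.

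Two points need tightening.

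\emph{The normalization is a hypothesis, not something imposed at the end.} The computation gives $\delta=-(\kappa\kappa')^2/2$ and $\gamma=(\alpha_0-\alpha_2)\kappa\kappa'$. So the statement as printed holds exactly when $\kappa\kappa'=1$; the usual convention is $f_0+f_2=f_1+f_3=\sqrt t$. This condition must be read as part of the hypothesis ``symmetric variables''. As you anticipated, the $\delta$-term fixes only $(\kappa\kappa')^2$, and the $\gamma$-term fixes the sign. You also need $\kappa'\neq0$ (otherwise $y\equiv1$) and $y\not\equiv0$ in order to solve for $f_0$.

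\emph{The constraint $\sum_i\alpha_i=1$ is not what produces $\gamma$.} Set $u:=\theta y-(y-1)(\alpha_1y+\alpha_3)$. The constraint enters only through the coefficient $1-\alpha_0-\alpha_2$ of $u$. Replacing this coefficient by $\alpha_1+\alpha_3$ removes a stray term $(1-\sum_i\alpha_i)\,u$. That is what gives the $-ty'$ term its correct coefficient and makes the constant terms collapse to the perfect-square form $(y-1)^2(\alpha_1^2y^2-\alpha_3^2)/(2y)$.
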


\begin{theorem}[\cite{clarkson2024rational}]\label{thm:Classification of rational solutions of PV}
The fifth Painlev\'e equation \eqref{eq:PV-general-form} has a rational solution if and only if one of the following holds:
\begin{enumerate}
    \item[(i)] $\alpha=\frac{1}{2}p^2,\;
        \beta=-\frac{1}{2}(p+2q+1+\mu)^2,\;
        \gamma=\mu,
        \; p\geq 1$;

    \item[(ii)] $\alpha=\frac{1}{2}(p+\mu)^2,\;
        \beta=-\frac{1}{2}(q+\varepsilon\mu)^2,\;
        \gamma=p+\varepsilon q,
        \; \varepsilon=\pm1,$    provided that \(p\neq0\) or \(q\neq0\);

    \item[(iii)] $\alpha=\frac{1}{2}\left(p+\frac{1}{2}\right)^2,\;
        \beta=-\frac{1}{2}\left(q+\frac{1}{2}\right)^2,\;
        \gamma=\mu,$    provided that \(p\neq0\) or \(q\neq0\);
\end{enumerate}
where \(p,q\in\mathbb Z\), and \(\mu\) is an arbitrary complex constant, together with the solutions obtained through the symmetries
\begin{align}
    \mathcal{S}_1 : \quad & y_1\left(t;\hat  \alpha_1,\hat  \beta_1,\hat  \gamma_1\right) = y\left(-t; \alpha, \beta, \gamma\right), \quad \left(\hat \alpha_1,\hat  \beta_1, \hat \gamma_1\right) = \left(\alpha, \beta, -\gamma\right), \label{eq:sym1} \\[1em]
    \mathcal{S}_2 : \quad & y_2\left(t;\hat  \alpha_2,\hat  \beta_2,\hat  \gamma_2\right) = \frac{1}{y\left(t; \alpha, \beta, \gamma\right)}, \quad \left(\hat \alpha_2,\hat  \beta_2,\hat  \gamma_2\right) = \left(-\beta, -\alpha, -\gamma\right), \label{eq:sym2}
\end{align}
where \(y(t; \alpha, \beta, \gamma)\) is a solution of \eqref{eq:PV-general-form}.
\end{theorem}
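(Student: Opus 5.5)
The natural route is the classical two-part argument (necessity via local expansions, sufficiency via Bäcklund transformations), carried out in the symmetric variables of Theorem~\ref{thm:MOK-PV-symmetric-form}, where the extended affine Weyl group \(\widetilde W(A_3^{(1)})\) acts by birational Bäcklund transformations on \((f_0,\ldots,f_3;\alpha_0,\ldots,\alpha_3)\). Throughout I assume \(\gamma\neq0\), so that \eqref{eq:PV-general-form} is genuinely P\(_{\mathrm V}\) and not reducible to P\(_{\mathrm{III}}\).

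\emph{Necessity.} Let \(y\) be a rational solution. First I analyse the possible behaviour at \(t=\infty\). Substituting \(y\sim c\,t^{\kappa}\) into \eqref{eq:PV-general-form} and balancing dominant terms leaves only a few options: \(y\to-1\), or \(y\) has a zero or a pole of controlled order at infinity. In each case I compute the expansion to the first order at which a nontrivial coefficient relation appears. The same is done at \(t=0\), where rationality forces \(y\) to be holomorphic or to have a zero or pole of integer order, with the leading exponent fixed by \(\sqrt{2\alpha}\) or \(\sqrt{-2\beta}\).

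At the finite nonzero poles of \(y\), and at points where \(y=0\) or \(y=1\), the Painlevé property gives residues of \(\frac{d}{dt}\log y\) and \(\frac{d}{dt}\log(y-1)\) that are integers or half-integers determined by \(\alpha,\beta\). Integrating \(y'/y\) around a large circle, and using the residue theorem together with the expansions at \(0\) and \(\infty\), turns these local data into Diophantine conditions on \(\sqrt{2\alpha}\), \(\sqrt{-2\beta}\) and \(\gamma\). Sorting by the behaviour at infinity, I expect these conditions to reduce exactly to the three families (i)--(iii), up to the sign changes produced by \(\mathcal S_1\) and \(\mathcal S_2\).

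\emph{Sufficiency.} I exhibit seed rational solutions:
\begin{enumerate}
\item[(a)] \(y=-1\) with \(\alpha=-\beta\) and \(\gamma=0\) free in the symmetric form, giving the one-parameter family that leads to (ii);
\item[(b)] \(y=c\,t+1\)-type solutions on the lines \(\alpha=\frac12(p+\mu)^2\), leading to (i);
\item[(c)] the half-integer seed with \(\alpha_1=\alpha_3=\frac12\), leading to (iii).
\end{enumerate}
I then apply the generators \(s_i\) and \(\pi\) of \(\widetilde W(A_3^{(1)})\), which act on parameters by translations by \(\mathbb Z\)-lattice vectors, to reach every point of the stated lattices. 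At each step I check via \eqref{eq:MOK-PV-parameter-relation} that the image is again rational and is not degenerate (no \(f_i\equiv0\) with \(\alpha_i=0\)). Alternatively, Theorem~\ref{thm:k-reduced-WH-NY-bilinear} with \(k=4\) supplies explicit \(\tau\)-functions \(T_i\) for a large part of families (i)--(ii), through \(f_i=\frac z4-\frac{d}{dz}\log(T_{i+1}/T_{i-1})\) after rescaling. The symmetries \(\mathcal S_1,\mathcal S_2\) are then checked directly on \eqref{eq:PV-general-form}.

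The main obstacle is the necessity step: showing that the local residue and expansion constraints cannot be satisfied outside (i)--(iii). This demands a careful case analysis of the possible orders at \(0\) and \(\infty\) and of the contributions of the movable poles, where sign ambiguities in \(\sqrt{2\alpha}\) and \(\sqrt{-2\beta}\) proliferate. I would organise it by first using \(\mathcal S_1,\mathcal S_2\) and Bäcklund shifts to move the parameters into a fundamental Weyl chamber. It then remains to show that a rational solution there must be one of the seeds; this is the approach of Kitaev--Law--McLeod, refined by Clarkson and collaborators.
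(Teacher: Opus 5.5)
The paper does not prove this theorem. It quotes it from Clarkson--Dunning, who in turn rely on Kitaev--Law--McLeod, so your outline has to carry the argument itself, and its necessity half does not. You only \emph{expect} the local data to reduce to (i)--(iii) and then defer back to Kitaev--Law--McLeod. That deferred step is the whole content of the theorem, including the non-existence statements hidden in $p\ge1$ and in the exclusions.

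Worse, the tool you propose cannot produce the required conditions. Every movable zero, pole and $1$-point of $y$ is simple when $\alpha\beta\neq0$:
$y\sim\pm t_0/(\sqrt{2\alpha}\,(t-t_0))$ at poles, $y\sim\pm\sqrt{-2\beta}\,(t-t_0)/t_0$ at zeros, and $y-1\sim\pm(t-t_0)$ at $1$-points. So the residues of $y'/y$ and $y'/(y-1)$ are integers independent of the parameter values, and $\gamma$ never enters. Integrating them around a large circle only does degree bookkeeping.

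The parameters sit in the leading coefficients, so the residue theorem must be applied to functions such as $y/t$, $1/(ty)$ and $1/(y-1)$, together with the expansions at $0$ and $\infty$. For example:
\begin{itemize}
\item If $y\to-1$ at infinity, then $y=-1-4\gamma/t+O(t^{-2})$, so $\operatorname{Res}_{\infty}\frac{1}{y-1}=-\gamma$. If moreover $y(0)\neq1$, the residue theorem gives $\gamma\in\mathbb Z$.
\item Similarly, applying it to $y/t$ gives $\sqrt{2\alpha}\pm\sqrt{-2\beta}\in\mathbb Z$ when $y(0)^2=-\beta/\alpha$.
\end{itemize}
Only once such relations are in hand does the real case analysis begin.

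Two further points are wrong as set up.

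First, the standing assumption $\gamma\neq0$ is unjustified. The classical reduction of P$_{\mathrm V}$ to P$_{\mathrm{III}}$ is the case $\delta=0$, which cannot occur here since $\delta=-\tfrac12$ is fixed. The statement contains many points with $\gamma=0$: (i) and (iii) with $\mu=0$, and (ii) with $p=-\varepsilon q$. Your own seed (a) also has $\gamma=0$. So this restriction both leaves part of the theorem unproved and contradicts your sufficiency step.

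Second, seed (a), $y\equiv-1$, lies exactly at $p=q=0$ in (ii), the case the statement excludes. For generic $\mu$ the line $(\tfrac12\mu^2,-\tfrac12\mu^2,0)$ is reached neither by (i)--(iii) nor by $\mathcal S_1,\mathcal S_2$. Hence the \emph{only if} direction holds only with this trivial solution set aside, and a necessity argument cannot land exactly on (i)--(iii) without saying so.

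On the sufficiency side, two corrections:
\begin{itemize}
\item Seed (b) is really $y=1+t/\gamma$ at $(\tfrac12\gamma^2,-\tfrac12,\gamma)$. This is $\mathcal S_2$ applied to family (i) with $(p,q,\mu)=(1,-1,-\gamma)$, not a point on the lines $\alpha=\tfrac12(p+\mu)^2$.
\item Seed (c) does exist for every $\gamma$: $y=-(t+2\gamma)/(t-2\gamma)$ solves the equation with $(\alpha,\beta)=(\tfrac18,-\tfrac18)$.
\end{itemize}
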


The following proposition follows from Theorem~\ref{thm:k-reduced-WH-NY-bilinear}
with \(k=4\), the reduction from the Noumi--Yamada system \(P(A_3)\) to the
symmetric form of the fifth Painlev\'e equation in
Theorem~\ref{thm:MOK-PV-symmetric-form}, together with the change of
variables \(z=\rho x\), \(\rho^2=-1/2\), and
\(t=x^2/4=-z^2/2\). We note that the following proposition relates the \(4\)-reduced Wronskian--Hermite
polynomials with a special class of rational solutions of the fifth Painlev\'e equation.

\begin{proposition}
\label{prop:4-reduced-WH-PV-z-form}
Let \(\mathbf N=(N_1,N_2,N_3)\in\mathbb Z_{\ge0}^3\).  For
\(i=0,1,2,3\), let
\begin{equation}
    T_i(z)
    =
    \mathcal W_{\mathbf N^{(i)}}^{[2,4]}(z),
\end{equation}
where \(\mathbf N^{(i)}\) is defined in \eqref{eq:shifted-block-vector}.
Define
\begin{equation}
\label{eq:4-reduced-WH-PV-z-solution}
    W(z)
    =
    -
    \frac{
        \dfrac{z}{2}
        +
        \dfrac{d}{dz}
        \log\left(\dfrac{T_0(z)}{T_2(z)}\right)
    }{
        \dfrac{z}{2}
        -
        \dfrac{d}{dz}
        \log\left(\dfrac{T_0(z)}{T_2(z)}\right)
    }.
\end{equation}
Then \(W(z)\) is a rational solution of
\begin{equation}
\label{eq:PV-pullback-z-form}
    \frac{d^2 W}{dz^2}
    =
    \left(
        \frac{1}{2W}
        +
        \frac{1}{W-1}
    \right)
    \left(\frac{dW}{dz}\right)^2
    -
    \frac{1}{z}\frac{dW}{dz}
    +
    \frac{4(W-1)^2}{z^2W}
    \left(
        \alpha W^2+\beta
    \right)
    -
    2\gamma W
    -
    \frac{z^2W(W+1)}{2(W-1)} ,
\end{equation}
where
\begin{equation}
\label{eq:4-reduced-WH-PV-parameters}
    \alpha
    =
    \frac12
    \left(
        N_2-N_1+\frac14
    \right)^2,
    \qquad
    \beta
    =
    -\frac12
    \left(
        N_3+\frac34
    \right)^2,
    \qquad
    \gamma
    =
    N_1+N_2-N_3+1.
\end{equation}
\end{proposition}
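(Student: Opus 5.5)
The plan is to combine Theorem~\ref{thm:k-reduced-WH-NY-bilinear} (case $k=4$) with the bilinearization \eqref{eq:f-from-tau-PAkminus1}--\eqref{eq:PAkminus1-tau-bilinear} of $P(A_3)$ and with Theorem~\ref{thm:MOK-PV-symmetric-form}. Afterwards we pull everything back to the variable $z$.

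First, work in the variable $x$ with $z=\rho x$ and $\rho^2=-1/2$. From \eqref{eq:T-rho-hat-tau-proof}, the functions $\widehat\tau_i(x)$, which are nonzero constant multiples of $T_i(\rho x)$, satisfy \eqref{eq:hat-tau-bilinear-proof} with $k=4$. The constants are read off from \eqref{eq:k-reduced-WH-Ki}:
\[
K_0=4N_1-N+1,\quad K_1=4N_2-N+1,\quad K_2=4N_3-N+1,\quad K_3=-N-3.
\]
Define $f_i$ by \eqref{eq:f-from-tau-PAkminus1} with $\tau_i=\widehat\tau_i$, using indices modulo $4$. By the bilinearization of \cite{tsuda2005universal}, the $f_i$ solve $P(A_3)$ with $\alpha_i=(K_i-K_{i-1}+1)/4$. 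Constant factors in $\tau_i$ drop out of the logarithmic derivatives. A direct computation gives
\[
\alpha_0=N_1+\tfrac54,\quad \alpha_1=N_2-N_1+\tfrac14,\quad \alpha_2=N_3-N_2+\tfrac14,\quad \alpha_3=-N_3-\tfrac34,
\]
and these sum to $1$.

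Next, specialize \eqref{eq:PA-odd} to $g=1$. The only term in each double sum is $j=r=1$, so the system reads
\[
\frac{x}{2}\frac{df_i}{dx}=f_if_{i+2}(f_{i+1}-f_{i+3})+\left(\tfrac12-\alpha_{i+2}\right)f_i+\alpha_if_{i+2}.
\]
Setting $t=x^2/4$ gives $t\,d/dt=(x/2)\,d/dx$, so this is exactly \eqref{eq:MOK-PV-symmetric-form}. Theorem~\ref{thm:MOK-PV-symmetric-form} then shows that $y=-f_3/f_1$ solves \eqref{eq:PV-general-form} with parameters
\[
\alpha=\tfrac12\alpha_1^2=\tfrac12\left(N_2-N_1+\tfrac14\right)^2,\qquad \beta=-\tfrac12\alpha_3^2=-\tfrac12\left(N_3+\tfrac34\right)^2,
\]
\[
\gamma=\alpha_0-\alpha_2=N_1+N_2-N_3+1.
\]
These are precisely the values in \eqref{eq:4-reduced-WH-PV-parameters}.

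Then express $y$ in terms of $z$. Put $L(z)=\frac{d}{dz}\log(T_0/T_2)$. Using $\tau_4=\tau_0$, $d/dx=\rho\,d/dz$, $x=z/\rho$ and $\rho^2=-1/2$, one finds
\[
\rho f_3=\tfrac12\left(\tfrac z2+L\right),\qquad \rho f_1=\tfrac12\left(\tfrac z2-L\right),
\]
so that $y=W(z)$. Since $t=x^2/4=-z^2/2$, we have $dt/dz=-z$, and the chain rule gives
\[
y_t=-\frac{W'}{z},\qquad y_{tt}=\frac{W''}{z^2}-\frac{W'}{z^3}.
\]
Substituting into \eqref{eq:PV-general-form} and multiplying through by $z^2$ turns the term $-\frac1t y_t$ into $-\frac1z W'$. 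The term $(y-1)^2(\alpha y^2+\beta)/(t^2y)$ acquires the factor $4/z^2$. The term $\gamma y/t$ becomes $-2\gamma W$, and the last term becomes $-z^2W(W+1)/(2(W-1))$. This yields \eqref{eq:PV-pullback-z-form}.

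Rationality follows because the $T_i$ are polynomials, so $L$ is rational and $L=O(1/z)$ as $z\to\infty$. Hence $z/2\mp L$ is not identically zero, so $W$ is a well-defined rational function and $f_1\not\equiv0$.

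The main obstacle is bookkeeping rather than new ideas. The points that need care are:
\begin{itemize}
\item the periodic index conventions, in particular $\tau_{i\pm1}$ with $\tau_4=\tau_0$ and $\tau_{-1}=\tau_3$;
\item the sign of $\rho^2$, which determines whether $L$ enters $W$ with a plus or a minus sign;
\item checking that the bilinear system \eqref{eq:PAkminus1-tau-bilinear} with these $K_i$ really produces $P(A_3)$ with $\alpha_i=(K_i-K_{i-1}+1)/4$. This is taken from \cite{tsuda2005universal}, but it should be verified that it holds for arbitrary constant normalizations of the $\tau_i$.
\end{itemize}
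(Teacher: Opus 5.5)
Your proposal is correct and follows essentially the same route as the paper, which derives the proposition in a single sentence from Theorem~\ref{thm:k-reduced-WH-NY-bilinear} with $k=4$, the reduction of $P(A_3)$ to the symmetric form of P$_{\mathrm V}$ in Theorem~\ref{thm:MOK-PV-symmetric-form}, and the change of variables $z=\rho x$, $\rho^2=-1/2$, $t=x^2/4=-z^2/2$. Your computations check out and supply the details the paper leaves implicit: the $\alpha_i$, the expressions for $\rho f_3$ and $\rho f_1$ (hence $y=W(z)$), and the chain-rule pull-back. The one point worth adding is that Theorem~\ref{thm:MOK-PV-symmetric-form} needs the normalization $f_0+f_2=f_1+f_3=x/2=\sqrt t$ to give $\delta=-1/2$, and this normalization follows directly from the definition of the $f_i$.
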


\begin{remark}
Note that \(W(z)\) is an even rational function, and hence there exists a rational function $R$ such that
\begin{equation}
    W(z)=R(z^2).
\end{equation}
Let \(y(t)=R(-2t)\). Then \(y(t)\) is a rational solution of  the fifth Painlev\'e equation
\eqref{eq:PV-general-form}.
\end{remark}

\begin{remark}
For either choice \(\varepsilon=\pm1\), take
\begin{equation}
    p=N_1-N_3,
    \qquad
    q=\varepsilon\,(N_2+1),
    \qquad
    \mu=N_3-N_2-\frac14,
\end{equation}
then \(p,q\in\mathbb Z\).  Moreover, since \(N_2\ge0\), we have
\(q=\varepsilon(N_2+1)\neq0\). Hence, the condition \(p\neq0\) or
\(q\neq0\) in case (ii)  of Theorem~\ref{thm:Classification of rational solutions of PV} is satisfied.  Furthermore, we have
\begin{equation}
    \alpha=\frac12(p+\mu)^2
    =
    \frac12\left(N_2-N_1+\frac14\right)^2,
\end{equation}
\begin{equation}
    \beta=-\frac12(q+\varepsilon\mu)^2
    =
    -\frac12\left(N_3+\frac34\right)^2,
\end{equation}
and
\begin{equation}
    \gamma=p+\varepsilon q
    =
    N_1+N_2-N_3+1.
\end{equation}
Therefore, the parameters \eqref{eq:4-reduced-WH-PV-parameters} are contained in
case (ii) of Theorem~\ref{thm:Classification of rational solutions of PV}.
\end{remark}

\section{Rogue wave patterns associated with the generalized Wronskian--Hermite polynomials}\label{sec:WH with application}
Rogue waves are localized nonlinear waves with unusually large amplitudes that ``come from nowhere and disappear without a trace''. These characteristics imply that rogue waves may have destructive effects on ships and offshore platforms.  Since their first recorded observation at an oil platform, rogue waves have attracted considerable attention and have also been observed in many other physical systems, especially in nonlinear optics \cite{solli2007optical}. From a mathematical point of view, rogue waves are usually described by rational solutions of integrable nonlinear wave equations on a nonzero background, such as the well-known Peregrine soliton of the nonlinear Schr\"odinger (NLS) equation. Since the discovery of higher-order rogue waves, much progress has been made in deriving explicit solutions and analyzing their dynamics in a wide range of integrable systems. 

Rogue waves may exhibit rich and highly regular patterns on the spatial-temporal plane. The study of such rogue wave patterns is important because it may provide critical information for predicting subsequent rogue waves based on preceding ones. Under suitable parameter choices, higher-order rogue wave solutions may form regular geometric structures such as circular rogue wave clusters \cite{kedziora2011circular}. Moreover, Yang and Yang proved analytically that these rogue wave patterns of the NLS equation are closely related to the root structures of the Yablonskii--Vorob'ev polynomial hierarchy. It has further been verified that these patterns are universal whenever the Schur polynomials involved in the $\tau$ functions have an index jump of two \cite{yang2021universal}, whereas the corresponding rogue wave patterns can be related to the Okamoto polynomial hierarchies when the Schur polynomials involved in the $\tau$ functions have an index jump of three \cite{yang2023rogue}. Recently, for specific integrable systems such as the vector NLS equation \cite{zhang2022rogue} and multi-component derivative NLS equations \cite{lin2024rogue}, it has been established that rogue wave solutions whose $\tau$ functions involve Schur polynomials with arbitrary index jumps are related to generalized Wronskian--Hermite polynomials.

In this section, we consider the multi-component (or \(M\)-component) Hirota equation
\begin{equation}
\label{eqn: vector-Hirota}
v_{j,t}
=
v_{j,xxx}
-3\left(\sum_{k=1}^{M} c_k |v_k|^2 \right) v_{j,x}
-3 v_j \left(\sum_{k=1}^{M} c_k v_k^{*} v_{k,x}\right),
\qquad j=1,2,\ldots,M,
\end{equation}
where \(*\) denotes complex conjugation and \(c_k=\pm1\).
Notably, by imposing suitable reductions, the multi-component Hirota equation reduces to the multi-component Sasa-Satsuma equation, which is also a significant higher-order generalization of the NLS equation and has been widely studied \cite{zhang2025dark,liu2023painleve}.
For example, in the case \(M=2\), the coupled Hirota equation can be reduced to the Sasa-Satsuma equation by imposing the reductions \(v_2=v_1^*\) and \(c_1=c_2\).
For the multi-component Hirota equation \eqref{eqn: vector-Hirota}, studies deriving rogue wave solutions by means of the KP reduction method are still lacking, especially for those solutions expressed in terms of Schur polynomials with an index jump of \(M+1\). 

\subsection{Rogue wave solutions}
We first introduce some notations required for the rogue wave solutions and their patterns. 
The Schur polynomials $S_n(\mathbf{x})$ are defined by
\begin{equation} \sum_{n=0}^{\infty}S_n(\mathbf{x})\lambda^n=\exp\left(\sum_{k=1}^{\infty}x_k\lambda^k\right),
\end{equation}
where $\mathbf{x}=(x_1,x_2,x_3,\cdots)$. To be more specific, we have
\begin{equation}\label{Schur polynomials}
  S_{0}(\mathbf{x})=1, \quad S_{1}(\mathbf{x})=x_{1}, \quad S_{2}(\mathbf{x})=\frac{1}{2} x_{1}^{2}+x_{2},  \ldots, \quad S_{j}(\mathbf{x})=\sum_{l_{1}+2 l_{2}+\cdots+r l_{r}=j}\left(\prod_{i=1}^{r} \frac{x_{i}^{l_{i}}}{l_{i} !}\right).
\end{equation}
Further, we define $S_j(\mathbf{x})\equiv0$ for $ j<0 $.

\begin{lemma}[\cite{zhang2022rogue}]\label{lemma:multiple roots}
Let $M$ be a positive integer and $\lambda_1>0, r_j \not = 0,k_j$ be real constants, with $k_i\neq k_j$ when $i\neq j$, $i,j=1,2,\dots, M$. 
Let $\mathcal{R}_M(z)$ be a rational function defined by
\begin{equation}\label{dimension reduction VS rat}
\mathcal{R}_M(z)= \sum_{j=1}^M\frac{r_j }{(z+  k_j)^2} + 1.
\end{equation}
Then $\mathcal{R}_M(z)=0$ has a pair of complex conjugate roots with nonzero imaginary parts of multiplicity $M$
\begin{equation}
     \lambda_1  \cos\left[  \frac{\pi}{M+1}\right] -  k_1  \pm  \mathrm{i}  \lambda_1 \sin\left[  \frac{\pi}{M+1}\right],
\end{equation}
if the parameters  $r_j,k_j$ satisfy the conditions
\begin{eqnarray}
  k_j  &=& k_1 +  \lambda_1 \left( \sin\left[\frac{\pi}{M+1}\right]  \cot\left[\frac{j\pi}{M+1}\right] -  \cos\left[\frac{\pi}{M+1}\right]\right), \quad j=2,\dots, M,
\end{eqnarray}
and
\begin{eqnarray}
  r_j
  = (-1)^{j+1}  \prod_{\substack{i=1 \\ i \not=j}}^M (k_j-k_i)^{-1}  \left(\lambda_1 \frac{\sin\left[  \frac{\pi}{M+1}\right] }{\sin\left[\frac{j\pi}{M+1}\right]} \right)^{M+1},\quad j=1,2,\dots, M.
\end{eqnarray}

\end{lemma}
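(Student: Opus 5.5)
The plan is to show directly that, under the stated choice of \(k_j\) and \(r_j\), one has the identity
\begin{equation*}
\mathcal R_M(z)=\frac{\bigl((z-z_0)(z-\bar z_0)\bigr)^M}{\prod_{j=1}^M(z+k_j)^2},
\qquad z_0=\lambda_1\cos\theta-k_1+\mathrm{i}\lambda_1\sin\theta,\quad \theta=\frac{\pi}{M+1}.
\end{equation*}
This identity gives the claim immediately. Indeed, \(\operatorname{Im}z_0=\lambda_1\sin\theta\neq0\). Moreover, \(z_0\) cannot be a pole \(-k_j\), since the \(k_j\) are real. Hence \(z_0\) and \(\bar z_0\) are roots of multiplicity exactly \(M\), and there are no other roots.

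\textbf{Step 1: change of variables.} Let \(x_j=\cot(j\theta)\) for \(j=1,\dots,M\). The hypothesis on \(k_j\) says \(k_j-k_1=\lambda_1\sin\theta\,(x_j-x_1)\), because \(x_1\sin\theta=\cos\theta\). Introduce \(t\) by
\begin{equation*}
z+k_j=\lambda_1\sin\theta\,(t+x_j),\qquad j=1,\dots,M.
\end{equation*}
This is consistent for all \(j\) simultaneously, and \(z=z_0\) corresponds to \(t=\mathrm{i}\).

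The key structural fact concerns the two polynomials \(A(t)=(t+\mathrm{i})^{M+1}\) and \(B(t)=(t-\mathrm{i})^{M+1}\), and their difference
\begin{equation*}
D(t)=A(t)-B(t)=2\mathrm{i}(M+1)\prod_{j=1}^M(t+x_j).
\end{equation*}
To verify this, write \(t=-\cot\varphi\). Then \(D=0\) iff \(\bigl((t+\mathrm{i})/(t-\mathrm{i})\bigr)^{M+1}=1\), i.e. \(e^{-2\mathrm{i}(M+1)\varphi}=1\), i.e. \(\varphi=j\theta\). This gives the \(M\) roots \(t=-x_j\). The leading coefficient \(2\mathrm{i}(M+1)\) is read off from the \(t^M\) term.

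\textbf{Step 2: a Wronskian identity.} Compute
\begin{equation*}
A'B-AB'=(M+1)(t^2+1)^M\bigl((t-\mathrm{i})-(t+\mathrm{i})\bigr)=-2\mathrm{i}(M+1)(t^2+1)^M.
\end{equation*}
Since \(B=A-D\), this equals \(AD'-A'D=-D^2\,(A/D)'\). Now \(A/D\) has degree \(M+1\) over degree \(M\), so it is a linear polynomial \(\alpha t+\beta\) plus \(\sum_j c_j/(t+x_j)\). The poles are simple because the \(x_j\) are distinct. Differentiating, we get
\begin{equation*}
\frac{(t^2+1)^M}{D(t)^2}=\frac{1}{2\mathrm{i}(M+1)}\Bigl(\alpha-\sum_{j}\frac{c_j}{(t+x_j)^2}\Bigr).
\end{equation*}
The essential point is that only constant and double-pole terms appear, with no simple-pole terms. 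This is exactly the shape of \(\mathcal R_M\).

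\textbf{Step 3: normalization.} Divide by the constant, which must equal \(1\) by comparing behaviour at \(t\to\infty\). Then substitute back \(t+x_j=(z+k_j)/(\lambda_1\sin\theta)\) and \(t^2+1=|z-z_0|^2/(\lambda_1\sin\theta)^2\) for real \(z\), extended as the polynomial \((z-z_0)(z-\bar z_0)\). This gives \(\mathcal R_M\) in the displayed form, with coefficients
\begin{equation*}
r_j=(\lambda_1\sin\theta)^2\,\frac{(t^2+1)^M}{\prod_{i\ne j}(t+x_i)^2}\Big|_{t=-x_j}.
\end{equation*}

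\textbf{Main obstacle.} The remaining work is the bookkeeping that identifies this \(r_j\) with the formula in the statement. The ingredients are:
\begin{itemize}
\item \(x_j^2+1=\csc^2(j\theta)\), so \((x_j^2+1)^M\) produces a factor \(\sin^{-2M}(j\theta)\);
\item \(\prod_{i\ne j}(x_i-x_j)^2\) can be rewritten using \(D'(-x_j)=2\mathrm{i}(M+1)\prod_{i\ne j}(x_i-x_j)\), where \(D'(-x_j)\) is computable from \(A'(-x_j)-B'(-x_j)\) with \(-x_j+\mathrm{i}=e^{-\mathrm{i}j\theta}/\sin(j\theta)\cdot(\pm1)\);
\item \(\prod_{i\ne j}(k_j-k_i)\) is converted back via \(k_j-k_i=\lambda_1\sin\theta\,(x_j-x_i)\).
\end{itemize}
The sign \((-1)^{j+1}\) should arise from \(e^{\mathrm{i}(M+1)j\theta}=(-1)^j\). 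I expect the main care to be in tracking these signs and the powers of \(\lambda_1\sin\theta/\sin(j\theta)\) to reach exactly the exponent \(M+1\). An alternative is to verify \(r_j\) from the double-pole coefficient of the right-hand side of the identity at \(z=-k_j\), which yields the same expression and can serve as a cross-check.
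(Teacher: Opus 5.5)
Your argument is correct, and the step you flag as the main obstacle does close. The paper itself does not prove this lemma. It is quoted from \cite{zhang2022rogue} and used only, via $r_j=-c_j\rho_j^2$, to fix the parameters in \eqref{eqn:constraints of multiple zeros}, so there is no in-paper argument to compare against. Your route is self-contained: you rescale so that the poles sit at $t=-\cot(j\theta)$, and you recognize $\mathcal R_M$ as a constant multiple of $\bigl((t+\mathrm{i})^{M+1}/D(t)\bigr)'$, whose partial fractions can contain only double poles. It also proves more than is stated, namely the global factorization $\mathcal R_M(z)=\bigl((z-z_0)(z-\bar z_0)\bigr)^M/\prod_j(z+k_j)^2$. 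Hence $z_0$ and $\bar z_0$ are the only zeros.

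For the bookkeeping, use $-x_j\pm\mathrm{i}=-e^{\mp\mathrm{i}j\theta}/\sin(j\theta)$, so your $\pm1$ is $-1$. Together with $\sin(Mj\theta)=\sin(j\pi-j\theta)=(-1)^{j+1}\sin(j\theta)$, one finds
\begin{equation*}
D'(-x_j)=2\mathrm{i}(M+1)(-1)^{M+j}\sin^{1-M}(j\theta),\qquad\text{hence}\qquad \prod_{i\ne j}(x_i-x_j)=(-1)^{M+j}\sin^{1-M}(j\theta).
\end{equation*}
Combined with $x_j^2+1=\sin^{-2}(j\theta)$, your coefficient becomes $r_j=\lambda_1^2\sin^2\theta/\sin^2(j\theta)$.

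On the other side, $k_j-k_i=\lambda_1\sin\theta\,(x_j-x_i)$ gives $\prod_{i\ne j}(k_j-k_i)=(-1)^{j+1}\bigl(\lambda_1\sin\theta/\sin(j\theta)\bigr)^{M-1}$. So the stated formula collapses to the same value $\bigl(\lambda_1\sin\theta/\sin(j\theta)\bigr)^2$, and the sign $(-1)^{j+1}$ you expected to track simply cancels the sign of the product. A by-product is that $r_j>0$ always. This forces $c_j=-1$ in \eqref{eqn:constraints of multiple zeros}, consistent with the paper's numerical choices.

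One small cleanup: $t-\mathrm{i}=(z-z_0)/(\lambda_1\sin\theta)$ and $t+\mathrm{i}=(z-\bar z_0)/(\lambda_1\sin\theta)$ hold identically. There is no need to pass through real $z$ and $|z-z_0|^2$.
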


In what follows, we construct the rogue wave solutions of the multi-component Hirota equation \eqref{eqn: vector-Hirota}. First, we define the rational function $\mathcal{F}_M(p)$ associated with the multi-component Hirota equation as
\begin{equation}\label{eqn:definition of F(p)}
\mathcal{F}_M(p) = -\sum_{j=1}^M \frac{c_j \rho_j^2}{p-\mathrm{i} k_j} + p, 
\end{equation}
where $c_j = \pm 1$, and $\rho_j > 0, k_j$ are real constants for $j=1,2,\dots,M$. It follows from Lemma \ref{lemma:multiple roots} that if the parameters $\{c_j, \rho_j, k_j\}_{j=1}^M$ satisfy the following constraints:
\begin{equation}\label{eqn:constraints of multiple zeros}
\begin{aligned}
 k_j &= k_1 + \lambda_1 \left( \sin\left[\frac{\pi}{M+1}\right] \cot\left[\frac{j \pi}{M+1}\right] - \cos\left[\frac{\pi}{M+1}\right]\right), \\
 c_j \rho_j^2 &= (-1)^{j} \prod_{\substack{i=1 \\ i \not=j}}^M (k_j-k_i)^{-1}  \left(\lambda_1 \frac{\sin\left[  \frac{\pi}{M+1}\right] }{\sin\left[\frac{j\pi}{M+1}\right]} \right)^{M+1},
 \end{aligned}
\end{equation}
then $\mathcal{F}_M^{\prime}(p)=0$ possesses a pair of non-imaginary roots with multiplicity $M$, given by
\begin{equation} \label{root of multiplicity M}
 p = \pm \lambda_1 \sin\left[\frac{\pi}{M+1}\right] - \mathrm{i} \lambda_1 \cos\left[\frac{\pi}{M+1}\right] + \mathrm{i} k_1.
\end{equation}
Next, we define the auxiliary function $p(\kappa)$ by
\begin{equation}\label{eqn:definition of p(kappa)}
\begin{aligned}
 \mathcal{F}_M(p(\kappa)) 
= \frac{\mathcal{F}_M(p(0))}{M+1} \sum_{n=1}^{M+1} \exp\left[\cos\left(\frac{2 n \pi}{M+1}\right) \kappa\right] \cos\left[\sin \left(\frac{2 n \pi}{M+1}\right)\kappa\right],
\end{aligned}
\end{equation}
where $p(0)$ is given by \eqref{root of multiplicity M}.
Then, one can obtain the following rogue wave solutions and rational solutions of the multi-component Hirota equation \eqref{eqn: vector-Hirota}.

\begin{theorem}\label{thm:rogue wave solutions}
Let $M$ be a positive integer,  $\rho_j >0, k_j$ be real constants, and $c_j=\pm1 $, where $j=1,2,\cdots,M$. Assume  $ \rho_j $ and $ k_j$ are given by \eqref{eqn:constraints of multiple zeros}. Let $\mathcal{F}_M(p)$ and $p(\kappa)$ be the functions defined by \eqref{eqn:definition of F(p)} and \eqref{eqn:definition of p(kappa)}, respectively, and let $p_0$ be a root of multiplicity $M$ of $\mathcal{F}_M^{\prime}(p)=0$ satisfying
$\Re(p_0)\Im(p_0)\neq0$. Then, the $M$-component Hirota equation \eqref{eqn: vector-Hirota} admits $\mathcal{N}$-th order rogue wave solutions
\begin{equation} \label{eqn:N-order rogue wave solutions of MHirota}
 v_{j,\mathcal{N}}(x,t) = \rho_{j}\frac{g_{j,\mathcal{N}}(x,t)}{f_{\mathcal{N}}(x,t)}  e^{\mathrm{i}\left(k_j x-w_j t\right)} , \quad j=1,2,\cdots,M,
\end{equation}
where $w_j=k_j^3+3k_j\sum_{i=1}^{M}c_i \rho_i^2+3\sum_{i=1}^{M}c_i\rho_i^2 k_i$, $\mathcal{N} = \left(N_1, N_2, \ldots, N_M\right)$ with $N_j~(j=1,2,\cdots,M)$ being nonnegative integers,
and $f_{\mathcal{N}}$ and $g_{j,\mathcal{N}}$ are given by
\begin{equation}
f_{\mathcal{N}}(x,t)=\tau_{\mathbf{n}_0}\left(x-3\sum_{i=1}^{M}c_i \rho_i^2 t,t\right), \quad g_{j,\mathcal{N}}(x,t)=\tau_{\mathbf{n}_j}\left(x-3\sum_{i=1}^{M}c_i \rho_i^2 t,t\right),
\end{equation}
with $\mathbf{n}_0=(0,0, \ldots, 0) \in \mathbb{R}^M$ and $\mathbf{n}_j= \mathbf{e}_j$ being the standard unit vector in $\mathbb{R}^M$. Here, $\tau_{\mathbf{n}}$ is given by the following $K \times K \, (1\leq K \leq M) $ block determinant
\begin{equation} \label{tau-block matrix-theorem}
\tau_{\mathbf{n}}=\det\left(\begin{array}{llll}
\tau_{\mathbf{n}}^{[I_1,I_1]} & \tau_{\mathbf{n}}^{[I_1,I_2]}&\cdots&\tau_{\mathbf{n}}^{[I_1,I_K]} \\
\tau_{\mathbf{n}}^{[I_2,I_1]} & \tau_{\mathbf{n}}^{[I_2,I_2]}&\cdots&\tau_{\mathbf{n}}^{[I_2,I_K]} \\ \vdots & \vdots& \ddots &\vdots \\\tau_{\mathbf{n}}^{[I_K,I_1]} & \tau_{\mathbf{n}}^{[I_K,I_2]}&\cdots&\tau_{\mathbf{n}}^{[I_K,I_K]}
\end{array}\right)_{N \times N},
\end{equation}
where
\begin{eqnarray}
&&\mathbf{n}=\left(n_1, n_2, \ldots, n_M\right),
\\
&& 1\leq I_1<I_2<\cdots<I_K\leq M,
\\
&& \tau_{\mathbf{n}}^{[I, J]}=\left(m_{(M+1)(i-1)+I,(M+1) (j-1)+J}^{(\mathbf{n},I, J)}\right)_{1 \leq i \leq N_I, 1 \leq j \leq N_J}, \quad 1 \leq I, J \leq M,  \label{tau-entry of block matrix-theorem}
\end{eqnarray}
$n_1, n_2, \ldots, n_M $ are integers,  $ I_j, N_{I_j} \, (j=1,2,\cdots,K)$  are positive integers with $N_{I_1}+N_{I_2}+\cdots+N_{I_K} = N, N \geq K$ and $N_l = 0$ for $l \in \{1,2,\cdots,M\}  \backslash \{I_1,I_2,\cdots,I_K\}$, and the corresponding matrix elements of \eqref{tau-entry of block matrix-theorem} are defined by
\begin{eqnarray} \label{eqn: definition of mij}
    m_{i, j}^{(\mathbf{n},I, J)}=\sum_{v=0}^{\min (i, j)}\left[\frac{\left|p_{1}\right|^{2}}{\left(p_{0}+p_{0}^{*}\right)^{2}}\right]^{v} S_{i-v}\left(\mathbf{x}_I^{+}(\mathbf{n})+v \mathbf{s}\right) S_{j-v}\left(\mathbf{x}_J^{-}(\mathbf{n})+v \mathbf{s}^{*}\right).
\end{eqnarray}
The vectors $\mathbf{x}_{I}^{\pm}
=\left(x_{1,I}^{\pm}, x_{2,I}^{\pm}, \cdots\right)$, $I=1,2,\dots,M$,
and $\mathbf{s}=\left(s_{1}, s_{2}, \cdots\right)$
are given by
\begin{eqnarray}
&&x_{i,I}^{+}=\alpha_i x+\beta_i t+ \sum_{j=1}^{M}n_j \theta_{ij} + a_{i,I}, \label{values of x+} \\
&&x_{i,I}^{-}=\alpha_i^* x+\beta_i^* t-\sum_{j=1}^{M}n_j \theta_{ij}^* +a_{i,I}^*,\\
&&\ln \left[\frac{1}{\kappa}\left(\frac{p_{0}+p_{0}^{*}}{p_{1}}\right)\left(\frac{p(\kappa)-p_{0}}{p(\kappa)+p_{0}^{*}}\right)\right]=\sum_{r=1}^{\infty} s_{r} \kappa^{r}, \label{sr}
\end{eqnarray}
where the asterisk `$*$' represents complex conjugation, $p_0=p(0),~p_1=p^{\prime}(0)$, the $a_{i,I}$'s are arbitrary constants, and $\alpha_i,~\beta_i,~\theta_{ij},~j = 1, 2, \dots, M$, are defined by the expansions
$$
p(\kappa)-p_{0}=\sum_{r=1}^{\infty} \alpha_{r} \kappa^{r}, \quad p^{3}(\kappa)-p_{0}^{3}=\sum_{r=1}^{\infty} \beta_{r} \kappa^{r},\quad\ln \left[\frac{p(\kappa)-\mathrm{i} k_{j}}{p_{0}-\mathrm{i} k_{j}}\right]=\sum_{r=1}^{\infty} \theta_{rj} \kappa^{r}.
$$
\end{theorem}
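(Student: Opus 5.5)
The plan is to obtain the rogue wave solutions by the KP hierarchy reduction method, adapted to the $M$-component setting as in \cite{zhang2022rogue,lin2024rogue}. The steps are: bilinearization, Gram determinant $\tau$ functions, dimension and conjugation reductions, and a final change of variables.

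\textbf{Bilinearization.} With $v_j=\rho_j\frac{g_j}{f}e^{\mathrm{i}(k_jx-w_jt)}$ and $f$ real, I will transform \eqref{eqn: vector-Hirota} into a bilinear system. I expect it to have the form
\[
\left(D_x^2-2\mathrm{i}k_jD_x-\mu\right)g_j\cdot f=\dots,\qquad \left(D_x^3-D_t+\dots\right)g_j\cdot f=0,\qquad D_x^2 f\cdot f=\dots,
\]
where the $w_j$ and the Galilean shift $x\mapsto x-3\sum_i c_i\rho_i^2t$ absorb the linear terms. This explains the shifted arguments of $f_{\mathcal N}$ and $g_{j,\mathcal N}$ in the statement.

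\textbf{Gram determinant solutions.} I will realize these as reductions of bilinear equations in the multi-component KP hierarchy. These are satisfied by Gram determinants $\tau_{\mathbf n}=\det(m^{(\mathbf n)}_{ij})$, with entries obtained by applying differential operators in $p,\bar p$ to
\[
\frac{1}{p+\bar p}\,e^{\xi+\bar\xi}\prod_{j=1}^M\left(\frac{p-\mathrm{i}k_j}{\bar p+\mathrm{i}k_j}\right)^{n_j}.
\]
Here $x_1,x_3$ play the roles of $x,t$, and the shifts $n_j$ generate the components.

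\textbf{Dimension reduction.} The key requirement is that $\tau$ be independent of the auxiliary KP time coupled to $\sum_j c_j\rho_j^2/(p-\mathrm{i}k_j)$. This forces the differential operators to act at critical points of $\mathcal F_M(p)$. Under the constraints \eqref{eqn:constraints of multiple zeros}, $p_0$ is a root of $\mathcal F_M'$ of multiplicity $M$, so $\mathcal F_M(p)-\mathcal F_M(p_0)\sim c(p-p_0)^{M+1}$. The parametrization \eqref{eqn:definition of p(kappa)} then makes $\mathcal F_M(p(\kappa))$ an even-in-$\mathbb Z_{M+1}$ series in $\kappa$, i.e.\ a function of $\kappa^{M+1}$.

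Following \cite{yang2023rogue,zhang2022rogue}, I will replace $\partial_p$ by $(p_1^{-1}\partial_\kappa)$-type operators. The relation $\left(\partial_{x_{\rm aux}}-\text{const}\right)m_{ij}=\sum$ shifted-index terms then holds only for indices that jump by $M+1$. This yields the index structure $(M+1)(i-1)+I$ in \eqref{tau-entry of block matrix-theorem}, with the $K$ blocks indexed by $I_1,\dots,I_K$.

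Expanding the generating function in $\kappa$ and $\bar\kappa$ and using the Cauchy–Binet/Schur expansion yields the entries \eqref{eqn: definition of mij}, with $\mathbf x^\pm$ and $\mathbf s$ as defined there. Verifying that the block structure survives the reduction, with blocks of different $I$ coupled consistently, is where I expect the main obstacle to be. My first attempt will be to treat each block as coming from a different $(M+1)$-th root branch $\kappa\mapsto\omega^{I}\kappa$ of the same $p(\kappa)$, and to check the reduction condition entrywise.

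\textbf{Conjugation and regularity.} I will impose $\bar p=p^*$ and $\mathbf x^-=(\mathbf x^+)^*$ with the stated signs of $\theta$. This gives $f$ real and $g_j$ consistent with $v_j^*$, since the reduction condition $c_j\rho_j^2$ real matches the nonlinear coefficients. Positivity of $f$ follows from the Gram structure because $\Re p_0\neq0$. The condition $\Re(p_0)\Im(p_0)\neq0$ ensures $p_0+p_0^*\neq0$ and that the denominators are nonvanishing.

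Finally, I will substitute $x_1\to x$ and $x_3\to t$, together with the Galilean shift, to obtain \eqref{eqn:N-order rogue wave solutions of MHirota}.
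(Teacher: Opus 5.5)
Your overall route is the paper's. It uses Gram-determinant $\tau$-functions of the multi-component KP hierarchy, a dimension reduction at the multiplicity-$M$ critical point $p_0$ of $\mathcal F_M$ (so $\mathcal F_M(p(\kappa))$ is a series in $\kappa^{M+1}$ and index shifts by multiples of $M+1$ stay inside a block or drop out), the conjugation reduction $\tau_{\mathbf n}=\tau_{-\mathbf n}^*$, and the Galilean shift.

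The genuine gap is the bilinearization, which is exactly where this third-order $M$-component equation differs from the vector NLS case you are borrowing from; the system you guess is not the right one. It cannot close in $f,g_j$ alone. The term $v_j\sum_k c_kv_k^*v_{k,x}$ produces $g_jg_l^*\,D_xg_l\cdot f$, and after rewriting $g_jD_xg_l\cdot f=g_lD_xg_j\cdot f-f\,D_xg_j\cdot g_l$ one is left with $f\,g_l^*\,D_xg_j\cdot g_l$. Since \eqref{eqn:constraints of multiple zeros} makes the $k_j$ distinct, this forces auxiliary functions $s_{jl}$ (on the KP side, $\tau_{\mathbf e_j+\mathbf e_l}$) satisfying $[D_x+\mathrm{i}(k_j-k_l)]g_j\cdot g_l=\mathrm{i}(k_j-k_l)s_{jl}f$. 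The cubic equation then carries the coupling term $3\mathrm{i}\sum_l(k_j-k_l)c_l\rho_l^2s_{jl}g_l^*$ rather than a zero right-hand side, cf.\ \eqref{eqn: bilinear 1}--\eqref{eqn: bilinear 2}.

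On the KP side you also overlook $y=x_2$. The three-time equation \eqref{eqn: KP3} for $\tau_{\mathbf n+\mathbf e_j}\cdot\tau_{\mathbf n}$ contains $D_y$ and $D_xD_y$, and $y$ is not a variable of the Hirota system. The $D_y$ term is removed with $(D_x^2-D_y+2a_jD_x)\tau_{\mathbf n+\mathbf e_j}\cdot\tau_{\mathbf n}=0$. That identity is only an elimination tool, so no $y$-free second-order equation for $g_j\cdot f$ of your first type enters the final system. The $D_xD_y$ term is removed by taking the $c_l\rho_l^2$-weighted sum of \eqref{eqn: KP4}. Under the dimension reduction, its part $D_{r_l}(D_x^2-D_y+2a_jD_x)$ becomes $D_x(D_x^2-D_y+2a_jD_x)$, modulo the second-order identity. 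Its term $(a_j-a_l)\tau_{\mathbf n+\mathbf e_j+\mathbf e_l}\tau_{\mathbf n-\mathbf e_l}$ becomes $s_{jl}g_l^*$ via $\tau_{-\mathbf e_l}=g_l^*$. This step is the real content of the reduction, and your plan has no substitute for it.

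Two smaller corrections follow.

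First, the dimension reduction is not independence of an auxiliary time but $(\partial_x-\sum_lc_l\rho_l^2\partial_{r_l})\tau_{\mathbf n}=C\tau_{\mathbf n}$. Its symbol on the generator is $\mathcal F_M(p)-\mathcal F_M(-q)$, which is why critical points of $\mathcal F_M$ appear rather than critical points of $\sum_jc_j\rho_j^2/(p-\mathrm{i}k_j)$. It is also this form that converts $\sum_lc_l\rho_l^2D_{r_l}$ into $D_x$ above.

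Second, the block coupling you flag as the main obstacle is not one. All blocks use the same branch $p(\kappa)$ and the same $p_1$ and $\mathbf s$. They differ only through the constants $a_{i,I}$, i.e.\ a block-dependent $\xi_0$, which Gram determinants with arbitrary row and column index sequences allow. Your residue-class argument therefore already settles it. Rotating branches $\kappa\mapsto\omega^I\kappa$ is unnecessary: since $s_r=0$ for $(M+1)\nmid r$, after relabeling the free constants it only reproduces the same determinants up to row and column phase factors.
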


\begin{theorem}\label{thm:rational solutions}
Let $M$ be a positive integer,  $\rho_j >0, k_j$ be real constants, and $c_j=\pm1 $, where $j=1,2,\cdots,M$. Assume  $c_j, \rho_j $ and $ k_j$ are given by \eqref{eqn:constraints of multiple zeros}. Let $\mathcal{F}_M(p), p(\kappa)$ be functions defined by \eqref{eqn:definition of F(p)} and \eqref{eqn:definition of p(kappa)} respectively, and $p_0$ be a real root of $\mathcal{F}_M^{\prime}(p)$ of multiplicity $M$. Then, the $M$-component Hirota equation \eqref{eqn: vector-Hirota} admits $\mathcal{N}$-th order rational solutions 
\begin{equation} \label{eqn:N-order rational solutions of MHirota}
 v_{j,\mathcal{N}}(x,t) = \rho_{j}\frac{g_{j,\mathcal{N}}(x,t)}{f_{\mathcal{N}}(x,t)}  e^{\mathrm{i}\left(k_j x-w_j t\right)}, \quad j=1,2,\cdots,M,
\end{equation}
where  $w_j=k_j^3+3k_j\sum_{i=1}^{M}c_i \rho_i^2+3\sum_{i=1}^{M}c_i\rho_i^2 k_i$, $\mathcal{N} = \left(N_1, N_2, \ldots, N_M\right)$ with $N_j~(j=1,2,\cdots,M)$ being nonnegative integers,
and $f_{\mathcal{N}}$ and $g_{j,\mathcal{N}}$ are defined in the same way as in Theorem~\ref{thm:rogue wave solutions}.
\end{theorem}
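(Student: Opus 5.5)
We will prove Theorem~\ref{thm:rational solutions} by the same KP-reduction route that underlies Theorem~\ref{thm:rogue wave solutions}, changing only the treatment of the normalization factor \(p_0+p_0^*\). The plan has three stages: embed the Hirota system into the KP hierarchy, impose the dimension reduction, and then pass to the rational limit.

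\textbf{Step 1: bilinearization and Gram determinants.} Substitute \(v_j=\rho_j(g_j/f)e^{\mathrm{i}(k_jx-w_jt)}\) and use the Galilean shift \(x\mapsto x-3\sum_i c_i\rho_i^2t\). This turns \eqref{eqn: vector-Hirota} into a bilinear system: a KP-type equation linking \(f\) and \(g_j\) through \(D_x\) and \(D_t\), a third-order equation of the form \((D_x^3-D_t+\cdots)g_j\cdot f=0\), and the constraint \(D_x^2f\cdot f=\sum_j c_j\rho_j^2(f^2-g_jg_j^*)\). We then take the multi-component KP Gram determinants \(\tau_{\mathbf n}=\det(m_{ij})\), with entries generated by
\[
m=\frac{1}{p+\bar q}\prod_j\Bigl(-\frac{p-\mathrm{i}k_j}{\bar q+\mathrm{i}k_j}\Bigr)^{n_j}e^{\xi+\bar\xi}.
\]
These satisfy the higher KP bilinear identities in the auxiliary times \(x_1,x_2,x_3\) and \(x_{-1}^{(j)}\).

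\textbf{Step 2: dimension reduction.} To eliminate the extra variables we need \(\sum_j c_j\rho_j^2\partial_{x_{-1}^{(j)}}\) to act as a combination of \(\partial_{x_1}\). This requires \(\mathcal F_M(p)\) to be stationary at \(p_0\). Because \(p_0\) is a root of \(\mathcal F_M'\) of multiplicity \(M\), we parametrize \(p\) by \(p(\kappa)\) from \eqref{eqn:definition of p(kappa)}. The differential operators \(\bigl[f(p)\partial_p\bigr]^i\) then become \(\partial_\kappa^i\), and only the indices \((M+1)(i-1)+I\) survive; this is the \((M+1)\)-jump structure in \eqref{tau-entry of block matrix-theorem}. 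These steps are identical to the rogue-wave case, so we cite them verbatim.

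\textbf{Step 3: the real \(p_0\) case.} This is where the argument departs from Theorem~\ref{thm:rogue wave solutions}. When \(p_0\) is real, \(p_0+p_0^*=2p_0\neq0\), because \(\mathcal F_M'(0)\) cannot vanish under \eqref{eqn:constraints of multiple zeros}. The prefactor \(1/(p+\bar q)\) therefore still expands regularly, and \eqref{sr} defines \(s_r\) exactly as before. What does change is that \(\mathcal F_M(p_0)\) is real. As a result, the background factor \(e^{\xi+\bar\xi}\) carries no growing exponential in \(x\) after the reduction, and the resulting \(\tau\) is polynomial in \((x,t)\), which gives a rational solution. We would verify two further facts. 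First, the complex-conjugation condition \(g_j^*\propto\tau_{-\mathbf e_j}\) persists, since \(p(\kappa)^*\) relates to \(p(\kappa^*)\) and real \(p_0\) is compatible with the symmetry \(\kappa\mapsto\kappa^*\). Second, the constraint \eqref{eqn:constraints of multiple zeros} gives \(c_j\rho_j^2\) real with the correct sign.

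The main obstacle is the conjugation and reality condition in Step 3. We must show that \(f\) is real and positive (so the solution is nonsingular) and that \(g_j^*\) equals \(\tau_{-\mathbf e_j}\), using \(\theta_{ij}^*\) from the expansion of \(\ln\bigl[(p(\kappa)-\mathrm{i}k_j)/(p_0-\mathrm{i}k_j)\bigr]\). First we would check that with real \(p_0\) the relation \(p(\kappa^*)^*=-p(\kappa)\) or \(p(\kappa^*)^*=p(\kappa)\) holds, using the cosine symmetry of the right side of \eqref{eqn:definition of p(kappa)}. From there we would reuse the rogue-wave positivity argument, in which \(f\) is written as a Cauchy–Binet sum of squared moduli.
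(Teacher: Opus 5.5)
Your overall route is how the paper treats this statement: reuse the KP reduction behind Theorem~\ref{thm:rogue wave solutions} and check that a real $p_0$ does not break it. The paper gives no argument beyond the appendix proof of Theorem~\ref{thm:rogue wave solutions}. However, your Step~3, which you present as the point where the argument departs, rests on a false claim.

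\textbf{The rationality mechanism.} At $p=p_0$, $q=p_0^*$ the common factor $e^{\xi+\eta}$ is $\exp[(p_0+p_0^*)x+(p_0^3+(p_0^*)^3)t+\cdots]=\exp[2p_0x+2p_0^3t+\cdots]$. This is a genuine growing exponential precisely because $p_0\neq0$, which you used one sentence earlier. Whether $\mathcal F_M(p_0)$ is real plays no role. It does happen to be real: comparing $p^M$-coefficients in $\Pi(p)[\mathcal F_M(p)-\mathcal F_M(p_0)]=(p-p_0)^{M+1}$ gives $\mathcal F_M(p_0)=(M+1)p_0-\mathrm{i}\sum_jk_j$, and \eqref{eqn:constraints of multiple zeros} gives $\sum_jk_j=M\Im p_0$.

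The actual reason for rationality is the same as in the rogue-wave case. Every entry $\mathcal A_i\mathcal B_jm^{\mathbf n}$ is this exponential times a polynomial in $(x,t)$, and the exponential does not depend on $\mathbf n$. So $\tau_{\mathbf n}$ equals one common exponential times a polynomial, and the exponential cancels in $g_{j,\mathcal N}/f_{\mathcal N}$; equivalently, the bilinear system is invariant under a common gauge $e^{ax+bt}$. The rogue waves of Theorem~\ref{thm:rogue wave solutions} are rational for exactly this reason, so your mechanism cannot be what separates the two theorems. The hypothesis $\Re(p_0)\Im(p_0)\neq0$ there concerns localization ($\Im(p_0^2)\neq0$), not rationality. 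With $p_0$ real, $x_1^+$ depends on $(\bar x,t)$ only through $p_1(\bar x+3p_0^2t)$, which is why only ``rational solutions'' are claimed here.

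\textbf{The conjugation check.} This is also aimed at the wrong thing. The reduction $\tau_{\mathbf n}=\tau_{-\mathbf n}^*$ needs no symmetry of $p(\kappa)$ under $\kappa\mapsto\kappa^*$. It is built in by taking the $q$-side data to be the complex conjugates of the $p$-side data, which is why $\mathbf x^-$ carries $\alpha_i^*,\beta_i^*,-\theta_{ij}^*,a_{i,I}^*$ and $\mathbf s^*$. It therefore holds for every $p_0$ with $p_0+p_0^*\neq0$ and every branch $p(\kappa)$.

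The identity $p(\kappa^*)^*=\pm p(\kappa)$ you plan to verify can hold only with the $+$ sign (set $\kappa=0$), and only for a branch with real $p_1$. It does hold there, because for $\Im p_0=0$ the constraints give $k_{M+1-j}=-k_j$ and $c_{M+1-j}\rho_{M+1-j}^2=c_j\rho_j^2$, so $\mathcal F_M$ is real on $\mathbb R$. But basing the proof on it would cover only part of the admissible branches.

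Positivity of $f$ is unchanged: $|h_0|^2=|p_1|^2/(2p_0)^2>0$, and the Laplace expansion writes $f$ as a sum of squared moduli exactly as before.

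So the correct Step~3 is simply the remark that the appendix argument never uses $\Im p_0\neq0$. It uses only that $p_0$ is a root of multiplicity $M$ with $p_0+p_0^*=2p_0\neq0$, which is automatic since $\Re p_0=\pm\lambda_1\sin\frac{\pi}{M+1}$.
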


\begin{remark}
    It should be noted that some of the rational solutions presented in Theorem~\ref{thm:rational solutions} are related to the partial rogue waves introduced in~\cite{Yang2023Yang}. In particular, it is shown that partial rogue waves of the Sasa-Satsuma equation arise when the associated generalized Okamoto polynomials possess real but not imaginary roots, or imaginary but not real roots. We also note that explicit formulas for the numbers of real and imaginary roots of the generalized Okamoto polynomials were derived in \cite{roffelsen2025real}.
\end{remark}

\begin{remark}
When $K=1$, the $\tau$ functions reduce to single-block determinants, i.e.,
\begin{eqnarray*}
\tau_{\mathbf{n}} &=& \det_{1 \leq i, j \leq N} \left(m_{(M+1)(i-1)+I_1,(M+1) (j-1)+I_1}^{(\mathbf{n},I_1, I_1)}\right), \quad 1 \leq I_1 \leq M,
\end{eqnarray*}
where $m_{i, j}^{(\mathbf{n},I, J)}$ is given by \eqref{eqn: definition of mij}.  In this case, we define the rogue wave solutions in Theorem \ref{thm:rogue wave solutions} to be the $I_1$-th type, and simply denote $\mathbf{x}_{I_1}^{\pm}$ by $\mathbf{x}^{\pm}$, thereby omitting the explicit dependence on $I_1$.
\end{remark}

We note that the explicit expressions of the rogue wave solutions in Theorem~\ref{thm:rogue wave solutions} depend on several auxiliary parameters, such as $p(\kappa)$ and $s_n$. 
To construct rogue waves of a given order, it suffices to determine the derivatives of $p(\kappa)$ at $\kappa=0$ up to a finite order. 
However, since the function $p(\kappa)$ is defined implicitly in \eqref{eqn:definition of p(kappa)}, these derivatives are not readily accessible.
For this reason, we present the following lemma, which provides the values of $p^{(n)}(0), n\in\mathbb{N}$.

\begin{lemma} \label{lemma:expression of pn}
Let \(p_0\in\mathbb{C}\setminus\mathrm{i}\mathbb{R}\) be a root of
multiplicity \(M\) of \(\mathcal{F}_M'(p)\) and let
\(p(\kappa)\) be any local analytic branch of
\eqref{eqn:definition of p(kappa)} satisfying \(p(0)=p_0\).
Then \(p(\kappa)\) admits the following Taylor expansion near
\(\kappa=0\):
\begin{equation}\label{eq:p-series}
p(\kappa)=p_0+\sum_{n=1}^{\infty} \frac{p^{(n)}(0)}{n!} \kappa^n :=p_0+\sum_{n=1}^{\infty}p_n\kappa^n,
\end{equation}
whose coefficients satisfy
\begin{equation}\label{eq:a1-formula}
p_1^{\,M+1}=\dfrac{\mathcal{F}_M(p_0)}{\mathcal{F}_M^{(M+1)}(p_0)}.
\end{equation}
Moreover, for any $n\ge 2$, one has
\begin{equation}\label{eq:pn-recursive-simplified}
p_n=
\begin{cases}
\dfrac{M!p_1}{(M+n)!}
-\dfrac{M!p_1}{\mathcal{F}_M(p_0)}\,\Sigma_n,
& M+n \equiv 0 \pmod{M+1},\\[2ex]
-\dfrac{M!p_1}{\mathcal{F}_M(p_0)}\,\Sigma_n,
& M+n \not\equiv 0 \pmod{M+1},
\end{cases}
\end{equation}
where $\Sigma_n$ depends only on $p_1,\dots,p_{n-1}$ and is given explicitly by
\begin{equation}\label{eq:Sigma-n}
\Sigma_n=
\frac{\mathcal{F}_M^{(M+1)}(p_0)}{(M+1)!}
\!\!\!\sum_{\substack{j_1+\cdots+j_{M+1}=M+n\\ 1\le j_\ell\le n-1}}
\!\!\! p_{j_1}\cdots p_{j_{M+1}}
+\sum_{r=M+2}^{M+n}\frac{\mathcal{F}_M^{(r)}(p_0)}{r!}
\!\!\!\sum_{\substack{j_1+\cdots+j_r=M+n\\ 1\le j_\ell\le n-1}}
\!\!\! p_{j_1}\cdots p_{j_r}.
\end{equation}
\end{lemma}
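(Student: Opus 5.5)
We need to prove Lemma on Taylor coefficients of p(κ), defined implicitly by F_M(p(κ)) = F_M(p_0)/(M+1) Σ_n exp(cos(2nπ/(M+1))κ) cos(sin(2nπ/(M+1))κ).

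The plan is to expand both sides of \eqref{eqn:definition of p(kappa)} in powers of \(\kappa\) and compare coefficients.

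First, the right-hand side. Write \(\omega=e^{2\pi\i/(M+1)}\) and note that
\[
\exp[\cos(2n\pi/(M+1))\kappa]\cos[\sin(2n\pi/(M+1))\kappa]=\Re\, e^{\omega^n\kappa}.
\]
Since \(\kappa\) is real in the formal identity (and the series has real coefficients), the average over \(n\) equals
\[
\frac{1}{M+1}\sum_{n=1}^{M+1}\Re\,e^{\omega^n\kappa}=\sum_{j\ge0}\frac{\kappa^{j(M+1)}}{(j(M+1))!},
\]
because \(\frac1{M+1}\sum_n\omega^{nj}\) is \(1\) or \(0\) according as \(M+1\mid j\) or not. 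The imaginary parts of \(\omega^{nj}\) cancel, since the set \(\{\omega^n\}\) is closed under conjugation. Hence
\[
\mathrm{RHS}=\mathcal F_M(p_0)\Bigl(1+\frac{\kappa^{M+1}}{(M+1)!}+\frac{\kappa^{2(M+1)}}{(2M+2)!}+\cdots\Bigr).
\]

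Second, the left-hand side. Since \(\mathcal F_M'(p_0)=\cdots=\mathcal F_M^{(M)}(p_0)=0\) (a root of \(\mathcal F_M'\) of multiplicity \(M\)) and \(\mathcal F_M^{(M+1)}(p_0)\neq0\), Taylor expansion gives
\[
\mathcal F_M(p(\kappa))=\mathcal F_M(p_0)+\sum_{r\ge M+1}\frac{\mathcal F_M^{(r)}(p_0)}{r!}\Bigl(\sum_{j\ge1}p_j\kappa^j\Bigr)^r.
\]
Existence and analyticity of the branch \(p(\kappa)\) follow from taking an \((M+1)\)-th root: the equation has the form
\[
(p-p_0)^{M+1}u(p)=\mathcal F_M(p_0)\,\kappa^{M+1}v(\kappa),
\]
with \(u(p_0)\neq0\) and \(v(0)=1/(M+1)!\). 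Extracting \((M+1)\)-th roots reduces this to the implicit function theorem. The lowest order \(\kappa^{M+1}\) gives
\[
\frac{\mathcal F_M^{(M+1)}(p_0)}{(M+1)!}\,p_1^{M+1}=\frac{\mathcal F_M(p_0)}{(M+1)!},
\]
which is \eqref{eq:a1-formula}. Note that \(\mathcal F_M(p_0)\neq0\) is needed; presumably it holds for \(p_0\notin\i\R\), but I would simply take it as implicit in the statement.

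Third, for \(n\ge2\) compare the coefficients of \(\kappa^{M+n}\). On the left, the only terms involving \(p_n\) come from \(r=M+1\), with multi-indices having one entry equal to \(n\) and the remaining \(M\) entries equal to \(1\). There are \(M+1\) such placements, contributing
\[
\frac{\mathcal F_M^{(M+1)}(p_0)}{(M+1)!}(M+1)p_1^M p_n=\frac{\mathcal F_M^{(M+1)}(p_0)}{M!}p_1^Mp_n.
\]
For \(r\ge M+2\), every index is at most \(n-1\), since \(r-1\) indices are at least \(1\). The remaining terms with \(r=M+1\) likewise have all indices at most \(n-1\). All of these are collected exactly in \(\Sigma_n\), including the upper limit \(r\le M+n\).

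The right-hand coefficient is \(\mathcal F_M(p_0)/(M+n)!\) when \((M+1)\mid(M+n)\), and \(0\) otherwise. Solving for \(p_n\) and using
\[
\mathcal F_M^{(M+1)}(p_0)\,p_1^M=\frac{\mathcal F_M(p_0)}{p_1}
\]
from step two gives
\[
p_n=\frac{M!\,p_1}{\mathcal F_M(p_0)}\Bigl(\frac{\mathcal F_M(p_0)}{(M+n)!}[\,(M+1)\mid (M+n)\,]-\Sigma_n\Bigr),
\]
which is \eqref{eq:pn-recursive-simplified}.

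The main obstacle is bookkeeping rather than ideas: checking that the index constraints in \(\Sigma_n\) exactly exclude the \(p_n\) terms and nothing else, and justifying the local analytic branch.
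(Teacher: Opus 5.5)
Your proposal is correct and follows the paper's proof essentially step for step: the roots-of-unity filter gives $E(\kappa)=\sum_{j\ge0}\kappa^{j(M+1)}/(j(M+1))!$, then you Taylor-expand $\mathcal F_M$ at $p_0$ and match the coefficients of $\kappa^{M+1}$ and $\kappa^{M+n}$, isolating the $p_n$-terms from $r=M+1$. The condition $\mathcal F_M(p_0)\neq0$ that you flag, which the paper also uses without comment, actually follows from $p_0\notin\i\R$: with $\Pi(p)=\prod_{j=1}^M(p-\i k_j)$ one has $\Pi(p)[\mathcal F_M(p)-\mathcal F_M(p_0)]=(p-p_0)^{M+1}$, and likewise at $-p_0^*$ since $\mathcal F_M(-p^*)=-\mathcal F_M(p)^*$, so $\mathcal F_M(p_0)=0$ would force $(p-p_0)^{M+1}=(p+p_0^*)^{M+1}$, i.e.\ $p_0\in\i\R$.
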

Moreover, it was shown in Appendix~D of \cite{yang2023rogue} that, for the Manakov system and the three-wave resonant interaction system, when the associated algebraic equation admits a double root, the parameters $s_n$ satisfy the constraint
\begin{equation}
    s_n = 0 \quad \text{if } n \not\equiv 0 \pmod{3}.
\end{equation}
Motivated by this result, we show in what follows that a similar conclusion can be established where the associated algebraic equation $\mathcal{F}_M'(p)=0$, defined in \eqref{eqn:definition of F(p)}, has a root of multiplicity $M$. It is worth noting that the same conclusion applies to other integrable equations, such as the vector NLS equation \cite{zhang2022rogue}, where the parameters are defined in the same way as in \eqref{sr}.

\begin{lemma}\label{lemma:expansion of s_r}
Suppose that \(p_0\in\mathbb C\setminus\mathrm{i}\mathbb R\) and
\(p_0,-p_0^*\) are roots of multiplicity \(M\) of
\(\mathcal F_M'(p)\).
Denote the coefficients $s_r$ by
\begin{equation}
 \ln \left[\frac{1}{\kappa}\left(\frac{p_{0}+p_{0}^{*}}{p_{1}}\right)\left(\frac{p(\kappa)-p_{0}}{p(\kappa)+p_{0}^{*}}\right)\right]
 = \sum_{r=1}^{\infty} s_{r} \kappa^{r},
\end{equation}
and define $p(\kappa)$ implicitly through
\begin{equation}
\begin{aligned}
 \mathcal{F}_M(p(\kappa))
 &= \frac{\mathcal{F}_M(p(0))}{M+1}
\sum_{n=1}^{M+1} \exp\left(\exp\left(\dfrac{2 n \pi \mathrm{i}}{M+1}\right) \kappa\right),
\end{aligned}
\end{equation}
where $p(0)=p_0$ and $p_1=p^{\prime}(0)$.  
Then it follows that
\begin{equation}
 s_j = 0, \quad \text{whenever } j \bmod (M+1) \neq 0 .
\end{equation}

\end{lemma}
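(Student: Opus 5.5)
Set \(\omega=\exp\bigl(2\pi\mathrm{i}/(M+1)\bigr)\). The plan rests on a single symmetry: the defining relation for \(p(\kappa)\) is invariant under \(\kappa\mapsto\omega\kappa\). Indeed, the right-hand side
\[
\Phi(\kappa)=\frac{\mathcal F_M(p_0)}{M+1}\sum_{n=1}^{M+1}\exp\bigl(\omega^n\kappa\bigr)
\]
only has its summands permuted when \(\kappa\) is replaced by \(\omega\kappa\). Hence \(\Phi(\omega\kappa)=\Phi(\kappa)\), and in fact \(\Phi(\kappa)=\mathcal F_M(p_0)\sum_{j\ge0}\kappa^{(M+1)j}/((M+1)j)!\) is a power series in \(\kappa^{M+1}\). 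Since \(p_0\) is a root of multiplicity \(M\) of \(\mathcal F_M'\), we have
\[
\mathcal F_M(p)-\mathcal F_M(p_0)=c\,(p-p_0)^{M+1}\bigl(1+O(p-p_0)\bigr),\qquad c=\frac{\mathcal F_M^{(M+1)}(p_0)}{(M+1)!}\neq0.
\]
(The nonvanishing of \(c\) is implicit in Lemma~\ref{lemma:expression of pn}.)

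\textbf{Step 1: root extraction.} I introduce the local coordinate \(w=(p-p_0)\bigl(1+O(p-p_0)\bigr)^{1/(M+1)}\). This is a biholomorphism near \(p_0\) with \(w'(p_0)=1\), and in it the defining relation reads \(c\,w^{M+1}=\Phi(\kappa)-\Phi(0)=\kappa^{M+1}g(\kappa^{M+1})\), where \(g(0)=\mathcal F_M(p_0)/(M+1)!\neq0\). Taking \((M+1)\)-th roots gives
\[
w=\zeta\,\kappa\,h(\kappa^{M+1}),
\]
with \(h\) analytic and \(\zeta\) an \((M+1)\)-th root determined by the chosen branch. Consequently \(p(\kappa)=p_0+\psi(\kappa h(\kappa^{M+1}))\) for a fixed analytic \(\psi\) with \(\psi(0)=0\) and \(\psi'(0)=1\). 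From this, \(p(\omega\kappa)\) is again a local solution branch of the same equation with \(p(\omega\kappa)\to p_0\). Its derivative at \(0\) is \(\omega p_1\) (consistent with \eqref{eq:a1-formula}), so \(p(\omega\kappa)=\tilde p(\kappa)\), where \(\tilde p\) is the branch with \(\tilde p_1=\omega p_1\).

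\textbf{Step 2: the symmetry of \(s_r\).} The key object is \(L(\kappa)=\dfrac{p(\kappa)-p_0}{\kappa\,p_1}\cdot\dfrac{p_0+p_0^*}{p(\kappa)+p_0^*}\). I claim \(L(\omega\kappa)=L(\kappa)\) as a power series; taking logarithms then gives \(s_j\omega^j=s_j\), so \(s_j=0\) unless \((M+1)\mid j\), which is exactly the statement. To prove the claim, write \(p(\kappa)-p_0=\kappa\,A(\kappa)\), where \(A(0)=p_1\). The goal is to show that \(p(\kappa)\) is a function of the single variable \(u=p_1\kappa\) which is independent of the branch choice: \(p(\kappa)=p_0+P(p_1\kappa)\), with \(P\) determined by \(\mathcal F_M\) and by \(\Phi\) expressed in the variable \(\kappa^{M+1}=u^{M+1}/p_1^{M+1}\). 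By \eqref{eq:a1-formula}, \(p_1^{M+1}\) is the same for every branch, so \(P\) is branch-independent. Then \(p(\omega\kappa)\) corresponds to the argument \(u\mapsto\omega u\), and in that case the factor \(p_1\) in \(L\) is replaced by \(\omega p_1\) while \(\kappa\) is replaced by \(\omega\kappa\).

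This is exactly the delicate point I expect to be the main obstacle. The naive symmetry \(p(\omega\kappa)=p(\kappa)\) is false, and only the normalized quantity \(L\), which divides by \(p_1\kappa\), is invariant. The clean way to handle it is to note that \(u=p_1\kappa\) satisfies \(u^{M+1}=p_1^{M+1}\kappa^{M+1}\). Hence \(P(u)\) is determined by an equation involving only \(u^{M+1}\), and in that equation the unique solution with \(P(u)=u+O(u^2)\) is a single power series. Writing \(P(u)=u\,Q(u)\), \(L\) becomes \(Q(u)\,(p_0+p_0^*)/(p_0+p_0^*+uQ(u))\). The remaining task is to verify that \(Q\) depends only on \(u^{M+1}\). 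This follows from uniqueness: \(\omega^{-1}P(\omega u)\) satisfies the same equation, since the equation depends on \(P(u)\) through the relation \(c\,w(P)^{M+1}=\ldots\) and the factor \(\omega^{M+1}=1\) cancels. It also has the same normalization \(u+O(u^2)\). I would need to check carefully at this step that \(\mathcal F_M(p_0+\cdot)\) introduces no non-\((M+1)\)-symmetric terms. If it does, then invariance holds only for \(w\), not for \(P\). In that case I would instead exploit the hypothesis that \(-p_0^*\) is also a root of multiplicity \(M\), and the specific rational form of \(\mathcal F_M\), to show that the cross-ratio-type factor \((p-p_0)/(p+p_0^*)\) is a function of \(w\) alone times a function of \(w^{M+1}\). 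Making this explicit is where the real work lies.
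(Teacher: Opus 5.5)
Your reduction to the invariance $L(\omega\kappa)=L(\kappa)$ is correct. However, the argument you give for it fails, and the step you postpone as ``where the real work lies'' is the whole content of the lemma.

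Your uniqueness claim is that $\omega^{-1}P(\omega u)$ solves the same equation as $P(u)$. That would require $\mathcal F_M(p_0+\omega^{-1}Y)=\mathcal F_M(p_0+Y)$, which holds only for the leading term $cY^{M+1}$. In your coordinate, $w(\omega^{-1}Y)\neq\omega^{-1}w(Y)$ because $w$ is not linear in $p-p_0$. Step~1 only shows that $w(p(\kappa))/\kappa$ is a series in $\kappa^{M+1}$; it says nothing of this kind about $(p(\kappa)-p_0)/\kappa$.

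In fact your intermediate claim contradicts the lemma. Since $\ln L=\ln Q(u)-\ln\bigl(1+uQ(u)/(p_0+p_0^*)\bigr)$, one has $s_1=p_2/p_1-p_1/(p_0+p_0^*)$. If $Q$ depended only on $u^{M+1}$, you would get $s_1=-p_1/(p_0+p_0^*)\neq0$. The lemma instead forces $p_2=p_1^2/(p_0+p_0^*)\neq0$: the asymmetry of $p(\kappa)-p_0$ must cancel exactly against that of $p(\kappa)+p_0^*$. For example, with $M=1$, $\mathcal F_1(p)=p+1/p$ and $p_0=1$, the relation is $p+1/p=2\cosh\kappa$. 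So $p(\kappa)=e^\kappa$, and $L(\kappa)=\frac{2}{\kappa}\tanh\frac{\kappa}{2}$ is even while $(e^\kappa-1)/\kappa$ is not.

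Moreover, no local argument at $p_0$ alone can work. The function $\mathcal F(p)=p^3/3-p$ also has simple critical points at $\pm1$, yet the same formula gives $s_1=-2p_1/3\neq0$. So the specific rational form of $\mathcal F_M$ must enter.

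The missing idea is an exact global identity. Set $\Pi(p)=\prod_{j=1}^{M}(p-\mathrm{i}k_j)$. The polynomial $\Pi(p)\bigl[\mathcal F_M(p)-\mathcal F_M(p_0)\bigr]$ is monic of degree $M+1$ and vanishes to order $M+1$ at $p_0$, hence equals $(p-p_0)^{M+1}$. Likewise $\Pi(p)\bigl[\mathcal F_M(p)-\mathcal F_M(-p_0^*)\bigr]=(p+p_0^*)^{M+1}$. Evaluating both at $p=p(\kappa)$ and dividing, $\Pi$ cancels:
\begin{equation*}
\left(\frac{p(\kappa)-p_0}{p(\kappa)+p_0^*}\right)^{M+1}=\frac{\Phi(\kappa)-\mathcal F_M(p_0)}{\Phi(\kappa)-\mathcal F_M(-p_0^*)}.
\end{equation*}
The numerator is $\kappa^{M+1}$ times a series in $\kappa^{M+1}$. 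The denominator is a series in $\kappa^{M+1}$ with nonzero constant term $2\Re\mathcal F_M(p_0)=2(M+1)\Re p_0$, obtained by comparing the $p^M$ coefficients in the first identity. Hence $L(\kappa)^{M+1}$ is a series in $\kappa^{M+1}$ with constant term $1$, so $(M+1)\ln L(\kappa)$ contains only powers $\kappa^{(M+1)j}$, which is the lemma.

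This is the precise form of your fallback: it says $(p-p_0)/(p+p_0^*)$ equals $w$ times a function of $w^{M+1}$. It also makes the branch bookkeeping of Step~1 unnecessary.
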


\subsection{Rogue wave patterns}
In this subsection, we focus on the rogue wave patterns associated with a single large free parameter, $K=1$ and $M\ge2$ in Theorem~\ref{thm:rogue wave solutions}, which can be characterized by the roots of the generalized Wronskian--Hermite polynomials of jump $M+1$. For \(K=1\) and \(I_1=l\), we set
\begin{equation}
    a_m:=a_{m,l}.
\end{equation} 
Here, to simplify the proof, we consider the rogue wave patterns in the \((\bar{x},t)\)-plane, where $\bar{x}=x-3\left(\sum_{j=1}^{M}c_j \rho_j^2\right)t$.

\begin{theorem} \label{thm:Rogue wave patterns}
Let $M\ge2$. Assume that $|a_m|\gg1$, $m\geq 2$ with $m \not \equiv0 \pmod{M+1}$, and all other parameters are $O(1)$ in the $l$-th type $\mathcal{N}_l$-th order rogue wave solutions ($l =1,2,\cdots,M$)
\begin{equation}\label{RW solution-(1,1) type}
  v_{1, \mathcal{N}_l}(\bar{x}, t), \quad v_{2, \mathcal{N}_l}(\bar{x}, t), \quad \cdots, \quad v_{M, \mathcal{N}_l}(\bar{x}, t),
\end{equation}
of the multi-component Hirota equation in Theorem~\ref{thm:rogue wave solutions}, where $\mathcal{N}_l=N  \mathbf{e}_{l},$
 $N$ is a positive integer and $\mathbf{e}_j$ is the standard unit vector in $\mathbb{R}^M$. We also assume that all non-zero roots of the generalized Wronskian--Hermite polynomials $W_N^{[m,M+1,l]}(z)$ are simple.
Then, we have the following results concerning the asymptotics of the rogue wave solutions.

 \begin{itemize}
   \item [(1)]  In the outer region on the $(\bar{x},t)$ plane, when $\sqrt{\bar{x}^{2}+t^{2}}=O\left(\left|a_m\right|^{1 /m}\right)$, the $\mathcal{N}_l$-th order rogue waves separate into $\Gamma-\Gamma_0$ fundamental rogue waves, where $\Gamma$ and $\Gamma_0$ are given in \eqref{degree Gamma} and \eqref{Gamma_0 formula}. 
These fundamental rogue waves are
\begin{equation}
\hat{v}_n(\bar{x}, t)=\rho_n e^{\mathrm{i}\left(k_n \bar{x}-\bar{w}_n t\right)}  \dfrac{\left[p_1 \left(\bar{x}-\hat{x}_0\right)+3p_0^2p_1 (t-\hat{t}_0)+\theta_{1n}\right]\left[p_1^* (\bar{x}-\hat{x}_0)+ 3(p_0^2)^*p_1^* (t-\hat{t}_0)-\theta_{1n}^*\right]+\left|h_0\right|^2}{\left|p_1 (\bar{x}-\hat{x}_0)+3p_0^2p_1 (t-\hat{t}_0)\right|^2+\left|h_0\right|^2},
\end{equation}
where $n=1,2,\cdots,M, \;\bar{x}=x-3\left(\sum_{j=1}^{M}c_j \rho_j^2\right)t, \; |h_0|^2=\left|p_1\right|^2/\left(p_0+p_0^*\right)^2$ and $\bar{w}_j=k_j^3+3\sum_{i=1}^{M}c_i\rho_i^2 k_i$.
The positions $(\hat{x}_0, \hat{t}_0)$ of these fundamental rogue waves are given by
\begin{equation}
    \hat{x}_0=\frac{1}{\Im(p_0^2)}\Re\!\left[\frac{\mathrm{i}\left(p_0^2\right)^*}{p_1} \left(z_0\,a_m^{\frac{1}{m}}-\Delta_l\right)\right],\quad \hat{t}_0=\frac{1}{3\Im(p_0^2)}\Im\!\left[\dfrac{1}{p_1} \left(z_0\,a_m^{\frac{1}{m}}-\Delta_l\right)\right],\label{eqn:prediction of locations of RW}
\end{equation}
where $z_0$ is any one of the non-zero simple roots of $W_{N}^{[m,M+1,l]}(z)$, $\Delta_l$ is a $z_0$-dependent $O(1)$ quantity (the expression of $\Delta_l$ is provided in \eqref{eqn:def of delta_l}), and $\left(\Re,\Im\right)$ refer to the real and imaginary parts of a complex number, respectively. The approximation error here is $ O\left(|a_{m}|^{-1 /m}\right)$.
In other words, when $|a_m|\gg1 $ and $\left(\bar{x}-\hat{x}_0\right)^2+\left(t-\hat{t}_0\right)^2=O(1)$, we have the following asymptotics
\begin{eqnarray}
&v_{n, \mathcal{N}_l}(\bar{x}, t)=\hat{v}_n\left(\bar{x}, t\right) +O\left(\left|a_m\right|^{-1 / m}\right), \quad n =1,2,\cdots,M.
\end{eqnarray}

   \item [(2)]  In the inner
	region, where $\bar{x}^2+t^2=O(1)$,	 if zero is a root of the generalized Wronskian--Hermite polynomials $W_N^{[m,M+1,l]}(z)$, then $\left[ v_{1, \mathcal{N}_l}(\bar{x}, t), v_{2, \mathcal{N}_l}(\bar{x}, t), \cdots,  v_{M, \mathcal{N}_l }(\bar{x}, t)\right]$ is approximately a lower
	$\widehat{\mathcal{N}}_l$-th order rogue wave $$v_{1, \widehat{\mathcal{N}}_l}(\bar{x},
	t), \quad v_{2, \widehat{\mathcal{N}}_l}(\bar{x}, t), \quad \cdots, \quad
	v_{M, \widehat{\mathcal{N}}_l}(\bar{x}, t)$$
  where  $\widehat{\mathcal{N}}_l=\displaystyle{\sum_{j=1}^{M}N_{j,l}\mathbf{e}_j}$, and $N_{j,l}$ refers to the value of $N_j$ against $l \in \{1,2,\cdots,M\}$ given in Theorem \ref{lemma:root structure of WH polynomials}.  Moreover, the internal parameters $\hat{a}_{j,n}, (j\geq1, n=1,2,\cdots,M)$ in this lower-order rogue wave are related to those in the original rogue wave as follows:
  \begin{equation}
     \hat{a}_{j,n} = a_j, \quad\text{ for }\quad j \not \equiv 0  \pmod{M+1}.
  \end{equation}

 \end{itemize}

\end{theorem}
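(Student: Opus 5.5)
We follow the standard asymptotic scheme of Yang--Yang for rogue wave patterns, adapted to jump \(M+1\). The starting point is Lemma~\ref{lemma:expansion of s_r}, which gives \(s_j=0\) unless \((M+1)\mid j\). Using it, we rewrite the single-block \(\tau\) function \(\tau_{\mathbf n}\) for \(K=1\), \(I_1=l\) through the Cauchy--Binet formula:
\[
\tau_{\mathbf n}=\sum_{0\le\nu_1<\cdots<\nu_N\le \max}\det\bigl[h_0^{\nu_j}S_{(M+1)(i-1)+l-\nu_j}(\mathbf x^+ +\nu_j\mathbf s)\bigr]\det\bigl[h_0^{\nu_j}S_{(M+1)(i-1)+l-\nu_j}(\mathbf x^- +\nu_j\mathbf s^*)\bigr].
\]

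\textbf{Outer region.} With \(|a_m|\gg1\) and all other \(a_j=O(1)\), the Schur polynomials factor as
\[
S_k(\mathbf x^++\nu\mathbf s)=S_k(x_1^++\nu s_1,\,0,\ldots,0,\,a_m,\,0,\ldots)\,[1+O(|a_m|^{-1/m})],
\]
and \(S_k(x,0,\ldots,a_m,\ldots)=a_m^{k/m}p_k^{[m]}(a_m^{-1/m}x)\). Hence the dominant term (\(\nu=(0,1,\ldots,N-1)\)) of \(\tau_{\mathbf n}\) is
\[
|a_m|^{2\Gamma/m}\,\bigl|W_N^{[m,M+1,l]}(\hat z)\bigr|^2,\qquad \hat z=a_m^{-1/m}(p_1\bar x+3p_0^2p_1t+\cdots),
\]
up to constants. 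Here \(\hat z\) is read off from \(x_1^+=\alpha_1\bar x+\beta_1t+\cdots\) with \(\alpha_1=p_1\) and \(\beta_1=3p_0^2p_1\).

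Away from the roots of \(W_N^{[m,M+1,l]}\), the ratios \(g_{j,\mathcal N}/f_{\mathcal N}\to1\), so the solution is the constant background. Near a nonzero root \(z_0\) we expand in \(x_1^+-z_0a_m^{1/m}\) to next order, keeping the \(\nu=(0,\ldots,N-2,N)\) and \((0,\ldots,N-2,N-1)\) contributions. Simplicity of \(z_0\), which is the standing hypothesis supplied by Theorem~\ref{thm:main-intro} in the covered cases, gives \(W_N'(z_0)\ne0\). The leading term then becomes
\[
|a_m|^{(2\Gamma-2)/m}\,\bigl|W_N'(z_0)\bigr|^2\bigl(|x_1^+-z_0a_m^{1/m}+\Delta_l|^2+|h_0|^2\bigr),
\]
where \(\Delta_l\) collects the \(O(1)\) shifts coming from \(\theta_{1n}\), \(a_{1}\), and the subleading terms. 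The \(\theta_{1n}\) shift for \(g_n\) arises from \(\mathbf n=\mathbf e_n\). Matching the resulting ratio to the Peregrine-type form \(\hat v_n\) and solving \(p_1(\bar x-\hat x_0)+3p_0^2p_1(t-\hat t_0)=z_0a_m^{1/m}-\Delta_l\) for real \((\hat x_0,\hat t_0)\) yields \eqref{eqn:prediction of locations of RW}. This is possible because \(\Im(p_0^2)\ne0\), which follows from \(\Re p_0\Im p_0\ne0\).

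Counting gives \(\Gamma-\Gamma_0\) fundamental waves, one for each nonzero root by Theorem~\ref{lemma:root structure of WH polynomials}.

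\textbf{Inner region.} For \(\bar x,t=O(1)\) we retain the highest power of \(a_m\) in the determinant. This is exactly the determinant reduction of \cite{lin2024rogue} recalled in the proof of Theorem~\ref{thm:relation between N_i and n_i}: after row operations, the index set \(\{l+(M+1)j\}\) collapses to the \((M+1)\)-core bead configuration \(\bigsqcup_i\{i+(M+1)j:\,j<N_{i,l}\}\). The remaining determinant is then precisely the multi-block \(\tau\) function of Theorem~\ref{thm:rogue wave solutions} with \(\widehat{\mathcal N}_l=\sum_jN_{j,l}\mathbf e_j\). The parameters \(a_j\) with \((M+1)\nmid j\) pass through unchanged, because the row operations only involve \(a_m\) shifts.

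The main obstacle is the inner-region reduction, specifically verifying that the row-operation bookkeeping of \cite{lin2024rogue} carries over to the \(\mathbf s\)-dependent Cauchy--Binet terms. We would first try to show that, since \(s_j\) vanishes off multiples of \(M+1\), the \(\nu\)-shifts commute with the \(a_m\)-elimination. The outer-region computation is routine by comparison.
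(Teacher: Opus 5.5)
Your proposal follows essentially the same route as the paper. Both use the Laplace (Cauchy--Binet) expansion of $\tau_{\mathbf n}$, with the dominant index choice $\nu_j=j-1$ producing $|W_N^{[m,M+1,l]}|^2$ and the two index choices $(0,\ldots,N-1)$ and $(0,\ldots,N-2,N)$ near a simple nonzero root. In the inner region both use the row--column reduction of \cite{lin2024rogue}; the paper then absorbs the residual shift $\nu_0\mathbf s$ into $\mathbf x^{\pm}$, which by Lemma~\ref{lemma:expansion of s_r} only touches $a_j$ with $(M+1)\mid j$ and so settles the obstacle you flag.

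One bookkeeping correction: $\Delta_l$ must be $\mathbf n$-independent. In the paper it comes solely from the $x_2^+S_{j-2}$ correction, which enters at leading order because $x_2^+=O(|a_m|^{1/m})$ in the outer region, as in \eqref{eqn:def of delta_l}; $\theta_{1n}$ is not absorbed into $\Delta_l$ but stays in the numerator of $\hat v_n$.
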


\begin{remark}\label{remark:rogue wave pattern associated with generalized Hermite poly}
Assume that $N=1$, $|a_m|\gg 1$, and all other parameters are $O(1)$ in the $l$-th type $\mathbf{e}_l$-th order rogue wave solutions $(l=1,2,\cdots,M)$ in Theorem \ref{thm:rogue wave solutions}. The resulting rogue wave patterns are associated with the generalized Hermite polynomials, whose root structures are arranged in an $m$-star configuration in the complex plane. 

Denote \( l=\hat{k}m+j \), where \( \hat{k}\ge 0 \) is an integer, and \( 0\le j\le m-1 \).
Hence, in the outer region, the $\mathbf{e}_l$-th order rogue waves separate into $\hat{k} m$ fundamental rogue waves, which are arranged along a \textit{skewed $m$-star pattern}, with each ray of the star containing $\hat{k}$ rogue waves.
In the inner region, the order $\widehat{\mathcal{N}}_l$ of the lower-order rogue wave is
\begin{equation}
\widehat{\mathcal N}_l
=
\begin{cases}
\mathbf 0, & j=0,\\
\mathbf e_j, & 1\le j\le m-1.
\end{cases}
\end{equation}
In particular, for $m=2$, we discover a new \textit{rogue wave chain} pattern (see the first row of Fig. \ref{fig:rogue wave pattern-10 com Hirota}).

\end{remark}

\begin{remark}\label{remark:circular rogue wave cluster}
Assume that $|a_m|\gg1$, $l+(N-2) (M+1) < m \le l+(N-1)(M+1), \; m \not \equiv0 \pmod{M+1}, \; N\geq3$, and all other parameters are $O(1)$ in the $l$-th type $\mathcal{N}_l$-th order rogue wave solutions ($l =1,2,\cdots,M$). In the outer region on the $(\bar{x},t)$ plane, when $\sqrt{\bar{x}^{2}+t^{2}}=O\left(\left|a_m\right|^{1 /m}\right)$, the $\mathcal{N}_l$-th order rogue waves separate into $m$ fundamental rogue waves, which are arranged along a skewed circular pattern. In the inner region, we have:
\begin{itemize}
    \item When $l+(N-2)(M+1) < m < l+(N-1)(M+1)$, the order $\widehat{\mathcal{N}}_l$ of the lower-order rogue wave in the inner region is given by
    \begin{equation}
       \widehat{\mathcal{N}}_l = \mathbf{e}_{j}+(N-1)\mathbf{e}_{l},
    \end{equation}
    where $j=l+(N-1)(M+1)-m$. Note that $j \not = l$, since $m \not \equiv 0 \pmod{M+1}.$
    
    \item  When \( m = n_N =l+(N-1)(M+1) \), the order $\widehat{\mathcal{N}}_l$ of the lower-order rogue wave in the inner region is given by
\begin{equation}
    \widehat{\mathcal{N}}_l = \begin{cases}
        (N-2) \mathbf{e}_{M}, & l=1,\\
        (N-1) \mathbf{e}_{l-1}, & l \not =1.
    \end{cases}
\end{equation}
Note that the circular rogue wave cluster reported in \cite{kedziora2011circular} corresponds to a special case of the above result. In particular, for an \(N\)-th order ring-type rogue wave of the NLS equation \cite{he2013generating}, the rogue wave located at the origin is of order \(N-2\).
\end{itemize}
For $N=2$, there exists at least one value of $m$, namely $m=l+M+1$, for which the rogue wave pattern forms a circular rogue wave cluster.
\end{remark}

\begin{remark}
By applying Theorem \ref{thm:WH-mcore-factorization} to Theorem \ref{thm:Rogue wave patterns}, one finds that the $\mathcal{N}_l$-th order rogue wave solution of the $M$-component Hirota equation contains no lower-order rogue wave at the origin if the partition
\begin{equation}\label{eqn: partition of remark of RW patterns}
    \left((N-1)M+l,\; (N-2)M+l,\; \ldots,\; M+l,\; l\right)
\end{equation}
is $m$-divisible, that is, its $m$-core is empty.

\end{remark}

\begin{remark}
Although Theorem~\ref{thm:Rogue wave patterns} is formulated for \(M\ge 2\), the corresponding result for $M=1$ can be derived by a similar argument in \cite{yang2021rogue,yang2021universal}. In particular, the predicted locations of the fundamental rogue waves in the outer region are given by \eqref{eqn:prediction of locations of RW} with \(\Delta_l=0\), while the rest of the conclusions remain the same as  those in Theorem~\ref{thm:Rogue wave patterns}. Therefore, the proof for the case \(M=1\) is omitted.
\end{remark}

\subsection{Comparison between predicted and true rogue wave patterns}\label{sec:Comparison between predicted and true rogue wave patterns}
In this subsection, we present several examples of Theorem~\ref{thm:Rogue wave patterns} to compare our analytical predictions of rogue wave patterns with the corresponding rogue wave solutions. In particular, we first consider the rogue wave patterns associated with the generalized Hermite polynomials described in Remark~\ref{remark:rogue wave pattern associated with generalized Hermite poly}, which correspond to the case where the order of the determinant of the $\tau$-function shown in \eqref{tau-block matrix-theorem} is equal to $1$. Then, the rogue wave patterns described in Remark~\ref{remark:circular rogue wave cluster} are presented, which correspond to the circular rogue wave clusters. 
For all density plots of the rogue wave solutions in this paper, unless otherwise specified, the color bars are fixed to the range $[0,3]$ in order to highlight the distribution and overall pattern of the rogue waves more clearly.

\subsubsection{Rogue wave patterns associated with the generalized Hermite polynomials} \label{sec:rogue wave pattern-generalized Hermite poly}
To illustrate the rogue wave patterns associated with the generalized Hermite polynomials given in Theorem~\ref{thm:Rogue wave patterns} and Remark \ref{remark:rogue wave pattern associated with generalized Hermite poly}, we choose the $\mathbf{e}_l$-th order rogue wave of  the $10$-component Hirota equation to further illustrate the skewed $m$-star shaped rogue wave pattern, which is presented in Fig.~\ref{fig:rogue wave pattern-10 com Hirota}. In this case, we choose the parameters $c_i=-1$ $(i=1,2,\dots,10)$, together with
\begin{equation}
    \begin{aligned}
\{k_1,k_2,k_3,k_4,k_5,k_6,k_7,k_8,k_9,k_{10}\}
\approx
\{&0.5, -0.0211086, -0.21537, -0.33083, -0.418986, \\&-0.5, -0.588156, -0.703616, -0.897877, -1.41899\},
\end{aligned}
\end{equation}
and
\begin{equation}
    \begin{aligned}
\{\rho_1^2,\rho_2^2,\rho_3^2,\rho_4^2,\rho_5^2,\rho_6^2,\rho_7^2,\rho_8^2,\rho_9^2,\rho_{10}^2\}
\approx
\{
&1, 0.271554, 0.138969, 0.0959274, 0.0810141, 0.0810141,\\& 0.0959274, 
0.138969, 0.271554, 1
\}.
    \end{aligned}
\end{equation}
Furthermore, we take the root of multiplicity ten of the equation $\mathcal{F}_{10}^{\prime}(p)=0$, that is,
\begin{equation}
    p_0 = \sin\left(\frac{\pi}{11}\right)+\mathrm{i} \left(\frac{1}{2} - \cos \left(\frac{\pi}{11}\right)\right) ,
\end{equation}
and choose $p_1\approx 0.10786 - 0.00144\mathrm{i}$.

In Fig.~\ref{fig:rogue wave pattern-10 com Hirota}, the rogue waves exhibit a skewed $m$-star configuration, with peaks distributed along $m$ branches radiating from the central region. For example, the rogue waves in the first row of Fig.~\ref{fig:rogue wave pattern-10 com Hirota} are arranged on a curve, whereas those in the second and third rows are distributed along three and four branches radiating from the central region, respectively. Moreover, there is very good agreement between the predicted rogue wave locations and the corresponding true rogue wave positions.

\begin{figure}[H]
    \centering
    \includegraphics[width=0.9\linewidth]{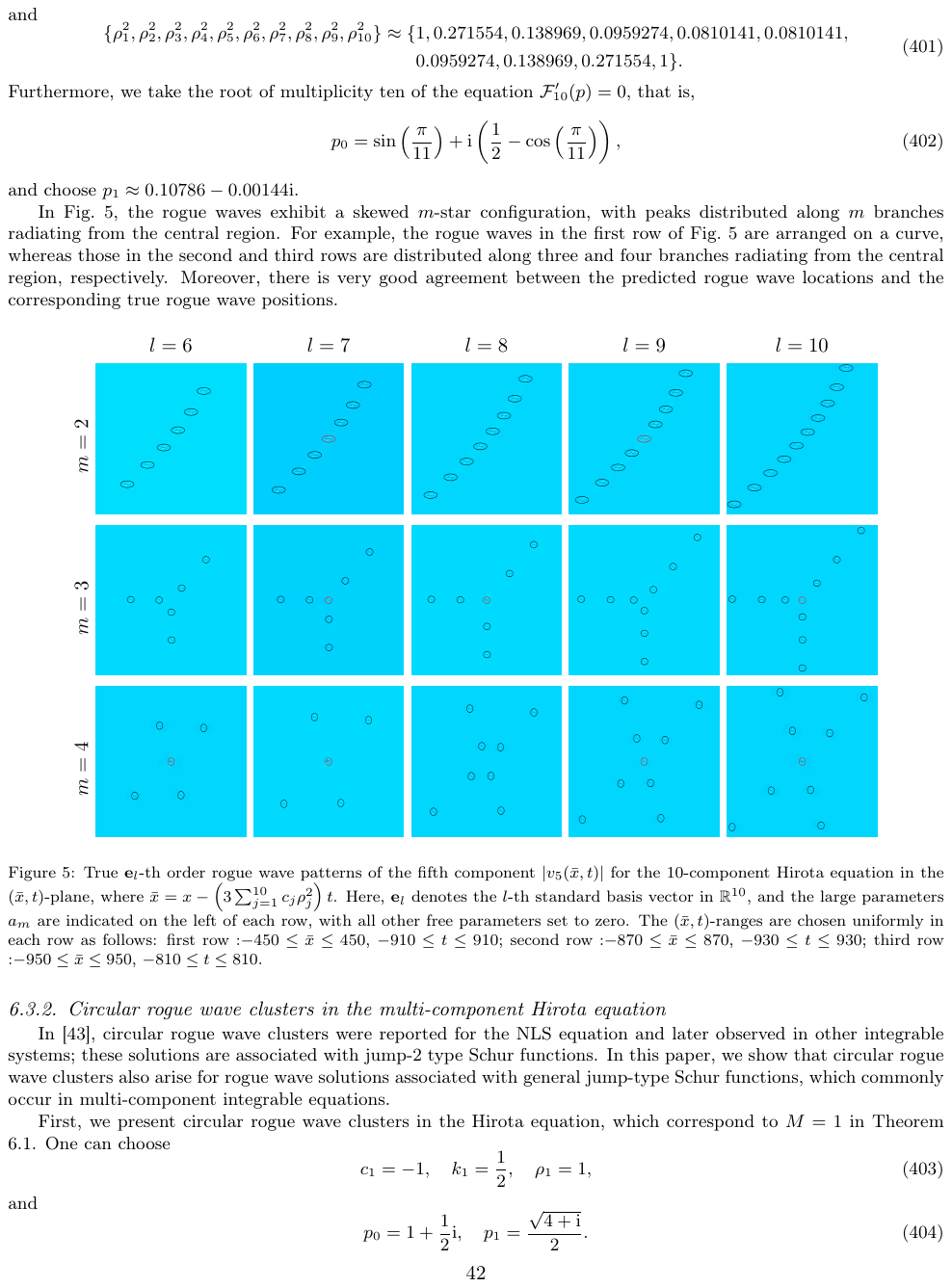}
    \caption{True $\mathbf{e}_l$-th order rogue wave patterns of the fifth component $|v_5(\bar{x},t)|$
for the $10$-component Hirota equation in the $(\bar{x},t)$-plane,
where $\bar{x}=x-\left(3\sum_{j=1}^{10} c_j\rho_j^2\right)t$.
Here, $\mathbf{e}_l$ denotes the $l$-th standard basis vector in $\mathbb{R}^{10}$,
and the large parameters $a_m$ are indicated on the left of each row, with all other free parameters set to zero. The $(\bar{x},t)$-ranges are chosen uniformly in each row as follows: first row :$-450\leq \bar{x}\leq 450$, $-910\leq t\leq 910$; second row :$-870\leq \bar{x}\leq 870$, $-930\leq t\leq 930$; third row :$-950\leq \bar{x}\leq 950$, $-810\leq t\leq 810$.
}
\label{fig:rogue wave pattern-10 com Hirota}
\end{figure}

\subsubsection{Circular rogue wave clusters in the multi-component Hirota equation}\label{sec:Circular rogue wave clusters}
In \cite{kedziora2011circular}, circular rogue wave clusters were reported for the NLS equation and later observed in other integrable systems; these solutions are associated with jump-$2$ type Schur functions. In this paper, we show that circular rogue wave clusters also arise for rogue wave solutions associated with general jump-type Schur functions, which commonly occur in multi-component integrable equations.

First, we present circular rogue wave clusters in the Hirota equation, which correspond to $M=1$ in Theorem \ref{thm:rogue wave solutions}. One can choose
\begin{equation}
    c_1=-1, \quad k_1=\frac{1}{2}, \quad \rho_1=1,
\end{equation}
and
\begin{equation}
    p_0=1 + \frac{1}{2}\mathrm{i}, \quad p_1=\frac{\sqrt{4+\mathrm{i}}}{2}.
\end{equation}
The third- to sixth-order rogue wave solutions are plotted in Fig.~\ref{fig:circular rogue waves cluster in the hirota equation}. For each order $N$, we choose $a_{2N-1}$ to be a sufficiently large number while setting all other $a_m$ to zero. It can be seen that all figures consist of many fundamental rogue waves arranged in a ring, with a possible lower-order rogue wave located at the origin. In this case, the lower-order rogue wave is expected to be of order $N-2$.

\begin{figure}[H]
    \centering
    \includegraphics[width=0.9\linewidth]{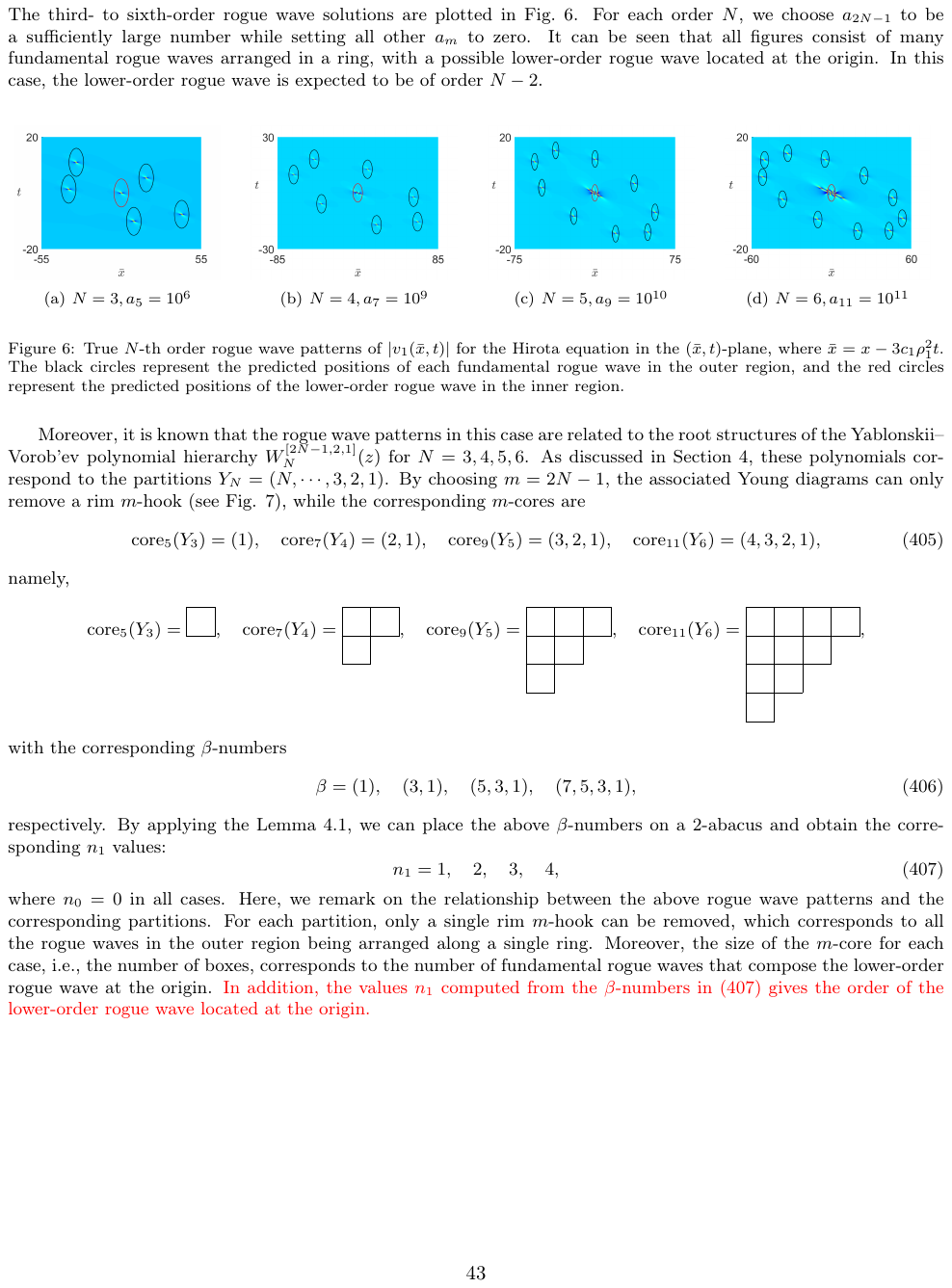}
    \caption{True $N$-th order rogue wave patterns of $|v_1(\bar{x},t)|$
for the Hirota equation in the $(\bar{x},t)$-plane,
where $\bar{x}=x - 3 c_1\rho_1^2 t$. The black circles represent the predicted positions of each fundamental rogue wave in the outer region, and the red circles represent the predicted positions of the lower-order rogue wave in the inner region.}
\label{fig:circular rogue waves cluster in the hirota equation}
\end{figure}

Moreover, it is known that the rogue wave patterns in this case are related to the root structures of the Yablonskii--Vorob’ev polynomial hierarchy $W_N^{[2N-1,2,1]}(z)$ for $N=3,4,5,6$. As discussed in Section~\ref{sec:WH with partition}, these polynomials correspond to the partitions $Y_N=(N,\cdots,3,2,1)$. By choosing $m=2N-1$, the associated Young diagrams can only remove a rim $m$-hook (see Fig. \ref{fig:partition in the hirota equation}), while the corresponding $m$-cores are
\begin{equation}
    \mathrm{core}_5(Y_3)=(1), \quad\mathrm{core}_7(Y_4)=(2,1), \quad \mathrm{core}_9(Y_5)=(3,2,1),\quad \mathrm{core}_{11}(Y_6)=(4,3,2,1),
\end{equation}
namely,
\begin{equation*}
    \mathrm{core}_5(Y_3)=\ydiagram{1}, \quad\mathrm{core}_7(Y_4)=\ydiagram{2,1}, \quad \mathrm{core}_9(Y_5)=\ydiagram{3,2,1},\quad \mathrm{core}_{11}(Y_6)=\ydiagram{4,3,2,1},
\end{equation*}
with the corresponding $\beta$-numbers
\begin{equation}
   \beta=  (1), \quad (3,1), \quad (5,3,1),\quad (7,5,3,1),
\end{equation}
respectively. By applying Lemma \ref{lem:size-of-core-by-runner-beads}, we can place the above $\beta$-numbers on a $2$-abacus and obtain the corresponding $b_1$ values:
\begin{equation}\label{eqn:1-component, n_i values for special cases}
    b_1= 1, \quad 2, \quad 3, \quad 4,
\end{equation}
where $b_0=0$ in all cases.
Here, we remark on the relationship between the above rogue wave patterns and the corresponding partitions. For each partition, only a single rim $m$-hook can be removed, which corresponds to all the rogue waves in the outer region being arranged along a single ring. Moreover, the size of the $m$-core for each case, i.e., the number of boxes, corresponds to the number of fundamental rogue waves that compose the lower-order rogue wave at the origin. In addition, the values $b_1$ computed from the $\beta$-numbers in \eqref{eqn:1-component, n_i values for special cases} give the order of the lower-order rogue wave located at the origin.

\begin{figure}[H]
    \centering
    \includegraphics[width=0.9\linewidth]{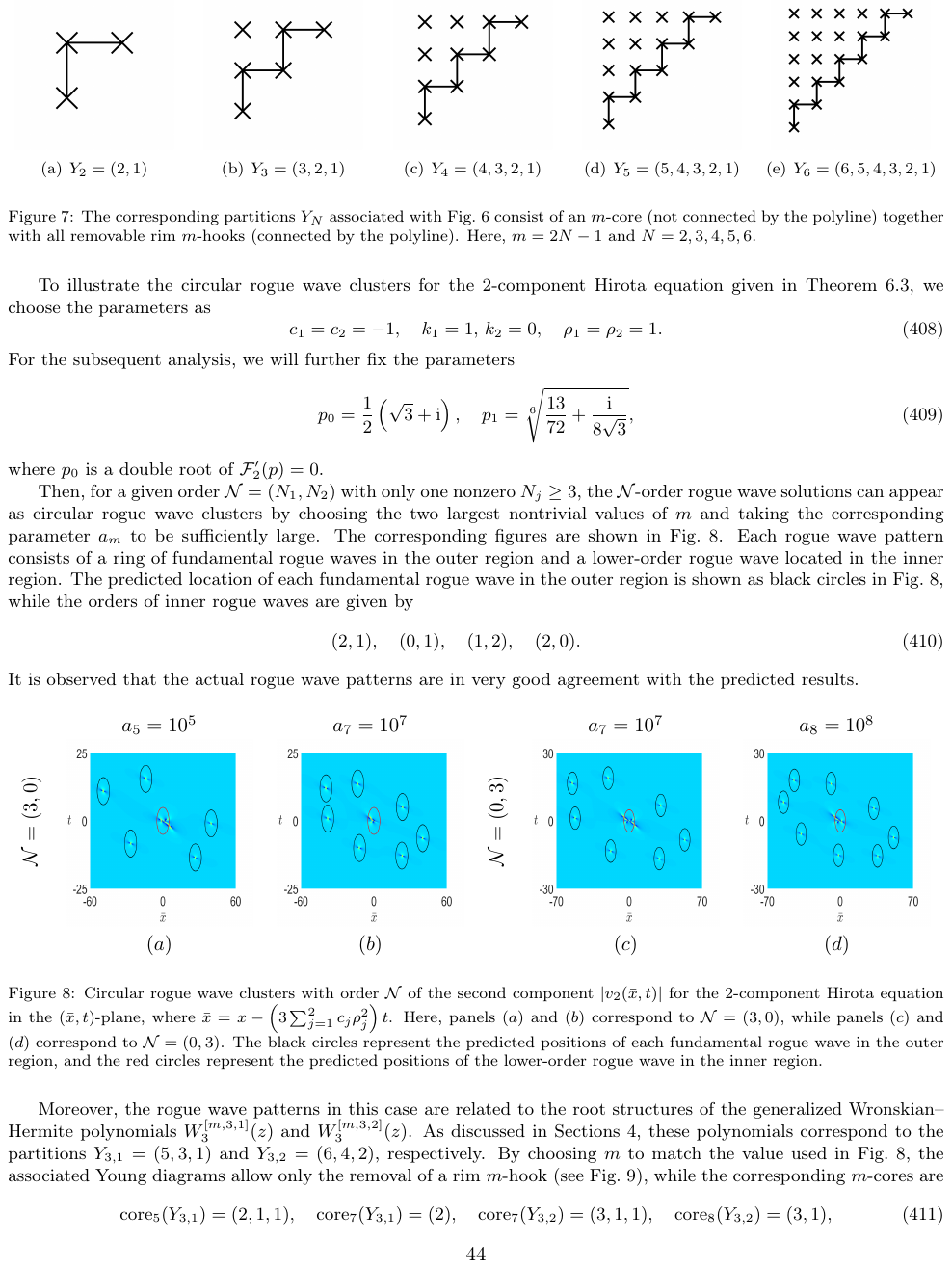}
\caption{The corresponding partitions $Y_N$ associated with Fig.~\ref{fig:circular rogue waves cluster in the hirota equation} consist of an $m$-core (not connected by the polyline) together with all removable rim $m$-hooks (connected by the polyline). Here, $m=2N-1$ and $N=2,3,4,5,6$.}
    \label{fig:partition in the hirota equation}
\end{figure}

To illustrate the circular rogue wave clusters for the $2$-component Hirota equation  given in Theorem~\ref{thm:Rogue wave patterns}, we choose the parameters as
\begin{equation}
  c_1=c_2=-1, \quad  k_1=1, \, k_2=0, \quad \rho_1=\rho_2=1.
\end{equation}
For the subsequent analysis, we will further fix the parameters
\begin{equation}
    p_0=\frac{1}{2} \left(\sqrt{3}+\mathrm{i}\right), \quad p_1=\sqrt[6]{\frac{13}{72}+\frac{\mathrm{i}}{8 \sqrt{3}}},
\end{equation}
where $p_0$ is a double root of $\mathcal{F}_2^{\prime}(p)=0$. 

Then, for a given order $\mathcal{N}=(N_1,N_2)$ with only one nonzero $N_j \geq 3$, the $\mathcal{N}$-order rogue wave solutions can appear as circular rogue wave clusters by choosing the two largest nontrivial values of $m$ and taking the corresponding parameter $a_m$ to be sufficiently large. The corresponding figures are shown in Fig.~\ref{fig:circular RW cluster of 2-com with N=3}. Each rogue wave pattern consists of a ring of fundamental rogue waves in the outer region and a lower-order rogue wave located in the inner region. The predicted location of each fundamental rogue wave in the outer region is shown as black circles in Fig.~\ref{fig:circular RW cluster of 2-com with N=3}, while the orders of inner rogue waves are given by 
\begin{eqnarray}\label{eqn:2-component, N_i values for special cases}
    (2,1),\quad (0,1), \quad (1,2), \quad (2,0).
\end{eqnarray}
It is observed that the actual rogue wave patterns are in very good agreement with the predicted results.

\begin{figure}[H]
    \centering
    \includegraphics[width=0.9\linewidth]{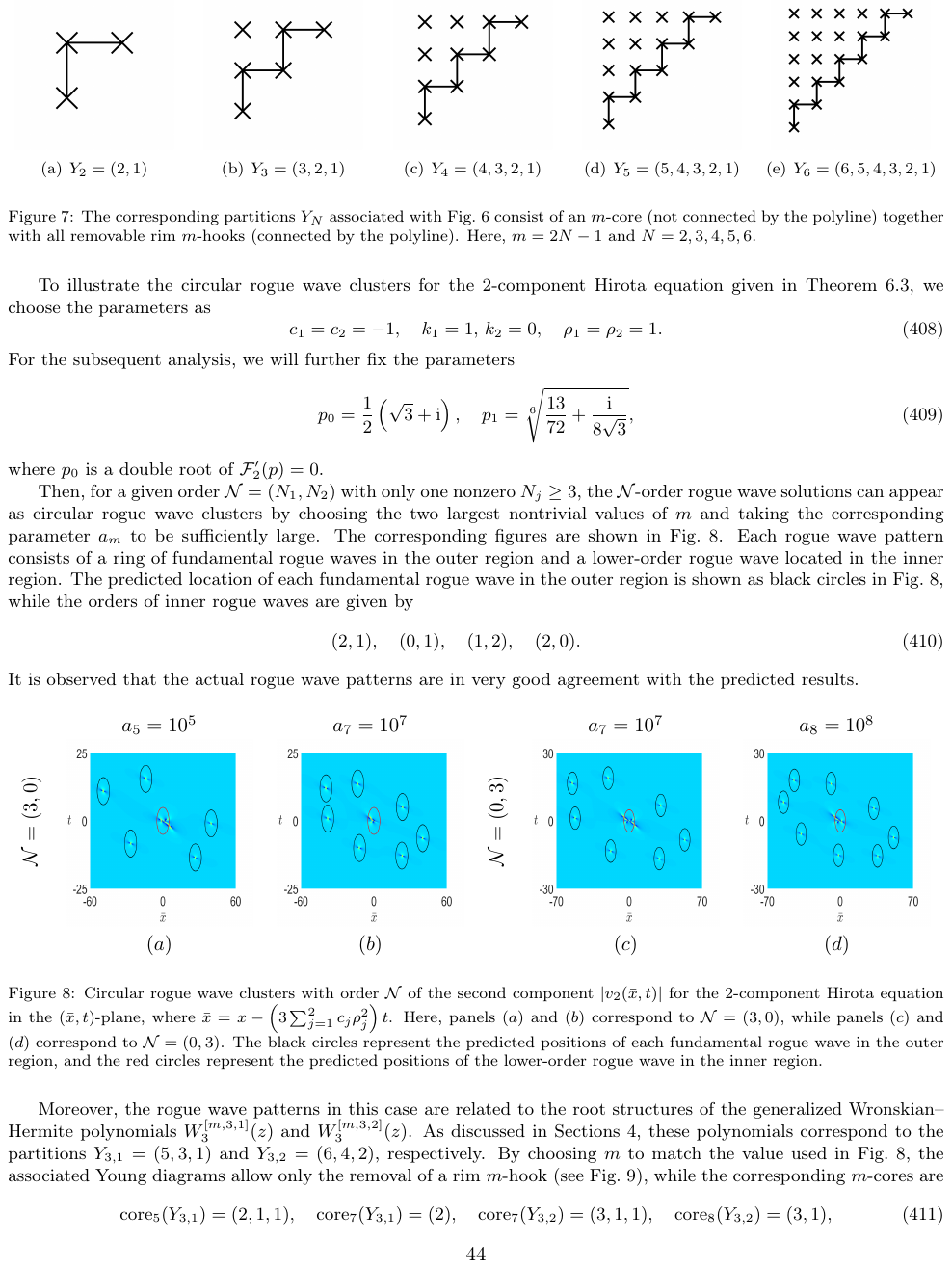}
    \caption{Circular rogue wave clusters with order
$\mathcal{N}$ of the second component $|v_2(\bar{x},t)|$ for the $2$-component Hirota equation in the $(\bar{x},t)$-plane, where $\bar{x}=x-\left(3\sum_{j=1}^2 c_j \rho_j^2\right)t$. Here, panels $(a)$ and $(b)$ correspond to \(\mathcal{N}=(3,0)\), while panels $(c)$ and $(d)$ correspond to \(\mathcal{N}=(0,3)\). The black circles represent the predicted positions of each fundamental rogue wave in the outer region, and the red circles represent the predicted positions of the lower-order rogue wave in the inner region.}
\label{fig:circular RW cluster of 2-com with N=3}
\end{figure}

Moreover, the rogue wave patterns in this case are related to the root structures of the generalized Wronskian--Hermite polynomials $W_3^{[m,3,1]}(z)$ and $W_3^{[m,3,2]}(z)$. As discussed in Sections \ref{sec:WH with partition}, these polynomials correspond to the partitions $Y_{3,1}=(5,3,1)$ and $Y_{3,2}=(6,4,2)$, respectively. By choosing $m$ to match the value used in Fig.~\ref{fig:circular RW cluster of 2-com with N=3}, the associated Young diagrams allow only the removal of a rim $m$-hook (see Fig.~\ref{fig: partitions in the 2-component hirota equation}), while the corresponding $m$-cores are
\begin{equation}
    \mathrm{core}_5(Y_{3,1})=(2,1,1), \quad \mathrm{core}_7(Y_{3,1})=(2), \quad\mathrm{core}_7(Y_{3,2})=(3,1,1), \quad \mathrm{core}_8(Y_{3,2})=(3,1),
\end{equation}
with the corresponding $\beta$-numbers
\begin{equation}
   \beta= (4,2,1), \quad (2), \quad (5,2,1), \quad (4,1),
\end{equation}
respectively. By applying Lemma \ref{lem:size-of-core-by-runner-beads}, we can place the above $\beta$-numbers on a $3$-abacus and obtain the corresponding $b_i$ values:
\begin{equation}
    (b_1,b_2)=(2,1),\quad (0,1), \quad (1,2), \quad (2,0),
\end{equation}
where $b_0=0$ in all cases. It is worth noting that these $(b_1,b_2)$ values coincide with the orders of the rogue waves in the inner region, as given in \eqref{eqn:2-component, N_i values for special cases}.

\begin{figure}[H]
    \centering
    \includegraphics[width=0.9\linewidth]{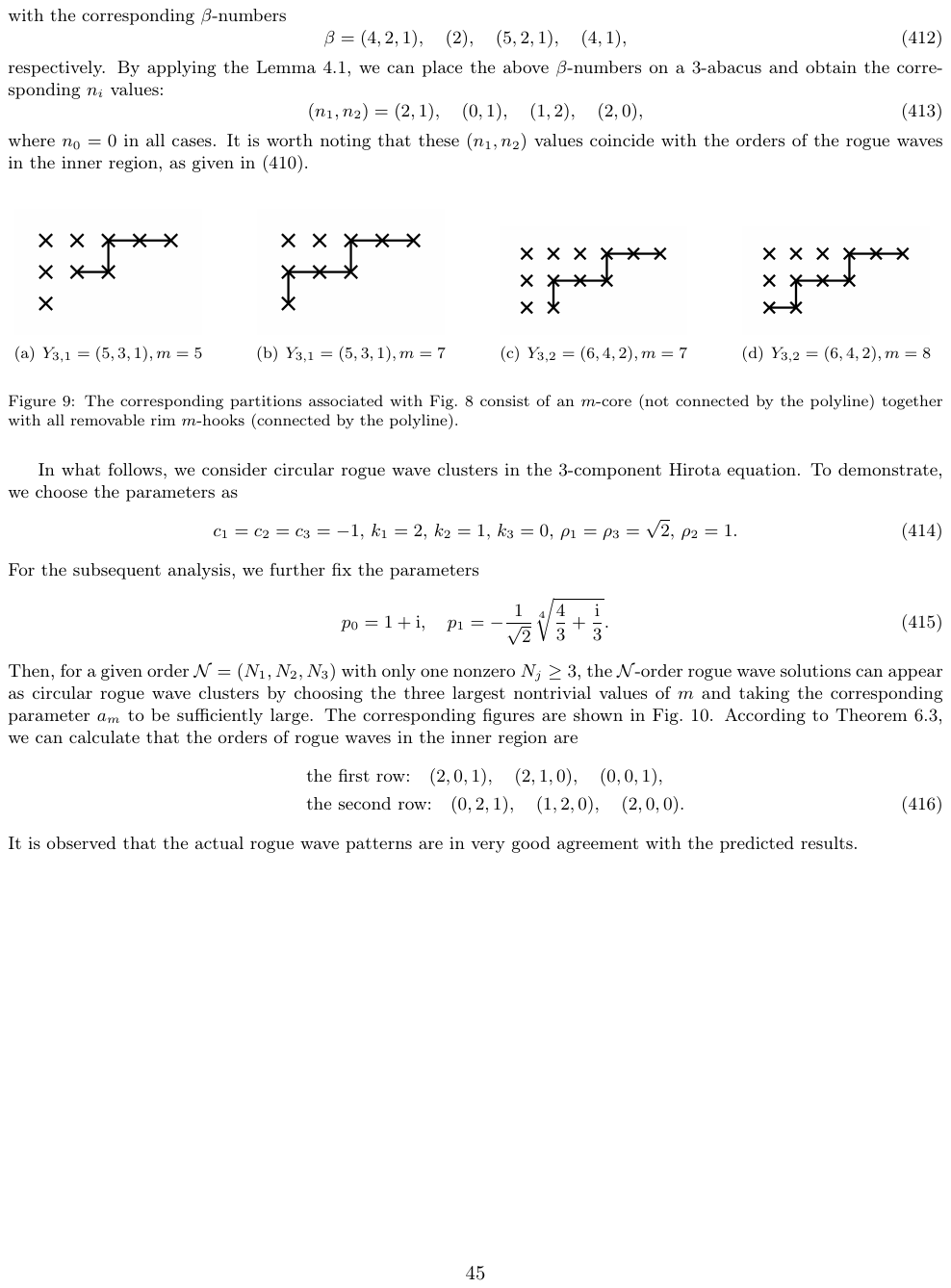}
\caption{The corresponding partitions associated with Fig.~\ref{fig:circular RW cluster of 2-com with N=3} consist of an $m$-core (not connected by the polyline) together with all removable rim $m$-hooks (connected by the polyline).}
    \label{fig: partitions in the 2-component hirota equation}
\end{figure}

In what follows, we consider circular rogue wave clusters in the $3$-component Hirota equation. To demonstrate, we choose the parameters as
\begin{equation}
  c_1=c_2=c_3=-1, \,  k_1=2, \, k_2=1, \,  k_3=0, \, \rho_1=\rho_3=\sqrt{2}, \, \rho_2=1.
\end{equation}
For the subsequent analysis, we further fix the parameters
\begin{equation}
    p_0=1+\mathrm{i} , \quad p_1=-\frac{1}{\sqrt{2}}\sqrt[4]{\frac{4}{3}+\frac{\mathrm{i}}{3}}.
\end{equation}
Then, for a given order $\mathcal{N}=(N_1,N_2,N_3)$ with only one nonzero $N_j \geq 3$, the $\mathcal{N}$-order rogue wave solutions can appear as circular rogue wave clusters by choosing the three largest nontrivial values of $m$ and taking the corresponding parameter $a_m$ to be sufficiently large. The corresponding figures are shown in Fig.~\ref{fig:circular RW cluster of 3-com with N=3}.  According to Theorem \ref{thm:Rogue wave patterns}, we can calculate that the orders of rogue waves in the inner region are
\begin{eqnarray}\label{eqn:3-component, N_i values for special cases} 
    &&\text{the first row:}  \quad (2,0,1), \quad (2,1,0),\quad (0,0,1), \nonumber\\
    &&\text{the second row:} \quad (0,2,1), \quad (1,2,0), \quad (2,0,0).
\end{eqnarray}
It is observed that the actual rogue wave patterns are in very good agreement with the predicted results.

\begin{figure}[H]
    \centering
    \includegraphics[width=0.9\linewidth]{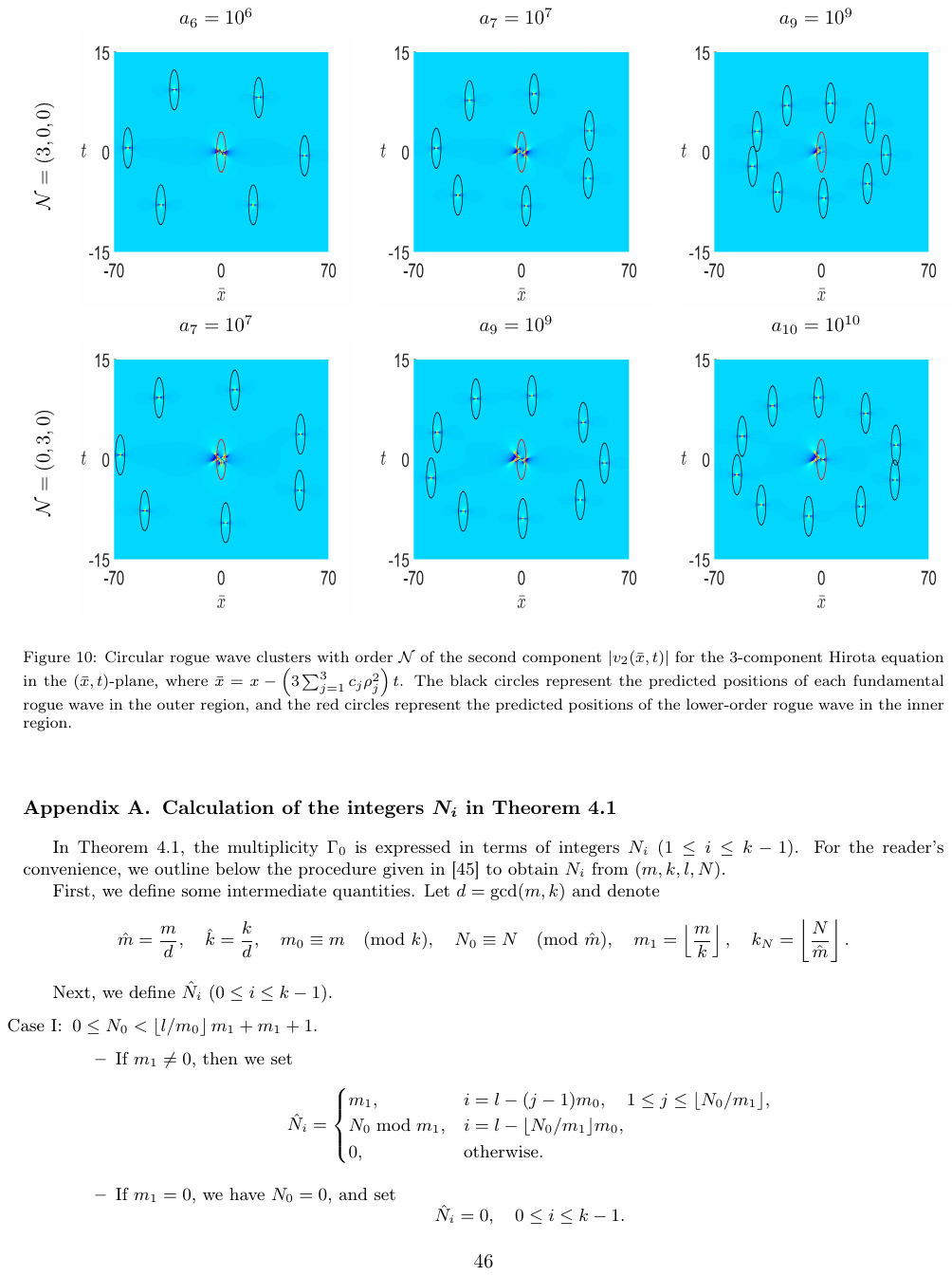}
    \caption{Circular rogue wave clusters with order
$\mathcal{N}$ of the second component $|v_2(\bar{x},t)|$ for the $3$-component Hirota equation in the $(\bar{x},t)$-plane, where $\bar{x}=x-\left(3\sum_{j=1}^3 c_j \rho_j^2\right)t$. The black circles represent the predicted positions of each fundamental rogue wave in the outer region, and the red circles represent the predicted positions of the lower-order rogue wave in the inner region.}
\label{fig:circular RW cluster of 3-com with N=3}
\end{figure}

\section*{Acknowledgments}
The authors would like to thank H.K. Yang for helpful discussions. 
C.F. Wu was supported by the National Natural Science Foundation of China (Grant No. 12471077).

\appendix

\section{Calculations of \texorpdfstring{\(N_i\)}{N\_i} in Theorem~\ref{lemma:root structure of WH polynomials}}
\label{app:Ni-procedure}
In Theorem~\ref{lemma:root structure of WH polynomials}, the multiplicity $\Gamma_0$ is expressed in terms of integers $N_i$ $(1\le i\le k-1)$.
For the reader's convenience, we outline below the procedure given in \cite{lin2024rogue} to obtain $N_i$ from $(m,k,l,N)$.

First, we define some intermediate quantities.
Let $d=\gcd(m,k)$ and denote
\begin{equation*}
    m_0\equiv m\pmod{k},\quad 1\leq m_0\leq k-1,
    \quad
    N_0\equiv N\pmod{\widehat m},\quad 0\leq N_0\leq\widehat m-1,
\end{equation*}
and
\begin{equation*}
   \widehat m=\frac{m}{d},
    \quad
    \widehat k=\frac{k}{d},
    \quad  m_1=\left\lfloor\frac{m}{k}\right\rfloor,
    \quad
    k_N=\left\lfloor\frac{N}{\widehat m}\right\rfloor.
\end{equation*}

Next, we define $\hat{N}_i$ $(0\leq i\leq k-1)$.

\begin{itemize}
    \item [Case I:]  \(0 \leq N_0 < \left\lfloor l/m_0 \right\rfloor m_1 + m_1 + 1\).
\begin{itemize}
    \item If $m_1\neq 0$,
    then we set
\[
\hat{N}_i = 
\begin{cases}
m_1, & i = l - (j-1)m_0, \quad 1 \leq j \leq \lfloor N_0/m_1 \rfloor, \\
N_0 \bmod m_1, & i = l - \lfloor N_0/m_1 \rfloor m_0, \\
0, & \text{otherwise}.
\end{cases}
\]
    \item If
    $m_1=0$,
     we have $N_0=0$, and set
\begin{equation*}
    \hat{N}_i=0,\quad 0\leq i\leq k-1.
\end{equation*}
\end{itemize}

\item[Case II:] $N_0\geq \lfloor l/m_0\rfloor m_1+m_1+1$.\\   
Let \(r\) be the integer satisfying
\[
\left\lfloor \frac{l + (r-1)k}{m_0} \right\rfloor m_1 + m_1 + r \leq N_0 < \left\lfloor \frac{l + rk}{m_0} \right\rfloor m_1 + m_1 + r + 1,\quad 1\leq r\leq m_0/d.
\]

\begin{itemize}
    \item If $m_1\neq 0$, then we set 
\[
\hat{N}_i=
\begin{cases}
m_1 + 1, & \quad i =  (l + (j-1)k) \bmod{m_0},\ 1 \leq j \leq r,\\
m_1, &  \quad 
i=
\begin{cases}
l - (j-1)m_0, & \ 1 \leq j \leq \left\lfloor \frac{l}{m_0} \right\rfloor,\\
l + k - (j-1)m_0, & \left\lfloor \frac{l}{m_0} \right\rfloor + 1 < j \leq \left\lfloor \frac{l + k}{m_0} \right\rfloor,\\
...\\
l + rk - (j-1)m_0, &\ \left\lfloor \frac{l+(r-1)k}{m_0} \right\rfloor + 1 < j \leq \left\lfloor \frac{N_0-r}{m_1} \right\rfloor,
\end{cases}\\
(N_0 - r) \bmod m_1, & \quad i = l + rk - \left\lfloor \frac{N_0 - r}{m_1} \right\rfloor m_0,\\
0, & \quad \text{otherwise}.
\end{cases}
\]

    \item If $m_1=0$, then we have $r=N_0$, $m_0=m$, and set 
\begin{equation*}
    \hat{N}_i=
\begin{cases}
    1, & \quad i= (l+(j-1)k) \bmod{ m},\quad 1\leq j\leq N_0,\\
    0, & \quad \text{otherwise}.
\end{cases}
\end{equation*}

\end{itemize}

\end{itemize}
Then we can use \(\hat{N}_i\) to determine \(N_i\) by considering two cases. 
\begin{itemize}
    \item[Case $1$:]\(d=\gcd(m,k)=1\). \\
\begin{itemize}
     \item[(i)] \quad If \(\hat{N}_0 = 0\), then\\
     \[
     N_i = \hat{N}_i \text{ for } 1 \leq i \leq k-1.
     \] \\
     \item[(ii)] \quad If \(\hat{N}_i \neq 0 \, (0 \leq i \leq j - 1)\) and \(\hat{N}_j = 0 \; (1 \leq j < k)\), then
\[
N_i = 
\begin{cases}
\hat{N}_{i+j}, & 1 \leq i \leq k - j - 1, \\
\hat{N}_{i+j-k} - 1, & k - j \leq i < k.
\end{cases}
\]\\
     \item[(iii)] \quad If all \(\hat{N}_i \neq 0 \,(0 \leq i < k)\), let \(\bar{r} = \min\{\hat{N}_0,\hat{N}_1,\ldots,\hat{N}_{k-1}\}\) and define \(\bar{N}_i = \hat{N}_i - \bar{r}\).\\
    \begin{itemize}
         \item[(a)] If \(\bar{N}_0=0\), then\\
         \[
         N_i = \bar{N}_i \text{ for } 1 \leq i \leq k-1;
         \]
         \item[(b)] If \(\bar{N}_i\) satisfies condition (ii), then
     \[
N_i = 
\begin{cases}
\bar{N}_{i+j}, & 1 \leq i \leq k - j - 1, \\
\bar{N}_{i+j-k} - 1, & k - j \leq i < k.
\end{cases}
\]
\end{itemize}
\end{itemize}

\item[{Case $2$:}] 
{
\(d=\gcd(m,k) \neq 1\). \\
Define
\[
l_1=l\bmod d,\qquad 0\leq l_1\leq d-1,
\]
and
\[
\tilde{N}_{l_j}^{(1)} = 
\begin{cases}
\lfloor k_N \widehat{m} / \widehat{k} \rfloor + 1, & 1 \leq j \leq k_N \widehat{m} \bmod{\widehat{k}}, \\
\lfloor k_N \widehat{m} / \widehat{k} \rfloor, & k_N \widehat{m} \bmod \widehat{k} < j \leq \widehat{k},
\end{cases}
\]
where \(l_j = l_1 + (j-1)d\).
Next we set
\[
\tilde{N}_{l_j}^{(2)} = 
\begin{cases}
\tilde{N}_{l_j}^{(1)} + \hat{N}_{l_j + k - (k_N m) \bmod k}, & 0 \leq l_j < (k_N m) \bmod k,\\
\tilde{N}_{l_j}^{(1)} + \hat{N}_{l_j - (k_N m) \bmod k}, & (k_N m) \bmod k \leq l_j < k.
\end{cases}
\]\\
Then, the \(N_i\) are determined as follows.
\begin{itemize}
     \item If \(d \nmid l\) (i.e., \(l_1 \neq 0\)):
\[
N_i = 
\begin{cases}
\tilde{N}_{l_j}^{(2)}, & i = l_j \ (1 \leq j \leq \widehat{k}), \\
0, & \text{otherwise}.
\end{cases}
\]

\item If \(d\mid l\) (i.e., \(l_1=0\)), then
\[
N_i=
\begin{cases}
\tilde{N}_{l_j}^{(2)},
& \tilde{N}_0^{(2)}=0,\quad
  i=l_j,\quad 2\leq j\leq\widehat{k},\\
\tilde{N}_0^{(2)}-1,
& \tilde{N}_0^{(2)}>0,\quad i=k-1,\\
\tilde{N}_{l_j}^{(2)},
& \tilde{N}_0^{(2)}>0,\quad
  i=l_j-1,\quad 2\leq j\leq\widehat{k},\\
0,
& \text{otherwise}.
\end{cases}
\]
\end{itemize}
}
\end{itemize}

\section{Root plots of the generalized Wronskian--Hermite polynomials}\label{appendix:Root plots of the generalized WH polynomials}
In this appendix, we illustrate the root structures of the Yablonskii--Vorob'ev polynomial hierarchy and the Okamoto polynomial hierarchy \cite{yang2021rogue,yang2021universal,yang2023rogue}, namely the Wronskian--Hermite polynomials $W_N^{[m,k,l]}(z)$ with $k=2,3$, in Figs.~\ref{fig:WHzeros_k2_l1_staircase}-\ref{fig:WHzeros_k3_l1_staircase}. 
Note that, for a given $N$, the roots of $W_N^{[m,k,l]}(z)$ degenerate to the trivial zero configuration whenever $m > l+(N-1)k$ or $m \equiv 0 \pmod{k}$. 
Consequently, we restrict our presentation to non-degenerate values of $m$. 
For $N \geq 3$, the zeros of the Wronskian--Hermite polynomials $W_N^{[m,k,l]}(z)$ corresponding to the largest $k-1$ non-degenerate values of $m$ exhibit a circular root structure. For example, when \(N \ge 3\), the polynomials corresponding to the largest value of \(m\) in each row of Fig.~\ref{fig:WHzeros_k2_l1_staircase}, namely, the last panel, exhibit a circular root structure. In contrast, in Figs.~\ref{fig:WHzeros_k3_l1_staircase}-\ref{fig:WHzeros_k3_l2_staircase}, the polynomials corresponding to the two largest values of \(m\) in each row, namely, the last two panels, both exhibit circular root structures.

\begin{figure}[H]
    \centering
    \includegraphics[width=0.63\linewidth]{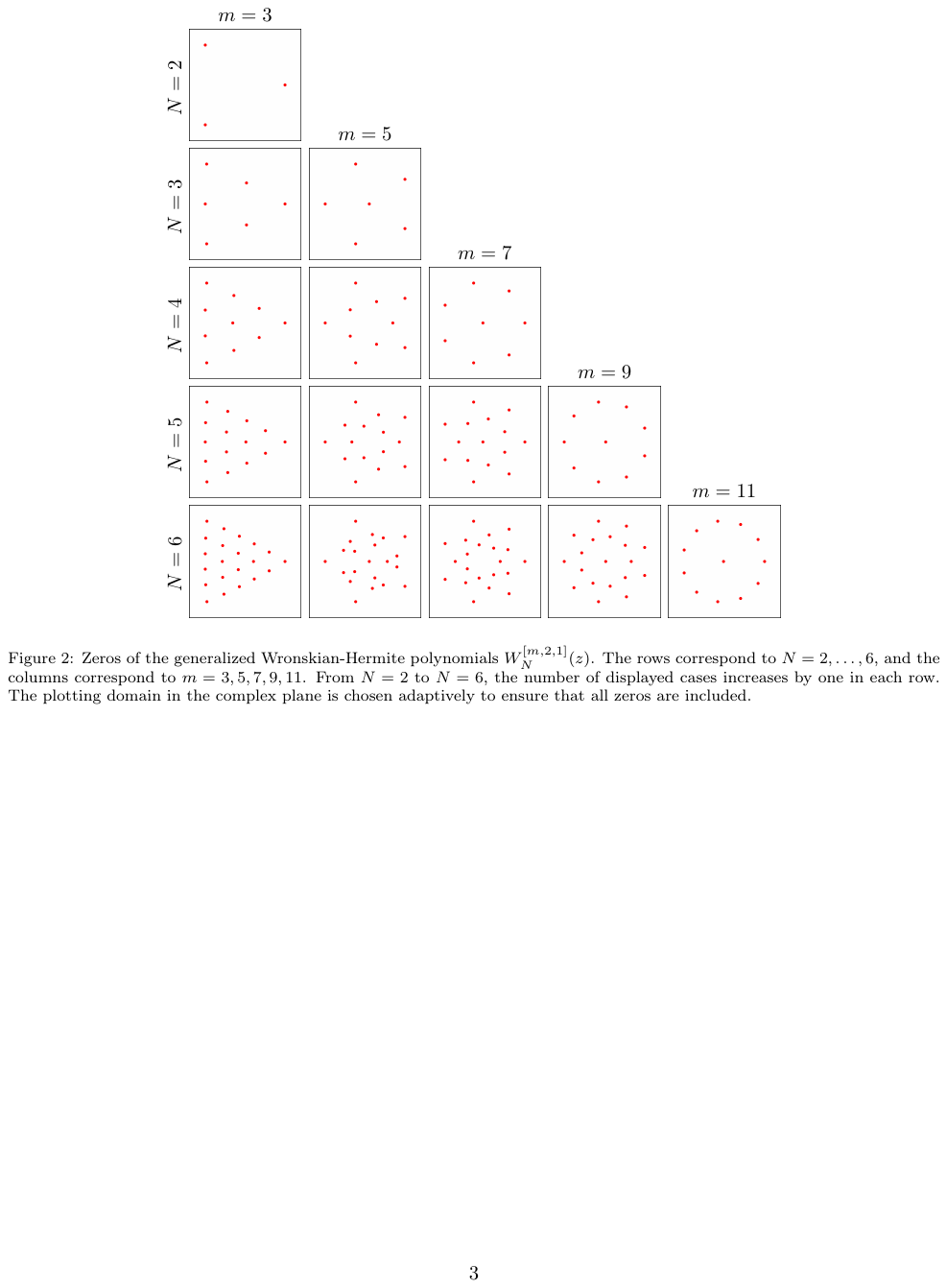}
    \caption{Roots of the generalized Wronskian--Hermite polynomials $W^{[m,2,1]}_{N}(z)$, namely the Yablonskii--Vorob’ev polynomial hierarchy.
The rows correspond to $N=2,\dots,6$, and the columns correspond to $m=3,5,7,9,11$.
From $N=2$ to $6$, the number of displayed cases increases by one in each row. The plotting domain in the complex plane is chosen adaptively to ensure that all zeros are included.
}
\label{fig:WHzeros_k2_l1_staircase}
\end{figure}

\begin{figure}[H]
    \centering
    \includegraphics[width=0.85\linewidth]{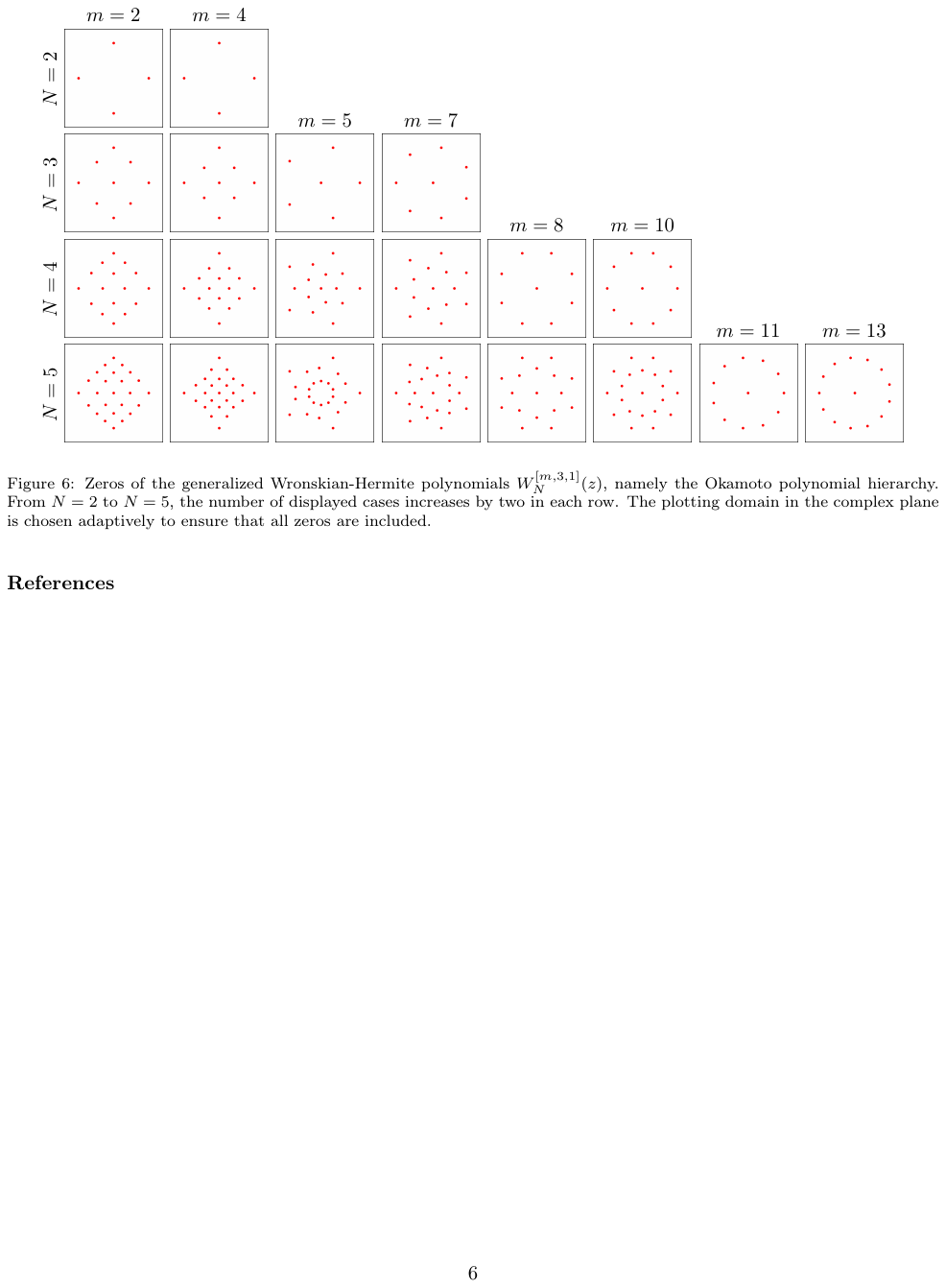}
    \caption{Roots of the generalized Wronskian--Hermite polynomials $W^{[m,3,1]}_{N}(z)$, namely the Okamoto polynomial hierarchy.
From $N=2$ to $5$, the number of displayed cases increases by two in each row.  The plotting domain in the complex plane is chosen adaptively to ensure that all zeros are included.}
\label{fig:WHzeros_k3_l1_staircase}
\end{figure}

\begin{figure}[H]
    \centering
    \includegraphics[width=0.9\linewidth]{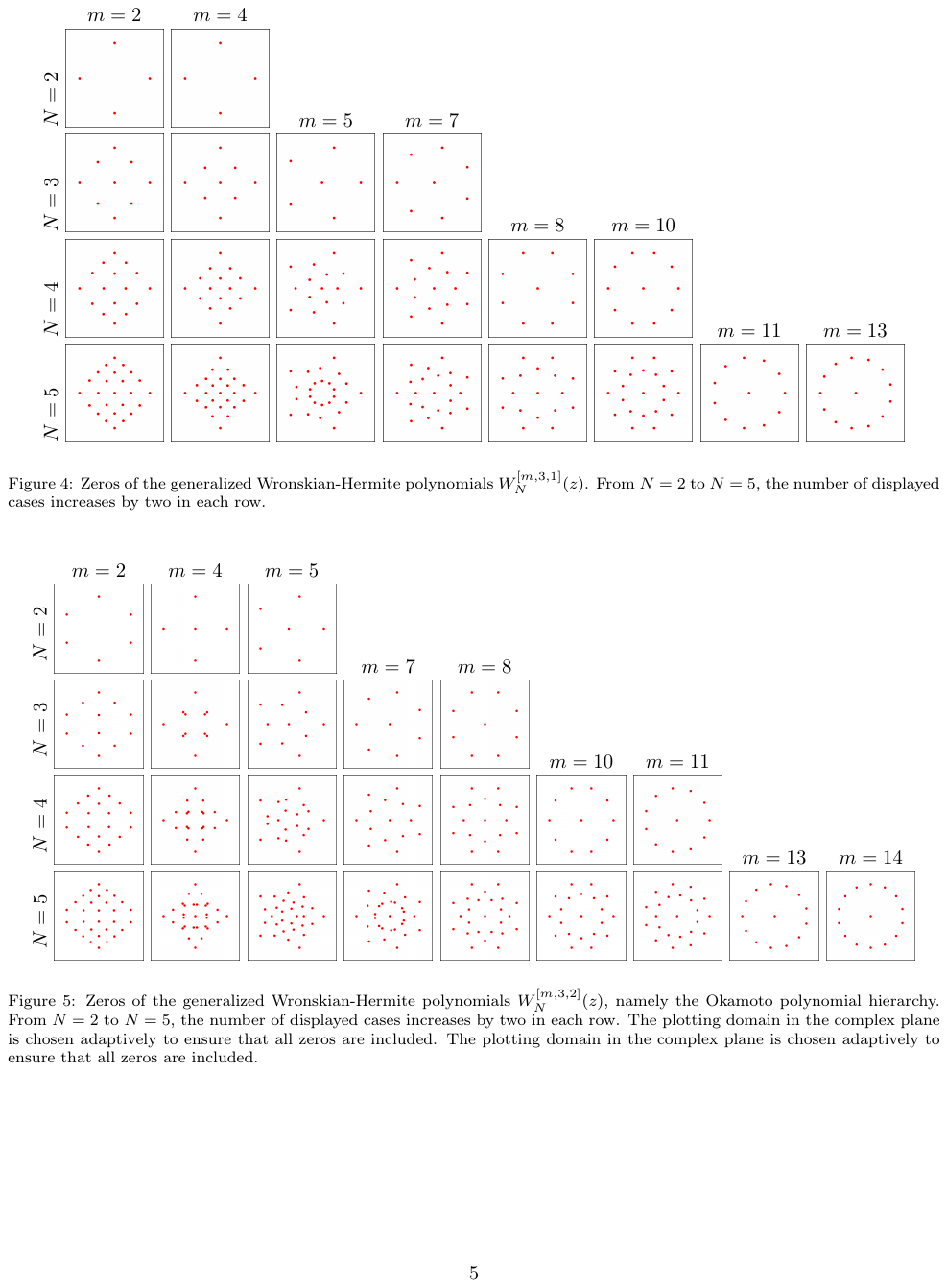}
    \caption{Roots of the generalized Wronskian--Hermite polynomials $W^{[m,3,2]}_{N}(z)$, namely the Okamoto polynomial hierarchy. From $N=2$ to $5$, the number of displayed cases increases by two in each row. The plotting domain in the complex plane is chosen adaptively to ensure that all zeros are included.}
\label{fig:WHzeros_k3_l2_staircase}
\end{figure}

\section{Proof of Theorem \ref{thm:rogue wave solutions}}
In this Appendix, we employ Hirota’s bilinear method and the KP reduction method to derive rogue wave solutions of the multi-component Hirota equation in Theorem \ref{thm:rogue wave solutions}. 

First, we can transform the multi-component Hirota equation \eqref{eqn: vector-Hirota} into a set of bilinear equations \cite{shi2026soliton2} 
\begin{flalign}
&\left[
 D_x^3 - D_t
+ 3 \mathrm{i} k_j D_x^2
- 3 \left( k_j^2 + 2 \sum_{l=1}^M c_l \rho_l^2 \right) D_x
\!-\! 3 \mathrm{i} k_j \sum_{l=1}^M c_l\rho_l^2\!+\! 3 \mathrm{i} \sum_{l=1}^M c_l \rho_l^2 k_l
\right] g_j \cdot f +3 \mathrm{i} \sum_{l=1}^M (k_j - k_l)c_l\rho_l^2 s_{jl} g_l^{*}\!=\!0,& \label{eqn: bilinear 1}
\\
&\left[ D_x + \mathrm{i} (k_j - k_l) \right] g_j \cdot g_l
= \mathrm{i} (k_j - k_l) s_{jl} f,&\label{eqn: bilinear 2}
\\
&\left(
D_x^2 - 2 \sum_{l=1}^M c_l \rho_l^2
\right) f \cdot f
+ 2 \sum_{l=1}^M c_l \rho_l^2 |g_l|^2
=0, \quad j=1,2,\cdots,M,&\label{eqn: bilinear 3}
\end{flalign}
under the nonzero boundary condition at $\pm\infty$ by the variable transformation
\begin{equation}
    v_{j}(x,t) = \rho_{j}\frac{g_{j}(x,t)}{f(x,t)}  e^{\mathrm{i}\left(k_j x-\left(k_j^3+3k_j\sum_{i=1}^{M}c_i \rho_i^2+3\sum_{i=1}^{M}c_i\rho_i^2 k_i\right) t\right)} , \quad j=1,2,\cdots,M,
\end{equation}
where $f(x,t)$ is a real-valued function, $g_j(x,t)$ is a complex-valued function, and $D$ is Hirota’s bilinear operator \cite{hirota2004direct}.

Then, we define 
\begin{equation*}
  \begin{aligned}
m^{\mathbf{n}} &=\frac{1}{p+q}\left[\prod_{j=1}^{M}\left(-\frac{p-\mathrm{i} k_j}{q+\mathrm{i} k_j}\right)^{n_j}  \right] e^{\xi+\eta}, \\
\xi &=p x+p^2 y+p^3 t+\sum_{j=1}^M\frac{1}{p-\mathrm{i} k_j} r_j +\xi_0(p), \\
\eta &=q x-q^2 y+q^3 t+\sum_{j=1}^M\frac{1}{q+\mathrm{i} k_j} r_j +\eta_0(q),
\end{aligned}
\end{equation*}
where $\mathbf{n}=\left(n_1, n_2, \ldots, n_M\right)$ with $n_j$ being integers, $p, q$   are arbitrary complex constants,  $j=1,2,\dots, M$,  and $ \xi_0(p), \eta_0(q)$ are arbitrary functions of $p$ and $q$ respectively.
Let $\mathcal{A}_i$ and $\mathcal{B}_j$ be differential operators of order $i$ and $j$, respectively, defined by
\begin{equation}
\mathcal{A}_i(p)=\frac{1}{i !}\left[f_1(p) \partial_p\right]^i, \quad \mathcal{B}_j(q)=\frac{1}{j !}\left[f_2(q) \partial_q\right]^j,
\end{equation}
where $ f_1(p), f_2(q) $ are arbitrary functions of $p$ and $q$ respectively.
Then it can be calculated that \cite{ohta2012general} the determinant
\begin{eqnarray}
\tau_{\mathbf{n}} = \det_{1 \leq  \nu, \mu  \leq N}\left( m_{i_\nu, j_\mu}^{\mathbf{n}}  \right)
\end{eqnarray}
where $\left(i_1, i_2, \cdots, i_N\right)$ and $\left(j_1, j_2, \cdots, j_N\right)$ are arbitrary sequences of indices, and the matrix element $m_{i j}^{\mathbf{n}}$ is defined as
\begin{equation} \label{phi equation}
m_{i j}^{\mathbf{n}}=\mathcal{A}_i \mathcal{B}_j m^{\mathbf{n}},
\end{equation}
would satisfy the bilinear equations \cite{shi2026soliton2}
\begin{eqnarray}
&&\left( D_{r_j} D_x - 2 \right) \tau_{\mathbf{n}} \cdot \tau_{\mathbf{n}}
= -2 \tau_{\mathbf{n}+\mathbf{e}_j} \tau_{\mathbf{n}-\mathbf{e}_j},\label{eqn: KP1}
\\
&&\left( D_x^2 - D_y + 2 a_j D_x \right) \tau_{\mathbf{n}+\mathbf{e}_j} \cdot \tau_{\mathbf{n}}
= 0,\label{eqn: KP2}
\\
&&\left( D_x^3 + 3 D_x D_y - 4 D_t
+ 3 a_j \left( D_x^2 + D_y \right)
+ 6 a_j^2 D_x \right) \tau_{\mathbf{n}+\mathbf{e}_j} \cdot \tau_{\mathbf{n}}
= 0,\label{eqn: KP3}
\\
&&\Big(
D_{r_l} \left( D_x^2 - D_y + 2 a_j D_x \right)
- 4 ( D_x + a_j - a_l )
\Big) \tau_{\mathbf{n}+\mathbf{e}_j} \cdot \tau_{\mathbf{n}}
+ 4 ( a_j - a_l ) \tau_{\mathbf{n}+\mathbf{e}_j+\mathbf{e}_l} \cdot \tau_{\mathbf{n}-\mathbf{e}_l}
= 0,\label{eqn: KP4}
\\
&&\left( D_x + a_j - a_l \right) \tau_{\mathbf{n}+\mathbf{e}_j} \cdot \tau_{\mathbf{n}+\mathbf{e}_l}
= ( a_j - a_l ) \tau_{\mathbf{n}+\mathbf{e}_j+\mathbf{e}_l} \tau_{\mathbf{n}},\label{eqn: KP5}
\end{eqnarray}
where $j=1,2,\cdots,M,$ $\mathbf{n}=(n_1,n_2,\cdots,n_M),$ $a_j = \mathrm{i} k_j$ and $\mathbf{e}_j$ is the standard unit vector in $\mathbb{R}^M$.

In what follows, we establish the reductions from the bilinear equations \eqref{eqn: KP1}–\eqref{eqn: KP5} in the KP hierarchy to the bilinear equations \eqref{eqn: bilinear 1}–\eqref{eqn: bilinear 3}, which lead to rogue wave solutions of the multi-component Hirota equation \eqref{eqn: vector-Hirota}. Following the approaches developed in \cite{ohta2012general}, the rogue wave solutions of the multi-component Hirota equation \eqref{eqn: vector-Hirota} can be derived through lengthy but straightforward calculations. For brevity, we only present the conditions required to achieve the reduction, which are as follows:

\begin{itemize}
  \item [i)] {\it Dimension reduction}
  \begin{equation}
\left(-\sum_{l=1}^M c_l \rho_l^2 \partial_{r_l}+\partial_x\right) \tau_{\mathbf{n}}= C \tau_{\mathbf{n}},
  \end{equation}
  where $C$ is some constant.

  \item [ii)] {\it Complex conjugate reduction}

  \begin{equation}\label{complex conjugacy condition}
    \tau_{\mathbf{n}} = \tau_{-\mathbf{n}}^*.
  \end{equation}

  \item [iii)]{\it  Gauge transformation}
\begin{equation}
    x \to x - 3\sum_{j=1}^M c_j\rho_j^2\, t, \qquad t \to t.
\end{equation}

\end{itemize}
After the above reductions and transformations, the bilinear equations \eqref{eqn: KP1}–\eqref{eqn: KP5} in the KP hierarchy are reduced to the bilinear equations \eqref{eqn: bilinear 1}–\eqref{eqn: bilinear 3}. Finally, by applying the simplifications developed in \cite{ohta2012general}, the proof of Theorem~\ref{thm:rogue wave solutions} is completed.

\begin{proof}[\bf Proof of Lemma \ref{lemma:expression of pn}]
Denote
\begin{equation}\label{eq:implicit-pkappa}
    \mathcal{F}_M(p(\kappa))=\mathcal{F}_M(p(0))\,E(\kappa),
\qquad 
E(\kappa)=\frac{1}{M+1}\sum_{j=1}^{M+1}\exp\!\left(\exp\!\left(\frac{2j\pi \mathrm{i}}{M+1}\right)\kappa\right).
\end{equation}
Let
\begin{equation}
    \omega=\exp\!\left(\frac{2\pi \mathrm{i}}{M+1}\right),
\end{equation}
then $\exp\!\left(2j\pi \mathrm{i}/(M+1)\right)=\omega^{j}$ and hence
\begin{equation}
    E(\kappa)=\frac{1}{M+1}\sum_{j=1}^{M+1}e^{\omega^{j}\kappa}.
\end{equation}
Expanding each exponential into its power series, we obtain
\begin{equation} \label{eq:E-series}
E(\kappa)
=\frac{1}{M+1}\sum_{j=1}^{M+1}\sum_{n=0}^{\infty}\frac{(\omega^{j}\kappa)^n}{n!}
=\frac{1}{M+1}\sum_{n=0}^{\infty}\left(\sum_{j=0}^{M}\omega^{jn}\right)\frac{\kappa^n}{n!}.
\end{equation}

Now we evaluate the finite sum $\sum_{j=0}^{M}\omega^{nj}.$ If $(M+1)\mid n$, then $\omega^{n}=1$ and hence
\begin{equation}
    \sum_{j=0}^{M}\omega^{nj}=\sum_{j=0}^{M}1=M+1.
\end{equation}
If $(M+1)\nmid n$, then $\omega^{n}\neq 1$ and
\begin{equation}
    \sum_{j=0}^{M}\omega^{nj}=\sum_{j=0}^{M}(\omega^{n})^{j}
=\frac{1-(\omega^{n})^{M+1}}{1-\omega^{n}}
=\frac{1-\omega^{n(M+1)}}{1-\omega^{n}}
=\frac{1-1}{1-\omega^{n}}=0.
\end{equation}
Consequently,
\begin{equation}
    \frac{1}{M+1}\sum_{j=0}^{M}\omega^{nj}=
\begin{cases}
1, & n\equiv 0 \pmod{M+1},\\
0, & n\not\equiv 0 \pmod{M+1}.
\end{cases}
\end{equation}
Substituting this into \eqref{eq:E-series} yields
\begin{equation}
    E(\kappa)=\sum_{n=0}^{\infty}\frac{\kappa^{(M+1)n}}{((M+1)n)!}.
\end{equation}
In particular, the Taylor expansion of $E(\kappa)$ contains only powers $\kappa^{(M+1)n}$ and no other powers.

Denote $u(\kappa)=p(\kappa)-p_0$. We have the following expansion
\begin{equation}
    u(\kappa)=\sum_{n=1}^{\infty}p_n\kappa^n.
\end{equation}
Since \(p_0\) is a zero of multiplicity \(M\) of
\(\mathcal F_M'(p)\), we have
\begin{equation}
\mathcal F_M^{(r)}(p_0)=0,
\quad r=1,2,\ldots,M,
\qquad
\mathcal F_M^{(M+1)}(p_0)\neq0.
\end{equation}
Therefore, the Taylor expansion of \(\mathcal F_M\) at \(p=p_0\)
gives
\begin{equation}\label{eq:F-Taylor}
\mathcal{F}_M(p(\kappa))
=\mathcal{F}_M(p_0)
+\sum_{r=M+1}^{\infty}\frac{\mathcal{F}_M^{(r)}(p_0)}{r!}\,u^r.
\end{equation}
Substituting \eqref{eq:F-Taylor} into \eqref{eq:implicit-pkappa} and cancelling the constant term
$\mathcal{F}_M(p_0)$ on both sides yield
\begin{equation}\label{eq:coef-identity}
\sum_{r=M+1}^{\infty}\frac{\mathcal{F}_M^{(r)}(p_0)}{r!}\,u(\kappa)^r
=
\mathcal{F}_M(p_0)\sum_{s=1}^{\infty}\frac{\kappa^{s(M+1)}}{(s(M+1))!}.
\end{equation}

Comparing the coefficients of $\kappa^{M+1}$ in \eqref{eq:coef-identity}, the right-hand side
contributes $\mathcal{F}_M(p_0)/(M+1)!$. On the left-hand side, the smallest possible power of
$\kappa$ arises from
\[
u(\kappa)^{M+1}=\left(p_1\kappa+p_2\kappa^2+p_3\kappa^3+\cdots\right)^{M+1},
\]
whose $\kappa^{M+1}$-coefficient equals $p_1^{M+1}$. Hence,
\[
\frac{\mathcal{F}_M^{(M+1)}(p_0)}{(M+1)!}\,p_1^{M+1}
=
\frac{\mathcal{F}_M(p_0)}{(M+1)!},
\]
which proves \eqref{eq:a1-formula}. This yields $M+1$ possible values for $p_1$, and we may choose any one of them.

Fix $n\ge 2$ and compare the coefficients of $\kappa^{M+n}$ in \eqref{eq:coef-identity}. From the right-hand side of \eqref{eq:coef-identity}, the coefficient of $\kappa^{M+n}$ equals
\begin{equation}
    \begin{cases}
\dfrac{\mathcal{F}_M(p_0)}{(M+n)!}, & M+n \equiv 0 \pmod{M+1}, \\[1ex]
0, & M+n \not \equiv 0 \pmod{M+1}.
\end{cases}
\end{equation}
On the left-hand side, for each $r\ge M+1$, the coefficient of $\kappa^{M+n}$ in $u(\kappa)^r$ is
\begin{equation}\label{eqn:coefficients in the left hand side}
    \sum_{\substack{j_1+\cdots+j_r=M+n\\ j_\ell\ge 1}}
p_{j_1}p_{j_2}\cdots p_{j_r}.
\end{equation}
The only way this sum can involve $p_n$ is through the case $r=M+1$. Indeed, if $r\ge M+2$, then
from $j_1+\cdots+j_r=M+n$ with $j_\ell\ge 1$, we obtain
$\max j_\ell\le M+n-(r-1)\le n-1$, so only $p_1,\dots,p_{n-1}$ appear.

For $r=M+1$, the coefficient of $\kappa^{M+n}$ in $u(\kappa)^{M+1}$ is
\begin{equation}
\sum_{\substack{j_1+\cdots+j_{M+1}=M+n\\ j_\ell\ge 1}}
p_{j_1}\cdots p_{j_{M+1}}.
\end{equation}
The terms containing \(p_n\) arise by taking one index equal to
\(n\) and the remaining \(M\) indices equal to \(1\). Their total
contribution is therefore
\((M+1)p_1^M p_n\).
Separating this term from the remaining contributions (which depend only on $p_1,\dots,p_{n-1}$), the coefficient of $\kappa^{M+n}$ in $u(\kappa)^{M+1}$ can be rewritten as
\begin{equation}\label{eq:key-splitting}
(M+1)p_1^{\,M}p_n
+\!\!\!\sum_{\substack{j_1+\cdots+j_{M+1}=M+n\\ 1\le j_\ell\le n-1}}
\!\!\! p_{j_1}\cdots p_{j_{M+1}}.
\end{equation}

Collecting the coefficient of $\kappa^{M+n}$ in \eqref{eq:coef-identity} and using
\eqref{eq:key-splitting}, we arrive at
\begin{align}
&\frac{\mathcal{F}_M^{(M+1)}(p_0)}{(M+1)!}(M+1)p_1^{\,M}p_n
+\frac{\mathcal{F}_M^{(M+1)}(p_0)}{(M+1)!}
\!\!\!\sum_{\substack{j_1+\cdots+j_{M+1}=M+n\\ 1\le j_\ell\le n-1}}
\!\!\! p_{j_1}\cdots p_{j_{M+1}} +\sum_{r=M+2}^{M+n}\frac{\mathcal{F}_M^{(r)}(p_0)}{r!}
\!\!\!\sum_{\substack{j_1+\cdots+j_r=M+n\\ 1\le j_\ell\le n-1}}
\!\!\! p_{j_1}\cdots p_{j_r}\nonumber\\
& 
=
\begin{cases}
\dfrac{\mathcal{F}_M(p_0)}{(M+n)!}, & M+n \equiv 0 \pmod{M+1},\\[1ex]
0, & M+n \not \equiv 0 \pmod{M+1},
\end{cases}
\label{eq:coef-Mn-balance}
\end{align}
which is equivalent to \eqref{eq:pn-recursive-simplified}.
This completes the proof.
\end{proof}

\begin{proof}[\bf Proof of Lemma \ref{lemma:expansion of s_r}]
Recall that $p(\kappa)$ is defined implicitly by
\begin{equation}
\mathcal{F}_M\bigl(p(\kappa)\bigr)
=\mathcal{F}_M(p_0)\,E(\kappa),\qquad 
E(\kappa)=\sum_{r=0}^{\infty}\frac{\kappa^{(M+1)r}}{((M+1)r)!}.
\end{equation}
Since $p_0$ is a root of $\mathcal{F}_M'(p)=0$ with a multiplicity of $M$, we have
\begin{equation}\label{eq:multi-root-assumption}
\mathcal{F}_M^{(r)}(p_0)=0\quad (r=1,2,\dots,M), 
\qquad \mathcal{F}_M^{(M+1)}(p_0)\neq 0.
\end{equation}

In what follows, we express the generating function of
\(\{s_r\}_{r\ge1}\) in terms of \(E(\kappa)\).
Note that $(M+1)s_r$ is defined by the following expansion
\begin{equation}\label{eq:s-def}
\ln\!\left[\frac{1}{\kappa^{M+1}}
\left(\frac{p(0)+p(0)^*}{p_1}\right)^{M+1}
\left(\frac{p(\kappa)-p(0)}{p(\kappa)+p(0)^*}\right)^{M+1}\right]
=\sum_{r=1}^{\infty}(M+1)s_r\,\kappa^r.
\end{equation}

We now show that the ratio
\begin{equation}\label{eq:ratio}
\left(\frac{p(\kappa)-p(0)}{p(\kappa)+p(0)^*}\right)^{M+1},
\end{equation}
depends on $\kappa$ only through $E(\kappa)$.
Recall that $\mathcal{F}_M(p)$ is a rational function of the form
\begin{equation}
    \mathcal{F}_M(p)=p+\sum_{m=1}^{M}\frac{c_m}{p-a_m},
\end{equation}
where $c_m \in \mathbb{R}$ and $a_m \in \mathrm{i} \mathbb{R}$ with $ a_m \not = a_n$ if $m \not = n$. 

Define
\begin{equation}
\Pi(p):=\prod_{m=1}^{M}(p-a_m).
\end{equation}
Since \(\Pi(p)\) is monic and
\[
\mathcal F_M(p)=p+\sum_{m=1}^{M}\frac{c_m}{p-a_m},
\]
the polynomial
\begin{equation}
P_{p_0}(p)
:=
\Pi(p)\bigl[\mathcal F_M(p)-\mathcal F_M(p_0)\bigr]
\end{equation}
is monic and has degree \(M+1\).
Since \(p_0\) is a zero of multiplicity \(M\) of
\(\mathcal F_M'(p)\), we have
\begin{equation}
\mathcal F_M^{(r)}(p_0)=0,
\quad r=1,\ldots,M,
\qquad
\mathcal F_M^{(M+1)}(p_0)\neq0.
\end{equation}
Hence \(P_{p_0}(p)\) has a zero of multiplicity \(M+1\) at
\(p=p_0\). Since it is monic and has degree \(M+1\), it follows that
\begin{equation}\label{eq:P1-factor}
\Pi(p)\bigl[\mathcal F_M(p)-\mathcal F_M(p_0)\bigr]
=
(p-p_0)^{M+1}.
\end{equation}
Substituting \(p=p(\kappa)\) and using
\(\mathcal F_M(p(\kappa))=\mathcal F_M(p_0)E(\kappa)\), we obtain
\begin{equation}\label{eq:key-factor1-Q}
\bigl(p(\kappa)-p_0\bigr)^{M+1}
=
\mathcal F_M(p_0)\bigl(E(\kappa)-1\bigr)
\Pi\bigl(p(\kappa)\bigr).
\end{equation}
Since \(c_m\in\mathbb R\) and \(a_m\in\mathrm{i}\mathbb R\), we have
\begin{equation}
\mathcal F_M(-p^*)=-\mathcal F_M(p)^*.
\end{equation}
Consequently, \(q_0:=-p_0^*\) is also a zero of multiplicity \(M\)
of \(\mathcal F_M'(p)\). By the same argument, the monic polynomial
\begin{equation}
P_{q_0}(p)
:=
\Pi(p)\bigl[\mathcal F_M(p)-\mathcal F_M(q_0)\bigr]
\end{equation}
satisfies
\begin{equation}
\Pi(p)\bigl[\mathcal F_M(p)-\mathcal F_M(q_0)\bigr]
=
(p-q_0)^{M+1}.
\end{equation}
Therefore,
\begin{equation}\label{eq:key-factor2-Q}
\bigl(p(\kappa)+p_0^*\bigr)^{M+1}
=
\bigl[
\mathcal F_M(p_0)E(\kappa)-\mathcal F_M(-p_0^*)
\bigr]
\Pi\bigl(p(\kappa)\bigr).
\end{equation}

Finally, dividing \eqref{eq:key-factor1-Q} by
\eqref{eq:key-factor2-Q}, the common factor
\(\Pi(p(\kappa))\) cancels, and we obtain
\begin{equation}\label{eq:final-ratio-s}
\left(
\frac{p(\kappa)-p_0}{p(\kappa)+p_0^*}
\right)^{M+1}
=
\frac{
\mathcal F_M(p_0)\bigl(E(\kappa)-1\bigr)
}{
\mathcal F_M(p_0)E(\kappa)-\mathcal F_M(-p_0^*)
},
\end{equation}
which depends on $\kappa$ only through $E(\kappa)$. Then, one can obtain \begin{equation}
\ln\!\left[\frac{1}{\kappa^{M+1}}
\left(\frac{p(0)+p(0)^*}{p_1}\right)^{M+1}
\frac{ \mathcal{F}_M(p(0)) (E(\kappa)-1)}{\mathcal{F}_M(p_0)E(\kappa)-\mathcal{F}_M(-p_0^*)}\right]
=\sum_{r=1}^{\infty}(M+1)s_r\,\kappa^r.
\end{equation}
Therefore $(M+1)s_j=0$ whenever $j\not\equiv0\pmod{M+1}$, i.e.,
\begin{equation}
    s_j=0,\quad j\not\equiv0\pmod{M+1}.
\end{equation}
This completes the proof.

\end{proof}

\section{Proof of Theorem \ref{thm:Rogue wave patterns}}
In this Appendix, we provide the rogue wave patterns given in Theorem \ref{thm:Rogue wave patterns}. Assume $\left|a_m\right|$ is large and all remaining parameters are $O(1)$ in the $l$-th type $\mathcal{N}_l$-th order rogue wave solutions
\begin{equation} 
  v_{1,\mathcal{N}_l}(x,t), \quad
  v_{2,\mathcal{N}_l}(x,t), \quad
  \ldots, \quad
  v_{M,\mathcal{N}_l}(x,t),
\end{equation}
which are expressed in terms of the $\tau$-function $\sigma_{\mathbf{n}}(\bar{x},t)=\det\left(\tau_{\mathbf{n}}^{[l,l]}\left(x-3\sum_{i=1}^{M}c_i \rho_i^2 t,t\right)\right)$ defined in Theorem~\ref{thm:rogue wave solutions}. Therefore,  for simplicity, we consider the rogue wave patterns in the \((\bar{x},t)\)-plane, where $\bar{x}=x-3\sum_{i=1}^{M}c_i \rho_i^2\, t$. 
The proof proceeds by considering two distinct regions, namely, the outer region and the inner region.

{\bf Proof of the outer region}. We first consider the case when $\left(\bar{x},t\right)$ is far away from the origin and $(\bar{x}^2+t^2)^{1/2}=O(|a_m|^{1/m})$. In this circumstance, we have
\begin{equation} \label{Simplified Schur polynomial-asmptotic}
\begin{aligned}
S_j\left(\mathbf{x}^{+}(\mathbf{n})+\nu \mathbf{s}\right)=S_j\left(x_1^{+}, x_2^{+}, \cdots,x_{M}^{+},\nu s_{M+1}, x_{M+2}^{+}, \cdots, x_{2M+1}^{+}, \nu s_{2(M+1)}, \cdots, x_m^{+}+\nu s_m, \cdots\right) 
\sim   S_j(\mathbf{v}),
\end{aligned}\end{equation}
where
$$
\mathbf{v}=\left(p_1 \bar{x}+ 3p_0^2p_1  t, 0, \cdots, 0, a_m, 0, \cdots\right).
$$
As shown in Lemma~\ref{lemma:expansion of s_r}, the parameters satisfy $s_1=s_2=\cdots=s_{M}
=s_{M+2}=\cdots=s_{2M+1}
=s_{2M+3}=\cdots=0.$
Recalling the definition of Schur polynomials, one can rewrite them in the form
\begin{equation}\label{Schur polynomial VS p1-3com}
S_j(\mathbf{v})=a_m^{\,j/m}\, p_j^{[m]}(z),
\end{equation}
where the scaled variable $z$ is given by
\begin{equation}
z=a_m^{-1/m}\bigl(\alpha_1 \bar{x}+\beta_1  t\bigr)
= a_m^{-1/m}\bigl(p_1 \bar{x} +  3p_0^2p_1   t\bigr).
\end{equation}
As a consequence, the determinant built from Schur polynomials admits the following asymptotic representation:
\begin{equation}\label{Schur polynomial VS WH x+}
 \det_{1 \leq i, j \leq N}
 \!\left[
 S_{(M+1)(i-1)+l-\nu_j}\!\left(\mathbf{x}^{+}(\mathbf{n})+\nu_j \mathbf{s}\right)
 \right]
 \sim
 \left(c_N^{[m,M+1,l]}\right)^{-1}
 a_m^{\frac{\left(N(2l+(M+1)(N-1)) - 2\sum_{j=1}^{N}\nu_j\right)}{2m}}
 W_{N}^{[m,M+1,l]}(z),
\end{equation}
and, analogously,
\begin{equation}
 \det_{1 \leq i, j \leq N}
 \!\left[
 S_{(M+1)(i-1)+l-\nu_j}\!\left(\mathbf{x}^{-}(\mathbf{n})+\nu_j \mathbf{s}^*\right)
 \right]
 \sim
 \left(c_N^{[m,M+1,l]}\right)^{-1}
 \left(a_m^*\right)^{\frac{\left(N(2l+(M+1)(N-1)) - 2\sum_{j=1}^{N}\nu_j\right)}{2m}}
 W_{N}^{[m,M+1,l]}(z^*).
\end{equation}
Here, we adopt the convention that $S_j\equiv 0$ for all $j<0$.

Next, by applying the Laplace expansion, we rewrite the $\tau$-function $\sigma_{\mathbf{n}}$ in the following form:
\begin{equation}\label{Laplace expansion of tau function}
  \begin{gathered}
\sigma_{\mathbf{n}}
=
\sum_{0 \leq \nu_1<\nu_2<\cdots<\nu_N \leq (M+1) N-1}
 \det_{1 \leq i, j \leq N}
 \!\left[
 \left(h_0\right)^{\nu_j}
 S_{(M+1)(i-1)+l-\nu_j}\!\left(\mathbf{x}^{+}(\mathbf{n})+\nu_j \mathbf{s}\right)
 \right] \\
\times
 \det_{1 \leq i, j \leq N}
 \!\left[
 \left(h_0^*\right)^{\nu_j}
 S_{(M+1)(i-1)+l-\nu_j}\!\left(\mathbf{x}^{-}(\mathbf{n})+\nu_j \mathbf{s}^*\right)
 \right],
\end{gathered}
\end{equation}
where $h_0=p_1/(p_0+p_0^*)$.

It is readily seen that, among all possible index selections $\{\nu_j\}$, the highest-order contribution in the parameter $a_m$ arises from the choice $\nu_j=j-1$. Consequently, we obtain the asymptotic behavior
\begin{equation}
\sigma_{\mathbf{n}}
\sim
|\alpha|^2\,|a_m|^{\frac{N(2l+M(N-1))}{m}}
\left|W_{N}^{[m,M+1,l]}(z)\right|^2,
\end{equation}
where
\[
\alpha = h_0^{N(N-1)/2}\left(c_N^{[m,M+1,l]}\right)^{-1}.
\]
The above asymptotic analysis shows that the leading-order term of $\sigma_{\mathbf{n}}$ is independent of the multi-index $\mathbf{n}$. Therefore, when $(\bar{x},t)$ is away from the point $(\hat{x}_0,\hat{t}_0)$, which is associated with the zeros of the polynomial $W_{N}^{[m,M+1,l]}(z)$ through
\begin{equation}
    z_0 = a_m^{-1/m}\left(p_1 \hat{x}_0 + 3p_0^2p_1 \hat{t}_0\right),
\end{equation}
we have
\begin{equation}
   \frac{\sigma_{\mathbf{n}_i}}{\sigma_{\mathbf{n}_0}} \sim 1,
   \qquad i=1,2,\ldots,M,
   \qquad \text{for } |a_m|\gg 1.
\end{equation}

However, when $(\bar{x},t)$ is close to $(\hat{x}_0,\hat{t}_0)$, the coefficient of the highest-order term in $a_m$ vanishes. To handle this situation, it is necessary to take into account lower-order contributions in $a_m$, which requires a more refined asymptotic analysis. In this regime, namely when $(\bar{x},t)$ is near $(\hat{x}_0,\hat{t}_0)$, we obtain
\begin{eqnarray*}
S_j\!\left(\mathbf{x}^{+}(\mathbf{n})+\nu \mathbf{s}\right)
&=&S_j\!\left(x_1^{+}, x_2^{+}, \ldots, x_{M}^{+}, \nu s_{M+1},
x_{M+2}^{+}, \ldots, x_{2M+1}^{+}, \nu s_{2(M+1)}, \ldots, x_m^{+}+\nu s_m, \ldots \right)
\\
&=&\Bigl[S_j(\hat{\mathbf{v}})
+\hat{x}_2^{+}\!\left(\hat{x}_0,\hat{t}_0\right) S_{j-2}(\hat{\mathbf{v}})\Bigr]
\Bigl[1+O\!\left(|a_m|^{-2/m}\right)\Bigr],
\qquad |a_m|\gg 1,
\end{eqnarray*}
where
\begin{eqnarray*}
\hat{\mathbf{v}}
&=&\Bigl(p_1 \bar{x}+ 3p_0^2p_1 t+\sum_{j=1}^{M} n_j \theta_{1j},
\,0,\,\ldots,\,0,\,a_m,\,0,\,\ldots \Bigr),\\
\hat{x}_2^{+}(x,t)
&=&p_2 \bar{x}+\left(3p_0\left( p_1^2+p_0 p_2\right)\right)  t,
\end{eqnarray*}
with the parameter $a_1$ in $x_1^{+}$ set to zero. 
In analogy with \eqref{Schur polynomial VS p1-3com}, we further have
\begin{equation}
S_j(\hat{\mathbf{v}})=a_m^{j/m}\, p_j^{[m]}(\hat{z}),
\end{equation}
where
\begin{equation}
\hat{z}
=a_m^{-1/m}\!\left(p_1 \bar{x} + 3p_0^2p_1 t
+\sum_{j=1}^{M} n_j \theta_{1j}\right).
\end{equation}

Under these circumstances, there are two possible index choices $\nu_j$ that yield leading-order contributions in $a_m$ for $\sigma_{\mathbf{n}}$. One corresponds to $\nu=(0,1,\ldots,N-1),$
while the other is given by $\nu=(0,1,\ldots,N-2,N).$

\paragraph{Case (1): the index choice $\nu_j=j-1$}
For this choice of indices, the leading-order contribution arises from two distinct parts. 
The first part originates from the term $S_j(\hat{\mathbf{v}})$. 
By extracting the dominant contribution involving $\mathbf{x}^{+}(\mathbf{n})$ from the asymptotic expansion of the determinants, we obtain
\begin{equation}
  \alpha\, a_m^{\frac{N(2l+M(N-1))}{2m}}
  W_{N}^{[m,M+1,l]}(\hat{z})
  \left[1+O\!\left(|a_m|^{-2/m}\right)\right].
\end{equation}
Next, we expand $W_{N}^{[m,M+1,l]}(\hat{z})$ in a neighborhood of $z_0$. 
Since $W_{N}^{[m,M+1,l]}(z_0)=0$, a first-order Taylor expansion yields
\begin{equation}
  W_{N}^{[m,M+1,l]}(\hat{z})
  = a_m^{-1/m}
  \Bigl[
      p_1\bigl(\bar{x}-\hat{x}_0\bigr)
      + 3p_0^2p_1 \bigl(t-\hat{t}_0\bigr)
      +\sum_{j=1}^{M} n_j \theta_{1j}
  \Bigr]
  \left[W_{N}^{[m,M+1,l]}\right]'(z_0)
  \left[1+O \left(|a_m|^{-1/m}\right)\right].
\end{equation}
Consequently, the corresponding leading-order term in $a_m$ is given by
\begin{equation}\label{1 index-part 1}
  \alpha\, a_m^{\frac{N(2l+M(N-1))-2}{2m}}
  \Bigl[
      p_1\bigl(\bar{x}-\hat{x}_0\bigr)
      + 3p_0^2p_1 \bigl(t-\hat{t}_0\bigr)
      +\sum_{j=1}^{M} n_j \theta_{1j}
  \Bigr]
  \left[W_{N}^{[m,M+1,l]}\right]'(z_0)
  \left[1+O\!\left(|a_m|^{-1/m}\right)\right].
\end{equation}

The second leading-order contribution comes from those determinants containing the factor
$\hat{x}_2^{+}(\hat{x}_0,\hat{t}_0)\,S_{j-2}(\hat{\mathbf{v}})$. 
More precisely, this contribution takes the form
\begin{eqnarray}
  \sum_{j=1}^{N}
  \det_{1\le i\le N}
  &&\left[
      S_{(M+1)(i-1)+l}(\hat{\mathbf{v}}),\ldots,
      S_{(M+1)(i-1)+l-j+2}(\hat{\mathbf{v}}),
      S_{(M+1)(i-1)+l-j-1}(\hat{\mathbf{v}}),\right. \nonumber\\
     && \left.
      S_{(M+1)(i-1)+l-j}(\hat{\mathbf{v}}),\ldots,
      S_{(M+1)(i-1)+l-N+1}(\hat{\mathbf{v}})
  \right] \times
  \hat{x}_2^{+}(\hat{x}_0,\hat{t}_0)\,
  h_0^{N(N-1)/2}
  \left[1+O\!\left(|a_m|^{-1/m}\right)\right].
  \label{1 index-part 2}
\end{eqnarray}

Combining \eqref{1 index-part 1} and \eqref{1 index-part 2}, we obtain the complete leading-order contribution in $a_m$
(from the first determinant in \eqref{Laplace expansion of tau function} involving $\mathbf{x}^{+}(\mathbf{n})$) as \cite{yang2023rogue}
\begin{equation*}
  \alpha\, a_m^{\frac{N(2l+M(N-1))-2}{2m}}
  \Bigl[
      p_1\bigl(\bar{x}-\hat{x}_0\bigr)
      + 3p_0^2p_1 \bigl(t-\hat{t}_0\bigr)
      +\sum_{j=1}^{M} n_j \theta_{1j}
      +\Delta_l
  \Bigr]
  \left[W_{N}^{[m,M+1,l]}\right]'(z_0)
  \left[1+O\!\left(|a_m|^{-1/m}\right)\right],
\end{equation*}
where
\begin{equation}\label{eqn:def of delta_l}
  \Delta_l
  =
  \frac{\hat{x}_2^{+}(\hat{x}_0,\hat{t}_0)}{a_m^{1/m}}
  \frac{
       \widetilde{W}_{N}^{[m,M+1,l]} (z_0)
  }{
      \left[W_{N}^{[m,M+1,l]}\right]'(z_0)
  },
\end{equation}
with
\begin{eqnarray}\label{eqn: two term delta}
    \widetilde{W}_{N}^{[m,M+1,l]} (z_0)=c_N^{[m,M+1,l]}\sum_{j=1}^{N}
  \det_{1\le i\le N}
  &&\left[
      p_{(M+1)(i-1)+l}^{[m]}(z_0),\ldots,
      p_{(M+1)(i-1)+l-j+2}^{[m]}(z_0),
      p_{(M+1)(i-1)+l-j-1}^{[m]}(z_0),\right.\nonumber\\
      &&\left.
      p_{(M+1)(i-1)+l-j}^{[m]}(z_0),\ldots,
      p_{(M+1)(i-1)+l-N+1}^{[m]}(z_0)
  \right].
\end{eqnarray}
Since $\hat{x}_2^{+}(\hat{x}_0,\hat{t}_0)
  = O \left(|a_m^{1/m}|\right),$
it follows that $\Delta_l=O(1)$. 
As in \cite{yang2023rogue}, this term can be absorbed into a redefinition of the parameters
$(\hat{x}_0,\hat{t}_0)$, leading to
\begin{equation}
  \alpha\, a_m^{\frac{N(2l+M(N-1))-2}{2m}}
  \Bigl[
      p_1\bigl(\bar{x}-\hat{x}_0\bigr)
      + 3p_0^2p_1 \bigl(t-\hat{t}_0\bigr)
      +\sum_{j=1}^{M} n_j \theta_{1j}
  \Bigr]
  \left[W_{N}^{[m,M+1,l]}\right]'(z_0)
  \left[1+O\!\left(|a_m|^{-1/m}\right)\right],
\end{equation}
where $(\hat{x}_0,\hat{t}_0)$ is defined as
\begin{eqnarray}
\hat{x}_0&=&\Re\!\left[\frac{1}{p_1}\left(z_0\,a_m^{\frac{1}{m}}-\Delta_l\right)\right]- 
\frac{\Re(p_0^2)}{\Im(p_0^2)}\Im\!\left[\frac{1}{p_1}\left(z_0\,a_m^{\frac{1}{m}}-\Delta_l\right)\right] = \frac{1}{\Im(p_0^2)}\Re\!\left[\frac{\mathrm{i}\left(p_0^2\right)^*}{p_1} \left(z_0\,a_m^{\frac{1}{m}}-\Delta_l\right)\right], \\
\hat{t}_0&=&\frac{1}{3\Im(p_0^2)}\Im\!\left[\dfrac{1}{p_1} \left(z_0\,a_m^{\frac{1}{m}}-\Delta_l\right)\right].
\end{eqnarray}
By the same argument, the second determinant in
\eqref{Laplace expansion of tau function} involving $\mathbf{x}^{-}(\mathbf{n})$
and corresponding to the index choice $\nu_j=j-1$ contributes
\begin{equation}
  \alpha^{*} (a_m^{*})^{\frac{N(2l+M(N-1))-2}{2m}}
  \Bigl[
      p_1^{*}\bigl(\bar{x}-\hat{x}_0\bigr)
      +3(p_0^2)^*p_1^*\bigl(t-\hat{t}_0\bigr)
      -\sum_{j=1}^{M} n_j \theta_{1j}^{*}
  \Bigr]
  \left[W_{N}^{[m,M+1,l]}\right]'(z_0^{*})
  \left[1+O\!\left(|a_m|^{-1/m}\right)\right].
\end{equation}

\paragraph{Case (2): the index choice $\nu=(0,1,\ldots,N-2,N)$}
For this choice of indices, the dominant contribution in $a_m$ can be evaluated in the same manner as in
\eqref{Schur polynomial VS WH x+}. 
More precisely, the leading-order term is given by
\begin{equation}
  |h_0 |^2 |\alpha|^2
  |a_m|^{\frac{N(2l+M(N-1))-2}{m}}
  \left|\left[W_{N}^{[m,M+1,l]}\right]'(z_0)\right|^2
  \left[1+O\!\left(|a_m|^{-1/m}\right)\right].
\end{equation}

Collecting the two leading contributions corresponding to the index choices
$\nu_j=j-1$ and $\nu=(0,1,\ldots,N-2,N)$, we arrive at the following asymptotic expression for the
$\tau$-function $\sigma_{\mathbf{n}}$:
\begin{eqnarray}
  \sigma_{\mathbf{n}}(\bar x,t)
  &=&
  |\alpha|^2
  \left|
      \left[W_{N}^{[m,M+1,l]}\right]'(z_0)
  \right|^2
  |a_m|^{\frac{N(2l+M(N-1))-2}{m}}
  \nonumber\\
  && \times
  \Bigl(
      \bigl[
          p_1(\bar{x}-\hat{x}_0)
          +3p_0^2p_1(t-\hat{t}_0)
          +\sum_{j=1}^{M} n_j\theta_{1j}
      \bigr] 
      \bigl[
          p_1^{*}(\bar{x}-\hat{x}_0)+
          3(p_0^2)^*p_1^*(t-\hat{t}_0)
          -\sum_{j=1}^{M} n_j\theta_{1j}^{*}
      \bigr]
      +|h_0|^2
  \Bigr)
  \nonumber\\
  && \times
  \left[1+O\!\left(|a_m|^{-1/m}\right)\right].
  \label{asymptotics of tau fucntion}
\end{eqnarray}

Finally, under the assumption that all nonzero roots of the generalized Wronskian--Hermite polynomial
$W_{N}^{[m,M+1,l]}(z)$ are simple, the leading-order term in \eqref{asymptotics of tau fucntion}
does not vanish. 
Consequently, in a neighborhood of $(\hat{x}_0,\hat{t}_0)$, the $\mathcal{N}_l$-th order rogue wave is
asymptotically approximated by a fundamental rogue wave of the multi-component Hirota equation
described in Theorem~\ref{thm:Rogue wave patterns}, with an error of order
$O(|a_m|^{-1/m})$.

{\bf Proof of the inner region}.
To investigate the patterns of the $l$-th type $\mathcal{N}_l$-th order rogue waves of the multi-component Hirota equation in the regime $|a_m|\gg 1$ within the inner region characterized by $\bar{x}^2+t^2=O(1)$, we first adopt a procedure similar to that in \cite{ohta2012general}. Specifically, we rewrite the determinant $\tau_{\mathbf{n}}$ as a block determinant of size $(M+2)N\times(M+2)N$,
\begin{equation}\label{three expansion form}
\tau_{\mathbf{n}}
=
\left|
\begin{array}{cc}
\mathbf{O}_{N\times N} & \Phi_{N\times (M+1)N} \\
-\Psi_{(M+1)N\times N} & \mathbf{I}_{(M+1)N\times (M+1)N}
\end{array}
\right|,
\end{equation}
where the block entries are given by
\[
\Phi_{i,j}
=
\left(h_0\right)^{j-1}
S_{(M+1)(i-1)+l+1-j}\!\left[\mathbf{x}^{+}(\mathbf{n})+(j-1)\mathbf{s}\right],
\]
\[
\Psi_{i,j}
=
\left(h_0^*\right)^{i-1}
S_{(M+1)(j-1)+l+1-i}\!\left[\mathbf{x}^{-}(\mathbf{n})+(i-1)\mathbf{s}^*\right].
\]

It is evident that every entry in the determinant \eqref{three expansion form} is a polynomial in the large parameter $a_m$. To make this polynomial dependence explicit, we note that the Schur polynomials $S_j(\mathbf{x}^{+}+\nu\mathbf{s})$ and $S_j(\mathbf{x}^{-}+\nu\mathbf{s}^*)$ admit the expansions
\begin{equation}\label{S expand}
S_j\!\left(\mathbf{x}^{+}+\nu \mathbf{s}\right)
=
\sum_{q=0}^{\lfloor j/m \rfloor}
\frac{a_m^{\,q}}{q!}
S_{j-qm}\!\left(\mathbf{y}^{+}+\nu \mathbf{s}\right),
\quad
S_j\!\left(\mathbf{x}^{-}+\nu \mathbf{s}^*\right)
=
\sum_{q=0}^{\lfloor j/m \rfloor}
\frac{(a_m^*)^{\,q}}{q!}
S_{j-qm}\!\left(\mathbf{y}^{-}+\nu \mathbf{s}^*\right),
\end{equation}
where $\lfloor a \rfloor$ denotes the greatest integer less than or equal to $a$, and
\begin{equation}\label{transformation between x and y}
\mathbf{x}^{+}
=
\mathbf{y}^{+}+(0,\ldots,0,a_m,0,\ldots),
\qquad
\mathbf{x}^{-}
=
\mathbf{y}^{-}+(0,\ldots,0,a_m^*,0,\ldots).
\end{equation}

In principle, the leading-order behavior of $\tau_{\mathbf{n}}$ with respect to $a_m$ could be obtained by retaining only the highest-degree terms in $a_m$ from each matrix entry. However, a direct implementation of this idea leads to a vanishing determinant. To overcome this difficulty, we follow the strategy developed in \cite{yang2023rogue,zhang2022rogue,lin2024rogue}, performing suitable row and column operations to extract the correct leading-order contribution.
At the end of these row and column operations, $\tau_{\mathbf{n}}$ can be reduced to
the form
	\begin{equation}\label{rewrite form}
		\tau_{\mathbf{n}}=\beta \left|a_m\right|^{\gamma}\left|\begin{array}{cc}
			\mathbf{O}_{\bar{N}_l\times \bar{N}_l} & \widehat{\Phi}_{\bar{N}_l \times \widehat{N}} \\
			-\widehat{\Psi}_{\widehat{N} \times \bar{N}_l} & \mathbf{I}_{\widehat{N} \times \widehat{N}}
		\end{array}\right|\left[1+O\left(|a_m|^{-1}\right)\right],
	\end{equation}
where $\beta \not = 0, \gamma>0$  are constants, $\bar{N}_l=\displaystyle{\sum_{n=1}^M}N_{n,l}$,  $\displaystyle{\widehat{N}=\max_{1\leq n \leq M}\left((M+1)N_{n,l}-M + n\right)}$,
	\begin{equation}
	\begin{aligned}
		&\widehat{\Phi}=\left(\begin{array}{llll}
			\widehat{\Phi}_{N_{1,l} \times \widehat{N}}^{(1)} \\
			\widehat{\Phi}_{N_{2,l} \times \widehat{N}}^{(2)}\\
			\vdots \\
			\widehat{\Phi}_{N_{M,l} \times \widehat{N}}^{(M)}
		\end{array}\right), \quad \widehat{\Psi}=\left(\begin{array}{llll}
			\widehat{\Psi}_{\widehat{N} \times N_{1,l}}^{(1)} & \widehat{\Psi}_{\widehat{N} \times N_{2,l}}^{(2)} &
			\cdots&
			\widehat{\Psi}_{\widehat{N} \times N_{M,l}}^{(M)}
		\end{array}\right),
\\
		&\widehat{\Phi}_{i, j}^{(I)}=\left(h_0\right)^{j-1} S_{(M+1)(i-1)+I-j+1}\left[\mathbf{y}^{+}(\mathbf{n})+\left(j-1+\nu_0\right) \mathbf{s}\right],
 \\
		&\widehat{\Psi}_{i, j}^{(J)}=\left(h_0^*\right)^{i-1} S_{(M+1)(j-1)+J-i+1}\left[\mathbf{y}^{-}(\mathbf{n})+\left(i-1+\nu_0\right) \mathbf{s}^*\right],
\end{aligned}
\end{equation}
with $\nu_0=N-\bar{N}_l$ and the values of $N_{n,l}\in \Z_{\ge0}$ referring to the value of $N_n$ against $l\in\{1,2,\ldots,M\}$ given in Theorem \ref{lemma:root structure of WH polynomials}.
Since the rogue wave solutions are independent of the constants $\beta$ and $\gamma$, we can rewrite \eqref{rewrite form} into an $M \times M$ block determinant
\begin{equation} \label{pattern tau-block matrix}
\tau_{\mathbf{n}}=\det\left(\begin{array}{llll}
\tau_{\mathbf{n}}^{[1,1]} & \tau_{\mathbf{n}}^{[1,2]}&\cdots&\tau_{\mathbf{n}}^{[1,M]} \\
\tau_{\mathbf{n}}^{[2,1]} & \tau_{\mathbf{n}}^{[2,2]}&\cdots&\tau_{\mathbf{n}}^{[2,M]} \\ \vdots & \vdots& \ddots &\vdots \\\tau_{\mathbf{n}}^{[M,1]} & \tau_{\mathbf{n}}^{[M,2]}&\cdots&\tau_{\mathbf{n}}^{[M,M]}
\end{array}\right)\left[1+O\left(|a_m|^{-1}\right)\right],
\end{equation}
where 
\begin{eqnarray}
\tau_{\mathbf{n}}^{[I, J]}=\left(m_{(M+1)(i-1)+I,(M+1)(j-1)+J}^{(\mathbf{n},I, J)}\right)_{1 \leq i \leq N_{I,l}, 1 \leq j \leq N_{J,l}}, \quad 1 \leq I, J \leq M,
\end{eqnarray}
and the corresponding matrix elements are defined by
\begin{eqnarray}
m_{i, j}^{(\mathbf{n},I, J)}&=&\sum_{v=0}^{\min (i, j)}\left[\frac{\left|p_{1}\right|^{2}}{\left(p_{0}+p_{0}^{*}\right)^{2}}\right]^{v} S_{i-v}\left(\mathbf{y}_I^{+}(\mathbf{n})+v \mathbf{s} + \nu_0 \mathbf{s}\right) S_{j-v}\left(\mathbf{y}_J^{-}(\mathbf{n})+v \mathbf{s}^{*} + \nu_0 \mathbf{s}^{*}\right).
\end{eqnarray}

Next, with similar arguments to \cite{yang2023rogue,zhang2022rogue}, $\mathbf{y}_I^{\pm}(\mathbf{n})$ can be replaced by $\mathbf{x}_I^{\pm}(\mathbf{n})$, while $\nu_0 \mathbf{s}$ and $\nu_0 \mathbf{s}^{*}$ can be absorbed into $\mathbf{x}_I^{+}(\mathbf{n})$ and $\mathbf{x}_I^{-}(\mathbf{n})$, respectively. Then, the determinant in \eqref{pattern tau-block matrix} becomes a lower $\widehat{\mathcal{N}}_l$-th order rogue wave 
$$
v_{n, \widehat{\mathcal{N}}_l}(\bar{x}, t), \quad n=1,2,\dots,M,
$$
of the multi-component Hirota equation as given in Theorem \ref{thm:rogue wave solutions}, where $\widehat{\mathcal{N}}_l=(N_{1,l},N_{2,l},\dots,N_{M,l})$. The corresponding internal parameters $\{\hat{a}_{n,I}\}$ with 
$$n=(1,2,\dots,M,M+2,M+3,\dots,2M+1,2M+3,2M+4,\dots), $$ are related to the original internal parameters as
\begin{equation}
     \hat{a}_{j,I} = a_j, \quad\text{ for }\quad j \not \equiv 0  \pmod{M+1}, \quad I=1,2,\dots,M.
\end{equation}
Here, we use the fact $s_j=0$ when $j \mod (M+1) \not =0$ in Lemma \ref{lemma:expansion of s_r}. Moreover, when $j \mod (M+1) =0$, the term $\hat{a}_{j,I}$ can be eliminated by performing appropriate row and column operations in the same manner as in \cite{yang2023rogue}.

This completes the proof of Theorem \ref{thm:Rogue wave patterns}.

\bibliographystyle{abbrvnat}
\bibliography{references.bib}

@article{zhang2022rogue,
  title={{Rogue waves and their patterns in the vector nonlinear {S}chr{\"o}dinger equation}},
  author={Zhang, Guangxiong and Huang, Peng and Feng, Bao-Feng and Wu, Chengfa},
  journal={J. Nonlinear Sci.},
  volume={33},
  number={6},
  pages={116},
  year={2023},
  publisher={Springer}
}

@article{lin2024rogue,
  title={{Rogue wave pattern of multi-component derivative nonlinear {S}chr{\"o}dinger equations}},
  author={Lin, Huian and Ling, Liming},
  journal={Chaos},
  volume={34},
  number={4},
  pages={043126},
  year={2024},
  publisher={AIP Publishing}
}

@book{hirota2004direct,
  title={{The direct method in soliton theory}},
  author={Hirota, Ryogo},
  number={155},
  year={2004},
  publisher={Cambridge University Press}
}

@article{ohta2012general,
  title={{General high-order rogue waves and their dynamics in the nonlinear {S}chr{\"o}dinger equation}},
  author={Ohta, Yasuhiro and Yang, Jianke},
  journal={Proc. R. Soc. A},
  volume={468},
  number={2142},
  pages={1716-1740},
  year={2012},
  publisher={The Royal Society Publishing}
}

@article{Yang2023Yang,
  title={{Partial-rogue waves that come from nowhere but leave with a trace in the Sasa-Satsuma equation}},
  author={Yang, Bo and Yang, Jianke},
  journal={Phys. Lett. A},
  volume={458},
  pages={128573},
  year={2023},
  publisher={Elsevier}
}

@article{yang2023rogue,
  title={{Rogue wave patterns associated with Okamoto polynomial hierarchies}},
  author={Yang, Bo and Yang, Jianke},
  journal={Stud. Appl. Math.},
  volume={151},
  number={1},
  pages={60--115},
  year={2023},
  publisher={Wiley Online Library}
}

@article{romdhane2008zeros,
  title={{On the zeros of d-symmetric d-orthogonal polynomials}},
  author={Romdhane, N Ben},
  journal={J. Math. Anal. Appl.},
  volume={344},
  number={2},
  pages={888-897},
  year={2008},
  publisher={Elsevier}
}

@article{garcia2015oscillation,
  title={{Oscillation theorems for the Wronskian of an arbitrary sequence of eigenfunctions of Schr{\"o}dinger’s equation}},
  author={Garc{\'\i}a-Ferrero, M{\textordfeminine}{\'A}ngeles and G{\'o}mez-Ullate, David},
  journal={Lett. Math. Phys.},
  volume={105},
  number={4},
  pages={551-573},
  year={2015},
  publisher={Springer}
}

@article{kedziora2011circular,
  title={{Circular rogue wave clusters}},
  author={Kedziora, David J and Ankiewicz, Adrian and Akhmediev, Nail},
  journal={Phys. Rev. E},
  volume={84},
  number={5},
  pages={056611},
  year={2011},
  publisher={APS}
}

@article{he2013generating,
  title={{Generating mechanism for higher-order rogue waves}},
  author={He, JS and Zhang, HR and Wang, LH and Porsezian, K and Fokas, AS},
  journal={Phys. Rev. E},
  volume={87},
  number={5},
  pages={052914},
  year={2013},
  publisher={APS}
}

@article{fukutani2000special,
  title={{Special polynomials and the Hirota bilinear relations of the second and the fourth Painlev{\'e} equations}},
  author={Fukutani, Satoshi and Okamoto, Kazuo and Umemura, Hiroshi},
  journal={Nagoya Math. J.},
  volume={159},
  pages={179--200},
  year={2000},
  publisher={Cambridge University Press}
}

@article{kametaka1983poles,
  title={{On poles of the rational solution of the Toda equation of Painlev{\'e}-IV type}},
  author={Kametaka, Yoshinori},
  journal={Proc. Jpn. Acad. A},
  volume  = {59},
  pages   = {453--455},
  year={1983}
}

@article{clarkson2003second,
  title={{The second Painlev{\'e} equation, its hierarchy and associated special polynomials}},
  author={Clarkson, Peter A and Mansfield, Elizabeth L},
  journal={Nonlinearity},
  volume={16},
  number={3},
  pages={R1-R26},
  year={2003}
}

@book{james2006representation,
  title={{The representation theory of the symmetric groups}},
  author={James, Gordon Douglas},
  year={2006},
  publisher={Springer}
}

@article{felder2012zeros,
  title={{Zeros of Wronskians of Hermite polynomials and Young diagrams}},
  author={Felder, Giovanni and Hemery, AD and Veselov, AP},
  journal={Physica D},
  volume={241},
  number={23-24},
  pages={2131-2137},
  year={2012},
  publisher={Elsevier}
}

@article{bonneux2020coefficients,
  title={{Coefficients of Wronskian Hermite polynomials}},
  author={Bonneux, Niels and Dunning, Clare and Stevens, Marco},
  journal={Stud. Appl. Math.},
  volume={144},
  number={3},
  pages={245-288},
  year={2020},
  publisher={Wiley Online Library}
}

@article{gramain2012core,
  title={On core and bar-core partitions},
  author={Gramain, Jean-Baptiste and Nath, Rishi},
  journal={Ramanujan J.},
  volume={27},
  number={2},
  pages={229-233},
  year={2012},
  publisher={Springer}
}

@article{olsson2009core,
  title={{Core partitions and block coverings}},
  author={Olsson, J{\o}rn B},
  journal={Proc. Amer. Math. Soc.},
  pages={2943-2951},
  volume={137},
  number={9},
  year={2009},
  publisher={JSTOR}
}

@article{oblomkov1999monodromy,
  title={{Monodromy-free Schr{\"o}dinger operators with quadratically increasing potentials}},
  author={Oblomkov, Aleksei Anatolyevich},
  journal={Theor. Math. Phys.},
  volume={121},
  number={3},
  pages={1574-1584},
  year={1999},
  publisher={Springer}
}

@article{bonneux2018recurrence,
  title={{Recurrence relations for Wronskian Hermite polynomials}},
  author={Bonneux, Niels and Stevens, Marco},
  journal={SIGMA},
  volume={14},
  pages={048},
  year={2018}
}

@article{grosu2021irreducibility,
  title={{The irreducibility of some Wronskian Hermite polynomials}},
  author={Grosu, Codru{\c{t}} and Grosu, Corina},
  journal={Indag. Math.},
  volume={32},
  number={2},
  pages={456-497},
  year={2021},
  publisher={Elsevier}
}

@article{yang2021rogue,
  title={{Rogue wave patterns in the nonlinear Schr{\"o}dinger equation}},
  author={Yang, Bo and Yang, Jianke},
  journal={Physica D},
  volume={419},
  pages={132850},
  year={2021},
  publisher={Elsevier}
}

@article{roffelsen2025real,
  title={{On real and imaginary roots of generalised Okamoto polynomials}},
  author={Roffelsen, Pieter and Stokes, Alexander},
  journal={J. Lond. Math. Soc.},
  volume={112},
  number={4},
  pages={e70329},
  year={2025},
  publisher={Wiley Online Library}
}

@article{shi2026soliton2,
  title={{Soliton solutions to the coupled Sasa-Satsuma-mKdV equation}},
  author={Shi, Changyan and Feng, Bao-Feng},
  journal={arXiv:2603.18207},
  year={2026}
}

@article{GouldHopper1962,
  author  = {Gould, H. W. and Hopper, A. T.},
  title   = {{Operational formulas connected with two generalizations of Hermite polynomials}},
  journal = {Duke Math. J.},
  volume  = {29},
  number  = {1},
  pages   = {51--63},
  year    = {1962}
}

@article{yang2021universal,
  title={{Universal rogue wave patterns associated with the Yablonskii-Vorob’ev polynomial hierarchy}},
  author={Yang, Bo and Yang, Jianke},
  journal={Physica D},
  volume={425},
  pages={132958},
  year={2021},
  publisher={Elsevier}
}

@article{solli2007optical,
  title={{Optical rogue waves}},
  author={Solli, Daniel R and Ropers, Claus and Koonath, Prakash and Jalali, Bahram},
  journal={Nature},
  volume={450},
  number={7172},
  pages={1054-1057},
  year={2007},
  publisher={Nature Publishing Group UK London}
}

@article{clarkson2003fourth,
  title={{The fourth Painlev{\'e} equation and associated special polynomials}},
  author={Clarkson, Peter A},
  journal={J. Math. Phys.},
  volume={44},
  number={11},
  pages={5350--5374},
  year={2003},
  publisher={American Institute of Physics}
}

@article{noumi1999symmetries,
  title={{Symmetries in the fourth Painlev{\'e} equation and Okamoto polynomials}},
  author={Noumi, Masatoshi and Yamada, Yasuhiko},
  journal={Nagoya Math. J.},
  volume={153},
  pages={53--86},
  year={1999},
  publisher={Cambridge University Press}
}

@article{zhang2025dark,
  title={{Dark soliton and breather solutions to the coupled Sasa-Satsuma equation}},
  author={Zhang, Guangxiong and Shi, Changyan and Wu, Chengfa and Feng, Bao-Feng},
  journal={J. Nonlinear Sci.},
  volume={35},
  number={1},
  pages={7},
  year={2025},
  publisher={Springer}
}

@article{liu2023painleve,
  author  = {N. Liu and Z.-Z. Lan and J.-D. Yu},
  title   = {{Painlev{\'e} asymptotics for the coupled Sasa--Satsuma equation}},
  journal = {Proc. Amer. Math. Soc.},
  volume  = {151},
  number  = {9},
  pages   = {3763--3778},
  year    = {2023}
}

@book{macdonald1998symmetric,
  title={{Symmetric functions and Hall polynomials}},
  author={Macdonald, Ian Grant},
  year={1998},
  publisher={Oxford University Press}
}

@article{bertola2015zeros,
  title={{Zeros of large degree Vorob’ev--Yablonski polynomials via a Hankel determinant identity}},
  author={Bertola, Marco and Bothner, Thomas},
  journal={Int. Math. Res. Not. IMRN},
  volume={2015},
  number={19},
  pages={9330--9399},
  year={2015},
  publisher={Oxford University Press}
}

@article{buckingham2020large,
  title={{Large-degree asymptotics of rational Painlev{\'e}-IV functions associated to generalized Hermite polynomials}},
  author={Buckingham, Robert},
  journal={Int. Math. Res. Not. IMRN},
  volume={2020},
  number={18},
  pages={5534--5577},
  year={2020},
  publisher={Oxford University Press}
}

@article{balogh2016hankel,
  title={{Hankel determinant approach to generalized Vorob’ev-Yablonski polynomials and their roots}},
  author={Balogh, Ferenc and Bertola, Marco and Bothner, Thomas},
  journal={Constr. Approx.},
  volume={44},
  number={3},
  pages={417-453},
  year={2016},
  publisher={Springer}
}

@article{borcea2009polya,
  title={P{\'o}lya-{S}chur master theorems for circular domains and their boundaries},
  author={Borcea, Julius and Br{\"a}nd{\'e}n, Petter},
  journal={Ann. of Math.},
  pages={465--492},
  volume  = {170},
  number  = {1},
  year={2009},
  publisher={JSTOR}
}

@book{obreschkoff1963verteilung,
  title={Verteilung und berechnung der Nullstellen reeller Polynome},
  author={Obreschkoff, Nikola},
  year={1963},
  publisher={VEB Deutscher Verlag der Wissenschaften},
  address={Berlin}
}

@book{GantmacherKrein2002,
  author    = {Gantmacher, F. R. and Krein, M. G.},
  title     = {Oscillation Matrices and Kernels and Small Vibrations of Mechanical Systems},
  publisher = {AMS Chelsea Publishing},
  address   = {Providence, RI},
  year      = {2002},
  series    = {AMS Chelsea Publishing},
  volume    = {345}
}

@article{AlseidiMargaliotGarloff2019,
  author  = {Alseidi, Rola and Margaliot, Michael and Garloff, J{\"u}rgen},
  title   = {On the Spectral Properties of Nonsingular Matrices that Are Strictly Sign-Regular for Some Order with Applications to Totally Positive Discrete-Time Systems},
  journal = {J. Math. Anal. Appl.},
  volume  = {474},
  number  = {1},
  pages   = {524--543},
  year    = {2019}
}

@article{tsuda2005universal,
  author  = {Tsuda, Teruhisa},
  title   = {{Universal characters, integrable chains and the Painlev{\'e} equations}},
  journal = {Adv. Math.},
  volume  = {197},
  number  = {2},
  pages   = {587--606},
  year    = {2005}
}

@article{gomez2021complete,
  author  = {G{\'o}mez-Ullate, David and Grandati, Yves and Milson, Robert},
  title   = {Complete Classification of Rational Solutions of {$A_{2n}$}-{P}ainlev{\'e} Systems},
  journal = {Adv. Math.},
  volume  = {385},
  pages   = {107770},
  year    = {2021}
}

@article{VanIseghem1987, 
author = {Van Iseghem, J.}, 
title = {Vector orthogonal relations. {V}ector {QD}-algorithm},
journal = {J. Comput. Appl. Math.}, 
volume = {19}, 
number = {1}, 
pages = {141--150}, 
year = {1987} }

@article{Maroni1989, 
author = {Maroni, Pascal}, 
title = {L'orthogonalit{\'e} et les r{\'e}currences de polyn{\^o}mes d'ordre sup{\'e}rieur {\`a} deux}, 
journal = {Annales de la Facult{\'e} des Sciences de Toulouse. Math{\'e}matiques}, 
series = {5}, 
volume = {10}, 
number = {1}, 
pages = {105--139}, 
year = {1989}
}

@article{DouakMaroni1992,
  author  = {Douak, K. and Maroni, P.},
  title   = {Les polyn{\^o}mes orthogonaux ``classiques'' de dimension deux},
  journal = {Analysis},
  volume  = {12},
  pages   = {71--107},
  year    = {1992}
}

@article{douak1995relation,
  author    = {Douak, Khalfa},
  title     = {The relation of the d-orthogonal polynomials to the {Appell} polynomials},
  journal   = {J. Comput. Appl. Math.},
  volume    = {70},
  number    = {2},
  pages     = {279--295},
  year      = {1995}
}

@book{noumi2004painleve,
  title={Painlev{\'e} equations through symmetry},
  author={Noumi, Masatoshi},
  volume={223},
  year={2004},
  publisher={American Mathematical Society}
}

@article{clarkson2024rational,
  author  = {Clarkson, Peter A. and Dunning, Clare},
  title   = {Rational solutions of the fifth {Painlev{\'e}} equation. {G}eneralized {Laguerre} polynomials},
  journal = {Stud. Appl. Math.},
  year    = {2024},
  volume  = {152},
  number  = {1},
  pages   = {453--507}
}

@article{masuda2002determinant,
  author  = {Masuda, Tetsu and Ohta, Yasuhiro and Kajiwara, Kenji},
  title   = {A determinant formula for a class of rational solutions of {Painlev{\'e} V} equation},
  journal = {Nagoya Math. J.},
  volume  = {168},
  pages   = {1--25},
  year    = {2002}
}

@article{Yablonskii1959RationalPII,
  author  = {Yablonskii, A. I.},
  title   = {On rational solutions of the second {Painlev{\'e}} equation},
  journal = {Vesti Akad. Navuk BSSR Ser. Fiz. Tkh. Nauk},
  volume  = {3},
  pages   = {30--35},
  year    = {1959}
}

@article{Vorobev1965RationalPII,
  author  = {Vorob'ev, A. P.},
  title   = {On the rational solutions of the second {Painlev{\'e}} equation},
  journal = {Differ. Equ.},
  volume  = {1},
  pages   = {58--59},
  year    = {1965}
}

@article{Okamoto1986StudiesPainleveIII,
  author  = {Okamoto, Kazuo},
  title   = {Studies on the {Painlev{\'e}} equations. {III}. {Second} and fourth {Painlev{\'e}} equations, {$P_{\mathrm{II}}$} and {$P_{\mathrm{IV}}$}},
  journal = {Math. Ann.},
  volume  = {275},
  pages   = {221--255},
  year    = {1986}
}

@article{KajiwaraOhta1996PII,
  author  = {Kajiwara, K. and Ohta, Y.},
  title   = {Determinantal structure of the rational solutions for the {Painlev{\'e}} {II} equation},
  journal = {J. Math. Phys.},
  volume  = {37},
  pages   = {4693--4704},
  year    = {1996}
}

@article{KajiwaraOhta1998PIV,
  author  = {Kajiwara, K. and Ohta, Y.},
  title   = {Determinant structure of the rational solutions for the {Painlev{\'e}} {IV} equation},
  journal = {J. Phys. A},
  volume  = {31},
  pages   = {2431--2446},
  year    = {1998}
}

@article{BuckinghamMiller2014PII,
  author  = {Buckingham, Robert J. and Miller, Peter D.},
  title   = {Large-degree asymptotics of rational {Painlev{\'e}}-{II} functions: noncritical behaviour},
  journal = {Nonlinearity},
  volume  = {27},
  number  = {10},
  pages   = {2489--2578},
  year    = {2014}
}

@article{BalogounBertola2025PVHankel,
  author  = {Balogoun, Malik and Bertola, Marco},
  title   = {Rational solutions of {Painlev{\'e}} {V} from {Hankel} determinants and the asymptotics of their pole locations},
  journal = {SIGMA},
  volume  = {21},
  pages   = {097},
  year    = {2025}
}

@article{BuckinghamMiller2015PIICritical,
  author  = {Buckingham, Robert J. and Miller, Peter D.},
  title   = {Large-degree asymptotics of rational {Painlev{\'e}}-{II} functions: critical behaviour},
  journal = {Nonlinearity},
  volume  = {28},
  number  = {6},
  pages   = {1539--1596},
  year    = {2015}
}

@article{BuckinghamMiller2022PIVIsomonodromy,
  author  = {Buckingham, Robert J. and Miller, Peter D.},
  title   = {Large-degree asymptotics of rational {Painlev{\'e}}-{IV} solutions by the isomonodromy method},
  journal = {Constr. Approx.},
  volume  = {56},
  number  = {2},
  pages   = {233--443},
  year    = {2022}
}

@article{BothnerMiller2020PIIIAsymptotics,
  author  = {Bothner, Thomas and Miller, Peter D.},
  title   = {Rational solutions of the {Painlev{\'e}}-{III} equation: large parameter asymptotics},
  journal = {Constr. Approx.},
  volume  = {51},
  pages   = {123-224},
  year    = {2020}
}

@article{noumi1998affine,
  author  = {Noumi, Masatoshi and Yamada, Yasuhiko},
  title   = {Affine {Weyl} groups, discrete dynamical systems and
             {Painlev{\'e}} equations},
  journal = {Commun. Math. Phys.},
  volume  = {199},
  number  = {2},
  pages   = {281--295},
  year    = {1998}
}

@article{feigin2021quasi,
  author  = {Feigin, Misha V and Halln{\"a}s, Martin A and
             Veselov, Alexander P},
  title   = {Quasi-invariant {Hermite} polynomials and the
             {Lassalle--Nekrasov} correspondence},
  journal = {Commun. Math. Phys.},
  volume  = {386},
  number  = {1},
  pages   = {107--141},
  year    = {2021}
}

@article{vignat2013proof,
  author  = {Vignat, Christophe and L{\'e}v{\^e}que, Olivier},
  title   = {Proof of a conjecture by {G}azeau et al. using the {Gould--Hopper} polynomials},
  journal = {J. Math. Phys.},
  volume  = {54},
  number  = {7},
  pages   = {073513},
  year    = {2013}
}

@article{chang2011gould,
  author  = {Chang, Jen-Hsu},
  title   = {The {Gould--Hopper} polynomials in the
             {Novikov--Veselov} equation},
  journal = {J. Math. Phys.},
  volume  = {52},
  number  = {9},
  pages   = {092703},
  year    = {2011}
}

@article{felder2026harmonic,
  author  = {Felder, Giovanni and Veselov, Alexander P and
             Nekrasov, Nikita},
  title   = {Harmonic locus and {Calogero--Moser} spaces},
  journal = {Commun. Math. Phys.},
  volume  = {407},
  number  = {2},
  pages   = {22},
  year    = {2026}
}

@article{bonneux2020asymptotic,
  author  = {Bonneux, Niels},
  title   = {Asymptotic behavior of {Wronskian} polynomials that are
             factorized via {$p$}-cores and {$p$}-quotients},
  journal = {Math. Phys. Anal. Geom.},
  volume  = {23},
  number  = {4},
  pages   = {36},
  year    = {2020}
}

@article{grosu2021expansion,
  author  = {Grosu, Codru{\c{t}} and Grosu, Corina},
  title   = {The expansion of {Wronskian Hermite} polynomials in the
             {Hermite} basis},
  journal = {SIGMA},
  volume  = {17},
  pages   = {003},
  year    = {2021}
}

@article{bertola2024exactly,
  author  = {Bertola, Marco and Chavez-Heredia, Eduardo and Grava, Tamara},
  title   = {Exactly solvable anharmonic oscillator, degenerate
             orthogonal polynomials and {Painlev{\'e} II}},
  journal = {Commun. Math. Phys.},
  volume  = {405},
  number  = {2},
  pages   = {52},
  year    = {2024}
}

@article{borcea2009lee,
  author  = {Borcea, Julius and Br{\"a}nd{\'e}n, Petter},
  title   = {The {Lee--Yang} and {P{\'o}lya--Schur} programs. {I}.   {Linear} operators preserving stability},
  journal = {Invent. Math.},
  volume  = {177},
  number  = {3},
  pages   = {541--569},
  year    = {2009}
}

@article{aptekarev1998multiple,
  author  = {Aptekarev, Alexander I.},
  title   = {Multiple orthogonal polynomials},
  journal = {J. Comput. Appl. Math.},
  volume  = {99},
  number  = {1--2},
  pages   = {423--447},
  year    = {1998}
}

@article{VanAsscheCoussement2001,
  author  = {Van Assche, Walter and Coussement, Els},
  title   = {Some classical multiple orthogonal polynomials},
  journal = {J. Comput. Appl. Math.},
  volume  = {127},
  number  = {1--2},
  pages   = {317--347},
  year    = {2001}
}

@inproceedings{kuijlaars2010multiple,
  author    = {Kuijlaars, Arno B. J.},
  title     = {Multiple orthogonal polynomials in random matrix theory},
  booktitle = {Proceedings of the International Congress of
               Mathematicians, Hyderabad, India, 2010},
  editor    = {Bhatia, Rajendra},
  volume    = {III},
  pages     = {1417--1432},
  publisher = {Hindustan Book Agency},
  address   = {New Delhi},
  year      = {2010}
}

@article{filipuk2015computing,
  author  = {Filipuk, Galina and Haneczok, Maciej and
             Van Assche, Walter},
  title   = {Computing recurrence coefficients of multiple orthogonal
             polynomials},
  journal = {Numer. Algorithms},
  volume  = {70},
  number  = {3},
  pages   = {519--543},
  year    = {2015}
}

@article{bleher2011random,
  author  = {Bleher, Pavel M. and Delvaux, Steven and
             Kuijlaars, Arno B. J.},
  title   = {Random Matrix Model with External Source and a Constrained
             Vector Equilibrium Problem},
  journal = {Comm. Pure Appl. Math.},
  volume  = {64},
  number  = {1},
  pages   = {116--160},
  year    = {2011}
}

@article{sokal2024multiple,
  author  = {Sokal, Alan D.},
  title   = {Multiple Orthogonal Polynomials, \(d\)-Orthogonal Polynomials,
             Production Matrices, and Branched Continued Fractions},
  journal = {Trans. Amer. Math. Soc. Ser. B},
  volume  = {11},
  pages   = {762--797},
  year    = {2024}
}
\end{document}